\documentclass[11pt,reqno]{article}   %style-phys/cs 
\usepackage{fullpage}

%packages
\usepackage[unicode=true]{hyperref}
\usepackage{amsmath}
\usepackage{cite}
\usepackage{amsfonts}
\usepackage{amssymb}
\usepackage{amsthm}
\usepackage{authblk}
\usepackage[pdftex]{color,graphicx}
\setlength{\oddsidemargin}{0in}
\setlength{\textwidth}{6.5in}%Decrease the side margins
\setlength{\headheight}{0in}
\setlength{\headsep}{0in}
\setlength{\topmargin}{0in}
\setlength{\textheight}{9in}%Decrease the top and bottom margins
\usepackage{adjustbox}
\usepackage{comment}
\usepackage{bbold}
\usepackage{tikz} %for diagrams
\usetikzlibrary{decorations.pathreplacing,angles,quotes}
\usetikzlibrary{matrix,arrows,decorations.pathmorphing}
\usepackage{tikz-cd}
\usetikzlibrary{arrows}
\usetikzlibrary{decorations.markings}
\usepackage[all]{xypic}
\usepackage{bbold}
\usepackage{diagbox}
\usepackage{caption}
\usepackage{subcaption}
\usepackage{pdfpages} 
\usepackage{blkarray}
\usepackage{centernot}
\usepackage{mathtools}
\usepackage{stmaryrd}
\usepackage{soul}  %strikethrough text: \st{}
\usepackage{mypack-aziz} %custom commands Aziz

%\usepackage{showlabels}

	%Aziz Kharoof additions

	%Aziz Kharoof comments 
	%Aziz Kharoof to fix/add later

\definecolor{bluegray}{rgb}{0.4, 0.6, 0.8}
\definecolor{turquoise}{rgb}{0.2, 0.7, 0.6}

  %Cihan Okay additions
  %Cihan Okay comments	

%%%%%%%%%%%%%%%%%%%%%%%%%%%%%%%%%%%%%%%%%%%%%%%%%%%%%%%%%%%%%%
%

%%%%%%%%%%%%%%%%%%%%%%%%%%%%%%%%%%%%%%%%%%%%%%%%%%%%%%%%%%%%%

\usepackage{tikz-cd}

\begin{document}
	
	%
	%\setcounter{secnumdepth}{3}
	%\setcounter{tocdepth}{2}
	%
	%\title{Convex monoids and contextuality}
	%
	\title{Simplicial distributions, convex categories and contextuality}
	
	\author{Aziz Kharoof\footnote{aziz.kharoof@bilkent.edu.tr} }
	\author{Cihan Okay\footnote{cihan.okay@bilkent.edu.tr}}
	\affil{Department of Mathematics, Bilkent University, Ankara, Turkey}
	
	\maketitle
	
\begin{abstract}
The data of a physical experiment can be {represented} as a presheaf of probability distributions. A striking feature of quantum theory is that those probability distributions obtained in quantum mechanical experiments do not always admit a joint probability distribution, a celebrated observation due to Bell. Such distributions are called contextual. Simplicial distributions are combinatorial models that extend presheaves of probability distributions by elevating sets of measurements and outcomes to spaces. Contextuality can be defined in this generalized setting.
This paper introduces the notion of convex categories to study simplicial distributions from a categorical perspective. Simplicial distributions can be given the structure of a convex monoid, a convex category with a single object, {when the outcome space has the structure of a group}. We describe contextuality as a monoid-theoretic notion {by introducing} a weak version of invertibility for monoids. 
Our main result is that a simplicial distribution is noncontextual if and only if it is weakly invertible. Similarly, strong contextuality and contextual fraction can be {characterized} in terms of invertibility in monoids. Finally, we show that simplicial homotopy can be used to detect extremal simplicial distributions refining the earlier methods based on \v Cech cohomology and the cohomology of groups.

\end{abstract}

	\tableofcontents

	%\noindent {\bf Note by} Aziz. {\bf For} Cihan.
	%\vspace{4mm} 
	
\section{Introduction} \label{cint}

Physical experiments are probabilistic: When a measurement is performed, the corresponding outcome occurs with a certain probability.  
Quantum theory
%, which is also probabilistic, 
comes with an additional constraint prohibiting certain measurements from being performed jointly. 
Therefore in a quantum mechanical experiment, the data that describes the outcome probabilities consists of a family of probability distributions indexed over subsets of measurements that are allowed to be performed jointly.
More precisely, this family is a presheaf of probability distributions since the restriction of probabilities to a smaller subset of measurements, also called marginalization, is compatible.
A striking phenomenon in physics, known as Bell's nonlocality and its generalization called contextuality, can be expressed as the nonexistence of a joint probability distribution over the set of all measurements that marginalizes to the distributions of the restricted set of measurements obtained from the experiment.
Such a joint distribution always exists in classical theories.
In particular,  a joint distribution {provides a model where all}
%implies that 
measurement outcomes are assigned before the measurement takes place, and the measurement probabilities are obtained by considering all such global assignments with a certain probability.
It is a celebrated result of Bell \cite{bell64} that in quantum theory, the joint distribution does not always exist, i.e., there are contextual families of distributions.
Another celebrated result due to Kochen--Specker \cite{KS67} proves a similar result by showing the impossibility of the global assignments of outcomes in quantum theory. The latter demonstrates a stronger version of contextuality.

There are various approaches to formalizing contextuality.
The presheaf approach is introduced in \cite{abramsky2011sheaf}.
The ingredients in this approach are (1) a simplicial complex $\Sigma$ whose vertices represent the set of all measurements, and its simplices are those that can be jointly performed, and (2) a finite set of outcomes for the measurements. 
For the outcome set, we don't lose any generality by considering the ring $\ZZ_d$ of integers modulo $d$.
A distribution on $(\Sigma,\ZZ_d)$ is a presheaf of distributions, i.e., a family $(p_\sigma)_{\sigma\in \Sigma}$ together with a compatibility condition.
Each  $p_\sigma$ is a distribution on the set of all  functions $\sigma \to \ZZ_d$.
More formally, let $D_R$ denote the distribution monad on the category of sets \cite{Jacobs_2010}, where $R$ is a commutative semiring.
Then $p_\sigma$ belongs to $D_R(\ZZ_d^\sigma)$ where $R=\RR_{\geq 0}$, the semiring of nonnegative reals.
The presheaf approach uses tools from \v Cech cohomology to study contextuality. 
Furthermore, this systematic study introduces degrees of contextuality, such as strong contextuality and
the more refined measure of contextuality known as the contextual fraction.
Another approach to contextuality is the topological approach of \cite{Coho} based on techniques from the cohomology of groups.
This approach introduces cohomology classes that can detect strong contextuality but fail to capture weaker versions, e.g., the famous Clauser--Horne--Shimony--Holt (CHSH) scenario \cite{chsh69}.
In \cite{okay2022simplicial} both approaches are unified under the theory of simplicial distributions.
This theory goes beyond the usual assumption that measurements and outcomes are represented by finite sets. In this framework, one can study distributions on spaces of measurements and outcomes, where a space is represented by a simplicial set.
Simplicial sets are combinatorial objects more expressive than simplicial complexes. They are fundamental objects in modern homotopy theory \cite{goerss2009simplicial}.
A simplicial distribution is defined on a pair $(X,Y)$ of simplicial sets, where $X$ represents the measurements and $Y$ the outcomes.
The distribution monad can be elevated to a monad on the category of simplicial sets. Given $Y$, one can define another simplicial set $D_R(Y)$ whose simplices are distributions on the set of simplices of $Y$.
A simplicial distribution is a morphism of simplicial sets
$$
p:X\to D_R(Y).
$$
%In this paper, we approach these objects from a categorical perspective and introduce abstract notions that extend the domain of contextuality to new classes of objects.
In this paper, we study simplicial distributions from a categorical perspective.

For a semiring $R$ the algebras of the distribution monad $D_R:\catSet\to \catSet$ are called $R$-convex sets. This notion generalizes the usual notion of convexity for $R=\RR_{\geq 0}$. 
Let $\catConv_R$ denote the category of $R$-convex sets.
Every monad has an associated Kleisli category.
In the case of $D_R$, the morphisms of the Kleisli category $\catsSet_{D_R}$ are the simplicial distributions, i.e., for simplicial sets $X,Y$ the set $\catSet_{D_R}(X,Y)$ of morphisms is given by simplicial set morphisms $X\to D_R(Y)$.
Let $s\catConv_R$ denote the category of simplicial $R$-convex sets. The main examples of such simplicial objects are $D_R(Y)$ for some simplicial set $Y$. 

\begin{prop}\label{intro-Hommm}
The functor sending a pair $(X,Y)$ of simplicial sets to the set $\St(X,Y)$ of simplicial set morphisms restricts
%$\St(-,-) : \St^{op} \times \St \to \catSet$ restricts
 to a functor
$$
\St(-,-): \St^{op} \times s\catConv_R \to \catConv_R
$$
\end{prop}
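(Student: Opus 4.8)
The plan is to realize $\St(X,Y)$ as a limit in $\catConv_R$ of a diagram assembled from the levelwise convex structure of $Y$, and then to extract both the $R$-convex structure on the hom-set and the bifunctoriality from general properties of $\catConv_R$. The key structural fact I would invoke is that $\catConv_R$, being the Eilenberg--Moore category of the monad $D_R$ on $\catSet$, is complete and that its forgetful functor $\catConv_R\to\catSet$ creates limits. Consequently, for any set $S$ and any $R$-convex set $A$ the power $A^S$ is again an $R$-convex set, with convex combinations formed coordinatewise via the structure map of $A$.

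Since $Y$ lies in $s\catConv_R$, each $Y_n$ is an $R$-convex set and each simplicial structure map $\alpha^*\colon Y_n\to Y_m$, induced by a map $\alpha\colon[m]\to[n]$ in $\Delta$, is affine, i.e.\ a morphism in $\catConv_R$. A simplicial set morphism $f\colon X\to Y$ (where $Y$ is regarded only through its underlying simplicial set) is a family $(f_n\colon X_n\to Y_n)_n$ natural in $[n]$, and this naturality is exactly the equalizer condition in
$$
\St(X,Y)\;\longrightarrow\;\prod_{n\geq 0}Y_n^{X_n}\;\rightrightarrows\;\prod_{\alpha\colon[m]\to[n]}Y_m^{X_n},
$$
where one parallel arrow postcomposes each $f_n$ with $\alpha^*$ and the other precomposes $f_m$ with the reindexing $X(\alpha)\colon X_n\to X_m$. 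Both products are $R$-convex sets; the first arrow is affine because each $\alpha^*$ is affine, and the second is affine because precomposition with a fixed set map respects coordinatewise convex combinations. Forming the equalizer in $\catConv_R$ and using that the forgetful functor creates it, the underlying set is precisely $\St(X,Y)$, which thus acquires a canonical $R$-convex structure. Unwinding the description, this structure is pointwise: a combination $\sum_i r_i f^{(i)}$ is the map sending $x\in X_n$ to $\sum_i r_i f^{(i)}_n(x)$ evaluated in $Y_n$.

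Finally I would check bifunctoriality. A morphism in $\St^{op}\times s\catConv_R$ from $(X,Y)$ to $(X',Y')$ is a pair consisting of $g\colon X'\to X$ in $\St$ and $h\colon Y\to Y'$ in $s\catConv_R$, and it acts by $f\mapsto h\circ f\circ g$. Precomposition with $g$ and postcomposition with the levelwise affine map $h$ each preserve pointwise convex combinations, so the induced map $\St(X,Y)\to\St(X',Y')$ is a morphism in $\catConv_R$; the identity and composition laws already hold at the level of sets and hence lift verbatim. The only point carrying genuine content---and the step I would write out most carefully---is that the pointwise combination $\sum_i r_i f^{(i)}$ is again a simplicial map, i.e.\ commutes with faces and degeneracies. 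This is precisely where the hypothesis $Y\in s\catConv_R$ is essential, since it supplies the identity $\alpha^*\big(\sum_i r_i f^{(i)}_n(x)\big)=\sum_i r_i\,\alpha^*\big(f^{(i)}_n(x)\big)$; everything else is formal, following from the completeness and monadicity of $\catConv_R$.
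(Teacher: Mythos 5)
Your proof is correct, but it takes a genuinely different route from the paper's. The paper proceeds by explicit construction: it defines the structure map on $U=\St(X,Y)$ as the composite $(\pi^Y)_\ast\circ\Theta_{X,Y}$ and then verifies the two $D_R$-algebra axioms by direct computation (this is the content of Lemma \ref{lem:two-eqs} and the proof of Proposition \ref{Hommm}), with bifunctoriality deduced from naturality of $\Theta_{X,Y}$ in both variables. You instead invoke the general fact that $\catConv_R=\catSet^{D_R}$ is complete with limits created by the forgetful functor, exhibit $\St(X,Y)$ as the equalizer
$$
\St(X,Y)\;\longrightarrow\;\prod_{n\geq 0}Y_n^{X_n}\;\rightrightarrows\;\prod_{\alpha\colon[m]\to[n]}Y_m^{X_n},
$$
check that both parallel arrows are morphisms of $R$-convex sets (using that $Y\in s\catConv_R$ has affine structure maps, via the identification $s(\catConv_R)\cong(\catsSet)^{D_R}$ of Proposition \ref{SimMon}), and read off the pointwise convex structure. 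Your approach is shorter modulo standard facts, works verbatim for any monad on a complete base, and cleanly isolates the single place where the hypothesis $Y\in s\catConv_R$ enters; the paper's computational approach has the advantage that the identity $\pi^{U}=(\pi^Y)_\ast\circ\Theta_{X,Y}$ is established as part of the proof, and this formula is exactly what is reused downstream (Proposition \ref{ThetaUniqqq}, Equation (\ref{eq:Rconv-structure-sSet}), Lemma \ref{TetaMorph}, Corollary \ref{SCM}). To splice your argument into the paper one would add the one-line remark that your limit-induced structure coincides with the paper's: unwinding the equalizer description gives $\pi(P)_n(x)=\pi^{Y_n}\bigl(D_R(e_x)(P)\bigr)$, and by Equation (\ref{BBB}) this equals $\pi^{Y_n}\bigl(\Theta_{X,Y}(P)_n(x)\bigr)$, i.e.\ the two structures agree.
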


\noindent
The main application of this result is to the set of simplicial distributions. By this result the set $\catsSet(X,D_R(Y))$ is an $R$-convex set.
Contextuality for simplicial distributions is defined using a comparison map
$$
\Theta_{X,Y}: D_R(\catsSet(X,Y)) \to \catsSet(X,D_R(Y)).
$$
Under this map, the delta distribution at a simplicial set morphism $\varphi:X\to Y$ is sent to the simplicial distribution given by the composition $ X\xrightarrow{\varphi} Y \xrightarrow{\delta_Y} D_R(Y)$, called a deterministic distribution.
Since the target is $R$-convex, $\Theta_{X,Y}$ is the unique extension to the free $D_R$-algebra, the domain of the map.
A simplicial distribution is called contextual if it lies in the image of $\Theta_{X,Y}$; otherwise called noncontextual.
We show in Section \ref{LowDimm} how to realize a presheaf of distributions as a simplicial distribution.
In Theorem \ref{thm:ContextualSheaf} we show that the notion of contextuality for simplicial distributions specializes to the notion for presheaves of distributions originally introduced in \cite{abramsky2011sheaf}.

A category-theoretic point of view suggests lifting our analysis from the level of morphism sets to the level of categories.
For this, we introduce the notion of convex categories.
The first step is to lift   $D_R$ to a monad on the category $\catCat$ of (locally small) categories (Corollary \ref{DRCat}). 
Then we define an $R$-convex category as a $D_R$-algebra in $\catCat$.
Every category enriched over the category of $R$-convex sets is an $R$-convex category. However, the converse does not always hold. The prominent example of a convex category is the Kleisli category $\catSet_{D_R}$, and its simplicial version $\catsSet_{D_R}$, which are not enriched over $\catConv_R$.
We can think of $\Theta_{X,Y}$ assembled into a morphism in $\catConvCat_R$ from the free $R$-convex category to the Kleisli category, both of which obtained from the category of simplicial sets:
$$
\Theta: D_R(\catsSet) \to \catsSet_{D_R}.
$$ 
%This category-theoretic perspective has interesting applications connecting contextuality to a weak notion of invertibility in convex monoids.
{For outcome spaces which also have a group structure the convex set of simplicial distributions can   be given a monoid structure.
Our categorical framework combined with this monoid structure has interesting applications connecting contextuality to a weak notion of invertibility in convex monoids.}
As we show in Section \ref{LowDimm} any presheaf of distribution can be realized as a simplicial distribution
$$
p: X\to D_R(N\ZZ_d),
$$
where $N\ZZ_d$ is the nerve of the additive group $\ZZ_d$.
The simplicial set $N\ZZ_d$ is, in fact, a simplicial group.
The group structure on the nerve induces a monoid structure on the convex set $\catsSet(X,D_R(N\ZZ_d))$ of simplicial distributions. 
We remark that this monoid structure is new and has not been investigated in the study of contextuality.
More precisely,  $\catsSet(X,D_R(N\ZZ_d))$ is a convex monoid, i.e., a convex category with a single object.
For a convex monoid $M$, with the map $\pi^M:D_R(M)\to M$ giving the $D_R$-algebra structure, an element $m\in M$ is called weakly invertible if it lies in the image of the composite {$D_R(M^*) \xhookrightarrow{D_R(i_M)} D_R(M)\xrightarrow{\pi^M} M$, where $i_M:M^*\hookrightarrow M$} is the inclusion of the units.
Our main result connects weak invertibility to noncontextuality.

\begin{thm}\label{intro-Theo2}
Let $R$ be a 
zero-sum-free, integral semiring. 
Given a simplicial set $X$ and a simplicial group $Y$, a distribution  $p \in \St(X,D_R(Y))$ is noncontextual if and only if $p$ is weakly invertible.  
\end{thm}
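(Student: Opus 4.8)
The plan is to reduce the statement to a single structural identification: in the convex monoid $M = \St(X,D_R(Y))$, the group of units $M^*$ coincides with the set of \emph{deterministic} distributions, i.e. those of the form $\delta_Y \circ \varphi$ for a simplicial map $\varphi : X \to Y$. Granting this, the theorem is immediate. By definition $p$ is noncontextual exactly when it lies in the image of $\Theta_{X,Y}$, which is the image of $\pi^M$ restricted to $D_R$ of the deterministic distributions; and $p$ is weakly invertible exactly when it lies in the image of $\pi^M \circ D_R(i_M)$, i.e. the image of $\pi^M$ restricted to $D_R(M^*)$. If the two generating subsets of $M$ coincide, these two maps are literally equal, so their images — the noncontextual and the weakly invertible distributions — agree.

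For the easy inclusion, every deterministic distribution is a unit. Since $Y$ is a simplicial group its inversion is a simplicial morphism $\mathrm{inv} : Y \to Y$, so for any $\varphi : X \to Y$ the pointwise inverse $\varphi^{-1} := \mathrm{inv}\circ\varphi$ is again simplicial. Using $\delta_a\cdot\delta_b = \delta_{ab}$ for the convolution on $D_R(Y)$, one computes $(\delta_Y\circ\varphi)\cdot(\delta_Y\circ\varphi^{-1}) = \delta_Y\circ(\varphi\cdot\varphi^{-1}) = \delta_Y\circ e$, the monoid identity, and symmetrically on the other side; hence $\delta_Y\circ\varphi \in M^*$.

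The reverse inclusion $M^* \subseteq \{\text{deterministic}\}$ is the heart of the argument, and where the hypotheses on $R$ enter. Suppose $p$ has a two-sided inverse $q$, so $p\cdot q = \delta_Y\circ e$. Fix $n$ and $x \in X_n$ and write $p_x = \sum_a p_x(a)\,\delta_a$ and $q_x = \sum_b q_x(b)\,\delta_b$ in $D_R(Y_n)$. The convolution identity $(p_x\cdot q_x)(z) = \sum_{ab=z} p_x(a)\,q_x(b)$ must vanish for every $z \neq e$. Since $R$ is zero-sum-free, a vanishing sum forces every summand to vanish, so $p_x(a)\,q_x(b)=0$ whenever $ab\neq e$; since $R$ is integral this gives $p_x(a)=0$ or $q_x(b)=0$ for each such pair. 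Equivalently, $ab = e$ for every $a$ in the support of $p_x$ and every $b$ in the support of $q_x$. As $q_x$ has total mass $1\neq 0$, zero-sum-freeness makes its support nonempty; fixing one $b_0$ there forces $a = b_0^{-1}$ for all $a$ in the support of $p_x$. Thus $p_x$ is supported on the single element $b_0^{-1}$, and $\sum_a p_x(a)=1$ gives $p_x = \delta_{b_0^{-1}}$.

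It remains to assemble these level-wise deltas. Setting $\varphi(x)$ to be the unique support element of $p_x$ defines a function $\varphi : X \to Y$ with $p = \delta_Y\circ\varphi$; because $p$ is a simplicial morphism and $\delta_Y : Y \to D_R(Y)$ is a natural monomorphism, $\varphi$ automatically commutes with faces and degeneracies, so it is simplicial and $p$ is deterministic. This establishes $M^* = \{\text{deterministic distributions}\}$ and hence the theorem. I expect the only delicate point to be the support-and-normalization step, where \emph{both} hypotheses on $R$ are used essentially: zero-sum-freeness to split a vanishing convolution into vanishing summands, and integrality to pass from a vanishing product to a vanishing factor. Dropping either would permit genuinely probabilistic units that are not deterministic, so the result would fail.
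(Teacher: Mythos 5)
Your proof is correct and takes essentially the same route as the paper: your identification of the units of $\St(X,D_R(Y))$ with the deterministic distributions is precisely the paper's Lemma \ref{GtoDRG} (same pointwise zero-sum-free/integral support argument) and Lemma \ref{HomInvvv} (same assembly into a simplicial map via injectivity of $\delta_Y$), and your concluding step is the factorization $\Theta_{X,Y}=\pi^{\St(X,D_R(Y))}\circ D_R((\delta_Y)_\ast)$ of Proposition \ref{ThetaUniqqq}, which the paper packages as Diagram (\ref{Importtt}). The only cosmetic difference is that the paper phrases this factorization in adjunction-transpose language, while you invoke it directly.
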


Moreover, we introduce the notion of strong invertibility, a monoid-theoretic description of strong contextuality. We also introduce a degree of invertibility called invertible fraction which generalizes  the notion of noncontextual fraction introduced in \cite{abramsky2011sheaf,Abramsky_2017}.
Simplicial distributions are formulated using the theory of simplicial sets.
It is a natural question to understand the role of homotopy in the context of simplicial distributions.
Embedding presheaves of distributions to our simplicial framework makes homotopical tools available for the study of contextuality.
In Corollary \ref{cor:homotopy} we show that simplicial homotopy can be used to detect extremal distributions, a question of fundamental importance in the study of polytopes of distributions; see \cite{pitowsky1989quantum,barrett2005nonlocal,
jones2005interconversion,abramsky2016possibilities}.

Our paper is organized as follows: In Section \ref{sec:alg-over-monad} we recall basics from convex sets and in Section \ref{sec:simp-conv-set} we introduce simplicial distributions. Proposition \ref{intro-Hommm} (Proposition \ref{Hommm}) is proved in Section \ref{sec:simp-dist-as-conv-set}.  In Section \ref{LowDimm}, we show how to describe a presheaf of distributions as a simplicial distribution.
%using the nerve space $N\ZZ_d$ 
In this section, we also provide examples of simplicial distributions, such as the CHSH scenario (Example \ref{Ex: CHSH}). Convex categories are introduced in Section \ref{subsec:conv-cat}. In Section \ref{sec:kleisli-convex-cat} we show that the Kleisli category $\catsSet_{D_R}$ is a convex category.
Weak invertibility, strong invertibility and invertible fraction are introduced in Sections \ref{sec:weak-invertibility}, \ref{sec:strong-invertibility} and \ref{sec:invertible-fraction}; respectively. Our main result Theorem \ref{intro-Theo2} (Theorem \ref{Theo2}) is proved in Section \ref{sec:cont-weak-inv}. The relationship between strong invertibility and strong contextuality is studied in Section \ref{sec:strong-cont-inv}.
Extremal simplicial distributions and the role of simplicial homotopy are discussed in Section \ref{sec:extremal}.

\paragraph{Acknowledgments.}
This  work is supported by the Air Force Office of Scientific Research under award
number FA9550-21-1-0002. 	
	
%	\paragraph{Notation.} Throughout this paper a monoid will be a semigroup with an identity element. Equivalently, a monoid is a small category with only one object. 
%	We will denote by $\catMon$ the category of monoids, by $\catGrp$ the category of
%	groups, and by $\catCat$ the category of locally small categories.
%	In addition, for any category $\catC$ we denote by $s\catC$ 
%	the
%	category of simplicial objects over $\catC$.
%	We will use the letter $R$ to denote a commutative semiring. 
 %%%%%%%%%%%%%%%%%%%%%%%%%%%%%%%%%%%%%%%%%%%%%%%%%%%%%%%
	\section{Simplicial distributions}\label{sec:SimpDist}

{
{Throughout the paper $R$ will denote a commutative semiring.}  
In this section, we introduce simplicial distributions \cite{okay2022simplicial} {defined over the semiring $R$}.
These objects describe distributions on a simplicial set 
parametrized by another simplicial set.
Our main result is that  the set of simplicial distributions constitute an $R$-convex set, in the sense that it is an algebra over the distribution monad. In practice, simplicial distributions come from presheaves of distributions, introduced in \cite{abramsky2011sheaf}. We describe how to embed the theory of presheaves of distributions into the theory of simplicial distributions.  
}

	\subsection{Algebras over a monad}\label{sec:alg-over-monad}

	We recall some basic facts about algebras over a monad from \cite[section VI]{Mac_Lane_1978}. 
	A {\it monad} on a category $\catC$ is a functor $T:\catC\to \catC$ together with natural transformations $\delta: \idy_\catC \to T$ and $\mu:T^2 \to T$ satisfying $\mu \circ T\mu ={\mu\circ \mu T}$ and $\mu\circ \delta T = \mu\circ T\delta = {\idy_T}$.
%making the following diagrams commute
%	$$
%	\begin{tikzcd}
%		T^3 \arrow[r,"T\mu"] \arrow[d,"\mu T"'] & T^2 \arrow[d,"\mu"] \\
%		T^2 \arrow[r,"\mu"] & T
%	\end{tikzcd}
%	\hspace{1cm}
%	\begin{tikzcd}
%		T \arrow[r,"\delta T"] \arrow[d,"T\delta"'] \arrow[rd,equal] & T^2 \arrow[d,"\mu"] \\
%		T^2 \arrow[r,"\mu"] & T
%	\end{tikzcd}
%	$$
	A {\it $T$-algebra} consists of an object $X$ of
	$\catC$ together with a  morphism $\pi:T(X) \to X$ of $\catC$  such that the following diagrams commute  
	\begin{equation}\label{AAA}
		\begin{tikzcd}
			X \arrow[d,"\delta_X"'] \arrow[r,equal] & X \\
			T(X) \arrow[ru,"\pi"']
		\end{tikzcd}
		\;\;\;\;\;\;\;\;\;\;\;\;
		\begin{tikzcd}
			\hspace{1cm}
			T^2(X) \arrow[r,"T(\pi)"] \arrow[d,"\mu_X"'] & T(X) \arrow[d,"\pi"] \\
			T(X) \arrow[r,"\pi"] & X
		\end{tikzcd}
	\end{equation}
	A {\it morphism of $T$-algebras} is a morphism $f:X \to Y$ of $\catC$ such that 
$\pi^Y \circ T(f) =f\circ \pi^X$.
%makes the following diagrams commute
%	$$
%	\begin{tikzcd}
%		T(X) \arrow[r,"T(f)"] \arrow[d,"\pi^X"'] & T(Y) \arrow[d,"\pi^Y"] \\
%		X \arrow[r,"f"] & Y
%	\end{tikzcd}
%	$$
	The {\it category of $T$-algebras} will be denoted by $\catC^T$.
	The object $T(X)$ together with the structure morphism $\mu_{X}$ is called a {\it free $T$-algebra}. There is an adjunction $T:\catC \adjoint \catC^T:U$ where $T$ sends an object to the associated free $T$-algebra and $U$ is the forgetful functor. The {\it Kleisli category of $T$}, denoted by $\catC_T$, is the category whose objects are the same as the objects of $\catC$ and morphisms $X\to Y$ are given by $\catC(X,{T}(Y))$. In fact, $\catC_T$ is equivalent to the full subcategory of free $T$-algebras in $\catC^T$.  
See also\cite[Chapter 5]{riehl2017category} and \cite[Subsection 5.2]{perrone2019notes}.
\begin{remark}\label{AdjFormmm}
{\rm
Under the bijection $\catC^T(T(X),Y)\cong \catC(X,Y)$ a morphism $f:T(X)\to Y$ is sent to $f\circ \delta_X$. Conversely, under this isomorphism, a morphism $g:X\to Y$ is sent to $\pi^Y\circ T(g)$. 
}
%For $X \in C$ and $Y \in C^T$,
%the bijection between the respective morphism sets 
%$C^T(T(X),Y) \cong C(X,Y)$ given by sending $f \in C(T(X),Y)$
%to $f \circ \delta_X$ and by sending $g \in C(X,Y)$ to
%$\pi^Y \circ T(g)$.
\end{remark}
%
	
%\cite[chapter 2 of section VI]{Mac_Lane_1978}	
		
%\akk{Paolo Perrone, refs -    \cite[Definition 5.2.5]{perrone2019notes},  Emily R. 
%,\cite[Section 5.2.2]{perrone2019notes} ,  \cite[Proposition 5.2.30 and Corollary 5.2.31]{perrone2019notes}. }

	\subsubsection{Convex sets}

%Let $R$ be a semiring. 
Main examples of interest are the semiring of nonnegative reals $\RR_{\geq 0}$ and the Boolean algebra $\BB=\set{0,1}$. 
	The {\it distribution monad}  $D_R:\catSet \to \catSet$ is defined as follows:
	\begin{itemize}
		\item  {For a set $X$ the set $D_R(X)$ of $R$-distributions on $X$ consists of  functions $p:X\to R$ of finite support, i.e., $|\set{x\in X:\, p(x)\neq 0}|<\infty$, such that $\sum_{x\in X} p(x)=1$.}
		\item Given a function $f:X\to Y$ the function $D_R(f):D_R(X)\to D_R(Y)$ is defined by
		$$
		p \mapsto \left( y\mapsto \sum_{x\in f^{-1}(y)} p(x) \right). 
		$$ 
%Equivalently, one can write $D_R(f)(P)=\sum_{x \in X}P(x)\delta^{f(x)}$.
	\end{itemize}
	The structure maps of the monad are given as follows:
	\begin{itemize}
		\item  $\delta_X: X\to D_R(X)$ sends $x\in X$ to the delta distribution
		$$
		\delta^x(x') = \left\lbrace
		\begin{array}{ll}
			1 & x'=x,\\
			0 & \text{otherwise.}
		\end{array}
		\right.
		$$
		\item $\mu_X:D_R^2(X)\to D_R(X)$ sends a distribution $P$ to the distribution
		$$
		D_R(P)(x) = \sum_{p\in D_R(X)} P(p)p(x). 
		$$
	\end{itemize}

{	
\begin{defn}[\cite{jacobs2009duality}]
%\ak{[[In the reference \cite{Jacobs_2010} they didn't mention $R$-convex sets, they mentioned it in the arxiv version.]]} 
{\rm
A $D_R$-algebra in the category of sets is called an {\it $R$-convex set}. A morphism of $R$-convex sets is given by a morphism of $D_R$-algebras. We will denote the category of $R$-convex sets by $\catConv_R$.
}
\end{defn}	
}	

{In the case of $R=\RR_{\geq 0}$ the notion of $\RR_{\geq 0}$-convexity coincides with the usual notion of convexity, as we  recall from \cite{Jacobs_2010}  in Proposition \ref{Conv=ConvR} below. 
%For arbitrary semirings this notion of $R$-convexity is common in categorical probability theory \cite{jacobs2009duality,}
}

%\cite{jacobs2009duality}

%\comm{comment here that this notion generalizes real convex sets and provide more references from the literature}
	
%	$D_R$-algebras are called {\it $R$-convex sets}. The category $\catSet^{D_R}$ of $D_R$-algebras is called the {\it category of $R$-convex sets} and will be denoted by $\catConv_R$.
%\ak{[[I found this Definition just in the other version of \cite{Jacobs_2010} which called "Duality for Convexity
%" and appears just in arxiv.]]}	

%\comm{ref -  [part (4) of Example 2]{Jacobs}}

	\subsubsection{Real convex sets}

	%There are many different definitions for real convex set in the literature, we will use the following definition from \cite[Definition 9]{Jacobs_2010}:
	%\begin{defn}\label{ConvDeff}
	
{We recall the definition of real convex sets from \cite[Definition 3]{Jacobs_2010}.}	
	A \emph{real convex set}  consists of a set X together with a ternary operation
	$\Span{-,-,-} : [0, 1] \times X \times X \to X$ 
	%called the convex combinations operation, 
	%and 
	satisfying the following requirements, for
	all $\alpha, \beta \in [0, 1]$ and $x, y, z \in X$:
	\begin{enumerate}
		\item $\Span{\alpha,x,y}=\Span{1-\alpha,y,x}$.
		\item  $\Span{\alpha,x,x}=x$.
		\item   $\Span{0,x,y}=y$.
		\item   If $\alpha +(1-\alpha)\beta \neq 0$, then 
		$$
		\Span{\alpha,x,\Span{\beta,y,z}}=
		\Span{\alpha +(1-\alpha)\beta,\Span{\frac{\alpha}{\alpha +(1-\alpha)\beta},x,y},z}.
		$$
	\end{enumerate} 
	Given a real convex set $X$ it is sometimes more convenient to use the notation 
	$\alpha x +(1-\alpha) y$ instead of $\Span{\alpha,x,y}$.
	%, where $x, x' \in X$.
{	A morphism of convex sets
	is
	an  function $f : X \to Y$ satisfying 
	%
	%$$
	%f(\Span{\alpha,x,x'}_X) =\Span{\alpha,f(x),f(x')}_Y
	%$$
$$
	f(\alpha x +(1-\alpha) x') =\alpha f(x) +(1-\alpha) f(x').
$$	}
	for all $\alpha \in [0, 1]$ and $x, x' \in X$. This yields the category of real convex sets denoted by
	$\catConv$.
  
	\begin{prop}[\cite{Jacobs_2010}]
	\label{Conv=ConvR}
		The category $\catConv$ is isomorphic to the category 
		$\catConv_{\rr_{\geq 0}}$. Under this isomorphism a real convex set $X$ is sent to the $\RR_{\geq 0}$-convex set $(X,\pi^X)$ where
		$$
		\pi^{X}: D_{\rr \geq 0}(X) \to X,\;\;\;\;\pi^X(P)=\sum_{x \in X} P(x)x.
		$$
		Conversely, an  $\rr_{\geq 0}$-convex set $(X,\pi^X)$ is sent to the real convex set where
		$$
		\Span{\alpha,x,y}=\pi^X(\alpha \delta^x + (1-\alpha)\delta^y).
		$$
%\aak{see \cite[Theorem 4]{Jacobs_2010}}  
\end{prop}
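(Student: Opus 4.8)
The plan is to construct functors $\catConv \to \catConv_{\RR_{\geq 0}}$ and $\catConv_{\RR_{\geq 0}} \to \catConv$ using the two formulas in the statement, and to check that on objects and on morphisms they are mutually inverse bijections, so that we obtain an isomorphism of categories (not merely an equivalence).

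First I would pass from a real convex set $(X,\Span{-,-,-})$ to a $D_{\RR_{\geq 0}}$-algebra. For a distribution $P\in D_{\RR_{\geq 0}}(X)$ with support $\{x_1,\dots,x_n\}$ and weights $p_i=P(x_i)$, I define the barycenter $\pi^X(P)=\sum_i p_i x_i$ by iterating the binary operation, say $\pi^X(P)=\Span{p_1,x_1,Q}$ where $Q$ is the barycenter of the renormalized conditional distribution $p_i/(1-p_1)$ on $x_2,\dots,x_n$ when $p_1\neq 1$, and $\pi^X(P)=x_1$ when $p_1=1$. The central lemma is that this value is independent of the chosen enumeration of the support and of the bracketing. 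I would prove this by induction on $|\mathrm{supp}(P)|$: axiom (1) lets me transpose terms, axiom (4) is exactly the reassociation that moves a coefficient between the outer and inner brackets, and axioms (2)--(3) absorb repeated points and zero weights so that passing to or from a chosen listing of the support does not change the value. Granting well-definedness, the unit diagram in (\ref{AAA}) reads $\pi^X(\delta^x)=x$, immediate from the $n=1$ case, and the associativity diagram $\pi^X\circ\mu_X=\pi^X\circ D_{\RR_{\geq 0}}(\pi^X)$ says that the barycenter of a distribution of distributions may be computed by first taking inner barycenters; this is again the reassociation encoded in axiom (4), proved by induction on the sizes of the supports involved.

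Next, from a $D_{\RR_{\geq 0}}$-algebra $(X,\pi^X)$ I would define $\Span{\alpha,x,y}=\pi^X(\alpha\delta^x+(1-\alpha)\delta^y)$ and verify axioms (1)--(4). Axiom (1) is formal, since $\alpha\delta^x+(1-\alpha)\delta^y=(1-\alpha)\delta^y+\alpha\delta^x$ as elements of $D_{\RR_{\geq 0}}(X)$. Axioms (2) and (3) follow from the unit diagram, using $\alpha\delta^x+(1-\alpha)\delta^x=\delta^x$ and $0\cdot\delta^x+1\cdot\delta^y=\delta^y$. For axiom (4), writing $\gamma=\alpha+(1-\alpha)\beta\neq 0$, I would exhibit two elements $\Phi,\Psi\in D_{\RR_{\geq 0}}^2(X)$: one built from the nesting on the left-hand side and one from the nesting on the right-hand side. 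Applying the multiplication diagram collapses each nested occurrence of $\pi^X$, rewriting both sides as $\pi^X(\mu_X(\Phi))$ and $\pi^X(\mu_X(\Psi))$; a direct computation then shows $\mu_X(\Phi)=\mu_X(\Psi)=\alpha\delta^x+(1-\alpha)\beta\,\delta^y+(1-\alpha)(1-\beta)\,\delta^z$, which is precisely where the coefficient identity $\gamma=\alpha+(1-\alpha)\beta$ and the renormalization $\alpha/\gamma$ of axiom (4) are used.

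Finally I would check that the two assignments are mutually inverse and functorial. Starting from $\Span{-,-,-}$, forming $\pi^X$ and restricting to two-point distributions recovers $\Span{\alpha,x,y}$ by the binary base case, so this round trip is the identity on $\catConv$. Starting from $(X,\pi^X)$, forming $\Span{-,-,-}$ and then its barycenter $\widehat{\pi}^X$, I must show $\widehat{\pi}^X=\pi^X$; this follows by expressing an arbitrary $P$ as an iterated $\mu_X$ of two-point distributions and applying the multiplication diagram repeatedly, again an induction on support size. For functoriality, an $\RR_{\geq 0}$-convex morphism preserves binary combinations, hence by the same induction preserves all barycenters and is a $D_{\RR_{\geq 0}}$-algebra morphism, and conversely an algebra morphism restricts to a convex morphism; since both constructions are the identity on underlying sets and functions, the categories are isomorphic. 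I expect the main obstacle to be the well-definedness lemma for $\pi^X$ together with the associativity diagram, as both require carefully managing normalization of coefficients and the degenerate zero-weight and repeated-point cases through axioms (2)--(4); the remaining steps are essentially bookkeeping.
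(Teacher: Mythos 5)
Your proposal is correct, but note that the paper contains no proof of this proposition at all: it is imported directly from \cite{Jacobs_2010}, so the only meaningful comparison is with that reference, and your argument is essentially a reconstruction of the standard proof given there (recursive definition of the barycenter map $\pi^X$ on supports, well-definedness via axioms (1) and (4) with (2)--(3) absorbing the degenerate cases, verification of the two Eilenberg--Moore diagrams, and the check that both constructions are identity on underlying sets and mutually inverse). In particular, your derivation of axiom (4) from the multiplication square, via the two elements $\Phi,\Psi\in D_{\RR_{\geq 0}}^2(X)$ whose images under $\mu_X$ both equal $\alpha\delta^x+(1-\alpha)\beta\,\delta^y+(1-\alpha)(1-\beta)\,\delta^z$, is exactly right, and you correctly identify the well-definedness lemma as the only step carrying real content.
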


	%
	%\begin{remark}\label{PiFormula}
	%We don't give a proof for Proposition \ref{Conv=ConvR} here, %but it is important for our purposes later to give the %correspondence formula.
	
	%
	%\end{remark}
	%

	%\subsection{Contextuality for simplicial distribution}
	\subsection{Simplicial convex sets} \label{sec:simp-conv-set}

{We begin by introducing simplicial sets. Our main reference is  is \cite{goerss2009simplicial}.}	
	The {\it simplex category} $\catDelta$ consists of
	\begin{itemize}
		\item the objects $[n]=\set{0,1,\cdots,n}$ for $n\geq 0$, and
		\item the morphisms $\theta:[m]\to [n]$ given by order preserving functions.
	\end{itemize}
	A {\it simplicial set} is a functor $X:\catDelta^\op\to \catSet$. {The set of $n$-simplices is usually denoted by $X_n=X([n])$. 
An object $[m]$ in the simplex category gives a simplicial set $\Delta[m]$ whose set of $n$-simplices is given by  $\catDelta([n],[m])$. It is a well-known fact that every simplicial set can be obtained by ``gluing", or more formally as a colimit of, simplicial sets of this form.}
A {\it morphism of simplicial sets} is a natural transformation $f:X\to Y$ between the functors. We will write  $\catsSet$ for the category of simplicial sets.
	This definition can be extended to an arbitrary category $\catC$ and the resulting category is denoted by $s\catC$.
	
%\ak{[[Do we need to add a reference here?]]}  
	
	\begin{prop}\label{SimMon}
		A monad $(T,\delta,\mu)$ on  $\catC$ extends to a monad $T:s\catC\to s\catC$ by applying $T$ degree-wise: 
		\begin{itemize}
			\item $T(X)$ is the simplicial object with $(TX)_n = T(X_n)$ and the simplicial structure maps are given by
			$d_i^{T(X)} = T(d_i^X)$  and $s_i^{T(X)} = T(s_i^X)$.
			%sending a simplicial set $X:\Delta^{op} \to C$ to the simplicial set $T(X)$ with $T(X)_n=T(X_n)$ and
			% $d_i^{T(X)}=T(d_i^X) \, , \, s_i^{T(X)}=T(s_i^X)$,
			\item $T(f): T(X)\to T(Y)$ in degree $n$ is given by  $T(f)_n = T(f_n)$.
			%sending a simplicial set map $f:X\to Y$  to the simplicial set map $T(f): T(X)\to T(Y)$ which, in the $n$-th level, is given by $T(f)_n = T(f_n)$.
			\item The monad structure maps $\delta$ and $\mu$ are defined by $(\delta_X)_n=\delta_{X_n}$ and $(\mu_X)_n=\mu_{X_n}$.  
		\end{itemize}
		Moreover, we have $s(\catC^T)
		\cong (s\catC)^T$.
	\end{prop}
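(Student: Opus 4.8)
The plan is to recognize that a simplicial object is a functor out of $\catDelta^{\op}$, so that $s\catC = [\catDelta^{\op},\catC]$, and that ``applying $T$ degree-wise'' is nothing but post-composition with $T$. Writing $T_* := T\circ(-)\colon [\catDelta^{\op},\catC]\to[\catDelta^{\op},\catC]$, the functor $T_*(X)=T\circ X$ is automatically a simplicial object (a composite of functors is a functor, so the simplicial identities are preserved), and on a morphism $f\colon X\to Y$ the transformation $T_*(f)$ has degree-$n$ component $T(f_n)$; this recovers the degree-wise formulas in the statement and yields functoriality of $T_*$ for free. The main work is then to upgrade $\delta$ and $\mu$ to natural transformations on $[\catDelta^{\op},\catC]$ and to verify the monad axioms there.

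For the structure maps, I would set $(\delta_X)_n := \delta_{X_n}$ and $(\mu_X)_n := \mu_{X_n}$ and check first that each is a morphism of simplicial objects, i.e. commutes with the faces and degeneracies. For $d_i^X\colon X_n\to X_{n-1}$ this is exactly the naturality square of $\delta$ (resp. $\mu$), viewed as a natural transformation on $\catC$, applied to the arrow $d_i^X$; the same holds for $s_i^X$. Naturality of $\delta$ and $\mu$ in the variable $X\in s\catC$ likewise reduces degree-wise to naturality of the original $\delta$, $\mu$. Finally the two identities $\mu\circ T_*\mu=\mu\circ\mu T_*$ and $\mu\circ\delta T_*=\mu\circ T_*\delta=\idy$ are equalities of morphisms of simplicial objects, hence may be verified one degree at a time; in degree $n$ they are precisely the monad axioms for $(T,\delta,\mu)$ evaluated at the object $X_n$. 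This establishes the first assertion.

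For the isomorphism $s(\catC^T)\cong(s\catC)^T$, I would unwind both sides and observe that they specify the same data. An object of $s(\catC^T)=[\catDelta^{\op},\catC^T]$ is a simplicial object $X$ of $\catC$ equipped, in each degree $n$, with a $T$-algebra structure $\pi_n\colon T(X_n)\to X_n$, subject to the requirement that every face and degeneracy map be a morphism of $T$-algebras, i.e. $d_i^X\circ\pi_n=\pi_{n-1}\circ T(d_i^X)$ and similarly for $s_i^X$. An object of $(s\catC)^T$ is a simplicial object $X$ together with a $T_*$-algebra structure $\pi\colon T_*(X)\to X$, which by definition is a morphism of simplicial objects satisfying the algebra axioms. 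But a morphism of simplicial objects $\pi\colon T_*(X)\to X$ is exactly a family $(\pi_n)_n$ with $\pi_n\colon T(X_n)\to X_n$ whose naturality squares are the very commutation relations $d_i^X\circ\pi_n=\pi_{n-1}\circ T(d_i^X)$ above; and the algebra axioms for $\pi$, being equalities in $s\catC$, hold iff they hold in each degree, i.e. iff each $(X_n,\pi_n)$ is a $T$-algebra. The two descriptions therefore coincide on objects, and the same degree-wise bookkeeping identifies morphisms on both sides (a map is a morphism in $s(\catC^T)$ iff each degree is an algebra map, iff it is a $T_*$-algebra morphism in $(s\catC)^T$), yielding an isomorphism of categories that is the identity on underlying simplicial data.

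The verifications above are all routine degree-wise reductions; the one point deserving care is the identification in the previous paragraph, namely that requiring $\pi$ to be a morphism of simplicial objects (as demanded by a $T_*$-algebra in $(s\catC)^T$) encodes \emph{exactly} the compatibility that makes the faces and degeneracies algebra morphisms (as demanded by a diagram $\catDelta^{\op}\to\catC^T$ defining an object of $s(\catC^T)$), with no additional conditions. Because $\catDelta^{\op}$ is an ordinary category and every condition is tested degree-wise or on the generating face and degeneracy maps, no higher coherence intervenes and the two sides match on the nose.
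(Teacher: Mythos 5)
Your proof is correct, and it takes the canonical route that the paper's own formulation (``applying $T$ degree-wise'') presupposes: the paper in fact states Proposition \ref{SimMon} without proof, treating it as routine. Your identification of the degree-wise extension with post-composition $T\circ(-)$ on $[\catDelta^{\op},\catC]$, the degree-wise verification of the monad axioms, and the unwinding of both sides of $s(\catC^T)\cong(s\catC)^T$ (naturality of the algebra structure map $\pi$ being exactly the condition that faces and degeneracies are $T$-algebra morphisms) supply precisely the verifications the paper leaves implicit, with no gaps.
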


Our main interest is the extension of the distribution monad to the category of simplicial sets, i.e., the functor 
$D_R:\catsSet \to \catsSet$. The associated category of $D_R$-algebras will be denoted by $s\catConv_R$.

\subsubsection{Simplicial distributions}

{Simplicial distributions are first introduced in  \cite{okay2022simplicial}. In this section we recall the basic definitions.} 
Let $\catsSet_{D_R}$ denote the Kleisli category of the distributions monad (acting on the category of simplicial sets). Its objects are simplicial sets and morphisms {between two simplicial sets $X$ and $Y$} are given by simplicial set morphisms of the form $p:X\to D_R(Y)$.  
 
	\begin{defn}\label{def:SimpDist}
		{\rm		
		Morphisms $\catsSet_{D_R}(X,Y)$ of the Kleisli category are called {\it simplicial distributions} on  $(X,Y)$.  
When the underlying semiring is $\RR_{\geq 0}$ we will call these morphisms simplicial probability distributions. For notational convenience  we write $p_\sigma$, where   $\sigma\in X_n$, for the distribution $p_n(\sigma)\in D_R(Y_n)$.  
		}
	\end{defn}
	
	There is a comparison map
	\begin{equation}\label{eq:Theta}
		\Theta_{X,Y}: D_R(\catsSet(X,Y)) \to \catsSet(X,D_R(Y))
	\end{equation}
	defined as follows: Let $x:\Delta[n]\to X$ be an $n$-simplex of $X$. We can construct a commutative diagram 
	$$
	\begin{tikzcd}
		D_R(\catsSet(X,Y)) \arrow{d}{D_R(x^*)} \arrow[r,dashed,"\Theta_{X,Y}"] &   \catsSet(X,D_R(Y)) \arrow{d}{x^*} \\
		D_R(\catsSet(\Delta[n],Y)) \arrow[r,"\Theta_x"] &   \catsSet(\Delta[n],D_R(Y))   
	\end{tikzcd}
	$$
	where the vertical maps are induced by $x$ and the bottom horizontal map is given by the identity map $D_R(Y_n)\to (D_RY)_n$.  
	The top horizontal map exists since $X$ is a colimit of its simplices. 
{The $\Theta$-map  gives rise to an important definition.}
	
	\begin{defn}
	\label{def:contextual-morphism}	
		{\rm
A simplicial distribution $p:X\to D_RY$ is called {\it contextual} if $p$ does not lie in the image of $\Theta_{X,Y}$. Otherwise, it is called {\it noncontextual}. When we want to refer to the semiring $R$  we say $R$-contextual, or $R$-noncontextual. 
		}
	\end{defn}
	
	The map $\Theta_{X,Y}$ can be given a more explicit description.
	
	\begin{prop}\label{pro:alternative-Theta}
		For $d\in D_R(\catsSet(X,Y))$ and $x\in X_n$ we have 
		$$
		\Theta (d)_n(x)= \sum_{\varphi \in \catsSet(X,Y)} d(\varphi) \delta^{\varphi_n(x)}.
		$$
That is, for $y\in Y_n$ we have
$
\Theta (d)_n(x):y \mapsto \sum_{\varphi_n(x)=y} d(\varphi).
$
\end{prop}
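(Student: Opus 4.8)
The plan is to unwind the definition of $\Theta_{X,Y}$ encoded in the commutative square, evaluating everything at a fixed simplex $x\in X_n$ viewed as a map $x:\Delta[n]\to X$. The key tool is the Yoneda lemma, which supplies natural isomorphisms $\catsSet(\Delta[n],Y)\cong Y_n$ and $\catsSet(\Delta[n],D_R(Y))\cong (D_RY)_n = D_R(Y_n)$, sending a morphism to its value on the top simplex $\iota_n=\idy_{[n]}$. Under these identifications the right-hand vertical map $x^*:\catsSet(X,D_R(Y))\to\catsSet(\Delta[n],D_R(Y))$ becomes evaluation at $x$, that is $p\mapsto p_n(x)$, while the bottom map $\Theta_x$ is by definition the identity of $D_R(Y_n)$.

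First I would compute the left-hand vertical map. The precomposition $x^*:\catsSet(X,Y)\to\catsSet(\Delta[n],Y)\cong Y_n$ sends a morphism $\varphi$ to $\varphi\circ x$, which under Yoneda is the simplex $\varphi_n(x)\in Y_n$. Applying the functor $D_R$ and the defining formula for $D_R$ on morphisms from Section \ref{sec:alg-over-monad}, the distribution $D_R(x^*)(d)\in D_R(Y_n)$ is given by $y\mapsto\sum_{\varphi\in (x^*)^{-1}(y)}d(\varphi)=\sum_{\varphi_n(x)=y}d(\varphi)$.

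Finally I would read off the result from commutativity of the square. Chasing $d$ around the diagram gives
$$
\Theta(d)_n(x) = x^*(\Theta(d)) = \Theta_x\big(D_R(x^*)(d)\big) = D_R(x^*)(d),
$$
whose value at $y\in Y_n$ is $\sum_{\varphi_n(x)=y}d(\varphi)$. Rewriting this distribution in terms of delta distributions yields $\Theta(d)_n(x)=\sum_{\varphi}d(\varphi)\,\delta^{\varphi_n(x)}$, as claimed.

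The computation is essentially bookkeeping, so I do not expect a genuine obstacle; the only care needed is to keep the two Yoneda isomorphisms straight, and in particular to observe that the apparently trivial bottom map $\Theta_x$ is the identity $D_R(Y_n)\to (D_RY)_n$ precisely because $(D_RY)_n$ is defined degreewise as $D_R(Y_n)$ in Proposition \ref{SimMon}. One should also note that specifying this value at every simplex $x$ genuinely determines $\Theta(d)$ as a morphism of simplicial sets, which is legitimate exactly because $X$ is a colimit of its simplices — the same fact invoked to define $\Theta_{X,Y}$ in the first place.
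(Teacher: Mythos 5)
Your proof is correct and follows exactly the route the paper intends: the paper states this proposition without a separate proof, treating it as an immediate unwinding of the defining commutative square, which is precisely what you do — identifying $\catsSet(\Delta[n],Y)\cong Y_n$ and $\catsSet(\Delta[n],D_R(Y))\cong D_R(Y_n)$ via Yoneda, computing $D_R(x^*)(d)$ as the pushforward along $\varphi\mapsto\varphi_n(x)$, and reading off the formula from commutativity. Your closing remarks (that $\Theta_x$ is the identity because $D_R$ is defined degreewise, and that values on simplices determine $\Theta(d)$ since $X$ is a colimit of its simplices) are exactly the facts the paper invokes when constructing $\Theta_{X,Y}$.
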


	In Corollary \ref{ThetaUniqqq} we will show that $\Theta$ is the unique map in $\catConv_R$ that makes the following diagram 
	commute 
	\begin{equation}\label{THETAA}
		\begin{tikzcd}
			\catsSet(X,Y) \arrow[d,"\delta_{\catsSet(X,Y)}"']  \arrow[r,"(\delta_Y)_*"] & \catsSet(X, D_R(Y)) \\
			D_R( \catsSet(X,Y) ) \arrow[ru, "\Theta"']
		\end{tikzcd}
	\end{equation}
{Alternatively,  noncontextual distributions can be described as convex mixtures of distributions in the image of $(\delta_Y)^*$.} 
 
\begin{defn}\label{def:deterministic}
{\rm
A simplicial distribution on $(X,Y)$ is called {\it deterministic} if it lies in the image of 
$(\delta_Y)_\ast:\St(X,Y) \to \St(X,D_R(Y))$.
%, where $\delta_Y:Y\to D_R(Y)$ is the unit of the monad $D_R$ acting on the category of simplicial sets. 
{In this case the resulting distribution is denoted by $\delta^\varphi=(\delta_Y)_*(\varphi)$.}
}
\end{defn}

{ 
\begin{ex}\label{ex:simplex}
{\rm
Let $X=\Delta[n]$ and $Y$ be an arbitrary simplicial set. There is a bijection between the set $\catsSet(\Delta[n],D_R(Y))$ of simplicial distributions and the set $D_R(Y)_n=D_R(Y_n)$ of $n$-simplices. Similarly, $\catsSet(\Delta[n],Y)$ can be identified with $Y_n$. 
Then the $\Theta$-map is the identity map and every simplicial distribution is noncontextual. 
}
\end{ex}
} 

{
Next, we introduce a stronger version of contextuality. 
}

\begin{defn}\label{suppp}
{\rm	
The {\it support of a simplicial distribution}
%Let 
	$p:X\to D_R Y$ 
%be a simplicial set map. The support of $p$ 
is defined by
	$$
	\supp(p) =\{ \varphi \in \St(X ,Y)  :\, p_n(x)(\varphi_n(x)) \neq 0,\; \forall x\in X_n,\, n \geq 0  \}.
	$$
	We say $p$ is {\it strongly contextual} if $\supp(p)$ is empty.
}
\end{defn}

\begin{defn}[\cite{jacobs2009duality}]
\label{def:ZeroFreeSum}
A semiring is called {\it zero-sum-free} if $a+b=0$ implies $a=b=0$ for all $a,b\in R$.
\end{defn}

\begin{prop}
Let $R$ be
%Given 
a zero-sum-free semiring.  
% $R$, for $p \in \St(X,D_R(Y))$,
If a simplicial distribution
	%$\supp(p)=\emptyset$ 
$p$ is strongly contextual 
then it is contextual.
\end{prop}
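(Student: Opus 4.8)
The plan is to prove the contrapositive: if $p$ is noncontextual, then $\supp(p)$ is nonempty. Since $p$ is noncontextual, by Definition \ref{def:contextual-morphism} it lies in the image of $\Theta_{X,Y}$, so there exists a distribution $d\in D_R(\St(X,Y))$ with $p=\Theta_{X,Y}(d)$. The claim will be that every $\varphi$ in the (necessarily nonempty, finite) support of $d$ already belongs to $\supp(p)$, which immediately forces $\supp(p)\neq\emptyset$.

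To carry this out, first I would fix some $\varphi\in\St(X,Y)$ with $d(\varphi)\neq 0$; such a $\varphi$ exists because $d$ is a probability distribution of finite support with total mass $1$, hence cannot be the zero function. Then, for an arbitrary simplex $x\in X_n$, I would compute $p_n(x)(\varphi_n(x))$ using the explicit formula from Proposition \ref{pro:alternative-Theta}:
$$
p_n(x)(\varphi_n(x)) = \Theta(d)_n(x)(\varphi_n(x)) = \sum_{\psi \in \St(X,Y),\ \psi_n(x)=\varphi_n(x)} d(\psi).
$$
This sum is a sum of nonnegative elements of $R$ (since the values of a distribution lie in $R$, and by convention they are the relevant ``probabilities''), and crucially it includes the term $d(\varphi)$ itself, since $\varphi$ trivially satisfies $\varphi_n(x)=\varphi_n(x)$.

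The key step, and the place where the zero-sum-free hypothesis enters, is concluding that this sum is nonzero. We have an identity of the form $d(\varphi) + (\text{sum of other nonzero-or-zero terms}) = p_n(x)(\varphi_n(x))$, where $d(\varphi)\neq 0$. In a general semiring one could imagine the extra terms cancelling $d(\varphi)$ to produce $0$, but Definition \ref{def:ZeroFreeSum} of zero-sum-freeness forbids exactly this: if the total were $0$, then $a+b=0$ with $a=d(\varphi)$ would force $d(\varphi)=0$, a contradiction. Hence $p_n(x)(\varphi_n(x))\neq 0$. Since $x\in X_n$ and $n\geq 0$ were arbitrary, this verifies the defining condition of $\supp(p)$ for $\varphi$, so $\varphi\in\supp(p)$ and $\supp(p)\neq\emptyset$, i.e.\ $p$ is not strongly contextual.

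I expect the main obstacle to be purely a matter of careful bookkeeping with the zero-sum-free property rather than any deep difficulty: one must be sure that zero-sum-freeness is applied to a genuine sum in $R$ (it generalizes from two summands to finitely many by induction, since $\sum_{i} a_i = 0$ forces each $a_i=0$), and one must confirm that the diagonal term $d(\varphi)$ genuinely appears in the sum defining $p_n(x)(\varphi_n(x))$ for \emph{every} simplex $x$ simultaneously, using the \emph{same} $\varphi$ chosen once from $\supp(d)$. The only subtlety is to choose $\varphi$ from $\supp(d)$ at the outset and keep it fixed across all $x$ and $n$, so that membership in $\supp(p)$ is checked for a single morphism.
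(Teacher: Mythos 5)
Your proof is correct and takes essentially the same route as the paper's: both argue the contrapositive, fix a single $\varphi$ with $d(\varphi)\neq 0$, expand $p_n(x)(\varphi_n(x))$ via the explicit formula for $\Theta_{X,Y}$, and use zero-sum-freeness to conclude the sum containing the diagonal term $d(\varphi)$ cannot vanish. (One cosmetic point: the aside about the summands being ``nonnegative'' is not meaningful in a general semiring, but your argument never actually uses it --- zero-sum-freeness alone does the work, exactly as in the paper.)
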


\begin{proof}
	Suppose that $p$ is noncontextual, that is there exists 
	$d \in D_R(\St(X,Y))$ such that
	$$
	p_n(x)=\sum_{\varphi \in \St(X,Y)} d(\varphi) \delta^{\varphi_n(x)}.
	$$
	Then $\psi:X\to Y$ with $d(\psi)\neq 0$ will belong to the support of $p$ since
	$$
	p_n(x)(\psi_n(x)) =  \sum_{\varphi \in \St(X,Y)} 
	d(\varphi) \delta^{\varphi_n(x)}(\psi_n(x)) =d(\psi) + 
	\sum_{\varphi  \neq \psi } 
	d(\varphi) \delta^{\varphi_n(x)}(\psi_n(x))  \neq 0.
	$$
\end{proof}   

For simplicial probability distributions one can introduce a degree of contextuality generalizing the notion introduced in \cite[Subsection 6.1]{abramsky2011sheaf}.

\begin{defn}\label{CFFF}
%	Given a simplicial sets $X,Y$, and given 
%	$p \in \St(X,D(Y))$. 
{\rm
The {\it noncontextual fraction} of a simplicial probability distribution $p\in \St(X,D(Y))$, which is denoted
	by $\NCF(p)$, is defined to be the supremum of $\alpha\in [0,1]$ such that
$$
p=\alpha q+ (1-\alpha)s
$$
where $q$ and $s$ run over   simplicial probability distributions with $q$ noncontextual. 
%	 of the following set
	%
%\ak{[[I think we need to fix, the following is too long]]}
%\aak{
%$$
%	\{\alpha \in [0,1] ~|~ q ~ \text{is non-contextual} , s \in \St(X,D(Y)) ~\text{s.t}\;\; p_n(x)=\alpha q_n(x)+ (1-\alpha)s_n(x),\;\;\; \forall x\in X_n,\, n \geq 0 \} 
%$$
%}
%	Meanwhile, 
The {\it contextual fraction} of $p$ is defined to be $\CF(p)= 1-\NCF(p)$. 
%and 
%	denoted by $\CF(p)$.
%(see [Subsection 6.1]\cite{ABMaD} for the sheaf theoretic version of the contextual fraction).
}	 
\end{defn}

	\subsection{Simplicial distributions as a convex set} \label{sec:simp-dist-as-conv-set}
	
	%\ak{
		%\begin{itemize}
		%\item Showing that if $Y$ is a simplicial convex set, then $\St(X,Y)$ is convex set.
		
		%\item Giving a category theoretical meaning for $\Theta$.
		
		%\end{itemize}
		%}

	%\ak{
		%\begin{itemize}
		%\item The natural map $\Theta$ in terms of Kleisly category
		%\end{itemize}
		%}

%\comm{intro for the section to explain what we are going to do}		
%We  		
In this section we will show that $\catsSet(X,Y)$, {where $Y$ is an object of $s\catConv_R$,} is an $R$-convex set (Proposition \ref{Hommm}) and prove a uniqueness result about the comparison map $\Theta_{X,Y}$ (Proposition \ref{ThetaUniqqq}). 
For simplicity of notation we will write $U=\St(X,Y)$, especially when this set appears in the structure maps for convexity.

\begin{lem}\label{lem:two-eqs}
{Let $Y$ be an object of $s\catConv_R$.}
For $x \in X_n$, consider the map
$$
e_x : \St(X,Y) \to Y_n
$$ 
%is 
defined by $e_x(\varphi) = \varphi_n(x)$.
Given $Q \in D_R(D_R(\St(X,Y)))$ let $P=D_R(D_R(e_x))(Q)$ and
% where the map
%$$
%e_x : \St(X,Y) \to Y_n
%$$ 
%is defined by $e_x(\varphi) = \varphi_n(x)$.
%\ak{[[I moved it here because $\pi^U$ appears in the first equation]]}
%We 
define $\pi^{U}$ to be the composite
%(\pi^Y)_\ast \circ \Theta_{X,Y} : 
$$ 
D_R(\St(X,Y)) \xrightarrow{\Theta_{X,Y}} \St(X,D_RY) \xrightarrow{(\pi^Y)_*}  \St(X,Y).
$$
Then $P \in D_R(D_R(Y_n))$
%The distribution $P=D_R(D_R(e_x))(Q) \in D_R(D_R(Y_n))$ 
satisfies the following equations:
\begin{equation}\label{Eq1}
D_R(\pi^{Y_n})(P)= \Theta_{X,Y}(D_R(\pi^U)(Q))_n(x)
\end{equation}
and
\begin{equation}\label{Eq2}
\mu_{Y_n}(P)=\Theta_{X,Y}(\mu_{U}(Q))_n(x).
\end{equation}
\end{lem}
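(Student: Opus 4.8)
The plan is to verify both equations by unwinding the definitions of the maps involved and applying the explicit formula for $\Theta$ given in Proposition \ref{pro:alternative-Theta}. The key observation is that the map $e_x: \St(X,Y) \to Y_n$ is precisely the evaluation that appears inside the formula for $\Theta$, namely $\Theta(d)_n(x) = \sum_{\varphi} d(\varphi)\delta^{\varphi_n(x)} = D_R(e_x)(d)$ read as a distribution on $Y_n$. So the starting point is to record the clean identity
\begin{equation*}
\Theta_{X,Y}(d)_n(x) = D_R(e_x)(d), \qquad d \in D_R(\St(X,Y)),
\end{equation*}
which reduces the problem to a statement about the two-fold distribution monad on sets applied through $e_x$. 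Both sides of \eqref{Eq1} and \eqref{Eq2} will then become expressions in $D_R$ and its structure maps $\mu$, together with the $D_R$-algebra structure $\pi^{Y_n}$ on $Y_n$, and the game is to match them via naturality.

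First I would handle \eqref{Eq2}, which is the purely monad-theoretic one and should come out of naturality of $\mu$. The right-hand side is $\Theta_{X,Y}(\mu_U(Q))_n(x) = D_R(e_x)(\mu_U(Q))$ by the identity above. Since $\mu$ is a natural transformation $D_R^2 \Rightarrow D_R$ and $e_x$ is a map of sets, naturality gives $D_R(e_x) \circ \mu_U = \mu_{Y_n} \circ D_R(D_R(e_x))$. Applying this to $Q$ yields $D_R(e_x)(\mu_U(Q)) = \mu_{Y_n}(D_R(D_R(e_x))(Q)) = \mu_{Y_n}(P)$, which is exactly the left-hand side of \eqref{Eq2}. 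The only mild subtlety here is to check that the evaluation map $\mu_U$ on $\St(X,Y)$ genuinely agrees degreewise with $\mu$ on the underlying set $U$, but this is immediate since $D_R$ acts on simplicial convex sets degreewise (Proposition \ref{SimMon}) and $U$ is just a set.

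For \eqref{Eq1} the strategy is analogous but now involves the algebra structure rather than $\mu$. The right-hand side is $\Theta_{X,Y}(D_R(\pi^U)(Q))_n(x) = D_R(e_x)(D_R(\pi^U)(Q))$, so by functoriality of $D_R$ this equals $D_R(e_x \circ \pi^U)(Q)$. The key compatibility to establish is the commuting square $e_x \circ \pi^U = \pi^{Y_n} \circ D_R(e_x)$ as maps $D_R(\St(X,Y)) \to Y_n$; that is, that $e_x$ is a morphism of $R$-convex sets where $\St(X,Y)$ carries the algebra structure $\pi^U$ from the lemma's hypothesis and $Y_n$ carries its own $\pi^{Y_n}$. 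This follows by tracing $\pi^U = (\pi^Y)_* \circ \Theta_{X,Y}$ through the definitions: evaluating at $x$ and using the identity $\Theta(d)_n(x) = D_R(e_x)(d)$ together with the fact that $(\pi^Y)_*$ acts degreewise as $\pi^{Y_n}$ gives exactly $e_x(\pi^U(d)) = \pi^{Y_n}(D_R(e_x)(d))$. Granting this square, functoriality of $D_R$ then gives $D_R(e_x \circ \pi^U)(Q) = D_R(\pi^{Y_n} \circ D_R(e_x))(Q) = D_R(\pi^{Y_n})(D_R(D_R(e_x))(Q)) = D_R(\pi^{Y_n})(P)$, matching the left-hand side.

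The main obstacle I anticipate is bookkeeping rather than conceptual: one must be careful that the algebra structure $\pi^{Y_n}$ on the $n$-simplices of the simplicial convex set $Y$ is indeed the degreewise component used when $(\pi^Y)_*$ is applied, and that the comparison map $\Theta_{X,Y}$ commutes with all the relevant $D_R$-applications so that the evaluation-at-$x$ identity $\Theta(d)_n(x) = D_R(e_x)(d)$ can be inserted freely inside iterated distributions. Once the compatibility $e_x \circ \pi^U = \pi^{Y_n} \circ D_R(e_x)$ is isolated and proved, both equations reduce to single applications of functoriality and naturality, so I would spend most of the effort making that square precise and leave the remaining manipulations as routine.
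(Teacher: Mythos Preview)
Your proposal is correct and, for Equation~\eqref{Eq1}, essentially identical to the paper's argument: both isolate the identity $\Theta_{X,Y}(d)_n(x)=D_R(e_x)(d)$ and then establish the commuting square $e_x\circ\pi^U=\pi^{Y_n}\circ D_R(e_x)$ before applying functoriality of $D_R$.

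The genuine difference lies in Equation~\eqref{Eq2}. You invoke naturality of $\mu:D_R^2\Rightarrow D_R$ applied to the set map $e_x$, giving $D_R(e_x)\circ\mu_U=\mu_{Y_n}\circ D_R(D_R(e_x))$ in one line. The paper instead performs a direct elementwise computation, expanding $\mu_{Y_n}(P)(y)$ as a double sum over $D_R(Y_n)$ and $D_R(U)$ and manipulating until it matches $\Theta_{X,Y}(\mu_U(Q))_n(x)(y)$. Your route is shorter and more conceptual, exposing that \eqref{Eq2} is nothing more than a naturality square; the paper's route is more explicit and verifies the same equality at the level of values, which has the minor virtue of not requiring the reader to recognize that $\mu_U$ on the set $U=\St(X,Y)$ is literally the component of the monad's multiplication.
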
		
\begin{proof}
To prove Equation (\ref{Eq1}) we first verify that 
\begin{equation}\label{eq:middle-step}
\pi^{Y_n} \circ D_R(e_x) = e_x \circ \pi^{U}.
\end{equation}
For $q \in D_R(U)$ we have
\begin{equation}\label{BBB}
D_R(e_x)(q)=\sum_{\varphi \in U}q(\varphi) \delta^{e_x(\varphi)}
=\sum_{\varphi \in U} q(\varphi) \delta^{\varphi_n(x)}
=\Theta_{X,Y}(q)_n(x).
\end{equation}
Using this we obtain 
$$
\begin{aligned}
\pi^{Y_n}(D_R(e_x)(q)) &=
\pi^{Y_n}\left(\Theta_{X,Y}(q)_n(x)\right)\\
&= \left(\pi^{Y}\circ (\Theta_{X,Y}(q))\right)_n(x) \\
&=e_x\left(\pi^{Y}\circ (\Theta_{X,Y}(q))\right) \\
&=e_x\circ (\pi^Y)_\ast \circ \Theta_{X,Y}(q) \\
&=e_x \circ \pi^{U}(q).
\end{aligned}
$$
Now using Equation (\ref{eq:middle-step}) and applying Equation (\ref{BBB}) to $q=D_R(\pi^{U})(Q)$ we obtain 
$$
\begin{aligned}
D_R(\pi^{Y_n})(P) &=D_R(\pi^{Y_n})\left(D_R(D_R(e_x))(Q)\right)\\
& =
D_R\left(\pi^{Y_n} \circ D_R(e_x)\right)(Q) \\
&=D_R(e_x \circ \pi^{U})(Q)\\
&=D_R(e_x)\left(D_R(\pi^{U})(Q)\right) \\
&=\Theta_{X,Y}\left(D_R(\pi^{U})(Q)\right)_n(x),
\end{aligned}
$$
which proves Equation (\ref{Eq1}).

Next we prove Equation (\ref{Eq2}):
For $ y\in Y_n$ we have
$$
\begin{aligned}
\mu_{Y_n}(P)(y) &=
\mu_{Y_n}(D_R(D_R(e_x))(Q))(y) \\
&=
\sum_{\tilde{q} \in D_R(Y_n)}D_R(D_R(e_x))(Q)(\tilde{q})\tilde{q}(y)\\
&=\sum_{\tilde{q} \in D_R(Y_n)}
\left(\sum_{q \in D_R(U):\,
D_R(e_x)(q)=\tilde{q}}Q(q)\right)\tilde{q}(y) \\
&=\sum_{\tilde{q} \in D_R(Y_n)} \,\,
\sum_{q \in D_R(U):\,
D_R(e_x)(q)=\tilde{q}}Q(q)D_R(e_x)(q)(y)\\
&=\sum_{q \in D_R(U)} Q(q)\left(\sum_{\varphi \in U:\,
e_x(\varphi)=y}q(\varphi)\right)\\
&=\sum_{q \in D_R(U)}~\sum_{\varphi \in U :\,
\varphi_n(x)=y}Q(q)q(\varphi) \\
&= \sum_{\varphi \in U:\,
\varphi_n(x)=y}\mu_{U}(Q)(\varphi)\\
&= \Theta_{X,Y}(\mu_{U}(Q))_n(x)(y).
\end{aligned}
$$
%&=\sum_{q \in D_R(\St(X,Y))}Q(q)\left(\sum_{\varphi \in \St(X,Y):\,
%\varphi_n(x)=y}  q(\varphi)\right)\\
%On the other 
%$$
%\begin{aligned}
%\Theta_{X,Y}(\mu_{\St(X,Y)}(Q))_n(x)(y)
%=\sum_{\varphi \in \St(X,Y) ~\text{s.t}~
%\varphi_n(x)=y}\mu_{\St(X,Y)}(Q)(\varphi)\\
%=\sum_{\varphi \in \St(X,Y) ~\text{s.t}~
%\varphi_n(x)=y}~\sum_{q \in D_R(\St(X,Y))}%Q(q)q(\varphi)
%\end{aligned}
%$$
\end{proof}

\begin{prop}\label{Hommm}
The functor $\St(-,-) : \St^{op} \times \St \to \catSet$ restricts to a functor
$$
\St(-,-): \St^{op} \times s\catConv_R \to \catConv_R
$$
\end{prop}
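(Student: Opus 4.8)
The plan is to equip $U:=\St(X,Y)$ with the structure map $\pi^U=(\pi^Y)_*\circ\Theta_{X,Y}$ from Lemma~\ref{lem:two-eqs}, verify the two $D_R$-algebra axioms of~\eqref{AAA}, and then check that the maps induced by morphisms of $\St^{op}$ and of $s\catConv_R$ are morphisms of $R$-convex sets. The entire argument rests on a single separation principle: two morphisms $\varphi,\psi\in\St(X,Y)$ coincide if and only if $e_x(\varphi)=e_x(\psi)$ for every simplex $x\in X_n$ and every $n\geq 0$, so any identity between maps valued in $\St(X,Y)$ may be checked after composing with the evaluation maps $e_x$. The bridge to the degreewise data is Equation~\eqref{eq:middle-step}, namely $e_x\circ\pi^U=\pi^{Y_n}\circ D_R(e_x)$, together with~\eqref{BBB}, which give $e_x(\pi^U(w))=\pi^{Y_n}(\Theta_{X,Y}(w)_n(x))$ for $w\in D_R(U)$. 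Here, since $Y$ is a $D_R$-algebra in $\catsSet$, Proposition~\ref{SimMon} identifies it with a simplicial object in $\catConv_R$, so each $Y_n$ is an $R$-convex set with structure map $\pi^{Y_n}$.

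First I would verify the unit axiom $\pi^U\circ\delta_U=\idy_U$. Applying $e_x$ and~\eqref{eq:middle-step}, the value $e_x(\pi^U(\delta_U(\varphi)))$ equals $\pi^{Y_n}(D_R(e_x)(\delta_U(\varphi)))=\pi^{Y_n}(\delta_{Y_n}(\varphi_n(x)))$ by naturality of $\delta$, which reduces to $\varphi_n(x)=e_x(\varphi)$ by the unit axiom for the $D_R$-algebra $Y_n$. For the associativity axiom $\pi^U\circ\mu_U=\pi^U\circ D_R(\pi^U)$, fix $Q\in D_R(D_R(U))$ and set $P=D_R(D_R(e_x))(Q)$. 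Evaluating the two sides at $x$ via $e_x(\pi^U(w))=\pi^{Y_n}(\Theta_{X,Y}(w)_n(x))$ and invoking Equations~\eqref{Eq1} and~\eqref{Eq2} of Lemma~\ref{lem:two-eqs} rewrites them as $\pi^{Y_n}(\mu_{Y_n}(P))$ and $\pi^{Y_n}(D_R(\pi^{Y_n})(P))$ respectively; these agree by the associativity axiom for $Y_n$. Hence $(U,\pi^U)$ is an $R$-convex set, proving the object part.

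For functoriality I would treat the two variables separately, since on underlying sets $\St(-,-)$ is already a functor and it remains only to check that the induced maps commute with the structure maps. Given $g\colon Y\to Y'$ in $s\catConv_R$ and a simplicial set $Z$, each $g_n$ is a convex morphism and $e'_x\circ g_*=g_n\circ e_x$, where $g_*$ is post-composition with $g$; applying $e'_x$ to both $g_*\circ\pi^{\St(Z,Y)}$ and $\pi^{\St(Z,Y')}\circ D_R(g_*)$ and using~\eqref{eq:middle-step} rewrites each as $\pi^{Y'_n}(D_R(g_n\circ e_x)(q))$, so $g_*$ is convex. Given $f\colon X'\to X$ in $\St$, the relation $e^{X'}_{x'}\circ f^*=e^{X}_{f_n(x')}$ for $x'\in X'_n$ together with~\eqref{eq:middle-step} identifies both $f^*\circ\pi^{\St(X,Y)}$ and $\pi^{\St(X',Y)}\circ D_R(f^*)$, after evaluation at $x'$, with $\pi^{Y_n}(D_R(e^X_{f_n(x')})(q))$, so $f^*$ is convex. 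A general morphism $(f,g)$ induces the composite $g_*\circ f^*$ of convex morphisms, which is convex, and preservation of identities and composition is inherited from the set-valued functor $\St(-,-)\colon\St^{op}\times\St\to\catSet$.

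The genuine computational content, Equations~\eqref{Eq1} and~\eqref{Eq2}, which relate $\Theta_{X,Y}$, $\mu$ and $\pi^Y$ to the degreewise operations $\mu_{Y_n}$ and $\pi^{Y_n}$, is already packaged in Lemma~\ref{lem:two-eqs}, so I expect no serious obstacle in the remainder. The one point demanding care is the disciplined use of the separation principle: every verification must be recast as an equality inside some $Y_n$ obtained by applying an evaluation map, and one must track the compatibilities $e'_x\circ g_*=g_n\circ e_x$ and $e^{X'}_{x'}\circ f^*=e^{X}_{f_n(x')}$ so that~\eqref{eq:middle-step} can be applied on the correct side of each equation.
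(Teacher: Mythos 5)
Your proposal is correct and follows essentially the same route as the paper: it equips $\St(X,Y)$ with the structure map $(\pi^Y)_*\circ\Theta_{X,Y}$ and reduces both algebra axioms, via evaluation at simplices and Equations (\ref{Eq1})--(\ref{Eq2}) of Lemma \ref{lem:two-eqs}, to the degreewise axioms for $Y_n$. The only differences are cosmetic: the paper checks the unit axiom globally using $\Theta_{X,Y}\circ\delta_U=(\delta_Y)_*$ rather than degreewise, and it dispatches functoriality by citing naturality of $\Theta_{X,Y}$ where you spell out the evaluation-map verification.
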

\begin{proof}
Let $X$ be a simplicial set and $Y$ be an object of $s\Conv_R$. 
{Let $\pi^{U}$ be the map that defined in Lemma \ref{lem:two-eqs},} we want to prove that with this structure map
 $U=\catsSet(X,Y)$  
is an object of $\catConv_R$.
First we verify that $\pi^U \circ \delta_U = \Id_U$:
$$
\begin{aligned}
\pi^{U} \circ \delta_{U} &=
 (\pi^Y)_\ast \circ \Theta_{X,Y} \circ \delta_{U}\\
 &=
(\pi^Y)_\ast \circ(\delta_Y)_\ast \\
&= (\pi^Y \circ \delta_Y)_\ast\\
 &=(\Id_Y)_\ast \\
&=\Id_{U}.
\end{aligned}
$$
%which equal to 
%$(\pi^Y \circ \delta_Y)_\ast =(\Id_Y)_\ast 
%=\Id_{\St(X,Y)}$
Next we show that $\pi^U \circ D_R(\pi_U) = \pi^U \circ \mu_U$, that is 
%Now we need to prove that 
the following diagram commutes
$$
\begin{tikzcd}
D_R(D_R(U)) \arrow[rrr,"D_R(\pi^{U})"]
 \arrow[d,"\mu_{U}"']
 &&& D_R(U) \arrow[d,"\pi^{U}"] \\
D_R(U) \arrow[rrr,"\pi^{U}"] &&& U 
\end{tikzcd}
$$
For  $Q \in D_R(D_R(U))$ and $x\in X_n$ we have 
%, then 
%$$
%\pi^{U}(D_R(\pi^{U})(Q)) ~,~ 
%\pi^{U}(D_R(\mu_{U})(Q)) \in U
%$$
%
%In order to prove that these two maps are equal, we take  
%$n \geq 0 ,x \in X_n$  and prove that  
%
%$$
%(\pi^{U}(D_R(\pi^{U})(Q)))_n(x)=
%(\pi^{U}(D_R(\mu_{U})(Q)))_n(x)
%$$
%
%Observe that 
%
$$
\begin{aligned}
(\pi^{U}(D_R(\pi^{U})(Q)))_n(x)&=
((\pi^{Y})_\ast \circ \Theta_{X,Y} (D_R(\pi^{U  })(Q)))_n(x) \\
&=(\pi^{Y} \circ(\Theta_{X,Y} (D_R(\pi^{U  })(Q))))_n(x) \\
&=\pi^{Y_ n} \circ(\Theta_{X,Y} (D_R(\pi^{U  })(Q)))_n(x)  \\
&=\pi^{Y_n}(\Theta_{X,Y}(D_R(\pi^{U})(Q))_n(x)).
\end{aligned}
$$
Similarly  
$
(\pi^{U}(\mu_{U}(Q)))_n(x)
=\pi^{Y_n}(\Theta_{X,Y}(\mu_{U}(Q))_n(x)).
$
Therefore we need to prove that
\begin{equation}\label{eq:main}
\pi^{Y_n}(\Theta(D_R(\pi^U)(Q))_n(x))
=\pi^{Y_n}(\Theta(\mu_{U}(Q))_n(x)).
\end{equation}
Substituting Equations (\ref{Eq1}) and (\ref{Eq2}) from Lemma \ref{lem:two-eqs} into Equation (\ref{eq:main}) and using  $\pi^{Y_n}\circ D_R(\pi^{Y_n})=\pi^{Y_n}\circ \mu_{Y_n}$  gives the desired result.
%(see diagram  (\ref{AAA}))
%it suffices to show that there exists
It remains to  show that for $f:Y_1\to Y_2$ and $g:X_1\to X_2$ the induced maps are morphisms in $\catConv_R$. This follows from the naturality of $\Theta_{X,Y}$ in both $X$ and $Y$.

\end{proof}
%
	%
	%One of the immediately corollaries we get from Proposition \ref{Hommm} is that for 
	%a simplicial sets $X,Y$ the set of distributions $\St(X,D_R(Y))$ is an $R$-convex set 
	%over $R$.
	%Here is the formula of $\pi^{\St(X,D_R(Y))}$:
	
{For a simplicial set $Y$ observe  that $D_R(Y)$ is an object of $s\catConv_R$, hence Proposition \ref{Hommm}	applies.}
We employ $\catsSet(X,D_R(Y))$ 
with an $R$-convex set structure by defining 
\begin{equation}\label{eq:Rconv-structure-sSet}
\pi^{\St(X,D_R(Y))}=(\pi^{D_R(Y)})_\ast \circ \Theta_{X,D_R(Y)}.
\end{equation}
More explicitly, for $Q \in D_R(\catsSet(X,D_R(Y)))$ and $x\in X_n$ this gives
	\begin{equation}\label{piForm}
	\pi(Q)_n(x)=\sum_{p \in \catsSet(X,D_R(Y)) }Q(p)\,p_n(x). 
	\end{equation}
	%Actually, we get that $\Theta_{X,Y}$ is an $R$-convex map.
	%
%
\begin{prop}\label{ThetaUniqqq}
%Given $X,Y \in \St$, then $\Theta_{X,Y}$ is the only map in $\Conv_R$ that makes diagram (\ref{THETAA}) commutes. In other words, it is the transpose
%of $(\delta_Y)_\ast : \St(X,Y) \to \St(X,D_R(Y))$ in $\Conv_R$.
%In addition, we have:
% 
The map $\Theta_{X,Y}$ is the transpose
of $(\delta_Y)_\ast : \St(X,Y) \to \St(X,D_R(Y))$ in $\catConv_R$, {with respect to the adjunction $\catSet\adjoint \catConv_R$,} and it is given by the composite
\begin{equation}\label{eq:Thetapi}
\Theta_{X,Y}=\pi^{\St(X,D_R(Y))}\circ D_R((\delta_Y)_\ast).
\end{equation}
In particular, it is the unique map in $\catConv_R$ that makes Diagram (\ref{THETAA}) commutes.
\end{prop}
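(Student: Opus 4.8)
The plan is to recognize the claimed identity as the statement that $\Theta_{X,Y}$ is the adjunct of $(\delta_Y)_\ast$ under the free--forgetful adjunction $D_R : \catSet \adjoint \catConv_R$. By Remark \ref{AdjFormmm} applied to $T = D_R$ on $\catSet$, the transpose of a set map $g : S \to UA$ into the underlying set of an $R$-convex set $(A,\pi^A)$ is the map $\pi^A \circ D_R(g) : D_R(S) \to A$, and this is automatically a morphism of $D_R$-algebras. I would take $S = \St(X,Y)$, take $A = \St(X,D_R(Y))$ with the convex structure $\pi^{\St(X,D_R(Y))}$ of (\ref{eq:Rconv-structure-sSet}) furnished by Proposition \ref{Hommm}, and take $g = (\delta_Y)_\ast$. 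The right-hand side of (\ref{eq:Thetapi}) is then exactly this transpose, so proving (\ref{eq:Thetapi}) simultaneously confirms that $\Theta_{X,Y}$ is a morphism in $\catConv_R$.

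To verify (\ref{eq:Thetapi}) I would evaluate both sides on $d \in D_R(\St(X,Y))$ at a simplex $x \in X_n$ and compare against the explicit formula of Proposition \ref{pro:alternative-Theta}. First, $(\delta_Y)_\ast$ sends $\varphi$ to the deterministic distribution $\delta^\varphi$, whose component satisfies $(\delta^\varphi)_n(x) = \delta^{\varphi_n(x)}$. Next, $D_R((\delta_Y)_\ast)(d)$ is the pushforward distribution on $\St(X,D_R(Y))$, so feeding it into formula (\ref{piForm}) for $\pi^{\St(X,D_R(Y))}$ gives
$$
\pi^{\St(X,D_R(Y))}\big(D_R((\delta_Y)_\ast)(d)\big)_n(x) = \sum_{p} D_R((\delta_Y)_\ast)(d)(p)\, p_n(x) = \sum_{\varphi \in \St(X,Y)} d(\varphi)\,(\delta^\varphi)_n(x),
$$
where collapsing the pushforward coefficients re-indexes the sum over $\varphi$ rather than over $p$ (this step is valid whether or not $(\delta_Y)_\ast$ is injective). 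Since $(\delta^\varphi)_n(x) = \delta^{\varphi_n(x)}$, the last expression equals $\sum_\varphi d(\varphi)\,\delta^{\varphi_n(x)}$, which is precisely $\Theta_{X,Y}(d)_n(x)$ by Proposition \ref{pro:alternative-Theta}. This establishes (\ref{eq:Thetapi}).

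Finally, I would deduce the commutativity of Diagram (\ref{THETAA}) and the uniqueness from the adjunction. Commutativity is the triangle $\Theta_{X,Y} \circ \delta_{\St(X,Y)} = (\delta_Y)_\ast$, which is read off by precomposing $\pi^{\St(X,D_R(Y))} \circ D_R((\delta_Y)_\ast)$ with $\delta_{\St(X,Y)}$ and applying naturality of $\delta$ followed by the $D_R$-algebra unit law $\pi^A \circ \delta_A = \Id_A$. For uniqueness, the unit $\delta_{\St(X,Y)}$ exhibits $D_R(\St(X,Y))$ as the free $R$-convex set on $\St(X,Y)$, so any $\catConv_R$-morphism $h$ satisfying $h \circ \delta_{\St(X,Y)} = (\delta_Y)_\ast$ must equal the transpose, hence $\Theta_{X,Y}$. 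The only genuine bookkeeping is the re-indexing of the pushforward sum in the middle step; the rest is a direct invocation of the universal property of the free algebra. The mild subtlety worth flagging is that $\Theta_{X,Y}$ was originally defined through the colimit presentation of $X$, so the comparison must go through the explicit description of Proposition \ref{pro:alternative-Theta} rather than the defining diagram.
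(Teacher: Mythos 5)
Your proof is correct, but it takes a genuinely different route from the paper's. The paper never unfolds $\Theta$ pointwise: it invokes naturality of $\Theta_{X,-}$ in the second variable to get the square $\Theta_{X,D_R(Y)} \circ D_R((\delta_Y)_\ast) = (D_R(\delta_Y))_\ast \circ \Theta_{X,Y}$, then post-composes both sides with $(\mu^Y)_\ast$ and finishes with two structural identities: $(\mu^Y)_\ast \circ \Theta_{X,D_R(Y)} = \pi^{\St(X,D_R(Y))}$ (the definition in (\ref{eq:Rconv-structure-sSet})) and $(\mu^Y)_\ast \circ (D_R(\delta_Y))_\ast = (\Id_{D_R(Y)})_\ast$ (the monad unit law). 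Equation (\ref{eq:Thetapi}) thus falls out of naturality and the monad axioms alone, and the transpose statement follows from Remark \ref{AdjFormmm} exactly as in your argument. You instead verify (\ref{eq:Thetapi}) by an element-level computation: evaluate $\pi^{\St(X,D_R(Y))} \circ D_R((\delta_Y)_\ast)$ on $d$ at a simplex $x$ via formula (\ref{piForm}), re-index the pushforward sum over $\varphi$, and match against the explicit description of $\Theta$ in Proposition \ref{pro:alternative-Theta}; your re-indexing step and the identity $(\delta^\varphi)_n(x) = \delta^{\varphi_n(x)}$ are both sound, and your derivation of the triangle (\ref{THETAA}) and of uniqueness from the universal property of the free algebra matches the paper's. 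What each approach buys: the paper's argument is structural and would survive verbatim in any setting where $\Theta$ is natural and the algebra structure is defined as in (\ref{eq:Rconv-structure-sSet}), with no need for the explicit formulas; yours is more elementary and self-contained, makes visible exactly where the sum manipulations happen, and correctly flags that injectivity of $(\delta_Y)_\ast$ is never needed — a point the abstract proof silently bypasses.
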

\begin{proof}
By the naturality of $\Theta_{X,-}$ we have the following commutative diagram:
\begin{equation}\label{NattD}
\begin{tikzcd}
D_R(\St(X,Y)) \arrow[rr,"D_R((\delta_Y)_\ast)"] \arrow[d,"\Theta_{X,Y}"] 
&& D_R(\St(X,D_R(Y))) \arrow[d,"\Theta_{X,D_R(Y)}"] \\ 
 \St(X,D_R(Y)) \arrow[rr,"(D_R(\delta_Y))_\ast"] && 
\St(X,D_R(D_R(Y))) 
\end{tikzcd}
\end{equation}
that is the following equation holds 
\begin{equation}\label{eq:Natt}
\Theta_{X,D_R(Y)} \circ D_R((\delta_Y)_\ast) = (D_R(\delta_Y))_\ast \circ \Theta_{X,Y}.
\end{equation}
We compose with $(\mu^Y)_\ast$ on both sides of Equation (\ref{eq:Natt}) to obtain 
\begin{equation}\label{eq:Naturalll}
(\mu^Y)_\ast \circ \Theta_{X,D_R(Y)} \circ D_R((\delta_Y)_\ast) 
=(\mu^Y)_\ast \circ (D_R(\delta_Y))_\ast \circ \Theta_{X,Y}.
\end{equation}
Now the last composite on the left-hand side can be rewritten as 
\begin{equation}\label{eq:NaturalllB}
(\mu^Y)_\ast \circ \Theta_{X,D_R(Y)}=
(\pi^{D_R(Y)})_\ast \circ \Theta_{X,D_R(Y)}=\pi^{\St(X,D_R(Y))}
\end{equation}
and similarly for the right-hand side we have 
\begin{equation}\label{eq:NaturalllA}
(\mu^Y)_\ast \circ (D_R(\delta_Y))_\ast= 
 (\mu^Y \circ D_R(\delta_Y))_\ast =(\Id_{D_R(Y)})_\ast =\Id_{\St(X,D_R(Y))}.
\end{equation}
which proves Equation (\ref{eq:Thetapi}).
%Substituting equations (\ref{eq:NaturalllB}) and (\ref{eq:NaturalllA}) in equation (\ref{eq:Naturalll}) we get (\ref{eq:Thetapi}). 
By Remark \ref{AdjFormmm} we see that $\Theta_{X,Y}$ is the transpose
of $(\delta_Y)_\ast$ in $\catConv_R$. 
\end{proof}

	\subsection{Simplicial distributions from presheaves of distributions}\label{LowDimm}

	%Let $\ZZ_d$ denote the ring of integers mod $d$.
	
	Simplicial {presheaf of distributions} can be constructed from the following data:
	\begin{itemize}
		\item A simplicial complex $\Sigma$.
		\item The ring $\ZZ_d=\set{0,1,\cdots,d-1}$ of integers mod $d\geq 2$.
	\end{itemize}
	We will write $\Sigma_0$ to denote the set of vertices of the simplicial complex. We will think of $\Sigma$ as a category $\catC_\Sigma$ whose objects are the simplices $\sigma \in \Sigma$ and morphisms are inclusions $\sigma \hookrightarrow \sigma'$.
	An element $p$ of the inverse limit  of the composite functor
	$$
	\catC_\Sigma \xrightarrow{\ZZ_d^-} \catSet \xrightarrow{D_R} \catSet
	$$
	is called  an {\it $R$-distribution on $(\Sigma,\ZZ_d)$} (also known as an {\it empirical model}). 
{Let $V\subset U$ be two subsets of $\Sigma_0$ and $i:V\hookrightarrow U$ denote the inclusion map.
For $p\in D_R(\ZZ_d^U)$ we will write $p|_V$ for the distribution $D_R(i^*)(p)\in D_R(\ZZ_d^V)$, where $i^*:\ZZ_d^U\to \ZZ_d^V$ is the restriction map.
%More explicitly,
Then,} a presheaf of distributions consists of a tuple $p=(p_\sigma)_{\sigma\in \Sigma}$ of distributions $p_\sigma\in D_R(\ZZ_d^\sigma)$ such that {
$$
p_\sigma |_{\sigma\cap \sigma'}= p_{\sigma'} |_{\sigma\cap \sigma'},\;\;\forall\sigma,\sigma'\in \Sigma,
$$}
%where $p_\sigma|_{\sigma\cap \sigma'}$ is a short-hand notation for {$D_R(\sigma\cap \sigma'\hookrightarrow \sigma)(p_\sigma)$.}
	We write $D_R(\Sigma,\ZZ_d)$ 
for the set of $R$-distributions on $(\Sigma,\ZZ_d)$. 
For more details see \cite{abramsky2011sheaf}.
	
	For a set $U$ let $\Delta_U$ denote the simplicial set whose $n$-simplices are given by the set $U^{n+1}$ and 
	the simplicial structure maps are given by
	$$
	\begin{aligned}
		d_i(x_0,x_1,\cdots,x_n) &=( x_0,x_1,\cdots,x_{i-1},x_{i+1},\cdots,  x_n) \\
		s_j(x_0,x_1,\cdots,x_n) &=( x_0,x_1,\cdots,x_{j-1},x_j,x_{j},x_{j+1},\cdots,  x_n).
	\end{aligned}
	$$
For the next definition we will use the following notation: For $p:X\to D_RY$ and $\sigma\in X_n$ we write $p_\sigma$ for the distribution on $D_RY_n$.		
	
	\begin{construction}[Realization]\label{cons:ZeroDim}{\rm
			Given $(\Sigma,\ZZ_d)$ we consider a pair of simplicial sets $(X_\Sigma,Y)$ where $X_\Sigma$ is the subsimplicial set of $\Delta_{\Sigma_0}$ whose $n$-simplices are given by
			$$
			(X_\Sigma)_n = \set{(x_0,x_1,\cdots,x_n)\in \Sigma_0^{n+1}:\, \set{x_0,x_1,\cdots,x_n}\in \Sigma}.
			$$    
			We define a function 
			%\ak{[[I think the notation $\real$ is not good, because we use $R$ for the ring.]]} 
			$$
			\real:D_R(\Sigma,\ZZ_d) \to \catsSet(X_\Sigma, D_R(\Delta_{\ZZ_d}))
			$$
			by sending $p=(p_\sigma)_{\sigma\in \Sigma}$ to the simplicial distribution
			$
			\real(p): X_\Sigma \to D_R(\Delta_{\ZZ_d})
			$
			defined by
			$$
			\real(p)_{(x_0,x_1,\cdots,x_n)}(a_0,a_1,\cdots,a_n) =
 p_{\{x_0,x_1,\cdots,x_n\}}(s:x_i\mapsto a_i).
			$$
		}
	\end{construction}
 
%{Let $\tilde p$ be a distribution in $D_R(\ZZ_d^{\Sigma_0})$. For   $\sigma \in \Sigma$ we will write $\tilde p|_\sigma$ for $D_R(i^*)(\tilde p)$ where $i^*:\ZZ_d^{\Sigma_0} \to \ZZ_d^\sigma$ is the restriction map.} 

%\comm{we defined restriction from a simplex to intersection}	

\begin{thm}\label{thm:ContextualSheaf}
			Let $p=(p_\sigma)_{\sigma\in \Sigma}$ be an $R$-distribution on 
			%An $R$-distribution $p$ on 
			$(\Sigma,\ZZ_d)$.
%{For each inclusion $i:\sigma \to \Sigma_0$ let us write $d|_\sigma=D_R(i^*)$ where $i^*:\ZZ_d^{\Sigma_0} \to \ZZ_d^\sigma$ is the restriction map. 
%}			
			Then $\real(p)$ is {non}contextual if and only if there exists {$\tilde p\in D_R(\ZZ_d^{\Sigma_0})$ such that $\tilde p|_\sigma = p_\sigma$} for all $\sigma\in \Sigma$.
			% is contextual if and only if $R_i(p)$ is contextual.
	\end{thm}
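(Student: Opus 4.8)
The plan is to reduce everything to the explicit description of the comparison map in Proposition \ref{pro:alternative-Theta}, after first understanding the set $\St(X_\Sigma,\Delta_{\ZZ_d})$ of deterministic outcome assignments. The crux is that $\Delta_{\ZZ_d}$ is determined by its vertices: an $n$-simplex $(a_0,\dots,a_n)$ is the unique simplex of $\Delta_{\ZZ_d}$ whose $i$-th vertex is $a_i$. Consequently a simplicial set morphism $\varphi:X_\Sigma\to\Delta_{\ZZ_d}$ is completely determined by the function $s=\varphi_0:(X_\Sigma)_0\to\ZZ_d$ on vertices, via $\varphi_n(x_0,\dots,x_n)=(s(x_0),\dots,s(x_n))$; conversely every function $s$ arises this way, since every tuple is a simplex of $\Delta_{\ZZ_d}$. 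Since $\Sigma$ is a simplicial complex we have $(X_\Sigma)_0=\Sigma_0$, so I would first record the bijection
$$
\St(X_\Sigma,\Delta_{\ZZ_d})\;\cong\;\ZZ_d^{\Sigma_0},\qquad \varphi\leftrightarrow s.
$$
Applying the functor $D_R$ then gives $D_R(\St(X_\Sigma,\Delta_{\ZZ_d}))\cong D_R(\ZZ_d^{\Sigma_0})$, identifying a distribution $d$ over simplicial morphisms with a candidate global section $\tilde p\in D_R(\ZZ_d^{\Sigma_0})$.

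Next I would compute $\Theta(d)$ explicitly. Fix a simplex $x=(x_0,\dots,x_n)\in(X_\Sigma)_n$ with underlying set $\sigma=\{x_0,\dots,x_n\}\in\Sigma$. By Proposition \ref{pro:alternative-Theta} and the formula for $\varphi_n$ above,
$$
\Theta(d)_n(x)(a_0,\dots,a_n)=\sum_{\varphi:\,\varphi_n(x)=(a_0,\dots,a_n)}d(\varphi)=\sum_{s\in\ZZ_d^{\Sigma_0}:\,s(x_i)=a_i\ \forall i}\tilde p(s).
$$
When the outcomes $(a_0,\dots,a_n)$ are consistent---meaning $x_i=x_j$ forces $a_i=a_j$---they define a function $t:\sigma\to\ZZ_d$ with $t(x_i)=a_i$, and the condition $s(x_i)=a_i\ \forall i$ becomes exactly $s|_\sigma=t$, so the sum equals $\tilde p|_\sigma(t)$. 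When they are inconsistent the index set is empty and the sum is $0$. On the other side, $\real(p)_x(a_0,\dots,a_n)=p_\sigma(t)$ in the consistent case and $0$ otherwise. Hence, for a fixed $\tilde p$ (equivalently $d$), the equality of simplicial distributions $\Theta(d)=\real(p)$ holds on every simplex if and only if $\tilde p|_\sigma(t)=p_\sigma(t)$ for all $\sigma\in\Sigma$ and all $t\in\ZZ_d^\sigma$, that is, if and only if $\tilde p|_\sigma=p_\sigma$ for all $\sigma$.

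Finally I would assemble the equivalence. By Definition \ref{def:contextual-morphism}, $\real(p)$ is noncontextual precisely when it lies in the image of $\Theta_{X_\Sigma,\Delta_{\ZZ_d}}$, i.e. when $\Theta(d)=\real(p)$ for some $d\in D_R(\St(X_\Sigma,\Delta_{\ZZ_d}))$. Transporting $d$ to $\tilde p$ along the bijection and using the previous paragraph, such a $d$ exists if and only if there is $\tilde p\in D_R(\ZZ_d^{\Sigma_0})$ with $\tilde p|_\sigma=p_\sigma$ for all $\sigma\in\Sigma$, which is the asserted condition. I expect the main obstacle to be the first step, namely pinning down $\St(X_\Sigma,\Delta_{\ZZ_d})\cong\ZZ_d^{\Sigma_0}$ rigorously: one must check that the vertex function really determines the morphism on all simplices (including degenerate ones and those with repeated or reordered entries) and that the resulting assignment is compatible with every face and degeneracy map, so that it is a genuine simplicial morphism. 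The remaining work is the bookkeeping matching the $\Theta$-formula to marginalization, where the only delicate point is handling inconsistent outcome tuples, for which both sides vanish.
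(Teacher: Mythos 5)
Your proof is correct, and it reaches the theorem by a more hands-on route than the paper. The paper's own proof is diagrammatic: for each $\sigma\in\Sigma$ it introduces an injective map $\eta_\sigma:D_R(\ZZ_d^\sigma)\to \catsSet(\Delta_\sigma,D_R(\Delta_{\ZZ_d}))$, assembles these (after choosing a well-ordering of $\Sigma_0$) into an injection $\eta$ defined on $\lim_\sigma D_R(\ZZ_d^\sigma)$, identifies $\catsSet(X_\Sigma,D_R(\Delta_{\ZZ_d}))$ with $\lim_\sigma\catsSet(\Delta_\sigma,D_R(\Delta_{\ZZ_d}))$, and reads off the statement from the commutativity of the resulting square together with the injectivity of $\eta$. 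You bypass the limit formalism entirely: you establish the bijection $\catsSet(X_\Sigma,\Delta_{\ZZ_d})\cong\ZZ_d^{\Sigma_0}$ (morphisms are determined by their values on vertices), and then verify by direct computation with Proposition \ref{pro:alternative-Theta} that under this identification $\Theta$ is exactly marginalization $\tilde p\mapsto(\tilde p|_\sigma)_\sigma$, comparing against $\real(p)$ simplex by simplex --- including the observation that both sides vanish on inconsistent tuples, a point Construction \ref{cons:ZeroDim} leaves implicit. The mathematical core is the same in both arguments (the vertex-function identification plus ``$\Theta$ equals marginalization''), so the difference is one of packaging rather than of idea; still, the two packagings buy different things. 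Your version is more self-contained and elementary, and it actually supplies the verifications that the paper's terse diagram only asserts. The paper's version isolates $\eta$ as a reusable morphism of $R$-convex sets, which pays off immediately afterwards: Remark \ref{rem:strong-fraction} uses precisely this to identify the contextual fraction of $\real(p)$ with the sheaf-theoretic one, and the ordered-version isomorphism is stated in terms of the same map, neither of which your pointwise computation packages as conveniently.
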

\begin{proof} 
For $\sigma\in \Sigma$ the composite map 
$$
\eta_\sigma: D_R(\ZZ^\sigma) \xrightarrow{\cong} D_R(\catsSet(\Delta_\sigma,\Delta_{\ZZ_d})) \xrightarrow{\Theta} \catsSet(\Delta_\sigma,D_R(\Delta_{\ZZ_d}))
$$
is injective. 
Choosing a well-ordering for $\Sigma_0$ the maps $\eta_\sigma$ can be assembled into the $\eta$ map in the following commutative diagram
$$
\begin{tikzcd}
D_R(\catsSet(X_\Sigma,\Delta_{\ZZ_d})) \arrow[d,"\cong"] \arrow[r,"\Theta"] & \catsSet(X_\Sigma,D_R(\Delta_{\ZZ_d})) \arrow[r,"\cong"] & \lim \catsSet(\Delta_\sigma,D_R(\Delta_{\ZZ_d})) \\
D_R(\ZZ_d^{\Sigma_0}) \arrow[r,"\tilde p\;\mapsto \;(\tilde p|_\sigma)"] & \lim D_R(\ZZ_d^\sigma) \arrow[ur,"\eta"',hook]
\end{tikzcd}
$$ 
This gives the desired result.
\end{proof}

\begin{rem}\label{rem:strong-fraction}
{\rm
The condition of being strongly contextual also simplifies in the case of $\real(p)$. The support of the distribution (Definition \ref{suppp}) becomes
$$
\supp(\real(p)) = \set{s\in \ZZ_d^{\Sigma_0}:\, p_\sigma(s|_{\sigma})\neq 0,\;\forall \sigma\in \Sigma }.
$$
This is precisely the definition of the support of a distribution in $D_R(\Sigma,\ZZ_d)$ \cite{abramsky2011sheaf}.

Observe that the map $\eta$ given in the proof of Theorem \ref{thm:ContextualSheaf} is a morphism of $R$-convex sets. In particular for $R=\RR_{\geq 0}$ this implies that the notion of contextual fraction for simplicial distributions (Definition \ref{CFFF}) coincides with the corresponding notion for distributions in $D(\Sigma,\ZZ_d)$ given in \cite{abramsky2011sheaf}.
}
\end{rem}

\begin{rem}
{\rm
The realization $\real$ defined in Construction \ref{cons:ZeroDim} has an ordered version. That is, given an ordered simplicial complex $\Sigma$, one can define $X_\Sigma^\ord$
%\ak{[[you mean$X_{\Sigma^\ord}$]]} 
as the subsimplicial set of $X_\Sigma$ consisting of ordered tuples of vertices that constitute a simplex. Then the $\eta$ map in the proof of Theorem \ref{thm:ContextualSheaf} is an isomorphism
$$
\eta: \lim D_R(\ZZ_d^\sigma) \xrightarrow{\cong} \catsSet(X_\Sigma^\ord,D_R(\Delta_{\ZZ_d}))
$$
For details see \cite[Theorem B.2]{okay2022simplicial}.
{In the examples below we will consider this ordered version since the corresponding pictures contain smaller number of simplices.}
}
\end{rem}

\begin{ex}\label{ex:edge-dist}
{\rm
{Let $\Sigma$ denote the simplicial complex with vertices $x,y$ and a single maximal simplex $\set{x,y}$.
An element of $D(\Sigma,\ZZ_2)$ is simply a distribution $p\in D(\ZZ_2^2)$. Let us write $p^{ab}=p(a,b)$ where $a,b\in \ZZ_2$.
With this notation $p$ can be conveniently represented as a box (table):
}
$$
\begin{tabular}{|c|c|}  
\hline
  & $y$  \\ 
\hline
$x$ & {\begin{tabular}{cc} $p^{00}$ & $p^{01}$ \\ $p^{10}$ & $p^{11}$ \\ \end{tabular}}  \\
\hline
\end{tabular} 
$$
{
Let us write $p_x=p|_{\set{x}}$ and $p_y=p|_{\set{y}}$. Then the compatibility relations become
\begin{equation}\label{eq:x-y}
\begin{aligned}
p_x^0 &= p^{00} + p^{01}\\
p_y^0 &= p^{00} + p^{10},
\end{aligned}
\end{equation}
which can be read off from the rows and the columns of the box.
Let us describe the corresponding  simplicial distribution.
The ordered version $X_\Sigma^\ord$ is isomorphic to $\Delta[1]$. Therefore $p$ gives a simplicial distribution of the form $\Delta[1]\to D(\Delta_{\ZZ_2})$, again denoted by $p$ for simplicity.
}
%Consider a simplicial distribution $p:\Delta[1]\to D(\Delta_{\ZZ_2})$.
Let $\iota_1$ denote the unique nondegenerate $1$-simplex of $\Delta[1]$, i.e., the identity map $[1]\to [1]$. 
Then the value of the distribution $p_{\iota_1}$ {(see Definition \ref{def:SimpDist} for notation)} at the $1$-simplex $(a,b)$ of $\Delta_{\ZZ_2}$ is given by $p^{ab}$.
%With this notation the simplicial distribution $p$
%A simplicial distribution $p:\Delta[1]\to D(\Delta_{\ZZ_2})$ 
%can be described by the following box (table):
%\NSbox{p_1}{p_2}{p_3}{p_4}$
%\begin{table}[h!] 
%\begin{center} 
%\end{center}
%\label{tab:noncontextual}   
%\end{table}  
The two $0$-simplices $d_1(\iota_1)$ and $d_0(\iota_1)$ correspond to $x$ and $y$; respectively.
The conditions in Equation (\ref{eq:x-y}) are expressed as the simplicial relation   $d_ip_{\iota_1}=p_{d_i\iota_1}$.
} 
\end{ex}

\begin{example}\label{Ex: CHSH}
%\ak{
%[[I think here is the place to add the CHSH scenario, by saying look
%at the example in \cite[Subscetion 2.6]{abramsky2011sheaf}, then we give the corresponding simplicial set that we had in the appendix of our first paper.
%Then we add after Proposition \ref{pro:Dec} the square model when the outcome space is $N\zz_2$]].}
{\rm
A famous example, known as the 
%Clauser, Horne, Shimony, Holt 
CHSH scenario \cite{chsh69}, consists of  the simplicial complex on the vertex set $\Sigma_0=\set{x_0,x_1,y_0,y_1}$ determined by the maximal simplices
$$
\set{x_0,y_0},\; \set{x_0,y_1},\;\set{x_1,y_0},\; \set{x_1,y_1} .
$$ 
The resulting (ordered) simplicial complex is the boundary of a square:
$$ 
\includegraphics[width=.2\linewidth]{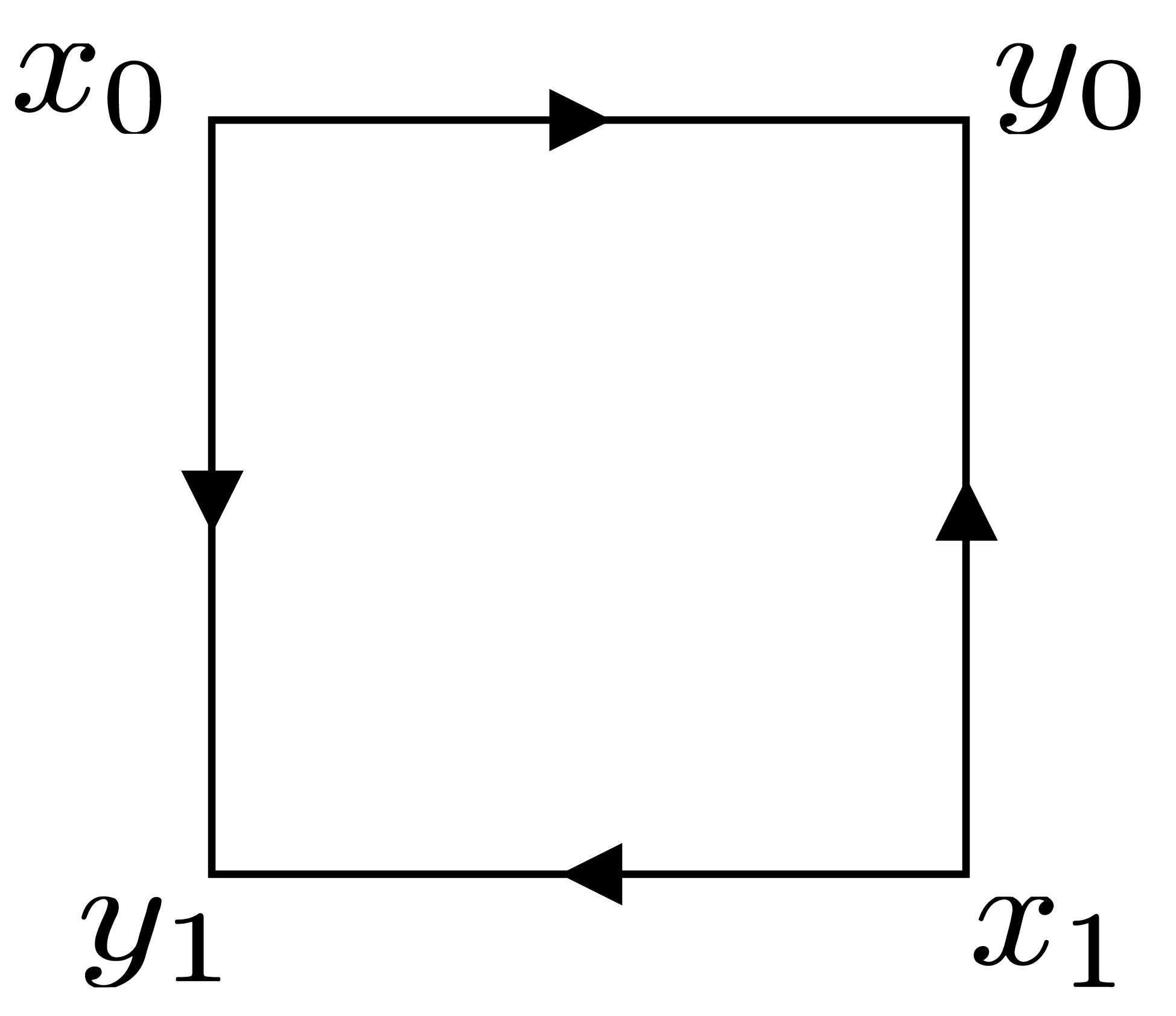}  
$$
A distribution $p=(p_\sigma)_{\sigma\in \Sigma}\in D(\Sigma,\ZZ_2)$ consists of distributions $p_{\set{x_i,y_j}}\in D(\ZZ_2^{\set{x_i,y_j}})$ together with the compatibility conditions imposed by the inverse limit. Writing $p_{x_iy_j}^{ab}$ for the probability $p_{\set{x_i,y_j}}(s)$, where $s:\set{x_i,y_j}\to \ZZ_2$ defined by $s(x_i)=a$, $s(y_j)=b$, these conditions can be expressed as
\begin{equation}\label{eq:chsh-conditions}
\sum_{a\in \ZZ_2} p_{x_iy_0}^{ab} = \sum_{a\in \ZZ_2} p_{x_iy_1}^{ab},\;\;\;\;
\sum_{b\in \ZZ_2} p_{x_0y_j}^{ab} = \sum_{b\in \ZZ_2} p_{x_1y_j}^{ab}. 
\end{equation}
{Again a convenient way to represent this data is to use a table of the form}
$$
\begin{tabular}{|c|c|c|c|c|} 
\hline
  & $y_0$ & $y_1$ \\ 
\hline
$x_0$ & {\begin{tabular}{cc} $p_{x_0y_0}^{00}$ & $p_{x_0y_0}^{01}$ \\ $p_{x_0y_0}^{10}$ & $p_{x_0y_0}^{11}$ \\ \end{tabular}} & {\begin{tabular}{cc} $p_{x_0y_1}^{00}$ & $p_{x_0y_1}^{01}$ \\ $p_{x_0y_1}^{10}$ & $p_{x_0y_1}^{11}$ \\ \end{tabular}} \\ 
\hline
$x_1$ & {\begin{tabular}{cc} $p_{x_1y_0}^{00}$ & $p_{x_1y_0}^{01}$ \\ $p_{x_1y_0}^{10}$ & $p_{x_1y_0}^{11}$ \\ \end{tabular}} & {\begin{tabular}{cc} $p_{x_1y_1}^{00}$ & $p_{x_1y_1}^{01}$ \\ $p_{x_1y_1}^{10}$ & $p_{x_1y_1}^{11}$ \\ \end{tabular}} \\  
\hline
\end{tabular} 
$$
The compatibility conditions in Equation (\ref{eq:chsh-conditions}) can be read off the table.
According to a celebrated theorem due to Fine \cite{fine1982hidden,fine1982joint}, a distribution $p\in D(\Sigma,\ZZ_2)$ is noncontextual if and only if the CHSH inequalities are satisfied. 
Moreover, it is well-known that $D(\Sigma,\ZZ_2)$ is a polytope with $16$ deterministic vertices and $8$ contextual vertices given by the  Popescu--Rohrlich (PR) boxes \cite{Popescu_1994}. 
%\comm{ref here}
%{For a topological approach that follows the original proof of Fine see \cite[Section 4.5]{okay2022simplicial}. \comm{add also recent Mermin polytope paper}
%}
%\comm{will elaborate more, add ref.}\ak{[[We can add "Two Proofs of Fine's Theorem" for example, but I think it is not nessecary for the reader to know, I mean not our goal in this paper
%]]} 
}
\end{example}

Next, we describe another way of realizing presheaves of distributions where the target space is distributions on the nerve space $N\ZZ_d$. \
%We need the nerve construction for this. 
For a monoid $(M,\cdot)$ the nerve $N(M)$  is the simplicial set whose set of $n$-simplices is given by $M^n$ with the following simplicial structure:
$$
\begin{aligned}
d_i(m_1,m_2,\cdots,m_n) &= \left\lbrace \begin{array}{ll}
(m_2,m_3,\cdots,m_n)   &  i=0\\
(m_1,\cdots,m_i\cdot m_{i+1} ,\cdots,m_n) & 0<i<n\\
(m_1,m_2,\cdots,m_{n-1}) & i=n
\end{array}
\right. \\
s_j(m_1,m_2,\cdots,m_n) &= (m_1,\cdots,m_{j-1}, e_M,m_{j},\cdots,  m_n)\;\;\, 0\leq j\leq n,
\end{aligned}
$$
where $e_M$ is the identity element.
%For a simplicial complex $\Sigma$ we define a subsimplicial set $N\Sigma \subset N(PU)$ of the nerve of the power set monoid $PU$ consisting of the following $n$-simplices
%		$$
%		(N\Sigma)_n = \set{(\sigma_1,\sigma_2,\cdots,\sigma_n)\in \Sigma_0^n:\, \cup_{i=1} ^n\sigma_i \in \Sigma }.
%		$$		
We will need the following adjunction 
%\comm{give reference}\ak{[[Which one you suggest?]]}
\begin{equation}\label{eq:adjunction-Dec}
	\Delta[0]\ast -:\catsSet \adjoint \catsSet: \Dec^0	 
\end{equation}
where $(- \ast -)$ denotes the join of the simplicial sets and  $\Dec^0$ is the d\' ecalage functor \cite{stevenson2011d}. 
The simplicial set $\Dec^0 X$ is obtained from shifting the simplices of $X$ down by one degree, i.e., $(\Dec^0 X)_n = X_{n+1}$, and forgetting the first face and degeneracy maps. 
%\comm{$\Dec^0$ is the one we need not $\Dec_0$. see nlab.\ak{[[Yes, the other one works if we forget the last face]]}}

\begin{pro}\label{pro:Dec}
The adjunction given in Equation (\ref{eq:adjunction-Dec}) induces 
%a bijection
%$$
%\catsSet(X, D_R(Y)) \cong \catsSet(X \ast \Delta[0], D_R(\Dec_0 Y) )
%$$ 
%which induces 
a commutative diagram of $R$-convex sets
$$
\begin{tikzcd}
D_R(\catsSet(\Delta[0] \ast X,Y )) 
\arrow[rr,"\Theta_{\Delta[0] \ast X,Y}"]
 \arrow[d,"\cong"] && \catsSet(\Delta[0] \ast X,D_RY) \arrow[d,"\cong"] \\
D_R(\catsSet(X,\Dec^0Y)) \arrow[rr,"\Theta_{X,\Dec^0(Y)}"] && \catsSet(X,D_R(\Dec^0Y))
\end{tikzcd}
$$ 
%If $Y$ is a simplicial monoid then the diagram is in the category of $R$-convex monoids.
\end{pro}
\begin{proof}
For a simplicial map $f:\Delta[0] \ast X \to Y$, the transpose of the compositon $\Delta[0] \ast X \stk{f} Y \stk{\delta_Y} D_R(Y)$ under 
the adjunction that given in Equation (\ref{eq:adjunction-Dec}) is equal 
to the composition of the transpose of $f$ under the same adjunction 
with $\Dec^0(\delta_Y)$.
Observe also that the map $\Dec^0(\delta_Y)$ equal to the map 
$\delta_{\Dec^0Y}$, so we have the following diagram in $\St$
$$
\begin{tikzcd}
\catsSet(\Delta[0] \ast X,Y ) \arrow[r,"(\delta_Y)_\ast"] \arrow[d,"\cong"] & \catsSet(\Delta[0] \ast X,D_RY) \arrow[d,"\cong"] \\
\catsSet(X,\Dec^0Y) \arrow[r,"(\delta_{\Dec^0Y})_\ast"] & \catsSet(X,D_R(\Dec^0 Y))
\end{tikzcd}
$$
Using Proposition \ref{ThetaUniqqq} we get the result.
\end{proof}

There is an isomorphism of simplicial sets
$$
\Delta_{\ZZ_d} \xrightarrow{\cong} \Dec^0(N\ZZ_d)
$$
defined in degree $n$ by sending $(a_0,a_1,\cdots,a_{n})$ to the tuple $(a_0,a_1-a_0,\cdots,a_n-a_{n-1})$. 
Note that the isomorphism is preserved even after applying the $D_R$ functor since essentially d\' ecalage only shifts the dimension.
Therefore there is a bijection 
$$
\catsSet(X , D_R(\Delta_{\ZZ_d})) \cong 
\catsSet(\Delta[0] \ast X ,   D_R(N\ZZ_d))
$$
and Proposition \ref{pro:Dec} implies that the notion of contextuality for both scenarios $(\Delta[0] \ast X , N\zz_d)$ and 
$(X,\Delta_{\zz_d})$ coincides.

\begin{ex}\label{ex:triangle}
{\rm
The transpose of the simplicial set map $\Delta[1]\to D(\Delta_{\ZZ_2})$ discussed in Example \ref{ex:edge-dist} under the adjunction in (\ref{eq:adjunction-Dec})  is given by 
$$
p: \Delta[2] \cong \Delta[0] \ast \Delta[1] \to D(N\ZZ_2).
$$
Let $\iota_2$ denote the unique nondegenerate $2$-simplex of $\Delta[2]$. Let $x,y,z$ denote the $1$-simplices of $\Delta[2]$ given by $d_2\iota_2$, $d_0\iota_2$ and $d_1\iota_2$; respectively. Then $p$ can be represented as follows:
%\comm{picture of $p:\Delta[2]\to N\ZZ_2$, triangle with probabilities}
$$ 
\includegraphics[width=.2\linewidth]{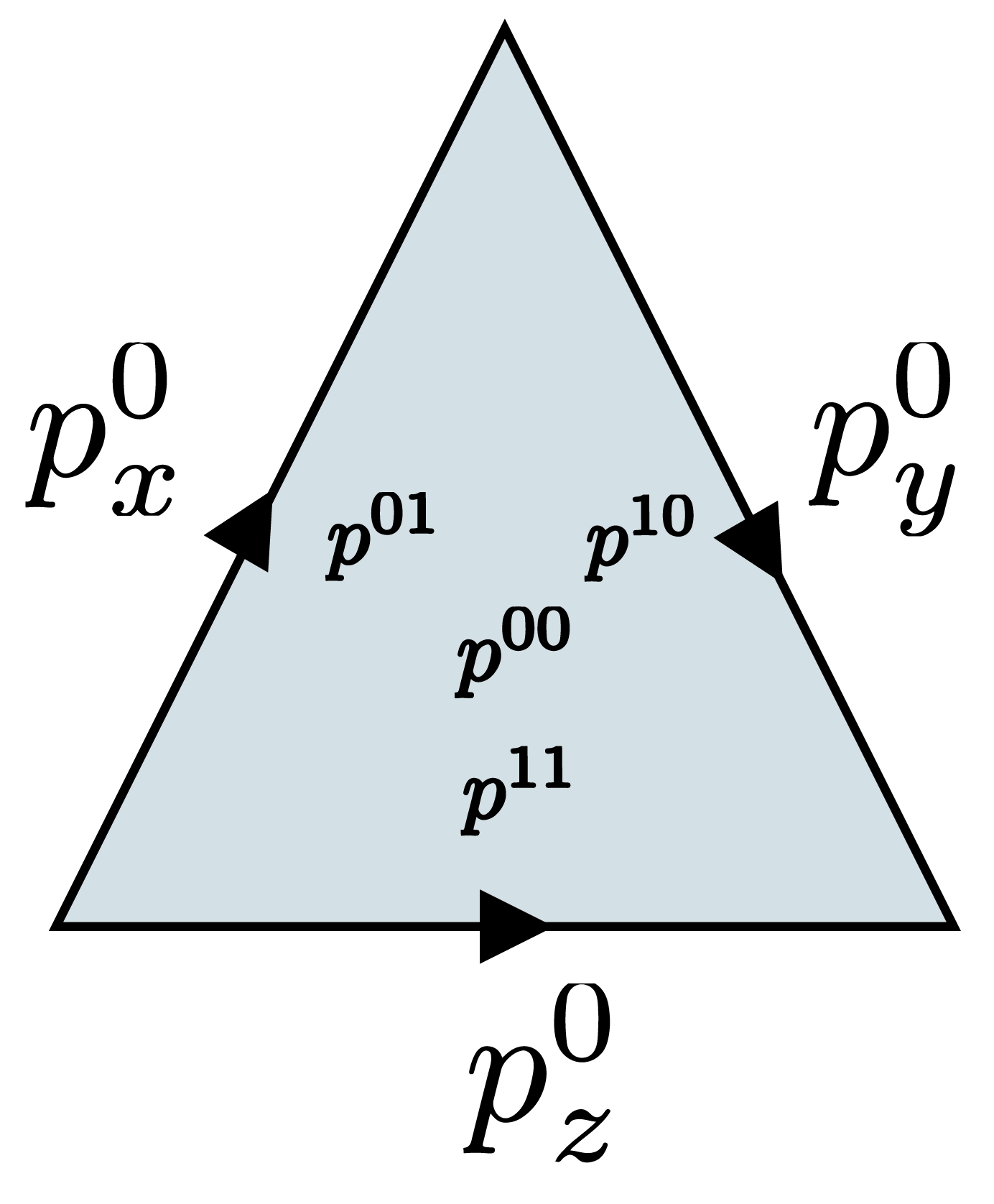}  
$$
The distribution $p_z=d_1 p_{\iota_2}$ is given by
%In this picture the distributions $p_x$, $p_y$ and $p_z$ on the edges are given by
$$
p_z^0 = p^{00}+p^{11},
$$
and $p_x^0$, $p_y^0$ are as before; see Equation (\ref{eq:x-y}).
We will write $x\oplus y$ for the $z$ edge, the XOR of $x$ and $y$. For more details on this interpretation see \cite{okay2022simplicial}.
% for the interpretation of the simplices as measurements.
%We will denote the $z$ edge by $x\oplus y$, the XOR of $x$ and $y$.
%The $z$ edge can be regarded as the XOR of $x$ and $y$; see \cite{okay2022simplicial}. 
}
\end{ex}

\begin{ex}\label{Ex:Sqqq} 
{\rm
In Example \ref{Ex: CHSH} the simplicial complex $\Sigma$ is the boundary of a square. The join $\Delta[0]\ast X_\Sigma$ (or rather the ordered version $\Delta[0]\ast X^\ord_\Sigma$) is the cone of this one-dimensional space, which is the square consisting of four triangles:
$$ 
\includegraphics[width=.26\linewidth]{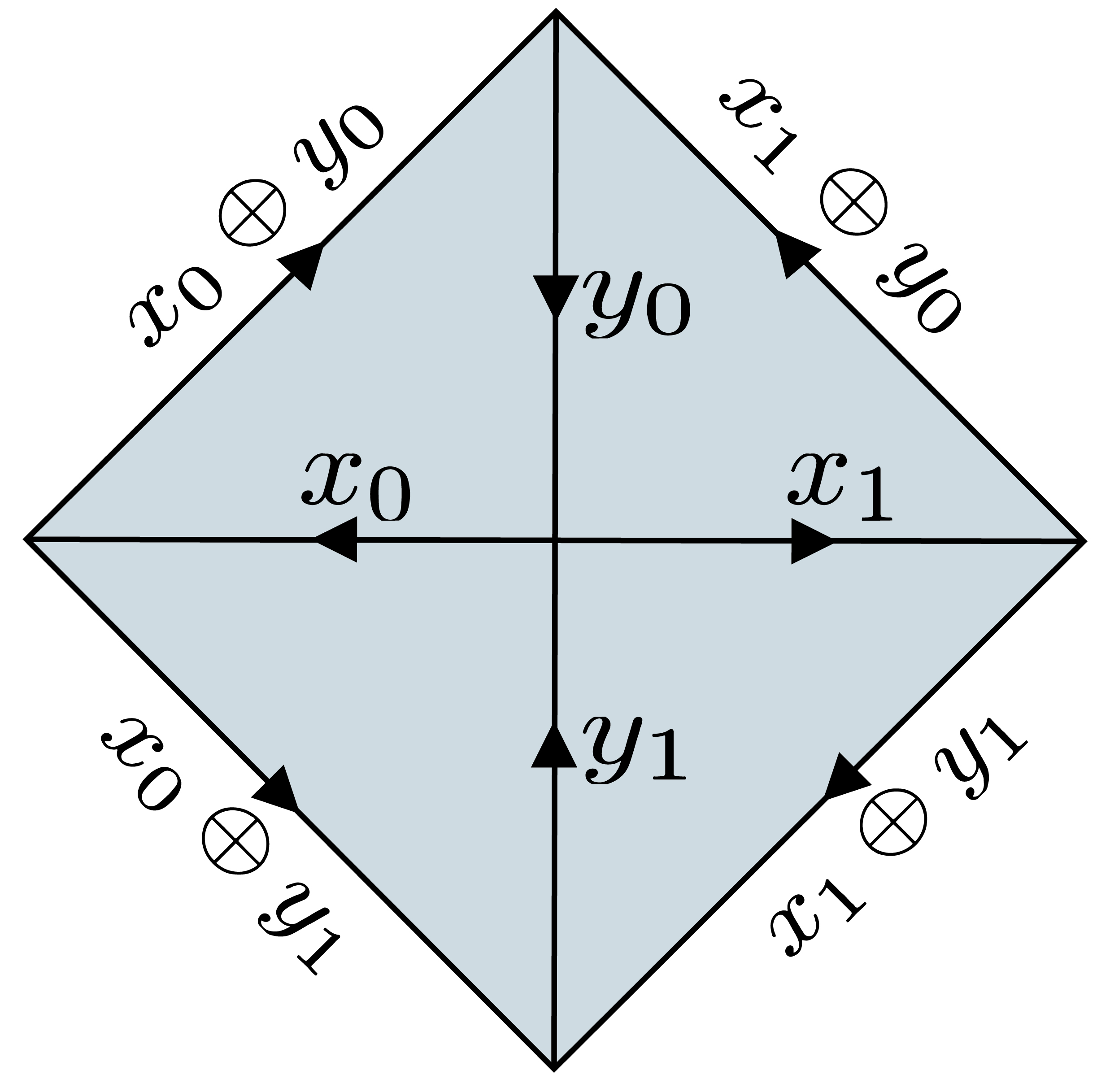}  
$$
This simplicial set is used in the topological proof of Fine's theorem \cite[Section 4.5]{okay2022simplicial}.
{See also \cite{okay2022mermin} for an alternative proof of Fine's theorem that uses a different simplicial realization.}
% \comm{add ref to Mermin paper}
%We imagine that the vertex labels $x_i$, $y_j$ are moved to the interior edges, and the edges on the boundary are labeled by $x_i\oplus y_j$. When $x_i$, $y_j$ are interpreted as measurements with outcomes in $\ZZ_2$ the measurement $x_i\oplus y_j$ is the mod $2$ sum (XOR) of the individual outcomes. 
%For more details see \cite{okay2022simplicial}.
}
\end{ex}

	%%%%%%%%%%%%%%%%%%%%%%%%%%%%%%%%%%%%%%%%%%%%%%%%%%%%%%%%%%
	%%%%%%%%%%%%%%%%%%%%%%%%%%%%%%%%%%%%%%%%%%%%%%%%%%%%%%%%%%%%

	%\supsect{\protect{\ref{ConvexSetCat}}.B}{Convex categories}
	\section{Convex categories}\label{sec:ConvexCat}
	
	In this paper by a category we mean a locally small category in the sense that the collection of morphisms between any two objects forms a set. Let $\catCat$ denote the category of (locally small) categories. %Throughout the paper $R$ will denote a commutative semiring.
In this section we introduce the notion of convex categories by upgrading the distributions functor $D_R$ to a monad on {$\catCat$}.
%the category of 
		%locally 
%(locally small) categories.
	
	%We now show that the distribution monad is also a monad on the category of locally small categories, this allows us to introduce the notion of Convex category. 
\subsection{Convex categories} \label{subsec:conv-cat}

Let $X$ and $Y$ be sets. We define a multiplication map 
$m_{X,Y}:D_R(X)\times D_R(Y)
 \to D_R(X\times Y)$, by sending 
 $(p,q)$ to the distribution {$p\cdot q$} given by
\begin{equation}\label{Mulmap}
(p \cdot q)(x,y)=p(x)q(y),\;\; x\in X,\;y\in Y.
\end{equation}
% 
%\ak{[[This map is known, I don't find the reference right now.]]}
%\end{defn}
This is a well-defined distribution since 
$$
		\begin{aligned}
			\sum_{(x,y) \in X \times Y} (p \cdot q)(x,y) =\sum_{(x,y) \in X \times Y}
			p(x)q(y)
			=\sum_{x \in X} p(x)\sum_{y\in Y} q(y)
			=1		\end{aligned}
		$$
and
the support of $p\cdot q$ is 
a subset of the finite set
		$\set{(x,y):\, p(x) \neq 0,\; q(y)\neq 0}$.

\begin{lem}\label{SecofPro}
%The multiplication map $m_{X,Y}$ satisfies the following %properties:
%\begin{enumerate}
%\item $m_{X,Y}$ is well defined map.
%
%\item 
The map $m_{X,Y}$ is a section of  $D_R(r_1)\times D_R(r_2):D_R(X\times Y)\to D_R(X)\times D_R(Y)$, where 
$r_1: X \times Y \to 
X$ and $r_2: X \times Y \to Y$ are the canonical projection maps.
%
%\end{enumerate} 
\end{lem}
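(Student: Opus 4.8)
The plan is to unwind the definition of a section and then verify the defining identity by a direct marginalization computation. Recall that $m_{X,Y}$ being a section of $D_R(r_1)\times D_R(r_2)$ means precisely that
$$
\bigl(D_R(r_1)\times D_R(r_2)\bigr)\circ m_{X,Y} = \Id_{D_R(X)\times D_R(Y)}.
$$
Since a map into a product is determined by its two component maps, I would split this into the two separate claims $D_R(r_1)(p\cdot q)=p$ and $D_R(r_2)(p\cdot q)=q$, to be checked for every pair $(p,q)\in D_R(X)\times D_R(Y)$, where $p\cdot q$ is the product distribution from Equation (\ref{Mulmap}).

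For the first claim, I would invoke the definition of $D_R$ on morphisms: the pushforward $D_R(r_1)(p\cdot q)$ evaluated at $x\in X$ is the sum of $p\cdot q$ over the fiber $r_1^{-1}(x)=\set{(x,y'):\,y'\in Y}$. This yields
$$
D_R(r_1)(p\cdot q)(x)=\sum_{y'\in Y}(p\cdot q)(x,y')=\sum_{y'\in Y}p(x)q(y')=p(x)\sum_{y'\in Y}q(y')=p(x),
$$
where the last step uses that $q$ is a distribution, i.e.\ $\sum_{y'\in Y}q(y')=1$. The computation establishing $D_R(r_2)(p\cdot q)=q$ is entirely symmetric: one marginalizes out the $X$-coordinate over the fiber $r_2^{-1}(y)$ and invokes $\sum_{x'\in X}p(x')=1$ instead.

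There is no genuine obstacle here; the statement is established by routine calculation. The only points requiring a line of care are that marginalizing over a fiber of a projection amounts to summing out exactly one coordinate of the product distribution, and that the finiteness of support (already verified when $p\cdot q$ was shown to be a well-defined element of $D_R(X\times Y)$) guarantees all sums are finite, so that factoring $p(x)$ out of the sum over $y'$ is legitimate. Assembling the two coordinate identities gives the required equality of maps, completing the proof.
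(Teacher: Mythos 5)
Your proof is correct and follows essentially the same route as the paper's: both verify the section identity coordinatewise by marginalizing the product distribution $p\cdot q$ over a fiber of the projection, factoring out $p(x)$, and using $\sum_{y}q(y)=1$ (and symmetrically for $r_2$). Your additional remarks about finiteness of support and splitting the identity into two component equations are just explicit spellings-out of steps the paper leaves implicit.
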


\begin{proof}
Given $(p,q) \in D_R(X)\times D_R(Y)$,  
%\begin{itemize}
%
%\item The support of $q\cdot p$ is 
%a subset of
%		$\set{(x,y):\, p(x) \neq 0,\; q(y)\neq 0}$, so it is finite. 
%		
%\item We also have
%we have
%		$$
%		\begin{aligned}
%			\sum_{(x,y) \in X \times Y} (q \cdot p)(x,y) =\sum_{(x,y) \in X \times Y}
%			q(y)p(x)
%			=\sum_{y\in Y} q(y)
%			\sum_{x \in X} p(x)=1		\end{aligned}
%		$$
%	\end{itemize}
%For the second part we have
we have
$$
(D_R(r_1)(p\cdot q))(x)= 
\sum_{y \in Y} (p\cdot q)(x,y)=
\sum_{y \in Y}p(x)q(y)= p(x).
$$
Similarly $D_R(r_2)(p\cdot q)=q$.
\end{proof}

Let $\catC$ be a category. For objects $X,Y,Z$ of $\catC$ we define a  map
	\begin{equation}\label{Convul}
		\ast: D_R(\catC(X,Y)) \times D_R(\catC(Y,Z)) \to D_R(\catC(X,Z)),\;\;{(p,q)\mapsto q\ast p},
	\end{equation}
to be the composite of 
$m_{C(X,Y),C(Y,Z)}$ with $D_R(\circ)$, 
where $\circ$ is the composition in $\catC$. 
So we have	 
	\begin{equation}\label{eq:composition-DC}
(q \ast p)(f)=\sum_{\,g_2 \circ g_1=f}q(g_2)p(g_1),\;\;\; \forall   f \in \catC(X,Z).
	\end{equation}
	where the sum runs over morphisms $g_1:X\to Y$ and $g_2:Y\to Z$ such that $g_2\circ g_1=f$ in $\catC$.
 
	\begin{lemma}\label{m1m2}
		Given $X,Y,Z$, objects in a category $\catC$, and given morphisms 
		$p=\sum_{f \in \catC(X,Y)}\alpha_f\delta^f \in D_R(\catC(X,Y))$ and 
		$q=\sum_{g \in \catC(Y,Z)}\beta_g\delta^g \in D_R(\catC(Y,Z))$ we have
		$$
		\left(\sum_{g \in \catC(Y,Z)}\beta_g \delta^g \right) \ast
		\left(\sum_{f \in \catC(X,Y)}\alpha_f\delta^f \right)=
		\sum_{f \in \catC(X,Y),\, g \in \catC(Y,Z)}\beta_{g}\alpha_{f}
		\,\delta^{g \circ  f}.
		$$
		In particular,
		$
		\delta^g \ast \delta^f =\delta^{g \circ f}
		$.
	\end{lemma}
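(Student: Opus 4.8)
The plan is to evaluate both sides of the claimed identity at an arbitrary morphism $h \in \catC(X,Z)$ and check that they produce the same value; since two $R$-distributions on $\catC(X,Z)$ are equal precisely when they agree as functions, this suffices. First I would recall that writing $p = \sum_f \alpha_f \delta^f$ just means $\alpha_f = p(f)$, and similarly $\beta_g = q(g)$, so these coefficients are nothing but the values of the distributions; in particular only finitely many are nonzero, which guarantees that every sum appearing below is finite and well-defined.

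For the left-hand side I would apply the defining formula (\ref{eq:composition-DC}) for the convolution directly:
$$
(q \ast p)(h) = \sum_{g_2 \circ g_1 = h} q(g_2) p(g_1) = \sum_{\substack{g_1 \in \catC(X,Y),\, g_2 \in \catC(Y,Z)\\ g_2 \circ g_1 = h}} \beta_{g_2}\, \alpha_{g_1}.
$$
For the right-hand side, evaluating $\sum_{f,g} \beta_g \alpha_f \delta^{g \circ f}$ at $h$ picks out exactly those index pairs $(f,g)$ with $g \circ f = h$, giving $\sum_{g \circ f = h} \beta_g \alpha_f$. After the harmless relabelling $g_1 \leftrightarrow f$, $g_2 \leftrightarrow g$ these are the same sum, so the two distributions coincide. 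The special case $\delta^g \ast \delta^f = \delta^{g \circ f}$ then follows by taking $p = \delta^f$ and $q = \delta^g$, i.e. letting a single coefficient in each distribution equal $1$ and the rest vanish, which collapses the double sum to the single term $\delta^{g \circ f}$.

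Alternatively, and perhaps more conceptually, one can bypass (\ref{eq:composition-DC}) and argue straight from the definition of $\ast$ as the composite $D_R(\circ) \circ m_{\catC(X,Y),\catC(Y,Z)}$. Here $m$ produces the product distribution $p \cdot q$ with $(p \cdot q)(f,g) = \alpha_f \beta_g$ by (\ref{Mulmap}), and then the pushforward formula for $D_R$ applied to the composition map $\circ : (f,g) \mapsto g \circ f$ yields $(q \ast p)(h) = \sum_{(f,g)\in \circ^{-1}(h)} \alpha_f \beta_g$, which is the identical computation expressed functorially.

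I do not expect a genuine obstacle: the statement is an unwinding of the definition of the convolution, and the only point requiring a little care is the bookkeeping of the summation index, namely recognizing that distinct pairs $(f,g)$ may share the same composite $g \circ f$, so that the value at $h$ is genuinely a sum over the fiber $\circ^{-1}(h)$ rather than a single term. Finiteness of the supports of $p$ and $q$ is what keeps every sum finite and legitimizes the reindexing.
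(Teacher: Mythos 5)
Your proof is correct and follows essentially the same route as the paper's: evaluate both sides at an arbitrary $h \in \catC(X,Z)$, expand the convolution via the defining formula, and observe that both reduce to $\sum_{g_2 \circ g_1 = h} \beta_{g_2}\alpha_{g_1}$. The functorial reformulation via $D_R(\circ)\circ m$ that you mention is a nice observation but is the same computation in different clothing.
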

	\begin{proof}
		Given $h \in \catC(X,Z)$ we have
		$$
		\begin{aligned}
			\left(\sum_{g \in \catC(Y,Z)}\beta_g \delta^g \right) \ast
			\left(\sum_{f \in \catC(X,Y)}\alpha_f\delta^f \right)(h) 
			&=
			\sum_{h_2 \circ h_1 =h}\left(\sum_{g \in
				\catC(Y,Z)}\beta_g\delta^g\right)(h_2) \cdot \left(\sum_{f \in
				\catC(X,Y)}\alpha_f\delta^f\right)(h_1)\\
			&=
			\sum_{h_2\circ h_1 =h}\beta_{h_2} \cdot
			\alpha_{h_1}\\
			&=
			\sum_{h_1 \in \catC(X,Y),\,h_2 \in
				\catC(Y,Z)}\beta_{h_2}\alpha_{h_1}\delta^{h_2\circ h_1}(h).
		\end{aligned}
		$$
		
	\end{proof}
	\begin{lem}\label{lem:def-DC}
		Let $\catC$ be a category. Consider the data consisting of
		\begin{itemize}
			\item the collection of objects of $\catC$,
			\item for objects $X,Y$ of $\catC$ the set $D_R(\catC(X,Y))$ of morphisms.
		\end{itemize}
		This data together with Equation (\ref{eq:composition-DC}) as the composition map specifies a category $D_R(\catC)$. 
	\end{lem}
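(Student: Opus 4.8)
The plan is to verify directly the two axioms that promote this data to a category: associativity of the composition $\ast$ defined in Equation (\ref{eq:composition-DC}), and the existence of two-sided identity morphisms. Well-definedness of $\ast$ requires no separate argument, since by construction $\ast$ is the composite $D_R(\circ)\circ m_{\catC(X,Y),\catC(Y,Z)}$, and the computation preceding Lemma \ref{SecofPro} already shows that $m$ lands in finitely supported $R$-distributions, while $D_R(\circ)$ preserves $R$-distributions; hence $q\ast p\in D_R(\catC(X,Z))$ whenever $p\in D_R(\catC(X,Y))$ and $q\in D_R(\catC(Y,Z))$.

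For the identities, I would take the identity morphism on an object $X$ of $D_R(\catC)$ to be the delta distribution $\delta^{\Id_X}\in D_R(\catC(X,X))$. Writing an arbitrary morphism $p\in D_R(\catC(X,Y))$ in its finite delta expansion $p=\sum_{f}\alpha_f\delta^f$ over $\supp(p)$, Lemma \ref{m1m2} yields at once
$$
\delta^{\Id_Y}\ast p=\sum_{f}\alpha_f\,\delta^{\Id_Y\circ f}=\sum_f\alpha_f\delta^f=p,
$$
and symmetrically $p\ast\delta^{\Id_X}=\sum_f\alpha_f\delta^{f\circ\Id_X}=p$, so $\delta^{\Id_X}$ is a two-sided unit.

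For associativity, I would expand both triple composites directly from Equation (\ref{eq:composition-DC}). Given $p\in D_R(\catC(W,X))$, $q\in D_R(\catC(X,Y))$, $r\in D_R(\catC(Y,Z))$ and a morphism $u\in\catC(W,Z)$, a double application of the composition formula gives
$$
((r\ast q)\ast p)(u)=\sum_{(h\circ g)\circ f=u}r(h)\,q(g)\,p(f),
\qquad
(r\ast(q\ast p))(u)=\sum_{h\circ(g\circ f)=u}r(h)\,q(g)\,p(f),
$$
where all sums are finite because $\supp(p)$, $\supp(q)$, $\supp(r)$ are finite. Since composition in $\catC$ is associative, $(h\circ g)\circ f=h\circ(g\circ f)$, so the two index sets coincide and the distributions agree on every $u$. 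Alternatively, the bilinearity packaged in Lemma \ref{m1m2} reduces associativity to the single identity $(\delta^h\ast\delta^g)\ast\delta^f=\delta^{(h\circ g)\circ f}=\delta^{h\circ(g\circ f)}=\delta^h\ast(\delta^g\ast\delta^f)$.

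I do not anticipate a genuine obstacle: the content is entirely bookkeeping, and the only point needing care is to keep the finiteness of supports explicit so that reordering the finite double sums is legitimate. Conceptually, the statement is an instance of the fact that a lax monoidal endofunctor of $(\catSet,\times)$ — which $D_R$ is, with lax structure $m_{X,Y}$ and unit $\delta$ — transports $\catSet$-enriched categories (that is, ordinary categories) to $\catSet$-enriched categories; I would nonetheless keep the explicit verification above as the self-contained proof and merely note this viewpoint.
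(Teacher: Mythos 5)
Your proof is correct and takes essentially the same approach as the paper's: the identity on an object $X$ is the delta distribution $\delta^{\Id_X}$, and associativity is verified by expanding both triple composites of Equation (\ref{eq:composition-DC}) into a triple sum over factorizations, which agree because composition in $\catC$ is associative. The only cosmetic differences are that you check the unit laws via Lemma \ref{m1m2} rather than directly from the composition formula, and you add correct but supplementary remarks on well-definedness and the lax-monoidal viewpoint.
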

	\begin{proof}
		The identity morphism in $D_R(\catC)(X,X)$ is given by the delta distribution $\delta^{\Id_X}$.
		Given $p \in D_R(\catC)(X,Y)$ and $f \in \catC(X,Y)$ we have 
		$$
		(\delta^{\Id_Y }\ast p)(f)=\sum_{g_2 \circ g_1=f}\delta^{\Id_Y }(g_2)p(g_1)
		=\delta^{\Id_Y }(\Id_Y)p(f)=p(f).
		$$
		Similarly, we have $p \ast \delta^{\Id_X}=p$.
		For associativity we check that for 
		%It remains to check associativity, given 
		$s \in D_R(\catC)(X,Y)$, $q \in D_R(\catC)(Y,Z)$ and $p \in D_R(\catC)(Z,W)$ we have
		$$
		\begin{aligned}
			((p \ast q) \ast s)(f) &=
			\sum_{g_2 \circ g_1=f}(p\ast q)(g_2)s(g_1)\\
			&=\sum_{g_2 \circ g_1=f}\left(\sum_{h_2 \circ
				h_1=g_2}p(h_2)q(h_1)\right)s(g_1)\\
			&=\sum_{g_2 \circ g_1=f}\sum_{h_2 \circ
				h_1=g_2}p(h_2)q(h_1)s(g_1) \\
			&=\sum_{h_2 \circ h_1 \circ g_1=f}p(h_2)q(h_1)s(g_1).
		\end{aligned}
		$$
		A similar calculation shows that $(p \ast (q \ast s))(f)$ gives the same result.
	\end{proof}

	\begin{lem}\label{lem:def-DF}
		Given a functor $F : \catC_1 \to \catC_2$ in $\catCat$ let us define the following maps:
		\begin{itemize}
			\item $D_R(F):\ob(D_R(\catC_1))\to \ob(D_R(\catC_2))$ defined  as the map $F$ between the objects of the categories.
			\item $D_R(F)_{X,Y}:D_R(\catC_1)(X,Y)\to D_R(\catC_2)(F X,F Y)$ defined by applying $D_R$ functor to the map $F_{X,Y}$ between the morphism sets.
		\end{itemize}
		Then these maps specify a functor $D_R(F):D_R(\catC_1)\to D_R(\catC_2)$. 
	\end{lem}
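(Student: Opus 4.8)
The plan is to verify the two functor axioms for $D_R(F)$: preservation of identity morphisms and compatibility with composition. Both should follow directly from the fact that $F$ is itself a functor, combined with the explicit descriptions of identities and composition in $D_R(\catC_1)$ and $D_R(\catC_2)$ established in Lemma \ref{lem:def-DC}.

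First I would check that $D_R(F)$ sends identities to identities. The identity morphism of $X$ in $D_R(\catC_1)$ is the delta distribution $\delta^{\Id_X}$, as recorded in the proof of Lemma \ref{lem:def-DC}. Since $D_R(F)_{X,X}$ is obtained by applying the $D_R$ functor to $F_{X,X}$, and $D_R$ on a function $g$ satisfies $D_R(g)(\delta^a) = \delta^{g(a)}$, we get $D_R(F)_{X,X}(\delta^{\Id_X}) = \delta^{F(\Id_X)} = \delta^{\Id_{FX}}$, using that $F$ preserves identities. This is exactly the identity of $FX$ in $D_R(\catC_2)$.

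Next I would verify compatibility with composition, i.e. that $D_R(F)(q \ast p) = D_R(F)(q) \ast D_R(F)(p)$ for $p\in D_R(\catC_1)(X,Y)$ and $q\in D_R(\catC_1)(Y,Z)$. The cleanest route is to expand $p$ and $q$ in the delta basis as $p = \sum_f \alpha_f \delta^f$ and $q = \sum_g \beta_g \delta^g$, apply Lemma \ref{m1m2} to write $q \ast p = \sum_{f,g} \beta_g \alpha_f\, \delta^{g\circ f}$, then push $D_R(F)$ through using $D_R(F)(\delta^{g\circ f}) = \delta^{F(g\circ f)} = \delta^{F(g)\circ F(f)}$, where the last equality is functoriality of $F$. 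On the other side, $D_R(F)(p) = \sum_f \alpha_f \delta^{F(f)}$ and $D_R(F)(q) = \sum_g \beta_g \delta^{F(g)}$, so another application of Lemma \ref{m1m2} yields $D_R(F)(q) \ast D_R(F)(p) = \sum_{f,g} \beta_g \alpha_f\, \delta^{F(g)\circ F(f)}$, matching the previous expression. Since everything is linear in the coefficients, this suffices.

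I do not anticipate a genuine obstacle here; the statement is essentially bookkeeping once Lemmas \ref{lem:def-DC} and \ref{m1m2} are in hand. The only point requiring mild care is the interchange of $D_R$ with $\ast$: strictly speaking one should confirm that the formula of Lemma \ref{m1m2} applies to the images $D_R(F)(p)$ and $D_R(F)(q)$, which are again finitely supported distributions on the relevant hom-sets, so the lemma does apply verbatim. An alternative, more conceptual proof would observe that $\ast$ is by definition $D_R(\circ)$ precomposed with the multiplication map $m$, and that both $m$ and $D_R(\circ)$ are natural in the appropriate hom-sets; functoriality of $D_R(F)$ then falls out of the naturality squares together with $F$ being a functor. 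I would present the explicit delta-basis computation as the main argument for transparency, since the coefficients track cleanly.
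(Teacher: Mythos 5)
Your proof is correct, but for the composition axiom it takes a genuinely different route from the paper. The identity check is the same in both. For composition, the paper argues pointwise: it evaluates $D_R(F)(p\ast q)$ at a morphism $f\in\catC_2(FX,FZ)$, unfolds the pushforward and the convolution formula (\ref{eq:composition-DC}), applies $F(f_2\circ f_1)=F(f_2)\circ F(f_1)$ inside the sum, and then regroups the sum over the fibers of $F$ to recognize $\bigl(D_R(F)(p)\ast D_R(F)(q)\bigr)(f)$. You instead expand $p$ and $q$ in the delta basis and invoke Lemma \ref{m1m2} twice, together with the fact that the pushforward sends $\delta^{g\circ f}$ to $\delta^{F(g)\circ F(f)}$. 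Both are valid; your version is more modular in that it reuses Lemma \ref{m1m2} (which the paper proves but only deploys later, e.g.\ in Lemma \ref{DeltaMuNat}), while the paper's computation is self-contained and sidesteps the one subtle point in your argument. On that point: the issue with applying Lemma \ref{m1m2} to $D_R(F)(p)=\sum_f\alpha_f\delta^{F(f)}$ is not finite support, as you suggest, but that this expansion is indexed by morphisms of $\catC_1$; when $F$ is not injective on hom-sets the same delta appears with multiplicity, so the expansion is not in the canonical form assumed by the statement of Lemma \ref{m1m2}. This is harmless---the convolution $\ast$ is bilinear, since its defining formula (\ref{eq:composition-DC}) is additive in each argument, so the conclusion of Lemma \ref{m1m2} holds for arbitrary formal expansions (alternatively, first collect coefficients along the fibers of $F$ and then apply the lemma verbatim)---but that bilinearity, rather than finiteness, is the justification your argument needs.
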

	\begin{proof}
		Given $X \in \ob(D_R(\catC_1))$, we have
		$$
		D_R(F)(\delta^{\Id_X})(f)=\sum_{F(g)=f}\delta^{\Id_X}(g)=
		\begin{cases}
			1   &  f=\Id_{F(X)}     \\  
			0 &  \text{otherwise.}
		\end{cases}
		$$
		Therefore $D_R(F)(\delta^{\Id_X})=\delta^{\Id_{F(X)}}$.
		Now given $p \in D_R(\catC_1)(Y,Z)$, $q \in D_R(\catC_1)(X,Y)$ and $f \in \catC_2(FX,FZ)$, then
		$$
\begin{aligned}		
D_R(F)(p \ast q)(f) &=\sum_{F(g)=f}(p\ast q)(g) \\
&= 	\sum_{F(g)=f} \sum_{{f_2 \circ
				f_1=g}}p(f_2)\cdot q(f_1) \\
& = \sum_{F(f_2 \circ
			f_1)=f}p(f_2)\cdot q(f_1) \\
&=\sum_{F(f_2)\circ F(f_1)=f }p(f_2)\cdot q(f_1)\\
&=	\sum_{{h_2 \circ h_1=f}}
			\sum_{F(f_2)=h_2,\, F(f_1)=h_1 }p(f_2)\cdot q(f_1)\\
&=\sum_{{h_2 \circ 	h_1=f}}\left(\sum_{{F(f_2)=h_2}}p(f_2)\right)\cdot \left(\sum_{{F(f_1)=h_1}}
			q(f_1)\right) \\
&= \sum_{{h_2 \circ h_1 =f}}D_R(F)(p)(h_2)\cdot
			D_R(F)(q)(h_1)\\
&= (D_R(F)(p) \ast D_R(F)(q))(f).			
\end{aligned}		
		$$
	\end{proof}

{Combining Lemma \ref{lem:def-DC} and Lemma \ref{lem:def-DF} we can upgrade $D_R$ to a functor on $\catCat$.
}

	\begin{cor}\label{DRFun}
		Sending $\catC \mapsto D_R(\catC)$ and $F\mapsto D_R(F)$ gives a functor
		%Definitions \ref{DRC} and \ref{DRF} give us a functor 
		$D_R: \catCat \to \catCat$.
	\end{cor}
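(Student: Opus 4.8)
The plan is to verify the two functoriality axioms, since Lemma \ref{lem:def-DC} and Lemma \ref{lem:def-DF} already establish that $D_R(\catC)$ is a well-defined category for each category $\catC$ and that $D_R(F)$ is a well-defined functor for each functor $F$. Concretely, I would check that $D_R$ preserves identity functors, i.e. $D_R(\Id_\catC)=\Id_{D_R(\catC)}$, and that it preserves composition, i.e. $D_R(G\circ F)=D_R(G)\circ D_R(F)$ for composable functors $F:\catC_1\to\catC_2$ and $G:\catC_2\to\catC_3$. The guiding observation throughout is that, by the construction in Lemma \ref{lem:def-DF}, the functor $D_R(F)$ acts on objects by the object map of $F$ and on each morphism set by applying the set-level functor $D_R:\catSet\to\catSet$ to the map $F_{X,Y}$ between hom-sets. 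Consequently both axioms reduce to the corresponding statements for $D_R$ on $\catSet$, which hold because $D_R$ is (the functor part of) a monad on $\catSet$.

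For the identity axiom, on objects $D_R(\Id_\catC)$ is literally the identity map. On morphism sets, I would compute $D_R(\Id_\catC)_{X,Y}=D_R((\Id_\catC)_{X,Y})=D_R(\Id_{\catC(X,Y)})$, which equals the identity on $D_R(\catC(X,Y))$ precisely because $D_R$ is a functor on $\catSet$. Hence $D_R(\Id_\catC)=\Id_{D_R(\catC)}$. For the composition axiom, the object maps agree immediately, since the object map of $D_R(H)$ is by definition that of $H$, and $(G\circ F)$ and $G\circ F$ have the same action on objects. On morphism sets I would use that functor composition factors the hom-set maps as $(G\circ F)_{X,Y}=G_{FX,FY}\circ F_{X,Y}$, so that $D_R(G\circ F)_{X,Y}=D_R(G_{FX,FY}\circ F_{X,Y})$. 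Functoriality of the set-level $D_R$ then yields $D_R(G_{FX,FY})\circ D_R(F_{X,Y})=D_R(G)_{FX,FY}\circ D_R(F)_{X,Y}$, which is exactly the morphism map of $D_R(G)\circ D_R(F)$.

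There is no genuine obstacle here: once Lemma \ref{lem:def-DC} and Lemma \ref{lem:def-DF} are in place, the corollary is purely a matter of bookkeeping, tracking that the assignment respects the two layers, objects and hom-sets, separately. The only point requiring minor care is to invoke functoriality of the set-level $D_R$ at the level of hom-sets, and to confirm that $G_{FX,FY}\circ F_{X,Y}$ indeed equals $(G\circ F)_{X,Y}$, which is immediate from the definition of functor composition. I therefore expect the proof to be a short verification rather than a substantive argument.
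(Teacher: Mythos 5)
Your proof is correct and follows exactly the paper's route: the paper's proof is the one-line observation that the corollary "follows from the functoriality of $D_R:\catSet\to\catSet$", and your verification of the identity and composition axioms, reducing each to the set-level functoriality of $D_R$ on hom-sets, is just the explicit bookkeeping behind that remark.
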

	\begin{proof}
		Follows from the 
		%can verifies this using 
		the functoriality of $D_R : \catSet \to \catSet$. 
	\end{proof}
	%
%	\begin{defn}\label{DelMu}
%{\rm	
%		Given a category $\catC$, we define $\delta_\catC: \catC \to D_R(\catC)$ to be the identity on the objects, and 
%		$\delta_\catC :\catC(X,Y) \to D_R(\catC)(X,Y)$ to be $\delta_{\catC(X,Y)}$ for objects
%		$X,Y$ of $\catC$.	
%		We also define $\mu_\catC : D_R(D_R(\catC)) \to D_R(\catC)$ to be the identity on the objects, and       
%		$\mu_\catC :D_R(D_R(\catC))(X,Y) \to D_R(\catC)(X,Y)$ to be $\mu_{\catC(X,Y)}$.
		% for  $X,Y \in \catC$.
%}	
%	\end{defn}
	%
	
{Next we will show that $D_R$ is a monad. For this we introduce the structure morphisms.}	
	
	\begin{lem}\label{DeltaMuNat}
	%	Let $R$ be a commutative semiring. 
		Given a category $\catC$ and objects $X,Y$ of $\catC$ we define the following maps:
		\begin{itemize}
			\item $\delta_\catC: \catC \to D_R(\catC)$ to be the identity map on the objects and 
			$\delta_\catC :\catC(X,Y) \to D_R(\catC)(X,Y)$ to be $\delta_{\catC(X,Y)}$.
			\item $\mu_\catC : D_R(D_R(\catC)) \to D_R(\catC)$ to be the identity on the objects and       
			$\mu_\catC :D_R(D_R(\catC))(X,Y) \to D_R(\catC)(X,Y)$ to be $\mu_{\catC(X,Y)}$.
			%for $X,Y \in C$.
		\end{itemize}
		These maps specify the following natural transformations:
		$$
		\delta: \idy_\Cat \to D_R\;\;\text{ and }\;\; \mu:D_RD_R \to D_R.
		$$
		%Given a commutative semiring $R$, Definition \ref{DelMu} gives us the natural transformations: 
		%
		%\ak{I struggled to fix this diagrams} 
		%$$
		%\begin{tikzcd}
		%\Cat \arrow[r, bend left=50,"\Id_{\Cat}" ""{name=A}]
		%\arrow[r, bend right=50, "D_R"' ""{name=B}]
		%& \Cat
		%\arrow[Rightarrow, from=A, to=B]
		%\end{tikzcd}
		%\hspace{1cm}
		%\begin{tikzcd}
		%\Cat \arrow[r, bend left=50,"D_R \circ D_R" %""{name=A,draw=white}]
		%\arrow[r, bend right=50, "D_R"' ""{name=B, draw=white}]
		%& \Cat
		%\arrow[Rightarrow, from=A, to=B]
		%\end{tikzcd}
		%$$
	\end{lem}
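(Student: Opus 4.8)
The plan is to check separately that $\delta_\catC$ and $\mu_\catC$ are functors, and then that both assignments are natural in $\catC$; the naturality will in each case reduce to the corresponding naturality on $\catSet$. That $\delta_\catC$ is a functor is immediate: it is the identity on objects, it sends $\Id_X$ to $\delta^{\Id_X}$, which is the identity of $D_R(\catC)$ by Lemma \ref{lem:def-DC}, and it preserves composition since $\delta^g \ast \delta^f = \delta^{g \circ f}$ by the last assertion of Lemma \ref{m1m2}.

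For $\mu_\catC$, the action on objects is again trivial and preservation of identities is a one-line check: the identity of $D_R(D_R(\catC))(X,X)$ is $\delta^{\delta^{\Id_X}}$ (applying Lemma \ref{lem:def-DC} to the category $D_R(\catC)$), and $\mu_{\catC(X,X)}(\delta^{\delta^{\Id_X}})(f) = \sum_{p}\delta^{\delta^{\Id_X}}(p)\,p(f) = \delta^{\Id_X}(f)$, so $\mu_\catC$ returns the identity $\delta^{\Id_X}$. The substantive step, which I expect to be the main obstacle, is that $\mu_\catC$ preserves composition, i.e. $\mu_\catC(Q \ast P) = \mu_\catC(Q) \ast \mu_\catC(P)$ for composable $P,Q$ in $D_R(D_R(\catC))$. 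The difficulty is bookkeeping: the composition $Q \ast P$ lives at the \emph{outer} level of $D_R(D_R(\catC))$, whose composition law is Equation (\ref{eq:composition-DC}) applied to the category $D_R(\catC)$, i.e. with the inner convolution $\ast$ playing the role of $\circ$, so two nested layers of convolution must be unwound. I would expand $\mu_\catC(Q\ast P)(f)=\sum_{r}(Q\ast P)(r)\,r(f)$, substitute the outer convolution to turn the constraint defining $r$ into a free sum over composable pairs $(s_1,s_2)$ of morphisms of $D_R(\catC)$, then expand $(s_2\ast s_1)(f)=\sum_{g_2\circ g_1=f}s_2(g_2)s_1(g_1)$ and interchange the (finite) sums so that the sums over $s_1$ and $s_2$ factor. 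The result is $\sum_{g_2\circ g_1=f}\bigl(\sum_{s_2}Q(s_2)s_2(g_2)\bigr)\bigl(\sum_{s_1}P(s_1)s_1(g_1)\bigr)=\sum_{g_2\circ g_1=f}\mu_\catC(Q)(g_2)\,\mu_\catC(P)(g_1)$, which is exactly $(\mu_\catC(Q)\ast\mu_\catC(P))(f)$. The interchange and factorization are justified because all distributions involved have finite support.

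Finally, for naturality I would note that in both squares every structure map is the identity on objects and $D_R(F)$ agrees with $F$ on objects, so it suffices to verify equality on hom-sets. Restricted to a morphism set $\catC_1(X,Y)$, the identities $D_R(F)\circ\delta_{\catC_1}=\delta_{\catC_2}\circ F$ and $D_R(F)\circ\mu_{\catC_1}=\mu_{\catC_2}\circ D_R(D_R(F))$ become $D_R(F_{X,Y})\circ\delta_{\catC_1(X,Y)}=\delta_{\catC_2(FX,FY)}\circ F_{X,Y}$ and $D_R(F_{X,Y})\circ\mu_{\catC_1(X,Y)}=\mu_{\catC_2(FX,FY)}\circ D_R(D_R(F_{X,Y}))$, which are precisely the naturality squares of $\delta$ and $\mu$ for the monad $D_R$ on $\catSet$ applied to the function $F_{X,Y}\colon\catC_1(X,Y)\to\catC_2(FX,FY)$. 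Since $\delta$ and $\mu$ are natural on $\catSet$ by construction, these hold componentwise, which completes the argument.
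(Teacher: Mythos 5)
Your proposal is correct and follows essentially the same route as the paper's proof: functoriality of $\delta_\catC$ via Lemma \ref{m1m2}, functoriality of $\mu_\catC$ by the same unwinding of the two nested convolutions and sum interchange (justified by finite supports), and naturality reduced componentwise to the set-level naturality of $\delta$ and $\mu$ on hom-sets. The only difference is cosmetic: you spell out the naturality reduction that the paper dispatches in one sentence.
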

	\begin{proof}
	%\comm{@CO check proof}
		First, we prove that $\delta_\catC$ and $\mu_\catC$ are functors. 
		Given $X \in \ob(\catC)$,  we have 
		$\delta_\catC(\Id_X)=\delta^{\Id_X}$, which is the identity morphism in $D_R(\catC)(X,X)$.
		Similarly, 
 $\mu_\catC(\delta^{\delta^{\Id_{X}}})=
 \delta^{\Id_X}$.
		Given $f \in \catC(X,Y)$, $g \in \catC(Y,Z)$ we have
		$$
		\delta_\catC(g \circ f)=\delta^{g \circ f} = \delta^{g} \ast \delta^{f}
		=\delta_\catC(g) \ast \delta_\catC(f),
		$$ 
where the second equality follows from Lemma \ref{m1m2}.		For $Q_1 \in D_R(D_R(\catC(X,Y)))$, $Q_2 \in D_R(D_R(\catC(Y,Z))$ and $f \in \catC(X,Z)$ we calculate
		$$
		\begin{aligned}
			\mu_\catC(Q_2 \ast Q_1)(f)&=\sum_{q \in
				D_R(\catC(X,Z))}(Q_2 \ast Q_1)(q)q(f)=\sum_{q \in
				D_R(\catC(X,Z))}\left(\sum_{p_1 \ast p_2 = q}Q_2(p_2) \cdot Q_1(p_1)\right)q(f)\\
			&=\sum_{q \in D_R(\catC(X,Z))}\sum_{p_2 \ast p_1 = q}Q_2(p_2) \cdot Q_1(p_1)(p_2\ast p_1)(f) \\
			&=\sum_{q \in D_R(\catC(X,Z))}\sum_{p_2 \ast p_1 = q}Q_2(p_2) \cdot Q_1(p_1) \sum_{g_2 \circ g_1 = f}p_2(g_2) \cdot p_1(g_1) \\
			&=\sum_{q \in D_R(\catC(X,Z))}\sum_{p_2 \ast p_1 = q}\sum_{g_2 \circ g_1 = f}Q_2(p_2) \cdot Q_1(p_1)
			\cdot p_2(g_2) \cdot p_1(g_1)\\
			&=\sum_{p_1 , p_2}\sum_{g_2 \circ g_1 = f}Q_2(p_2) \cdot Q_1(p_1)\cdot
			p_2(g_2) \cdot p_1(g_1)\\
			&=\sum_{g_2\circ g_1=f  }\sum_{p_2 \in
				D_R(\catC(Y,Z))}Q_2(p_2)\cdot p_2(g_2)\sum_{p_1 \in D_R(C(X,Y))}Q_1(p_1)\cdot p_1(g_1)\\
				&= \sum_{g_2\circ g_1=f}\mu_\catC(Q_2)(g_2) \mu_\catC(Q_1)(g_1) \\
			&=	(\mu_\catC(Q_2)  \ast \mu_\catC(Q_1))(f).
		\end{aligned}
		$$
		%\ak{We need a commutative semiring just for the functoriality 
			%of $\mu$.}	
		The naturality of $\delta$ and $\mu$ follows from the	naturality of the set-theoretic versions of $\delta$ and $\mu$.
	\end{proof}
	
	Combining Corollary \ref{DRFun} and Lemma \ref{DeltaMuNat} we obtain the main 
	result of this subsection.
	
	\begin{cor}\label{DRCat}
		The triple $(D_R,\delta,\mu)$ is a monad on $\catCat$
	\end{cor}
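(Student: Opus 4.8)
The plan is to verify directly that the triple $(D_R,\delta,\mu)$ satisfies the three monad axioms, reducing each categorical identity to the already-established set-theoretic monad axioms for $D_R:\catSet\to\catSet$. Since Lemma \ref{DeltaMuNat} has established that $\delta$ and $\mu$ are natural transformations, all that remains is the coherence: I must check $\mu\circ D_R\mu = \mu\circ\mu D_R$ (associativity) and $\mu\circ\delta D_R = \mu\circ D_R\delta = \idy_{D_R}$ (unit laws), now as equations of functors on $\catCat$.

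First I would observe that each of these is an equality of functors $D_R^k(\catC)\to D_R^j(\catC)$ that acts as the identity on objects, by construction of $\delta_\catC$ and $\mu_\catC$ in Lemma \ref{DeltaMuNat}. Therefore the content of each axiom lies entirely at the level of the morphism sets. For fixed objects $X,Y$ of $\catC$, the morphism set of $D_R^k(\catC)$ from $X$ to $Y$ is precisely $D_R^k(\catC(X,Y))$, and the components of $\delta_\catC$ and $\mu_\catC$ on these hom-sets are exactly the set-theoretic structure maps $\delta_{\catC(X,Y)}$ and $\mu_{\catC(X,Y)}$. Hence on the hom-set from $X$ to $Y$ each categorical monad identity restricts to the corresponding identity for the set-theoretic monad $D_R$ applied to the set $\catC(X,Y)$, which holds by the construction of $D_R$ on $\catSet$ recalled in Section \ref{sec:alg-over-monad}. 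This reduces all three axioms to componentwise checks that are already known.

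The step requiring the most care is confirming that the hom-set components of the composite functors genuinely coincide with the set-theoretic structure maps on each $\catC(X,Y)$, rather than with maps that mix different hom-sets. This is where the multiplicativity computation in the proof of Lemma \ref{DeltaMuNat}, showing $\mu_\catC(Q_2\ast Q_1)=\mu_\catC(Q_2)\ast\mu_\catC(Q_1)$, does the real work: it guarantees $\mu_\catC$ is a functor whose hom-components are exactly $\mu_{\catC(X,Y)}$, and the analogous statement for $\delta_\catC$ is immediate. Once this identification is in place, the functorial associativity and unit identities hold if and only if they hold componentwise on each $\catC(X,Y)$, and there they are the set-theoretic monad axioms.

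I would therefore write the proof as follows: note that Lemma \ref{DeltaMuNat} already provides naturality, so it suffices to check the monad coherence axioms; observe that all the relevant natural transformations are identities on objects so the axioms are equations of functors determined by their action on hom-sets; and then remark that on each hom-set $\catC(X,Y)$ the relevant maps are the set-theoretic structure maps of $D_R$, whence the axioms follow from the fact that $(D_R,\delta,\mu)$ is a monad on $\catSet$. No lengthy computation is needed beyond this reduction, since the genuinely computational identity (compatibility of $\mu_\catC$ with $\ast$) was already dispatched in the previous lemma.
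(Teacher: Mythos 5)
Your proposal is correct and follows essentially the same route as the paper: the paper's proof is simply "combining Corollary \ref{DRFun} and Lemma \ref{DeltaMuNat}," implicitly relying on exactly the reduction you spell out — that since $\delta_\catC$ and $\mu_\catC$ are identities on objects and act hom-set-wise as the set-theoretic structure maps $\delta_{\catC(X,Y)}$ and $\mu_{\catC(X,Y)}$, the monad coherence axioms hold componentwise by the set-theoretic monad axioms for $D_R$ on $\catSet$. Your write-up just makes explicit what the paper leaves to the reader.
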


	\begin{defn}\label{ConCatttt}
{\rm	
A $D_R$-algebra in $\catCat$ will be called an
%An 
{\it $R$-convex category}. 
%is a $D_R$-algebra in $\catCat$.
%	A category $\catC$ that is a $D_R$-algebra, where $D_R$ is the monad acting on $\catCat$, is called an 
		%An 
%		{\it $R$-convex category}.
		%is an algebra $(C,\pi^C)$ over the monad $(D_R,\delta,\mu)$ on $\Cat$, 
		We will write $\catConvCat_R$ for the category of $R$-convex categories.
}		
	\end{defn}
% 
%One can immediately get the following result from Definition 
%\ref{ConCatttt}. 

{This notion of  convex category is different than the one introduced in \cite[Definition 5.1]{Jacobs_2011}.
For example, $(\rr,\cdot)$ is an $\rr_{\geq 0}$-convex category with one object, but not a convex category in the latter sense since coproducts do not exist in this category.
Next, we provide an explicit criterion to check whether a category is $R$-convex.
%, but there is no finite coproducts.
}

\begin{prop}	\label{ConvCat2}
	Let $\catC$ be a category such that the set $\catC(X,Y)$ of morphisms is an $R$-convex set for all objects $X,Y$ of $\catC$.
		%for  $X,Y \in C$ the set of morphisms $C(X,Y)\in Conv$. 
We define the following maps:
%$\pi^\catC : D_R(\catC) \to \catC$ as  follows:
\begin{itemize}
\item $\ob(D_R(\catC)) \to \ob(\catC)$ to be the identity map.
\item $D_R(\catC)(X,Y) \to \catC(X,Y)$ to be the structure map $\pi^{\catC(X,Y)}$. 
\end{itemize}
These maps give a well defined functor 
$\pi^\catC : D_R(\catC) \to \catC$ if and only if 
$(\catC,\pi^\catC)$ is an  $R$-convex category.
%if and only if $\pi^\catC$ is a well-defined functor.
	%This definition is equivalent to
	%which is equivalent to 
	%saying that for all objects $X,Y$ of $\catC$
	%the pair $(\catC(X,Y),\pi^{\catC(X,Y)})$ belongs to $\Conv_R$, and 
%for all objects $X,Y,Z$ of $C$ the following diagram commutes:
%
%\mydiagram[\label{PiFunctor}]{D_R(\catC(X,Y))\times D_R(\catC(Y,Z)) 
%\ar[r]^>>>>>>{\ast} \ar[d]^{\pi^{\catC(X,Y)} \times 
%\pi^{\catC(Y,Z)}} &
%D_R(\catC(X,Z)) \ar[d]^{\pi^{\catC(X,Z)}}  \\ 
%\catC(X,Y) \times \catC(Y,Z) \ar[r]^>>>>>>>>>>>{\circ} &  \catC(X,Z)
%}
%
%$$
	%\begin{tikzcd}
		%D_R(\catC(X,Y))\times D_R(\catC(Y,Z)) \arrow[r,"\ast"] 
		%\arrow[d,"\pi^{\catC(X,Y)} \times \pi^{\catC(Y,Z)}"] & %D_R(\catC(X,Z)) \arrow[d,"\pi^{\catC(X,Z)}"] \\
	%\catC(X,Y) \times \catC(Y,Z) \arrow[r,"\circ"] & \catC(X,Z)
	%\end{tikzcd}
%$$
	%The top horizontal map is the one given by Equation (\ref{eq:composition-DC}) and the bottom horizontal map is the composition map in $\catC$.
%
\end{prop}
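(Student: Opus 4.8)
The plan is to reduce both implications to one observation: since every hom-set $\catC(X,Y)$ is an $R$-convex set, the structure maps $\pi^{\catC(X,Y)}$ already satisfy the two $D_R$-algebra axioms of Diagram (\ref{AAA}) at the level of sets. The only genuinely new condition imposed by asking that the assembled maps form a functor $\pi^\catC : D_R(\catC)\to\catC$ is preservation of composition, and the same condition is the only nontrivial ingredient in being an $R$-convex category. So the equivalence will fall out once I separate the automatic pieces from this single nontrivial one.

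First I would record that $\pi^\catC$ automatically preserves identities. The identity of $X$ in $D_R(\catC)$ is $\delta^{\Id_X}$ by Lemma \ref{lem:def-DC}, and since $\pi^\catC$ acts on the hom-set $D_R(\catC)(X,X)=D_R(\catC(X,X))$ by $\pi^{\catC(X,X)}$, the unit axiom $\pi^{\catC(X,X)}\circ\delta_{\catC(X,X)}=\Id$ of the $R$-convex set $\catC(X,X)$ gives $\pi^\catC(\delta^{\Id_X})=\Id_X$. Hence $\pi^\catC$ is a functor if and only if it respects the composition $\ast$ of Equation (\ref{eq:composition-DC}), that is $\pi^\catC(q\ast p)=\pi^\catC(q)\circ\pi^\catC(p)$ for all composable $p,q$.

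For the forward implication, I would assume these maps define a functor and verify the two diagrams of (\ref{AAA}) in $\catCat$. Both sides of each diagram are functors acting as the identity on objects, so equality of functors reduces to equality on each hom-set. Restricting the unit diagram to $\catC(X,Y)$ yields exactly $\pi^{\catC(X,Y)}\circ\delta_{\catC(X,Y)}=\Id$, and restricting the associativity diagram yields $\pi^{\catC(X,Y)}\circ D_R(\pi^{\catC(X,Y)})=\pi^{\catC(X,Y)}\circ\mu_{\catC(X,Y)}$; both hold because $\catC(X,Y)$ is an $R$-convex set. Thus $\pi^\catC$ is a $D_R$-algebra structure map and $(\catC,\pi^\catC)$ is an $R$-convex category in the sense of Definition \ref{ConCatttt}. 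The converse is immediate: an $R$-convex category comes equipped with a structure map that is by definition a morphism in $\catCat$, i.e. a functor, so if $(\catC,\pi^\catC)$ is $R$-convex then in particular these maps assemble into a well-defined functor.

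The main obstacle is organizational rather than conceptual: one must check that the two commuting squares defining a $D_R$-algebra in $\catCat$ decompose hom-set by hom-set into the set-level axioms. This relies on $\delta_\catC$, $\mu_\catC$ and $D_R(\pi^\catC)$ acting on each hom-set exactly as $\delta_{\catC(X,Y)}$, $\mu_{\catC(X,Y)}$ and $D_R(\pi^{\catC(X,Y)})$, which is precisely what Lemmas \ref{lem:def-DF} and \ref{DeltaMuNat} guarantee. With this compatibility in place, the proposition becomes a clean equivalence between being a functor and being an algebra, the single nontrivial condition in either direction being preservation of composition by $\pi^\catC$.
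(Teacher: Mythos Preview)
Your proof is correct. The paper actually states Proposition \ref{ConvCat2} without proof, presumably regarding it as a direct unwinding of the definitions, so there is nothing to compare against. Your argument makes explicit exactly the right observation: since $\delta_\catC$, $\mu_\catC$, and $D_R(\pi^\catC)$ act hom-set by hom-set as the corresponding set-level maps (Lemmas \ref{lem:def-DF} and \ref{DeltaMuNat}), the two diagrams of (\ref{AAA}) in $\catCat$ restrict on each hom-set to the $D_R$-algebra axioms for $\catC(X,Y)$, which hold by hypothesis. Hence the only content in asking that $(\catC,\pi^\catC)$ be a $D_R$-algebra in $\catCat$ is that $\pi^\catC$ be a morphism in $\catCat$, i.e., a functor; and conversely, if it is a functor the algebra axioms come for free. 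Your reduction of functoriality to preservation of composition (identities being automatic from the unit axiom) is also clean and correct.
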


%\begin{remark}
%\ak{[[Here is what I think after reading: 
%This definition is different from Definition 5.1 in \cite{Jacobs_2011},for example $(\rr,\cdot)$ is an $\rr_{\geq 0}$-convex category with one object, but there is no finite coproducts. Also one can build some category that satisfies the condition of Jacobs's definition but not 
%a real convex category, but it doesn't deserve, it is obvious that the two definitions are different.]]}
%\end{remark}      
		
{Categories enriched over $R$-convex sets are examples of convex categories.}		

\begin{prop}\label{EnrichSOconv}
Any category $\catC$ enriched over 
%Any $\catC$ be a category enriched over 
$\catConv_R$ is an $R$-convex category.
%, then $\catC \in \catConvCat_R$.
\end{prop}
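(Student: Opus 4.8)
The plan is to verify the hypotheses of Proposition \ref{ConvCat2}. Since $\catC$ is enriched over $\catConv_R$ (with the tensor product as monoidal structure), each hom-set $\catC(X,Y)$ carries by definition the structure of an $R$-convex set, say with structure map $\pi^{\catC(X,Y)}:D_R(\catC(X,Y))\to \catC(X,Y)$. Thus the candidate maps of Proposition \ref{ConvCat2} are already defined: the identity on objects together with $\pi^{\catC(X,Y)}$ on each hom-set. It therefore suffices to check that these assemble into a functor $\pi^\catC: D_R(\catC)\to \catC$; once this is done Proposition \ref{ConvCat2} immediately yields that $(\catC,\pi^\catC)$ is an $R$-convex category. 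Preservation of identities is the easy half: the identity morphism of $D_R(\catC)(X,X)$ is $\delta^{\Id_X}$ by Lemma \ref{lem:def-DC}, and $\pi^{\catC(X,X)}(\delta^{\Id_X})=\Id_X$ is exactly the unit axiom $\pi^{\catC(X,X)}\circ \delta_{\catC(X,X)}=\Id_{\catC(X,X)}$ of the convex set $\catC(X,X)$.

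The substance lies in preservation of composition. Writing $A=\catC(X,Y)$, $B=\catC(Y,Z)$, $C=\catC(X,Z)$ and letting $\circ:A\times B\to C$ denote composition in $\catC$, the definition of $\ast$ in Equation (\ref{Convul}) unwinds the required identity $\pi^\catC(q\ast p)=\pi^\catC(q)\circ \pi^\catC(p)$ into
$$
\pi^{C}\big(D_R(\circ)(m_{A,B}(P,Q))\big)=\circ\big(\pi^{A}(P),\pi^{B}(Q)\big)\qquad (P\in D_R(A),\ Q\in D_R(B)).
$$
This is precisely the statement that $\circ$ is a bimorphism of $R$-convex sets, i.e.\ a map that is a morphism of convex sets in each variable separately, and this is exactly the content of enrichment over $\catConv_R$: enriched composition is a $\catConv_R$-morphism out of the tensor product $A\otimes B$, and such morphisms correspond precisely to bimorphisms $A\times B\to C$.

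What remains is to deduce this two-variable identity from the two one-variable affineness conditions, and I would do so by reducing one variable at a time through the strength of the commutative monad $D_R$. First I would factor the product map as $m_{A,B}(P,Q)=\mu_{A\times B}\big(D_R(\mathrm{st}(P,-))(Q)\big)$, where $\mathrm{st}(P,b)=D_R(a\mapsto(a,b))(P)$; pushing forward along $\circ$ and using naturality of $\mu$ together with the algebra axiom $\pi^{C}\circ\mu_{C}=\pi^{C}\circ D_R(\pi^{C})$ (the right-hand square of (\ref{AAA})) rewrites the left side as $\pi^{C}\big(D_R(\circ(\pi^{A}(P),-))(Q)\big)$, where affineness of $\circ$ in the first variable has been used to collapse the $A$-direction. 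Applying affineness in the second variable then delivers $\circ(\pi^{A}(P),\pi^{B}(Q))$, as required. The main obstacle is organizing this reduction correctly in the general $R$-convex setting: one cannot literally expand $\pi^{A}(P)$ as a scalar-weighted sum $\sum_a P(a)\,a$, so every step must be phrased through the strength and the monad and algebra axioms rather than by manipulating coefficients. Once this compatibility is established, functoriality of $\pi^\catC$ follows and Proposition \ref{ConvCat2} closes the proof.
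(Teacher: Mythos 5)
Your strength-based argument is mathematically sound, but it takes a genuinely different route from the paper, and it rests on a reading of the hypothesis that conflicts with the paper's conventions. The paper's proof never invokes a strength: writing $A=\catC(X,Y)$, $B=\catC(Y,Z)$, $C=\catC(X,Z)$, it equips $A\times B$ with the product $D_R$-algebra structure, notes (via the proof of Theorem 5.6.5 in \cite{riehl2017category}) that $\pi^{A\times B}=(\pi^{A}\times\pi^{B})\circ(D_R(r_1)\times D_R(r_2))$, and then uses Lemma \ref{SecofPro} (that $m_{A,B}$ is a section of $D_R(r_1)\times D_R(r_2)$) to conclude $\pi^{A\times B}\circ m_{A,B}=\pi^{A}\times\pi^{B}$; functoriality of $\pi^{\catC}$ then falls out of a single diagram, using that composition is a morphism of $D_R$-algebras out of the \emph{cartesian} product. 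Your reduction one variable at a time through the strength of the commutative monad is a correct alternative, and it actually proves a stronger statement: it only needs composition to be affine in each variable separately (a bimorphism), which is strictly weaker than being a $\catConv_R$-morphism out of the product convex set.

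The caveat is your assertion that enrichment over $\catConv_R$ means \emph{exactly} that composition is a $\catConv_R$-morphism out of a tensor product $A\otimes B$, i.e.\ a bimorphism. That is not the paper's meaning, and it cannot be: Remark \ref{rem:enriched-convex} states that $(\RR,\cdot)$, viewed as a one-object category, is an $\RR_{\geq 0}$-convex category that is \emph{not} enriched over $\catConv$, yet multiplication is certainly a bimorphism (it is bilinear, hence affine in each variable). Under your tensor-product reading, the proposition would degenerate into an equivalence for categories with convex hom-sets, and that remark would be false. The paper's enrichment is with respect to the cartesian structure: composition is required to be an affine map out of the product convex set $\catC(X,Y)\times\catC(Y,Z)$. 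Your proof survives this correction, because joint affineness implies separate affineness (restrict along $a\mapsto(a,b)$, which is a morphism of $D_R$-algebras into the product), but that one-line bridge from the actual hypothesis to the bimorphism condition you use is missing from your write-up; as it stands, the misidentification of the hypothesis is the only genuine gap, and it is easily repaired.
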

\begin{proof}
The set $\catC(X,Y)$ is $R$-convex for every object $X,Y$.
% of $\catC$. 
We will prove that the maps in Proposition \ref{ConvCat2} give a well-defined functor
$\pi^{\catC}:D_R(\catC) \to \catC$.
% in Proposition \ref{ConvCat2} is a well-defined functor. 
%\begin{itemize}
%\item 
We have 
$\pi^{\catC(X,X)}(\delta^{\Id_X})
=\pi^{\catC(X,X)}(\delta_{\catC(X,X)}(\Id_X))
=\Id_X$.
%\item 
Given $X,Y,Z \in \ob(\catC)$, let 
$r_1: \catC(X,Y)\times \catC(Y,Z) 
\to \catC(X,Y)$ and $r_2: \catC(X,Y)\times \catC(Y,Z) 
\to \catC(Y,Z)$ be the projection maps.
%structure maps of the product $\catC(X,Y)\times \catC(Y,Z)$, then
Using the proof of \cite[Theorem  5.6.5]{riehl2017category},
we obtain that
\begin{equation}\label{PiProd} 
\pi^{\catC(X,Y) \times \catC(Y,Z)}=(\pi^{\catC(X,Y)} \times \pi^{\catC(Y,Z)}) \circ (D_R(r_1) \times D_R(r_2) ) 
\end{equation}
By composing Equation (\ref{PiProd}) with 
$m_{\catC(X,Y),\catC(Y,Z)}$, defined in Equation (\ref{Mulmap}), from the right and using  Lemma \ref{SecofPro} we obtain that   
$(\pi^{\catC(X,Y) \times \catC(Y,Z)})
\circ m_{\catC(X,Y),\catC(Y,Z)} =\pi^{\catC(X,Y)} \times \pi^{\catC(Y,Z)}
$. This gives the following commutative
diagram:
%hookrightarrow
%\begin{equation}
$$
\begin{tikzcd}
D_R(\catC(X,Y))\times D_R(\catC(Y,Z)) \arrow[rr] \arrow[rd,"m"]
 \arrow[dd,"\pi\times \pi"]
 && \catC(X,Z) \arrow[dd,"\pi"]  \\
 & D_R(\catC(X,Y)\times\catC(Y,Z)) 
 \arrow[ru,"D_R(\circ)"]
 \arrow[ld,"\pi"]  & \\
\catC(X,Y)\times \catC(Y,Z) 
\arrow[rr,"\circ"] && \catC(X,Z)
\end{tikzcd}
$$
%\end{equation}
%\ak{[[The diagram written here but there is a problem with compiling.]]}
%\end{itemize}
By Proposition \ref{ConvCat2} we conclude that $\catC$ is an $R$-convex category. 
\end{proof}

\begin{rem}\label{rem:enriched-convex}
{\rm
The notion of $R$-convex category is   weaker than the notion of category enriched over 
$\catConv_R$. For example, 
the monoid $(\rr, \cdot)$ as a category with one object is an $\rr_{\geq 0}$-convex category, but the product is not a morphism in 
$\catConv$. Therefore it is not enriched over convex sets.
{Similarly, the Kleisli category $\catSet_{D_R}$ is not enriched over $R$-convex sets, but it is an $R$-convex category. In Proposition \ref{pro:Kleisli-conv-cat} we will show that its simplicial version $\catsSet_{D_R}$ is also an $R$-convex category, which is again not enriched over $R$-convex sets.
}
}
\end{rem}

\subsubsection{Real convex categories}

{The isomorphism between the categories of   $\RR_{\geq 0}$-convex sets and real convex sets (Proposition \ref{Conv=ConvR})   extends to the case of $\RR_{\geq 0}$-convex categories. We have the following equivalent characterizations of $\RR_{\geq 0}$-convex categories.}

\begin{prop}\label{ThConvCat} 
Let $\catC$ be a category such that the set $\catC(X,Y)$ of morphisms is a real convex set for all objects $X,Y$ of $\catC$.
Let $\pi^\catC : D(\catC) \to \catC$ be defined as in 
Proposition \ref{ConvCat2} where $\pi^{\catC(X,Y)}$ is the corresponding structure map of $\catC(X,Y)$ when regarded 
as an $\RR_{\geq 0}$-convex set (see Proposition \ref{Conv=ConvR}). 
%	 
%in the sense of Remark \ref{PiFormula}.
%$\pi^C :D(C) \to C$
%to be the identity on the objects and carries  a morphism $P \in D(C(X,Y))$ in $D(C)$ to $\pi^{C(X,Y)}(P)$, where  
%$\pi^{C(X,Y)}$ is the corresponding structure map of $C(X,Y)$ in the sense of Remark \ref{PiFormula}.
Then the following statements are equivalent.
		\begin{enumerate}
			\item $(\catC,\pi^\catC)$ is an  $\rr_{\geq 0}$-convex category.
			
			\item For objects $X,Y,Z$ of $\catC$, morphisms $f_i \in \catC(X,Y)$, $g_i \in \catC(Y,Z)$, and numbers $\alpha_i ,\beta_i \in [0,1]$, where $1 \leq i \leq k$,  satisfying
			$\sum_{i} \alpha_i = \sum_{i} \beta_i = 1$ we have 
			$$
			\left(\sum_{i} \beta_i  g_i\right)\circ \left(\sum_{i} \alpha_i f_i\right) 
			=\sum_{i,j} \beta_j\alpha_i\, g_j \circ f_i.
			$$

			\item For objects $X,Y,Z$ of $\catC$, morphisms $f_1,f_2 \in \catC(X,Y)$, $g_1,g_2 \in \catC(Y,Z)$, and numbers 
			$\alpha ,\beta \in [0,1]$ we have the following equality:
$$
\begin{aligned}
(\beta g_1 + (1-\beta) g_2) & \circ (\alpha f_1 + (1-\alpha) f_2) 
 \\
&=\beta \alpha  g_1 \circ f_1 + \beta (1-\alpha) g_1\circ  f_2 +
(1-\beta) \alpha g_2 \circ f_1
 + (1-\beta) (1-\alpha) g_2 \circ f_2.
\end{aligned}
$$
\end{enumerate} 
\end{prop}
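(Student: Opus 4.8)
The plan is to establish the three-way equivalence by reducing (1) $\Leftrightarrow$ (2) to the criterion of Proposition~\ref{ConvCat2}, and then handling (2) $\Leftrightarrow$ (3) by a direct specialization together with an induction. For (1) $\Leftrightarrow$ (2), recall that by Proposition~\ref{ConvCat2} the pair $(\catC,\pi^\catC)$ is an $\RR_{\geq 0}$-convex category precisely when the stated maps assemble into a functor $D(\catC)\to\catC$. Preservation of identities is automatic, since $\pi^{\catC(X,X)}(\delta^{\Id_X})=\Id_X$ by the unit axiom of a $D$-algebra, so the only real content is preservation of composition. Writing $p=\sum_f\alpha_f\delta^f$ and $q=\sum_g\beta_g\delta^g$, Lemma~\ref{m1m2} gives $q\ast p=\sum_{f,g}\beta_g\alpha_f\,\delta^{g\circ f}$, and the barycenter description of $\pi$ from Proposition~\ref{Conv=ConvR} turns the functoriality equation $\pi^\catC(q\ast p)=\pi^\catC(q)\circ\pi^\catC(p)$ into
$$
\left(\sum_g\beta_g\,g\right)\circ\left(\sum_f\alpha_f\,f\right)=\sum_{f,g}\beta_g\alpha_f\,g\circ f .
$$
This is exactly statement (2), after reindexing the two supports by a common finite set $\{1,\dots,k\}$, padding with zero-weight terms and collecting any repeated morphisms; all of these moves are harmless by the convex-set axioms. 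Hence (1) and (2) are equivalent.

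The implication (2) $\Rightarrow$ (3) is simply the case $k=2$ with $(\alpha_1,\alpha_2)=(\alpha,1-\alpha)$ and $(\beta_1,\beta_2)=(\beta,1-\beta)$. For the converse (3) $\Rightarrow$ (2) I would first extract one-sided distributivity: setting $g_1=g_2=g$ in (3) collapses its right-hand side to $\alpha\,g\circ f_1+(1-\alpha)\,g\circ f_2$, yielding $g\circ(\alpha f_1+(1-\alpha)f_2)=\alpha\,g\circ f_1+(1-\alpha)\,g\circ f_2$ for a single $g$, and symmetrically $(\beta g_1+(1-\beta)g_2)\circ f=\beta\,g_1\circ f+(1-\beta)\,g_2\circ f$. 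An induction on $k$ — decomposing $\sum_{i=1}^k\alpha_i f_i=\alpha_1 f_1+(1-\alpha_1)\sum_{i\geq 2}\tfrac{\alpha_i}{1-\alpha_1}f_i$ when $\alpha_1\neq 1$, the case $\alpha_1=1$ being trivial — then upgrades these to the $k$-ary forms $g\circ(\sum_i\alpha_i f_i)=\sum_i\alpha_i\,g\circ f_i$ and $(\sum_j\beta_j g_j)\circ f=\sum_j\beta_j\,g_j\circ f$.

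To conclude, I would apply the first $k$-ary identity with the \emph{single} morphism $g=\sum_j\beta_j g_j$ (which is a genuine element of $\catC(Y,Z)$) and then the second identity inside each summand:
$$
\left(\sum_j\beta_j g_j\right)\circ\left(\sum_i\alpha_i f_i\right)=\sum_i\alpha_i\left(\sum_j\beta_j\,g_j\circ f_i\right)=\sum_{i,j}\beta_j\alpha_i\,g_j\circ f_i,
$$
which is statement (2).

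The main obstacle I anticipate is bookkeeping rather than anything conceptual. One must justify that an iterated convex combination $\sum_i\alpha_i f_i$ is a well-defined element independent of the bracketing, which rests on axiom~(4) for real convex sets, and one must treat the degenerate coefficients $\alpha_i\in\{0,1\}$ carefully so that the rescaling $\alpha_i/(1-\alpha_1)$ in the induction never divides by zero. The final displayed step also uses that a convex combination of convex combinations collapses to a single convex combination over the product index set, which is again a consequence of the associativity axiom; making this collapse explicit is the only genuinely technical point of the argument.
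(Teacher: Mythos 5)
Your proof is correct and follows essentially the same route as the paper's: the equivalence $(1)\Leftrightarrow(2)$ via the functoriality criterion of Proposition~\ref{ConvCat2}, Lemma~\ref{m1m2}, and the barycenter formula of Proposition~\ref{Conv=ConvR}, and then $(2)\Leftrightarrow(3)$ by specialization in one direction and induction in the other. The only difference is that you spell out the induction for $(3)\Rightarrow(2)$ (one-sided distributivity, rescaling, nested-combination collapse), which the paper dismisses with a one-line remark; this is a welcome filling-in of detail rather than a departure in method.
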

\begin{proof}
We begin by showing that $(1)$ implies $(2)$.	
Let $X,Y,Z \in \ob(\catC)$, $f_i \in \catC(X,Y)$, $g_i \in \catC(Y,Z)$, $\alpha_i ,\beta_i \in [0,1]$, where $1 \leq i \leq k$ and
$\sum_{i} \alpha_i = \sum_{i} \beta_i = 1$.
Using Lemma \ref{m1m2} and the functoriality of $\pi^\catC$ we obtain 
$$
\begin{aligned}
			\sum_{i,j} \beta_j\alpha_i\, g_j \circ f_i &=
			\pi^{\catC(X,Z)}\left(\sum_{i,j}\beta_{j}\alpha_{i}
			\delta^{g_j \circ  f_i}\right)\\
			&=
			\pi^{\catC(X,Z)}\left((\sum_{j}\beta_{j} \delta^{g_j}) \ast
			(\sum_{i}\alpha_{i}\delta^{f_i})\right)\\
			&=\pi^{\catC(Y,Z)}\left(\sum_{j}\beta_{j} \delta^{g_j}\right) 
\circ
			\pi^{\catC(X,Y)}\left(\sum_{i}\alpha_{i} \delta^{f_i}\right)\\
			&=\left(\sum_{j} \beta_j  g_j\right)\circ \left(\sum_{i} \alpha_i f_i\right) .
		\end{aligned}
		$$

		Conversely, assume that $(2)$ holds. By 
Proposition \ref{ConvCat2} it is enough to prove that $\pi^C$ is a functor. Given $p \in D_{\RR_{\geq 0}}(\catC(X,Y))$, $q \in D_{\RR_{\geq 0}}(\catC(Y,Z))$ we have
$$
\begin{aligned}
\pi^{\catC(X,Z)}(q \ast p) &= \sum_{f \in \catC(X,Z)} (q \ast p)(f)f \\
& = \sum_{f \in \catC(X,Z)}  \left(\sum_{g_1 \circ g_1 =f} q(g_2) p(g_1) \right) f \\ 
			&=\sum_{g_1 \in C(X,Y),\, g_2 \in \catC(Y,Z)} q(g_2)p(g_1) g_2\circ g_1\\
&= \left(\sum_{g_2 \in  \catC(Y,Z)} q(g_2)g_2 \right)
			\circ \left(\sum_{g_1 \in  \catC(X,Y)} p(g_1)g_1 \right)   \\
&=			\pi^{\catC(Y,Z)}(q)\circ \pi^{\catC(X,Y)}(p)
\end{aligned}
$$
Finally, note that $(3)$ is a special case of $(2)$ and one can get $(2)$ from $(3)$ by induction.
	\end{proof}

%\comm{ @Aziz: I see some recurring mistakes in typing. Avoid the following:
%\begin{itemize}
%\item When separating math objects don't use comma in the math mode. E.g. instead of $a \in A,\; b\in B$ use $a\in A$, $b\in B$. 
%\ak{[[I didn't see more like this, anyway I will check again when we do the final reading.]]}
%\end{itemize}
%}

%
	We will write  $\catConvCat$ for the category $\catConvCat_{\rr_{\geq 0}}$ 
	and refer to the objects of this category as real convex categories. 
%\begin{example}
Note that Proposition \ref{ThConvCat}
implies that 
 the category of $\RR$-vector spaces with the canonical convex structure on the $\RR$-vector space of linear maps is a real convex category.
%\end{example}

	\subsection{Kleisli category as a convex category} \label{sec:kleisli-convex-cat}
	Recall  that the Kleisli category $\catC_T$ of a monad $T:\catC\to \catC$ has the same objects as $\catC$ and for objects $X,Y$ its morphisms are given by $\catC(X,T Y)$. For an object $X$ the identity morphism in $\catC_T(X,X)$ is given by  $\delta_X:X\to TX$. The composition of two morphisms $f:X\to TY$ and $g:Y\to TZ$ is defined by
	$$
	g \diamond f: X \xrightarrow{f} TY \xrightarrow{T(g)} T(TZ) \xrightarrow{\mu_Z} TZ. 
	$$
	We are interested in the Kleisli category of the distribution monad $D_R:\catsSet \to \catsSet$.
	In this case the composition can be explicitly written as
	$$
(q \diamond p)_n(x)= \sum_{y\in Y_n}p_n(x)(y)q_n(y),  
	$$ 
{where $p\in \catsSet(X,D_R(Y))$, $q\in \catsSet(Y,D_R(Z))$ and $x\in X_n$.} 	
%
	%\sum_{y\in Y} p(x)(y) q(y)
	%
	%\begin{defn}
	%Given a monad $(T,\delta,\mu)$ on a category $C$, the kleisli category
	%is denoted $C_T$ . Its objects are the same objects of $C$. Meanwhile, the morphisms of $C$ are such that
	% $C_T(X,Y)=C(X,TY)$ for every $X,Y \in C$  
	
	%The composition is defined as follows: for $ f: X \to TY, g: Y \to TZ$,
	%$$
	%g \circ_{T} f :=  \mu_Z \circ T(g) \circ f 
	%$$
	%
	%and the identity morphism in $C_T(X,X)$ is $\delta_X$.
	
	%One can check that $C_T$ is a category; the conditions of
	%associativity and identity are fulfilled by the axioms of the monad.
	%(see [Definition 5.2.9]\cite{RiLL}).
	
	%\end{defn}
	%
	%We interested in $\St_{D_R}$, the kleisli category of the monad $(D_R,\delta,\mu)$
	%on $\St$. In this case, we will denote the composition by $\diamond$. One can verifies that for $P \in \St(X,D_R(Y))$, $Q \in \St(Y,D_R(Z))$,
	%and $x \in X$ we have
	%$$
	%(Q \diamond P)(x)= \sum_{y\in Y}p(x)(y)Q(y)  
	%$$
	%

	\begin{prop}\label{pro:Kleisli-conv-cat}
		The Kleisli category $\catsSet_{D_R}$ is an $R$-convex 
		category.  
	\end{prop}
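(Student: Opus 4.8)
The plan is to invoke the criterion of Proposition \ref{ConvCat2}. Its hypothesis is already available: for any objects $X,Y$ the morphism set $\catsSet_{D_R}(X,Y)=\catsSet(X,D_R(Y))$ is an $R$-convex set by Proposition \ref{Hommm}, with structure map $\pi^{\catsSet(X,D_R(Y))}$ given in Equation (\ref{eq:Rconv-structure-sSet}) and described explicitly in Equation (\ref{piForm}). It therefore remains to verify that the two assignments prescribed in Proposition \ref{ConvCat2} — the identity on objects together with $\pi^{\catsSet(X,D_R(Y))}$ on each morphism set — assemble into a well-defined functor $\pi:D_R(\catsSet_{D_R})\to\catsSet_{D_R}$. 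Once this is shown, Proposition \ref{ConvCat2} immediately yields that $(\catsSet_{D_R},\pi)$ is an $R$-convex category.

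Preservation of identities is the easy half. The identity of $X$ in $\catsSet_{D_R}$ is $\delta_X$, so the corresponding identity morphism of $D_R(\catsSet_{D_R})$ is $\delta^{\delta_X}$, and $\pi(\delta^{\delta_X})=\pi^{\catsSet(X,D_R(X))}(\delta^{\delta_X})=\delta_X$ by the unit axiom $\pi\circ\delta=\Id$ of the $R$-convex set $\catsSet(X,D_R(X))$, which was established in Proposition \ref{Hommm}.

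The substantive step is compatibility with composition: for $Q_1\in D_R(\catsSet_{D_R}(X,Y))$ and $Q_2\in D_R(\catsSet_{D_R}(Y,Z))$ I must check $\pi(Q_2\ast Q_1)=\pi(Q_2)\diamond\pi(Q_1)$, where $\ast$ is the convolution composition of Equation (\ref{eq:composition-DC}) in $D_R(\catsSet_{D_R})$ and $\diamond$ is the Kleisli composition. I would verify this by evaluating both simplicial distributions at an arbitrary simplex $x\in X_n$. Using (\ref{piForm}) and (\ref{eq:composition-DC}), the left-hand side expands to $\sum_{p',q}Q_1(p')Q_2(q)\,(q\diamond p')_n(x)$; substituting the pointwise Kleisli formula $(q\diamond p')_n(x)=\sum_{y\in Y_n}p'_n(x)(y)\,q_n(y)$ turns this into $\sum_{y}\sum_{p',q}Q_1(p')Q_2(q)\,p'_n(x)(y)\,q_n(y)$. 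Expanding the right-hand side by the same explicit formulas and distributing the (finite) sums over the commutative semiring $R$ produces exactly the same double sum, so the two agree.

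The only genuine difficulty is bookkeeping rather than any conceptual obstruction: one must record that every distribution in sight has finite support, so that the free sums over $(p',q)$ obtained by collapsing the constraint $q\diamond p'=p$ are legitimate and may be reordered, and one must keep straight that the weights $p'_n(x)(y)\in R$ recombine with the distributions $q_n(y)\in D_R(Z_n)$ as genuine convex combinations. I expect this reindexing to be the main, though purely computational, point; no homotopical input beyond the simplexwise formulas already recorded in Section \ref{sec:simp-dist-as-conv-set} is required.
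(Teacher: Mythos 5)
Your proposal is correct and follows essentially the same route as the paper's proof: equip each morphism set with the structure map $\pi^{\catsSet(X,D_R(Y))}$ from Proposition \ref{Hommm} and Equation (\ref{eq:Rconv-structure-sSet}), then verify via Proposition \ref{ConvCat2} that these maps define a functor $D_R(\catsSet_{D_R})\to\catsSet_{D_R}$, with the compatibility with composition checked by exactly the pointwise computation you describe (expand $\pi(Q\ast P)_n(x)$ by (\ref{piForm}) and (\ref{eq:composition-DC}), substitute the Kleisli formula, and exchange the order of summation to recognize $(\pi(Q)\diamond\pi(P))_n(x)$). Your explicit check of identity preservation is a minor addition the paper leaves implicit.
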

	\begin{proof}
		The map $\pi :D_R(\catsSet_{D_R}) \to \catsSet_{D_R}$ which employs $\catsSet_{D_R}$ with the structure of an $R$-convex category is defined to be identity on the collection of objects. For simplicial sets $X,Y$ the map 
		$$
		\pi_{X,Y} : D_R(\St(X,D_R(Y))) \to \St(X,D_R(Y)) 
		$$
is defined to be $\pi^{\catsSet(X,D_R(Y))}$
 (see {Equation (\ref{eq:Rconv-structure-sSet})} and Proposition \ref{Hommm}).
We need to show, as a consequence Proposition \ref{ConvCat2}, that $\pi$ is a functor. That is, for all $P \in D_R(\catsSet(X,D_RY))$ and $Q\in D_R(\catsSet(Y,D_RZ))$ we have $\pi(Q \ast P)=\pi(Q) \diamond \pi(P)$.
		%In order to show that $\pi$ is a functor, we need to prove that for 
		%every $X,Y,Z \in \St$, $P \in D_R(\St(X,D_R(Y)))$, and $Q %\in D_R(\St(Y,D_R(Z)))$ we have
		%$\pi(Q \ast P)=\pi(Q) \diamond \pi(P)$
		%For $x \in X_n$ 
		This follows from the following computation: {For $x\in X_n$ we have
		$$
		\begin{aligned}
			\pi(Q \ast P)_n(x) &=\sum_{p \in  \catsSet(X,D_R(Z)) } (Q\ast P)(p)p_n(x) \\
			&=\sum_{p \in \catsSet(X,D_R(Z)) } \sum_{q' \diamond q =p}
			Q(q') P(q ) p_n(x)\\
			&=\sum_{q \in \catsSet(X,D_R(Y)),\, q' \in\catsSet(Y,D_R(Z)) } 
			Q(q') P(q) (q'\diamond q)_n (x)\\
			&=\sum_{q,q'}P(q)Q(q')\sum_{y\in Y_n}q_n(x)(y)q'_n(y)\\
			&=\sum_{y \in Y_n} \sum_{q \in \St(X,D_R(Y))} P(q)q_n(x)(y) 
			\sum_{q' \in \St(Y,D_R(Z))}Q(q')q'_n(y)\\
			&=\sum_{y\in Y_n}\pi(P)_n(x)(y)\pi(Q)_n(y)\\
			&= (\pi(Q) \diamond \pi(P))_n(x) .
		\end{aligned}
		$$}
	\end{proof}
	%

	%\begin{defn}\label{FTT}
	Let  $F_{T} : \catC \to\catC_{T}$ denote the functor defined as follows:
\begin{itemize}
\item $\ob(\catC) \to \ob(\catC_T)$ is the identity functor.
\item $F_T :\catC(X,Y) \to 
\catC_T(X,Y)$ is defined to be $(\delta_Y)_\ast$, that is  $f: X \to Y$  is sent to  the composite $F_T(f):X \xrightarrow{f} Y \xrightarrow{\delta_Y} TY$.
\end{itemize}	
The functor $F_T$ 
	has a right adjoint $U_T:\catC_T \to \catC$ 
	(see \cite[Lemma 5.2.11]{riehl2017category}). 
	In fact, the monad $T$  arises from this adjunction. 
	{In the following we will consider the free convex category $D_R(\catsSet)$, and
	% as a free convex category, and 
the Kleisli category $\catsSet_{D_R}$, which is also a convex category (Proposition \ref{pro:Kleisli-conv-cat}).
The $\Theta$-map defined in Equation (\ref{eq:Theta}) can be given a categorical interpretation using the theory of convex categories.
}

	%Recall that $\catsSet$
%\ak{[[I wroted 
%the last two sentences just to show the importance of the functor $F_T$, but we don't use this adjunction later]]}
% \comm{it would be move the other adjunction $\catC \adjoint \catC^T$ mentioned in the first section here. I think we don't use it before the proof of the prop below. \ak{Response: We used before, in
%Proposition \ref{ThetaUniqqq}, even we need the formula that described in Remark \ref{AdjFormmm}}} 	
	\begin{prop}
		The transpose of the functor $F_{D_R}: \catsSet \to \catsSet_{D_R}$
		%(see Definition \ref{FTT}) as a morphism in $\Cat$, under the 
		with respect to the 
		adjunction $D_R: \catCat \adjoint \catConvCat_R: U$
		%that appears in \ref{Adjjj}, 
		is the functor 
		$\Theta:  D_R(\St) \to \St_{D_R}$ which is defined as identity on the objects and as the map $\Theta_{X,Y}$ 
		%in  equation (\ref{eq:Theta}) 
		on morphisms. 
	\end{prop}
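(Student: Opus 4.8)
The plan is to identify $\Theta$ with the image of $F_{D_R}$ under the transpose bijection of the free--forgetful adjunction and then to reduce the hom-set computation to the identity already packaged in Proposition \ref{ThetaUniqqq}. Since $(D_R,\delta,\mu)$ is a monad on $\catCat$ (Corollary \ref{DRCat}), its free--forgetful adjunction $D_R : \catCat \adjoint \catConvCat_R : U$ is exactly of the type treated in Remark \ref{AdjFormmm}. Hence for any functor $G : \catsSet \to U(\catsSet_{D_R})$ the transpose is the composite $\pi^{\catsSet_{D_R}} \circ D_R(G) : D_R(\catsSet) \to \catsSet_{D_R}$, where $\pi^{\catsSet_{D_R}}$ is the $D_R$-algebra structure map supplied by Proposition \ref{pro:Kleisli-conv-cat}. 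Taking $G = F_{D_R}$, the transpose is $\pi^{\catsSet_{D_R}} \circ D_R(F_{D_R})$, and being a transpose it is automatically a morphism in $\catConvCat_R$; thus this step simultaneously guarantees that $\Theta$, once identified with it, is a well-defined functor of convex categories.

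Next I would evaluate this composite separately on objects and on morphisms. On objects, each of $F_{D_R}$, $D_R(-)$, and $\pi^{\catsSet_{D_R}}$ is the identity, so the transpose is the identity on objects, matching the definition of $\Theta$. On the hom-sets, Lemma \ref{lem:def-DF} shows that $D_R(F_{D_R})$ acts on $D_R(\catsSet)(X,Y) = D_R(\catsSet(X,Y))$ by applying $D_R$ to the morphism component $(F_{D_R})_{X,Y} = (\delta_Y)_\ast$, landing in $D_R(\catsSet_{D_R}(X,Y)) = D_R(\catsSet(X, D_R(Y)))$; and by the definition of the Kleisli convex structure (Proposition \ref{pro:Kleisli-conv-cat}) the map $\pi^{\catsSet_{D_R}}$ acts on this hom-set as $\pi^{\catsSet(X,D_R(Y))}$. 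Therefore the morphism component of the transpose is exactly
$$
\pi^{\catsSet(X,D_R(Y))} \circ D_R((\delta_Y)_\ast).
$$

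Finally I would invoke Proposition \ref{ThetaUniqqq}, specifically Equation (\ref{eq:Thetapi}), which states precisely that $\Theta_{X,Y} = \pi^{\catsSet(X,D_R(Y))} \circ D_R((\delta_Y)_\ast)$. This identifies the morphism component of the transpose with $\Theta_{X,Y}$ and completes the proof. I do not expect a genuine obstacle: the analytic content has been front-loaded into Proposition \ref{ThetaUniqqq}, so what remains is bookkeeping to match the adjunction transpose formula with the explicit descriptions of $D_R(F_{D_R})$ (via Lemma \ref{lem:def-DF}) and of the Kleisli algebra structure map (via Proposition \ref{pro:Kleisli-conv-cat}). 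The only point requiring mild care is the chain of identifications $D_R(\catsSet_{D_R})(X,Y) = D_R(\catsSet_{D_R}(X,Y)) = D_R(\catsSet(X,D_R(Y)))$ coming from Lemma \ref{lem:def-DC} and the Kleisli definition, which ensures that the two factors compose on the same object.
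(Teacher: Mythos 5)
Your proof is correct and follows essentially the same route as the paper: both rest on exactly the two ingredients Remark \ref{AdjFormmm} (transpose $=$ $\pi^{\catsSet_{D_R}} \circ D_R(F_{D_R})$) and Proposition \ref{ThetaUniqqq} (Equation (\ref{eq:Thetapi})), merely applied in the opposite order. The paper's proof is a two-line version of yours; your additional bookkeeping on objects and hom-sets (via Lemma \ref{lem:def-DF} and Proposition \ref{pro:Kleisli-conv-cat}) just makes explicit the identifications the paper leaves implicit.
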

	\begin{proof}
		Using Proposition \ref{ThetaUniqqq} we see that 
		$\Theta = \pi^{\St_{D_R}}\circ D_R(F_{D_R})$. Then
	Remark \ref{AdjFormmm} implies that  $\Theta$ is 	the corresponding transpose of $F_{D_R}$.
	%	\comm{proof needs clarification: \ak{Response: I think now it is clear.}}
	\end{proof}
	%

	%\section{Weak  invertibility in convex monoids}
	\section{Convex monoids}\label{Sec: WI}
	%
	%
	%In this section, we define the concept of weak invertibility in a convex monoid, which will appear later as a generalization of non-contextuality. In addition, we introduce the definition of invertible support, strong non-invertibility, and invertibility fraction in convex monoids.
	%
	
	%\subsection{Convex monoids}
	
	%In Section \ref{sec:ConvexCat} we defined the notion of convex category, here we will focus on convex categories with one object, which is convex monoids.
 
		In Section \ref{sec:ConvexCat} we introduced the notion of a convex category. Now, in this section we will specialize to convex monoids, that is convex categories with a single object. {We will introduce a  weak notion of invertibility for convex monoids. 
This definition is inspired by the definition of noncontextuality for simplicial distributions. Later in Section \ref{Sec: SimDis} we will see that the two notions coincide for cases of interest.		
For real convex monoids we introduce the notion of invertible fraction to quantify the closeness of an element to being invertible. 
}

		The monad $D_R$ acting on $\catCat$ restricts to a monad on the category $\catMon$ of monoids.
		A $D_R$-algebra $(M,\pi^M)$ over this monad is called an {\it $R$-convex monoid}.
		By Proposition \ref{ConvCat2} this is equivalent to saying that $(M,\pi^M)$ is an $R$-convex set and the map $\pi^M:D_R(M)\to M$ is a homomorphism of monoids.
		We will write $\catConvMon_R$ for the category of $R$-convex monoids. Proposition \ref{ThConvCat} specializes to give the following result.

	%We will denote by $\ConvM_R$ the category $R$-convex monoids, and by $\ConvAbM_R$ the subcategory of abelian convex monoids.
	%Also sometimes we will use the notation $(M,\pi^M, \cdot)$ to denote the product of $M$.
	%\end{defn}
	%
	
	%
	\begin{cor}\label{ThrProp}
		Let $(M,\cdot)$ be a monoid which is also a real convex set.
		%Given a monoid $(M,\cdot)$ that lies in the category  $\Conv$(Definition \ref{ConvDeff}). 
		Let $\pi^M$ denote 
		the structure map of $M$ when regarded as an $\RR_{\geq 0}$-convex set.  
		Then the following statements are equivalent.
\begin{enumerate}
\item $(M,\pi^M)$ is an $\rr_{\geq 0}$-convex monoid.
			
\item For $m_i,n_i \in M$ and 
			$\alpha_i , \beta_i\in [0,1]$, where  $1 \leq i \leq k$
			and $\sum_{i} \alpha_i =\sum_{i} \beta_i = 1$,  
			we have 
			$$
\left(\sum_{i} \alpha_i  m_i\right)\cdot \left(\sum_{i} \beta_i n_i\right) 
			=\sum_{i,j} \alpha_i \beta_j\, m_i \cdot n_j
			$$

			\item For $m_1,m_2,n_1,n_2 \in M$ and 
			$\alpha , \beta \in [0,1]$  we have the following equality 
$$
\begin{aligned}
(\alpha m_1 + (1-\alpha) m_2) &\cdot (\beta n_1 + (1-\beta) n_2)\\
&=\alpha \beta m_1 \cdot n_1 + \alpha (1-\beta) m_1 \cdot n_2 
+ (1-\alpha) \beta m_2 \cdot n_1  
+ (1-\alpha) (1-\beta) m_2 \cdot n_2.
\end{aligned}
$$

		\end{enumerate} 
	\end{cor}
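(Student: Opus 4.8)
The plan is to deduce Corollary~\ref{ThrProp} directly from Proposition~\ref{ThConvCat} by specializing to the case of a category with a single object. A monoid $(M,\cdot)$ is exactly a one-object category $\catC$ whose unique morphism set $\catC(\ast,\ast)$ equals $M$ and whose composition $\circ$ is the monoid multiplication $\cdot$. The hypothesis that $M$ is a real convex set is precisely the hypothesis of Proposition~\ref{ThConvCat} that $\catC(X,Y)$ is a real convex set for all objects, and the map $\pi^M$ defined from the convex structure is exactly the $\pi^\catC$ of Proposition~\ref{ConvCat2}. So the first step is simply to observe that condition~(1) of the corollary, ``$(M,\pi^M)$ is an $\rr_{\geq 0}$-convex monoid,'' is by definition the statement that $(\catC,\pi^\catC)$ is an $\rr_{\geq 0}$-convex category, and then invoke the equivalence $(1)\Leftrightarrow(2)\Leftrightarrow(3)$ of Proposition~\ref{ThConvCat}.

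The only subtlety is that the conditions in Proposition~\ref{ThConvCat} are written with the composition order $\left(\sum_j \beta_j g_j\right)\circ\left(\sum_i \alpha_i f_i\right)$, whereas the corollary is stated in the multiplicative order $\left(\sum_i \alpha_i m_i\right)\cdot\left(\sum_i \beta_i n_i\right)$. Since in a one-object category the ``composable'' constraint $f\in\catC(X,Y)$, $g\in\catC(Y,Z)$ is vacuous (all morphisms are endomorphisms of the single object), any two elements compose, and $g\circ f$ corresponds to the product $g\cdot f$. The translation is therefore a matter of matching variable names and multiplication order: setting $g_j = m_j$, $f_i = n_i$ and reading $g_j\circ f_i$ as $m_j\cdot n_i$ turns condition~(2) of the proposition into condition~(2) of the corollary (up to relabeling the roles of $\alpha$ and $\beta$, which is symmetric), and likewise for~(3). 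I would spell out this dictionary explicitly so that the reader sees the correspondence is exact.

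Thus the proof is essentially a one-line citation plus the observation that a monoid is a one-object category. I would write: ``A monoid $(M,\cdot)$ is a category $\catC$ with a single object, morphism set $M$, and composition given by multiplication. Under this identification condition~(1) is the statement that $(\catC,\pi^\catC)$ is an $\rr_{\geq 0}$-convex category, and conditions~(2) and~(3) are precisely conditions~(2) and~(3) of Proposition~\ref{ThConvCat} (with the composition $\circ$ written as the product $\cdot$). The equivalences then follow immediately from Proposition~\ref{ThConvCat}.'' The step requiring the most care is confirming that no composability hypothesis is lost in the reduction, i.e.\ that the vacuity of the object constraints in the one-object case makes the proposition's conditions apply to arbitrary tuples of monoid elements; this is where I would pause to make sure the quantifiers line up, but it presents no genuine obstacle.
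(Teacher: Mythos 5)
Your proposal is correct and matches the paper's own treatment exactly: the paper introduces $R$-convex monoids as one-object $R$-convex categories and states that Corollary \ref{ThrProp} is obtained by specializing Proposition \ref{ThConvCat}, which is precisely your reduction. Your extra care about the order of composition versus multiplication and the vacuity of the composability constraints is a sound (if implicit in the paper) bookkeeping step, not a divergence in method.
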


	\begin{remark}{\rm
			%he notion of convex monoid used in \cite{Roumen_2016}. 
			By part (3) of Corollary \ref{ThrProp} one can see that the notion of a convex monoid given in \cite[Definition 9]{Roumen_2016}
			is a special case of $\rr_{\geq 0}$-convex monoid.
		}
	\end{remark}
	As in the case of categories, from now on we will write 
	$\catConvMon$ for $\catConvMon_{\rr_{\geq 0}}$ and refer to the objects of this category as real convex monoids.

	%\begin{corollary}\label{ConvMonRR}
	%The set \w[]{M} lies in the category  \w{\ConvM_{\rr \geq 0}} if and only if 
	%\w{M} is a monoid that lies in the category \w{\Conv} with the property \w{(3)} in \ref{ThrProp}. 
	%\end{corollary}
	
	%We give now some examples of convex monoids:
	
	\begin{example}\label{Ex1 : CM}
		{\rm
			A semiring $R$ can be given the structure of an $R$-convex set by defining 
			$
			\pi^{R}: D_{R}(R) \to R
			$
			as follows:
			$$
			\pi^R(p)=\sum_{x \in R} p(x)x .
			$$
			{Since} $R$ is commutative $\pi^R$ is a homomorphism of monoids, hence $R$ becomes an $R$-convex monoid.
		}
		%Given a semiring $R$, we define 
		
		%$$
		%\pi^{R}: D_{R}(R) \to R
		%$$
		
		%
		%One can see that $(R,\pi^R)\in \Conv_R$. 
		%In addition, if $R$ is commutative, then for $P , Q \in D_R(R)$, we have
		%
		%$$
		%\begin{aligned}
		%\pi^R(P \ast Q)=\sum_{x \in R}(P\ast Q)(x)x
		%=\sum_{x \in R}
		%(\sum_{x_1 \cdot x_2=x}P(x_1) Q(x_2))x
		%=\sum_{x \in R}
		%\sum_{x_1 \cdot x_2=x}P(x_1) Q(x_2)x_1 x_2\\
		%=\sum_{x_1 \cdot x_2}P(x_1) Q(x_2)x_1 x_2
		%\end{aligned}
		%$$
		%
		%On the other hand we have:
		%$$
		%\pi^R(P) \cdot \pi^R(Q)=\sum_{x_1 \in R}P(x_1) x_1\sum_{x_2 \in R} Q(x_2) x_2
		%=\sum_{x_1 \in R}\sum_{x_2 \in R} P(x_1) x_1Q(x_2) x_2
		%$$
		%the structure map  $\pi^R$ is homomorphism
		%So in this case $(R,\pi^R)\in \ConvM_R$.
	\end{example}

	\begin{example}\label{Ex2: CM}
		{\rm
			The set $\rr$  of real numbers is a convex set, and 
with the following product 
			$$
			x_1 \vartriangle x_2 := x_1 x_2 + (1-x_1)(1-x_2),\;\;\;\; x_1,x_2\in \RR,
			$$ 
			is a monoid with $1$ as the identity.
			In addition, part  $(3)$ of {Corollary} \ref{ThrProp} holds, hence $(\rr,\pi^\rr)$ is a 
			real convex monoid.  
			Note that $([0,1],\pi^\rr,\vartriangle)$ is  a real subconvex monoid of $(\rr,\pi^\rr,\vartriangle)$.
		}
	\end{example}
	%

	%\begin{example}\label{Ex4: CM}
	%Given a semiring $R$, a set $X$, and $M$ a convex monoid $M \in \ConvM_{R}$,  then $\Hom(X,M) \in \ConvM_{R}$  
	%where 
	%$$
	%\pi^{\Hom(X,M)}(Q)(x)= 
	%\pi^M   (\sum_{\varphi \in \Hom(X,M)} Q(\varphi) %\delta_{\varphi(x)} )   
	%$$ 
	%
	%This is a special case of Proposition \ref{SCM}.

\begin{example}{\rm
The set of continuous functions from $\rr$ to $\rr$   is a real 
convex set. With the composition operation it is a monoid, but not a real convex monoid.  
}
\end{example}

	% A
	\subsection{Weak invertibility} \label{sec:weak-invertibility}
	% strong invertibility separated into a new subsection
	%
In this section, we will introduce the notion of weak invertibility for monoids. We begin by relating monoids to groups. For a monoid $M$ let $I(M)$ denote the subset of invertible elements. This construction defines a functor $I:\catMon \to \catGrp$, which turns out to be the right adjoint of the inclusion functor $j:\catGrp \to \Mon$; see \cite[Example 2.1.3 (d)]{leinster2014basic}. The composition of the two adjunctions 
$j:\catGrp \adjoint \catMon : I$ and $ 
D_R:\catMon \adjoint \catConvMon_R:U$
gives us 
the following adjunction
		\begin{equation}\label{eq:adj-I}
			D_R:\catGrp \adjoint \catConvMon_R: I.
		\end{equation}
For simplicity, we will use the notation $M^\ast$
instead of $I(M)$.

Let $i_M:M^\ast \to M$ denote the inclusion map. 
We will consider   $D_R(M^*)$ as a subset of $D_R(M)$ via the map $D_R(i_M)$.
The restriction of $\pi^M: D_R(M)\to M$ to  $D_R(M^*)$ will be denoted by $\tilde \pi^M$.
%We will consider the restriction of $\pi^M: D_R(M)\to M$ to  $D_R(M^*)$
%In other words, the map $\tilde{\pi}^M$ is the transpose of the inclusion homomorphism 
%$i_M : M^\ast \to M$ with respect to the adjunction $D_R:\catMon \adjoint \catConvMon_R:U$
 %(see Remark \ref{AdjFormmm}). 
%\ak{[[We use this fact in the proof of \ref{Theo1}]].}

	%We now provide an important definition that will play essential role in the definitions of the next subsection. 

	\begin{defn}\label{weakk}
		{\rm
		%	Let $\Upsilon^M: D_R(M^\ast) \to M$ denote the composition
			%$\pi^M \circ D_R(i^M)$
		%	$D_R(M^*)\xhookrightarrow{D_R(i_M)} D_R(M) \xrightarrow{\pi^M} M$ where $i_M : M^\ast \xhookrightarrow{} M$ is the canonical inclusion map.
			%G%iven a convex monoid $M$, 
			%W%e define the map $\Upsilon^M: D_R(M^\ast) \to M$ to be the composition
			%$\pi^M \circ D_R(i^M)$. Observe that $\Upsilon^M$ is the 
			%transpose of the inclusion homomorphism 
			%$i^M : M^\ast \to M$(see Definition \ref{IFun} 
			%and Remark \ref{AdjFormmm}).
			An element $m \in M$ is called 
			{\it weakly invertible} if it lies in the image of  $\tilde\pi^M:D_R(M^*)\to M$.
			
			%$\Upsilon^M$.  
		}
%\ak{Observe that $D_R(i_M)$ is exactly the inclusion of 
%$D_R(M^\ast)$ in $D_R(M)$, so for $p \in D_R(M^\ast)$ we will
%write $\pi^M(p)$ instead of $\Upsilon^M(p)$. }
%\comm{I think we don't need a new notation, just use $\pi^M$ instead of $\Upsilon$.}
	\end{defn}
	%
	%It is clear that 
	In particular, every invertible element is weakly invertible. %Here is some more interesting examples.
	
	\begin{example}\label{Ex2: W.I}
		{\rm
			In Example \ref{Ex1 : CM} if we let $R=\rr_{\geq 0}$ then $0$ is not weakly invertible since for 
			$p\in D((\rr_{\geq 0})^\ast)$ we have
			%D((0,\infty))
			$$
			\tilde{\pi}^{\rr_{\geq 0}}(p)= \sum_{x \in \rr_{\geq 0}}p(x)x >0.
			$$
			%
			%_{\rr_{\geq 0}} 
		}
	\end{example}
	
	\begin{example}\label{Ex1: W.I}
		{\rm
			The invertible elements of the subconvex 
			monoid $([0,1],\pi^\rr,\vartriangle)$ (see Example \ref{Ex2: CM}) are $0$ and $1$. For an element $x \in [0,1]$  we have the distribution $p=x \delta_1 + (1-x) \delta_0 \in D([0,1]^\ast)$ that satisfies $\pi^{\rr}(p)=x$.
			Therefore every element of $[0,1]$ is weakly invertible.
		}
	\end{example}
	\begin{prop}\label{mthennm}
Let $n\in M^{\ast}$ and $m \in M$. Then $m$ is weakly invertible if and only if $n\cdot m$ is weakly invertible.
	\end{prop}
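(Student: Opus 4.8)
The statement is that for $n\in M^*$ invertible and $m\in M$ arbitrary, $m$ is weakly invertible if and only if $n\cdot m$ is. The key structural fact I would exploit is that $\pi^M$ is a homomorphism of monoids (this is precisely what Proposition \ref{ConvCat2} gives us for an $R$-convex monoid), together with the observation that left-multiplication by a \emph{fixed} invertible element $n$ interacts nicely with the convex structure. Concretely, I would first establish the identity
$$
n\cdot \pi^M(p) = \pi^M\bigl(D_R(\ell_n)(p)\bigr)
$$
for all $p\in D_R(M)$, where $\ell_n:M\to M$ is left-multiplication by $n$. This follows because $\pi^M$ is a monoid homomorphism on the free convex monoid $D_R(M)$: writing $\delta^n\ast p$ for the product in $D_R(M)$, Lemma \ref{m1m2} gives $\delta^n\ast p=D_R(\ell_n)(p)$, and applying $\pi^M$ yields $\pi^M(\delta^n)\cdot\pi^M(p)=n\cdot\pi^M(p)$ on one side and $\pi^M(D_R(\ell_n)(p))$ on the other.

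\medskip

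\noindent
\textbf{The forward direction.} Suppose $m$ is weakly invertible, so $m=\tilde\pi^M(p)$ for some $p\in D_R(M^*)$. The crucial point is that $\ell_n$ restricts to a map $M^*\to M^*$: since $n$ is invertible, $n\cdot u$ is invertible whenever $u$ is (its inverse is $u^{-1}n^{-1}$). Hence $D_R(\ell_n)(p)$ lies in $D_R(M^*)$, and by the identity above $n\cdot m=n\cdot\pi^M(p)=\pi^M(D_R(\ell_n)(p))=\tilde\pi^M(D_R(\ell_n)(p))$, exhibiting $n\cdot m$ as weakly invertible.

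\medskip

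\noindent
\textbf{The reverse direction.} This is the symmetric statement: if $n\cdot m$ is weakly invertible, apply the forward direction to the invertible element $n^{-1}$ and the element $n\cdot m$. Since $n^{-1}\in M^*$, we conclude $n^{-1}\cdot(n\cdot m)=m$ is weakly invertible. Thus the two directions are genuinely one argument applied twice, and the proof is essentially symmetric once the key multiplication identity is in hand.

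\medskip

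\noindent
\textbf{Main obstacle.} The only nontrivial point to verify carefully is the interaction $n\cdot\pi^M(p)=\pi^M(D_R(\ell_n)(p))$, i.e.\ that multiplying the convex barycenter by a fixed invertible element equals taking the barycenter of the pushed-forward distribution. I expect this to be the heart of the argument, but it reduces cleanly to the fact that $\pi^M$ is a monoid homomorphism (Proposition \ref{ConvCat2}) applied to the delta distribution $\delta^n$, combined with the formula $\delta^n\ast p=D_R(\ell_n)(p)$ from Lemma \ref{m1m2}. Everything else — the preservation of $M^*$ under $\ell_n$ and the symmetry of the two directions — is routine.
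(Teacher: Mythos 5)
Your proposal is correct and follows essentially the same route as the paper: the paper's proof also takes $p\in D_R(M^*)$ with $\tilde\pi^M(p)=m$, forms $\delta^n\ast p$ (which is exactly your $D_R(\ell_n)(p)$, by Lemma \ref{m1m2}), uses the monoid-homomorphism property of $\pi^M$ to get $\tilde\pi^M(\delta^n\ast p)=n\cdot m$, and then handles the converse by applying the forward direction to $n^{-1}$ and $n\cdot m$. The only cosmetic difference is that you phrase the key step via the pushforward along left-multiplication $\ell_n$ and its preservation of $M^*$, while the paper works directly with the convolution $\delta^n\ast p$ inside $D_R(M^*)$; these are the same computation.
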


\begin{proof}
Suppose that $p \in D_R(M^\ast)$ with $\tilde\pi^M(p)=m$. 
%Note that $\delta^n \in D_R(M^\ast)$, so 
% $\delta^n \ast p \in D_R(M^\ast)$ and 
Then we have
\begin{equation}\label{nmWeak}
{\tilde\pi}^M(\delta^n \ast p)={\tilde\pi}^M(\delta^n) \cdot {\tilde\pi}^M(p)=n \cdot m
\end{equation}
In Equation (\ref{nmWeak}) we used the fact that ${\tilde\pi}^M(\delta^n) =\pi^M(D_R(i_M)(\delta^n ))= \pi^M(\delta^n)=n$. This shows that if $m$ is weakly invertible then $n\cdot m$ is weakly invertible. Conversely,
%What we did give us also that 
if $n\cdot m$ is weakly invertible then applying this observation to the product $n\cdot m$ (instead of $m$) we obtain that $n^{-1} \cdot (n\cdot m)=m$ is weakly invertible.
\end{proof}

{Next, we provide a criterion for weak invertibility in pull-backs of convex monoids. This is a version for convex monoids of a ``gluing result" for simplicial distributions \cite[Lemma 4.4]{okay2022simplicial}; see also \cite[Section 3.4]{flori2013compositories}.
}

	\begin{lemma}\label{DRX1X2}
		%\ak{Maybe not needed}), and given 
Let $R$ be a division {semiring}  which is also zero-sum-free (see \cite{Hutchins_1981}).
	Consider the following diagram in $\catSet$:
		$$
		\begin{tikzcd}
			& X_1 \arrow[d,"f_1"'] \\
			X_2 \arrow[r,"f_2"] & Y
		\end{tikzcd}
		$$
		%
%If $R$  a division, \aak{zero-sum-free} semiring (see \cite{Hutchins_1981})
Then the induced map $D_R(X_1\times_Y X_2) \to D_R(X_1)\times_{D_R(Y)}D_R(X_2)$ is surjective.
	\end{lemma}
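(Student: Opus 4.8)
The plan is to prove surjectivity by exhibiting an explicit preimage via the standard conditional-product (coupling) construction. First I would unwind the target: an element of $D_R(X_1)\times_{D_R(Y)}D_R(X_2)$ is a pair $(p_1,p_2)$ with $p_1\in D_R(X_1)$, $p_2\in D_R(X_2)$ satisfying $D_R(f_1)(p_1)=D_R(f_2)(p_2)=:q\in D_R(Y)$, while the comparison map sends $P\in D_R(X_1\times_Y X_2)$ to the pair of its marginals along the two projections $r_1:X_1\times_Y X_2\to X_1$ and $r_2:X_1\times_Y X_2\to X_2$. Thus surjectivity amounts to producing, for every such compatible pair, a joint distribution on the fiber product with the prescribed marginals. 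The key identity to keep in mind is that for each $y$ one has $q(y)=\sum_{x_1\in f_1^{-1}(y)}p_1(x_1)=\sum_{x_2\in f_2^{-1}(y)}p_2(x_2)$, both being finite sums since $p_1,p_2$ have finite support.

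Second, I would define the candidate preimage fiberwise. For $(x_1,x_2)\in X_1\times_Y X_2$ with common image $y=f_1(x_1)=f_2(x_2)$, set
$$
P(x_1,x_2)=
\begin{cases}
p_1(x_1)\,p_2(x_2)\,q(y)^{-1} & q(y)\neq 0,\\
0 & q(y)=0.
\end{cases}
$$
This is exactly where the two hypotheses enter: being a division semiring guarantees that the inverse $q(y)^{-1}$ exists whenever $q(y)\neq 0$, and zero-sum-freeness will handle the vanishing fibers. Since the support of $P$ is contained in the finite set $\{(x_1,x_2):p_1(x_1)\neq0,\ p_2(x_2)\neq0\}$, it has finite support, and normalization is a fiber-by-fiber computation: $\sum_{(x_1,x_2)}P(x_1,x_2)=\sum_{y:\,q(y)\neq0}q(y)^{-1}q(y)q(y)=\sum_{y:\,q(y)\neq0}q(y)=1$, so $P\in D_R(X_1\times_Y X_2)$.

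Third, I would verify the marginals. Fixing $x_1$ with $y=f_1(x_1)$, when $q(y)\neq0$ one computes $D_R(r_1)(P)(x_1)=p_1(x_1)\,q(y)^{-1}\sum_{x_2\in f_2^{-1}(y)}p_2(x_2)=p_1(x_1)$, and the symmetric computation recovers $p_2$ from $r_2$. The main obstacle, and the only nonroutine point, is the case $q(y)=0$, where $P$ vanishes on the entire fiber over $y$; I must check this is still consistent with the marginals. This is precisely where zero-sum-freeness is essential: the relation $q(y)=\sum_{x_1\in f_1^{-1}(y)}p_1(x_1)=0$ is a finite sum of elements of $R$ equal to zero, so by zero-sum-freeness (applied inductively to the finitely many nonzero summands) every $p_1(x_1)$ with $x_1\in f_1^{-1}(y)$ vanishes, and likewise every such $p_2(x_2)$. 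Hence $D_R(r_1)(P)(x_1)=0=p_1(x_1)$ on these fibers as well. Assembling the two cases yields $D_R(r_1)(P)=p_1$ and $D_R(r_2)(P)=p_2$, so $P$ maps to $(p_1,p_2)$ and the comparison map is surjective.
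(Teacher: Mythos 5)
Your proof is correct and follows essentially the same route as the paper's: both construct the fiberwise conditional-product distribution $P(x_1,x_2)=p_1(x_1)\,p_2(x_2)\,q(y)^{-1}$ (set to zero on fibers where $q(y)=0$), use the division hypothesis to invert $q(y)$, and invoke zero-sum-freeness to conclude that $p_1$ and $p_2$ vanish on the fibers where $q(y)=0$, so that the marginal computations go through. The only cosmetic difference is that you verify normalization by grouping the sum over fibers of $Y$, whereas the paper sums over $x_1$ directly (using zero-sum-freeness there as well); this is the same computation.
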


	\begin{proof}
Given $(p_1,p_2) \in D_R(X_1)\times_{D_R(Y)}D_R(X_2)$, we define 
$
p: X_1 \times_Y X_2 \to R
$
as follows
$$
p(x_1,x_2)= \begin{cases}
\frac{p_1(x_1)p_2(x_2)}{D_R(f_1)(p_1)(f_1(x_1))} &
 \text{if}~ D_R(f_1)(p_1)(f_1(x_1)) \neq 0 \\
0 & \text{otherwise.}
\end{cases}
$$
Note that $D_R(f_1)(p_1)(f_1(x_1)) =D_R(f_2)(p_2)(f_2(x_2)) $
because of the compatibility of $p_1,p_2$ and $x_1,x_2$.
First, we verify that $p \in D_R(X_1 \times_Y X_2)$:
$$
\begin{aligned}
\sum_{(x_1,x_2) \in X_1 \times_Y X_2}p(x_1,x_2)&=
\sum_{(x_1,x_2) \in X_1 \times_Y X_2 \,,\, 
D_R(f_1)(p_1)(f_1(x_1)) \neq 0} 
\frac{p_1(x_1)p_2(x_2)}{D_R(f_1)(p_1)(f_1(x_1))} \\
&=\sum_{x_1 \in X_1 \, , \, 
D_R(f_1)(p_1)(f_1(x_1)) \neq 0 }
 \;\; \sum_{x_2 \in X_2 \, , \, f_2(x_2)=f_1(x_1)}
 \frac{p_1(x_1)p_2(x_2)}{D_R(f_1)(p_1)(f_1(x_1))}\\
&=\sum_{x_1 \in X_1 \, ,\, 
D_R(f_1)(p_1)(f_1(x_1)) \neq 0 }
\frac{p_1(x_1)}{D_R(f_1)(p_1)(f_1(x_1))}
 \;\; \sum_{x_2 \in X_2 \, , \,  f_2(x_2)=f_1(x_1)}
 p_2(x_2)\\
&=\sum_{x_1 \in X_1 \, , \, 
D_R(f_1)(p_1)(f_1(x_1)) \neq 0 }
\frac{p_1(x_1)}{D_R(f_1)(p_1)(f_1(x_1))}
\;  D_R(f_2)(p_2)(f_1(x_1))\\
&=\sum_{x_1 \in X_1   \, , \, 
D_R(f_1)(p_1)(f_1(x_1)) \neq 0 }
p_1(x_1)\\
&=\sum_{x_1 \in X_1}p_1(x_1)=1.
\end{aligned}
$$
In the last line we use the following fact: Since $R$ is a zero-sum-free semiring,  
$D_R(f_1)(p_1)(f_1(x_1)) = 0$ implies $p_1(x_1)=0$. %Hence this sum is equal to 
%$
%\sum_{x_1 \in X_1}p_1(x_1)=1
%$

Let $r_j :  X_1 \times_Y X_2 \to X_j$, where $j=0,1$, be the projection maps.
%pullback maps, 
We will show that $D_R(r_1)(p)=p_1$. 
Let 
$x\in X_1$. If $D_R(f_1)(p_1)(f_1(x)) \neq 0$, then 
$$
\begin{aligned}
D_R(r_1)(p)(x) &= \sum_{f_2(x_2)=f_1(x)}p(x,x_2)\\
&=
\sum_{f_2(x_2)=f_1(x)}
\frac{p_1(x)p_2(x_2)}{D_R(f_1)(p_1)(f_1(x))}\\
&=\frac{p_1(x)}{D_R(f_1)(p_1)(f_1(x))}
\sum_{f_2(x_2)=f_1(x)} p_2(x_2)\\
&=\frac{p_1(x)}{D_R(f_1)(p_1)(f_1(x))}
D_R(f_2)(p_2)(f_1(x))
=p_1(x).
\end{aligned}
$$
If $D_R(f_1)(p_1)(f_1(x))=0$, then as before $p_1(x)=0$.  
We also have 
$$D_R(r_1)(p)(x)=\sum_{f_2(x_2)=f_1(x)}p(x,x_2)=0.$$
Similarly, one can show that $D_R(r_2)(p)=p_2$.
 
\end{proof}

\begin{prop}\label{pro:Glue} 
Let $R$ be a division which is also a zero-sum-free semiring.
		Consider the following diagram in $\catConvMon_R$:
\begin{equation}\label{DiagBull}
\begin{tikzcd}
	& M_1  \arrow[d,"f_1"] \\
M_2 \arrow[r,"f_2"] & N   
\end{tikzcd}
\end{equation}
For $(m_1,m_2) \in M_1 \times_N M_2$ the following are equivalent:
		\begin{enumerate}
			\item $(m_1,m_2)$ is weakly invertible.
			\item There are $P_1 \in D_R(M^\ast_1)$, $P_2 \in D_R(M^\ast_2)$ such that 
			$D_R(f_1)(P_1)=D_R(f_2)(P_2)$, 
			$\pi^{M_1}(P_1)=m_1$ and $\pi^{M_2}(P_2)=m_2$  
		\end{enumerate}
	\end{prop}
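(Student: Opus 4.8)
The plan is to reduce the statement to the componentwise description of the pullback convex monoid and then invoke the gluing result Lemma \ref{DRX1X2}. Write $M := M_1 \times_N M_2$ and let $\rho_j : M \to M_j$ ($j=1,2$) be the two projections. First I would record two structural facts. Since the forgetful functor $\catConvMon_R \to \catMon$ is monadic it creates this limit, so the convex structure of $M$ is componentwise: each $\rho_j$ is a morphism of convex monoids, and hence for every $P \in D_R(M)$ the element $\pi^M(P)$ has $j$-th coordinate $\pi^{M_j}(D_R(\rho_j)(P))$. Second, I would identify the units as $M^\ast = M_1^\ast \times_N M_2^\ast$: an element $(u_1,u_2)\in M$ is invertible exactly when each $u_j$ is invertible in $M_j$, since then $f_1(u_1^{-1}) = f_1(u_1)^{-1} = f_2(u_2)^{-1} = f_2(u_2^{-1})$ shows that the componentwise inverse again lies in $M$. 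In particular each $\rho_j$ restricts to a projection $M^\ast \to M_j^\ast$.

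For $(1)\Rightarrow(2)$ I would start from a witness $P \in D_R(M^\ast)$ with $\tilde\pi^M(P) = (m_1,m_2)$ and push it forward along the projections, setting $P_j := D_R(\rho_j)(P) \in D_R(M_j^\ast)$. The componentwise formula for $\pi^M$ gives $\pi^{M_j}(P_j) = m_j$ at once, and the compatibility $D_R(f_1)(P_1) = D_R(f_2)(P_2)$ follows because $f_1 \circ \rho_1 = f_2 \circ \rho_2$ already holds as functions on $M^\ast$, so the two pushforwards of $P$ to $D_R(N)$ coincide.

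The substantive direction is $(2)\Rightarrow(1)$, and here the hypothesis that $R$ is a zero-sum-free division semiring is essential. Given $P_1,P_2$ as in $(2)$, the condition $D_R(f_1)(P_1)=D_R(f_2)(P_2)$ says precisely that $(P_1,P_2)$ lies in the set-theoretic pullback $D_R(M_1^\ast) \times_{D_R(N)} D_R(M_2^\ast)$. Applying Lemma \ref{DRX1X2} to the cospan $M_1^\ast \xrightarrow{f_1} N \xleftarrow{f_2} M_2^\ast$ produces a distribution $P \in D_R(M_1^\ast \times_N M_2^\ast) = D_R(M^\ast)$ with $D_R(\rho_1)(P)=P_1$ and $D_R(\rho_2)(P)=P_2$. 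Feeding this into the componentwise formula for $\pi^M$ yields $\tilde\pi^M(P) = (\pi^{M_1}(P_1),\pi^{M_2}(P_2)) = (m_1,m_2)$, so $(m_1,m_2)$ lies in the image of $\tilde\pi^M$ and is weakly invertible by Definition \ref{weakk}.

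I expect the only genuine obstacle to be the surjectivity exploited in $(2)\Rightarrow(1)$: the componentwise data $(P_1,P_2)$ need not a priori come from a single joint distribution on the pullback, and producing one is exactly the content of Lemma \ref{DRX1X2}, whose proof divides by the common marginal and uses zero-sum-freeness to control its vanishing. Once the identification $M^\ast = M_1^\ast \times_N M_2^\ast$ and the componentwise formula for $\pi^M$ are in place, everything else is bookkeeping.
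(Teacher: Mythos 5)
Your proof is correct and follows essentially the same route as the paper: identify the units of the pullback with the pullback of units, use that the monadic forgetful functor makes the convex structure on $M_1\times_N M_2$ componentwise, and invoke Lemma \ref{DRX1X2} (applied to the cospan of unit groups) for the substantive surjectivity in $(2)\Rightarrow(1)$. The only cosmetic difference is that you verify $M^\ast = M_1^\ast\times_N M_2^\ast$ by a direct componentwise-inverse computation, whereas the paper deduces it from the adjunction $D_R \dashv I$ and the fact that right adjoints preserve limits.
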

	\begin{proof}
		We have the following map between the {pull-back} squares:
\begin{equation}\label{M1NM2}
\begin{tikzcd}
M^\ast_1 \times_{N^\ast} M^\ast_2  \arrow[r]
 \arrow[d] \arrow[rrd] & M_1^\ast \arrow[d]  
\arrow[rrd,hookrightarrow] && \\
M^{\ast}_2 \arrow[r] \arrow[rrd,hookrightarrow] & N^{\ast}
 \arrow[rrd,hookrightarrow] & 
M_1 \times_N M_2  \arrow[r,"r_1"'] \arrow[d,"\;\;\;r_2"]& 
M_1 \ar[d,"f_1"] \\ 
&& M_2 \arrow[r, "f_2"'] & N 
\end{tikzcd}
\end{equation}
Applying $D_R$ to this diagram  induces 
\begin{equation}\label{PUllM1NM2}
\begin{tikzcd}
D_R(M^\ast_1 \times_{N^\ast} M^\ast_2)  \arrow[r]
 \arrow[d]
 & D_R( M_1 \times_N M_2) \arrow[d,"D_R(r_1) \times D_R(r_2)"]  \\
D_R(M^\ast_1) \times_{D_R(N^\ast)} D_R(M^\ast_2)   \arrow[r] &
D_R(M_1) \times_{D_R(N)} D_R(M_2)  
\end{tikzcd}
\end{equation}
The vertical maps in Diagram (\ref{PUllM1NM2}) are surjective by Lemma \ref{DRX1X2}.
Observe that $(M_1 \times_N M_2)^\ast =M^\ast_1 \times_{N^\ast} M^\ast_2$ by
%because of \aak{ 
the adjunction in (\ref{eq:adj-I}) and \cite[Theorem 6.3.1]{leinster2014basic}.
%, so by 
Composing  the Diagram (\ref{PUllM1NM2}) with
$\pi^{M_1} \times \pi^{M_2}:
D_R(M_1) \times_{D_R(N)} D_R(M_2) \to M_1 \times_N M_2$ we obtain
\begin{equation}\label{PUllM1NM3}
\begin{tikzcd}
D_R((M_1 \times_{N} M_2)^\ast) 	\arrow[r,hookrightarrow]	
\arrow[d,twoheadrightarrow] & D_R( M_1 \times_N M_2) 
\arrow[dd, bend left=100,"\pi^{M_1 \times_{N} M_2}"]
\arrow[d,"D_R(r_1) \times D_R(r_2)"']  \\
D_R(M^\ast_1) \times_{D_R(N^\ast)} D_R(M^\ast_2) 
\arrow[r] & 
D_R(M_1) \times_{D_R(N)} D_R(M_2) 		
\arrow[d,"\pi^{M_1} \times \pi^{M_2}"'] \\ 
&    M_1 \times_N M_2
\end{tikzcd}
\end{equation}
Since $\catMon$ is a complete category, using the proof of \cite[Theorem  5.6.5]{riehl2017category},
we obtain that
$$ 
\pi^{M_1 \times_{N} M_2}=(D_R(r_1) \times D_R(r_2) )\circ (\pi^{M_1} \times \pi^{M_2}).
$$
Therefore $(m_1,m_2) \in M_1 \times_N M_2$ is weakly invertible if 
and if $(m_1,m_2)$ is in the image of the map 
$$
 D_R(M^\ast_1) \times_{D_R(N^\ast)} D_R(M^\ast_2)  \to M_1 \times_N M_2.
$$
This happens if and only if there exists
$P_1 \in D_R(M^\ast_1)$ and $P_2 \in D_R(M^\ast_2)$ such that 
$D_R(f_1)(P_1)=D_R(f_2)(P_2)$ and 
$\pi^{M_1}(P_1)=m_1$, $\pi^{M_2}(P_2)=m_2$. 
\end{proof}
	%

%\ak{[[We need to write something like the following:

%Using Theorem \ref{Theo2} and Proposition \ref{pro:Glue},
%we get a version of \cite[Lemma 4.4]{okay2022simplicial} for a division,
%zero-sum-free semiring, but this when $Y$ is a simplicial group]].} 
%\comm{Yes, we should do this.}

\begin{rem}
{\rm

Let $X_1$ and $X_2$ be simplicial sets. Consider two simplicial set maps $f_i:Z\to X_i$ for $i=1,2$.
Let $Y$ be a simplicial group 
%\comm{can we instead have simplicial monoid?\ak{[[No, because then  Theorem \ref{Theo2} doesn't hold]]}}.
Applying Proposition \ref{pro:Glue} (together with Theorem \ref{Theo2} below) to the diagram of convex monoids
$$ 
\begin{tikzcd}
	& \catsSet(X_1,D_R(Y))  \arrow[d,"f_1^\ast"] \\
\catsSet(X_2,D_R(Y)) \arrow[r,"f_2^\ast"] & \catsSet(Z,D_R(Y))  
\end{tikzcd} 
$$
gives us a criterion on deciding whether a simplicial distribution $p\in \catsSet(X_1\sqcup_Z X_2, D_R(Y))$ is noncontextual. This is a generalization of \cite[Lemma 4.4]{okay2022simplicial}, in the case when $Y$ is a simplicial group, to the category of convex monoids. 
}
\end{rem}

\subsection{Strong invertibility} \label{sec:strong-invertibility}
	
{We introduce a stronger version of invertibility akin to strong contextuality (Definition \ref{suppp}) defined in terms of supports.}

	\begin{defn}\label{InvSup}
	{\rm
		Given an $R$-convex monoid $M$,
		the \emph{invertible support} of $m \in M$ is the following subset of
		$M^{\ast}$:
		$$
		\Isupp(m)=\{m' \in M^{\ast}:\, \exists  P \in D_R(M) \text{ such that }
		\pi(P)=m \text{ and } P(m')\neq 0 \}
		$$
We say that $m$ is 
		\emph{strongly non-invertible}
		if $\Isupp(m)=\emptyset$.
		}
	\end{defn}
	%
	%
%\ak{Here we prove some Propositions related to the invertible support and the strong invertibility.}

%	\begin{example}\label{Ex1: S.N.I}
%	{\rm
%The element $0\in \rr_{\geq 0}$ in Example \ref{Ex2: W.I} is not invertible. Therefore its 
%invertible support is empty, which means that it is strongly non-invertible. 
%}
%	\end{example}

{To study the properties of strong invertibility we will restrict to a special class of semirings.}

\begin{defn}\label{PropAB}
		{\rm
%		A semiring $R$ is called {\it zero-sum-free} if  $a+b=0$ implies $a=0$ and $b=0$ for all			$a,b \in R$ . 
			A semiring $R$ is called {\it integral} 
			if $a\cdot b=0$  implies $a=0$ or $b=0$ if for all $a,b \in R$ (see {\cite[Definition 7]{jacobs2009duality}}). 
		}
	\end{defn}

{In this section the semiring $R$ will be zero-sum-free (Definition \ref{def:ZeroFreeSum}) and integral. 
}

	\begin{lem}\label{spp}
Let $R$ be a zero-sum-free, integral semiring, and $M$ be an $R$-convex monoid.
%Let $M$ be an $R$-convex monoid, where $R$ is a zero-sum-free,
%integral semiring.
For $m_1,m_2 \in M$ we have 
$$\Isupp(m_1) \cdot \Isupp(m_2)  \subseteq  \Isupp(m_1\cdot m_2).$$   
Furthermore, if $m_1$ invertible, then 
$m_1 \cdot \Isupp(m_2)=\Isupp(m_1\cdot m_2)$.
	\end{lem}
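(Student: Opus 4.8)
The plan is to exploit the fact that the structure map $\pi^M : D_R(M) \to M$ is a homomorphism of monoids, which is exactly what it means for $(M,\pi^M)$ to be an $R$-convex monoid (as noted after Definition \ref{ConCatttt}, via Proposition \ref{ConvCat2}). Thus $\pi^M(P \ast Q) = \pi^M(P) \cdot \pi^M(Q)$ for all $P,Q \in D_R(M)$. For the inclusion $\Isupp(m_1) \cdot \Isupp(m_2) \subseteq \Isupp(m_1 \cdot m_2)$, I would start from arbitrary $m_1' \in \Isupp(m_1)$ and $m_2' \in \Isupp(m_2)$, with witnessing distributions $P_1, P_2 \in D_R(M)$ satisfying $\pi^M(P_i)=m_i$ and $P_1(m_1'), P_2(m_2') \neq 0$. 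I then propose $P := P_1 \ast P_2$ as a witness for $m_1' \cdot m_2'$: the homomorphism property gives $\pi^M(P)=m_1 \cdot m_2$, and $m_1' \cdot m_2'$ is invertible as a product of invertibles, hence lies in $M^\ast$.

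The key step, and the only place the hypotheses on $R$ are genuinely used, is to check that the coefficient $P(m_1' \cdot m_2')$ is nonzero. By the convolution formula in Equation (\ref{eq:composition-DC}),
$$
P(m_1' \cdot m_2') = \sum_{g_2 \cdot g_1 = m_1' \cdot m_2'} P_1(g_2)\, P_2(g_1),
$$
a sum that contains the term $P_1(m_1')\,P_2(m_2')$. Integrality of $R$ (Definition \ref{PropAB}) forces this term to be nonzero, while zero-sum-freeness (Definition \ref{def:ZeroFreeSum}) guarantees that a finite sum of elements of $R$ vanishes only if each summand vanishes; consequently no cancellation can occur and $P(m_1' \cdot m_2') \neq 0$. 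This yields $m_1' \cdot m_2' \in \Isupp(m_1 \cdot m_2)$ and completes the first assertion.

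For the refinement when $m_1$ is invertible, I would first record that $m_1 \in \Isupp(m_1)$, witnessed by $\delta^{m_1}$: indeed $\pi^M(\delta^{m_1})=m_1$ by the unit axiom $\pi^M \circ \delta_M = \Id_M$, and $\delta^{m_1}(m_1)=1$. The inclusion $m_1 \cdot \Isupp(m_2) \subseteq \Isupp(m_1 \cdot m_2)$ is then an immediate instance of the first part. For the reverse inclusion I would apply the first part to the invertible element $m_1^{-1}$ together with $m_1 \cdot m_2$, which gives
$$
m_1^{-1} \cdot \Isupp(m_1 \cdot m_2) \subseteq \Isupp(m_1^{-1} \cdot m_1 \cdot m_2) = \Isupp(m_2).
$$
Hence any $m' \in \Isupp(m_1 \cdot m_2)$ satisfies $m_1^{-1} \cdot m' \in \Isupp(m_2)$, so $m' = m_1 \cdot (m_1^{-1} \cdot m') \in m_1 \cdot \Isupp(m_2)$, establishing equality. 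The main obstacle is the no-cancellation argument in the key step; the rest is formal manipulation of the algebra axioms and of the adjunction (\ref{eq:adj-I}) defining $M^\ast$.
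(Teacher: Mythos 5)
Your proposal is correct and takes essentially the same route as the paper: the witness $P_1 \ast P_2$ combined with the homomorphism property of $\pi^M$, integrality to make the term $P_1(m_1')P_2(m_2')$ nonzero and zero-sum-freeness to rule out cancellation in the convolution sum, and then the same two-inclusion argument for the invertible case (forward via $m_1 \in \Isupp(m_1)$, reverse by applying the first part to $m_1^{-1}$). You merely make explicit a couple of points the paper leaves implicit, such as $m_1' \cdot m_2' \in M^\ast$ and the witness $\delta^{m_1}$.
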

	\begin{proof}
	Given $n_1 \in \Isupp(m_1)$, $n_2 \in \Isupp(m_2)$ there exists $P_1,P_2 \in D_R(M)$ such that
		$\pi(P_i)=m_i$ and $P_i(n_i)\neq 0$. Therefore
		 $\pi(P_1\ast P_2)=m_1\cdot m_2$ and 
		$$
		P_1\ast P_2(n_1\cdot n_2)=\sum_{x_1 \cdot x_2=n_1
			\cdot n_2}P_1(x_1)P_2(x_2)
		=P_1(n_1)P_2(n_2)+\cdots \neq 0.
		$$
This means that $n_1 \cdot n_2 \in \Isupp(m_1 \cdot m_2)$. 

Suppose now that $m_1 \in M^\ast$. Then
$m_1\cdot \Isupp(m_2) \subseteq \Isupp(m_1) \cdot \Isupp(m_2)
\subseteq \Isupp(m_1\cdot m_2)$. 
		Therefore $\Isupp(m_1 \cdot m_2)=m_1\cdot m_1^{-1}\cdot \Isupp(m_1\cdot m_2) \subseteq
	m_1 \cdot \Isupp(m_2)$.
	\end{proof}
	%
	%\begin{corollary}
		%Let $M$ be an $R$-convex monoid, where $R$ is a zero-sum-free, integral
		%semiring. The set $M-\{m\in M ~|~ m ~ \text{is strongly non-invertible}\}$ is
		%subconvex monoid of $M$.
	%\end{corollary}
	%
	%\begin{proof}
		%According to Proposition \ref{spp} if $\Isupp(m_1)$ and $\Isupp(m_2)$  both not empty, then $\Isupp(m_1\cdot m_2)$ is also not empty.
	%\end{proof}
	%

	%

{As an immediate consequence of this observation we have the following.}	
 	
	\begin{corollary}\label{msNInmsNI}
	Let $R$ be a zero-sum-free, integral semiring, and $M$ be an $R$-convex monoid.
	%	Let $M$ be an $R$-convex monoid, where $R$ is a zero-sum-free,
%integral semiring. 
Let $n\in M^{\ast}$ and $m \in M$. 
Then the following are equivalent:
\begin{enumerate}
\item $m$ is strongly non-invertible.
\item $n\cdot m$ is strongly non-invertible.
\item $m\cdot n$
is strongly non-invertible.
\end{enumerate}
%$m$ is strongly non-invertible if and only if
%		$n\cdot m$ is strongly non-invertible, \ak{if and only if $m\cdot n$
%is strongly non-invertible.}
	\end{corollary}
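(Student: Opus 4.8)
The plan is to deduce everything from Lemma~\ref{spp}, whose ``furthermore'' clause already computes $\Isupp$ under multiplication by a unit. The guiding observation is that strong non-invertibility of $m$ means precisely $\Isupp(m)=\emptyset$, and that left or right multiplication by a fixed unit is a bijection of $M^\ast$; hence it sends the empty set to the empty set and a nonempty set to a nonempty set. So once $\Isupp(n\cdot m)$ and $\Isupp(m\cdot n)$ are expressed as translates of $\Isupp(m)$, the three emptiness conditions become equivalent, and it suffices to prove $(1)\Leftrightarrow(2)$ and $(1)\Leftrightarrow(3)$.

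First I would prove $(1)\Leftrightarrow(2)$. Since $n\in M^\ast$, the second part of Lemma~\ref{spp} applied with $m_1=n$ and $m_2=m$ gives
$$
\Isupp(n\cdot m)=n\cdot\Isupp(m).
$$
As left multiplication by the unit $n$ is injective and carries $M^\ast$ into $M^\ast$, the set $n\cdot\Isupp(m)$ is empty if and only if $\Isupp(m)$ is empty. Thus $n\cdot m$ is strongly non-invertible exactly when $m$ is, which is $(1)\Leftrightarrow(2)$.

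For $(1)\Leftrightarrow(3)$ I need the right-handed analogue $\Isupp(m\cdot n)=\Isupp(m)\cdot n$, and this is the only genuine obstacle, since Lemma~\ref{spp} records only the left version. I would obtain it from the symmetric inclusion $\Isupp(a)\cdot\Isupp(b)\subseteq\Isupp(a\cdot b)$ of Lemma~\ref{spp}, using that a unit $u$ lies in its own invertible support (take $P=\delta^u$, so that $\pi(P)=u$ and $P(u)=1\neq0$). Indeed, with $n,n^{-1}\in M^\ast$ one gets $\Isupp(m)\cdot n\subseteq\Isupp(m)\cdot\Isupp(n)\subseteq\Isupp(m\cdot n)$, and conversely $\Isupp(m\cdot n)\cdot n^{-1}\subseteq\Isupp(m\cdot n)\cdot\Isupp(n^{-1})\subseteq\Isupp(m)$, whence $\Isupp(m\cdot n)=\bigl(\Isupp(m\cdot n)\cdot n^{-1}\bigr)\cdot n\subseteq\Isupp(m)\cdot n$. (Equivalently, one may pass to the opposite convex monoid $M^{\mathrm{op}}$, for which $\Isupp$ is unchanged since its definition depends only on $M^\ast$ and $\pi^M$ and not on the multiplication, and apply the left version of Lemma~\ref{spp} there.) With the identity $\Isupp(m\cdot n)=\Isupp(m)\cdot n$ in hand, right multiplication by the unit $n$ is again an injection of $M^\ast$, so $\Isupp(m)\cdot n$ is empty if and only if $\Isupp(m)$ is empty, giving $(1)\Leftrightarrow(3)$ and completing the argument.
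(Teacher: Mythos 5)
Your proof is correct and takes essentially the same route as the paper, which states this corollary as an immediate consequence of Lemma~\ref{spp} without further argument. Your explicit derivation of the right-handed identity $\Isupp(m\cdot n)=\Isupp(m)\cdot n$ from the symmetric inclusion of Lemma~\ref{spp} (via $u\in\Isupp(u)$ for units $u$) is precisely the detail the paper leaves implicit, since the lemma only records the left-multiplication version; so this fills in, rather than deviates from, the intended reasoning.
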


	\begin{lem}\label{sup(fm)}
Let $R$ be a zero-sum-free semiring.
%, integral semiring, and $M$ be an $R$-convex monoid.	
For a morphism $f:M_1 \to M_2$  in $\catConvMon_R$ and $m\in M_1$ we have
%, where $R$
%	is a zero-sum-free semiring, and given $m \in M$,
%	then 
$f(\Isupp(m)) \subseteq \Isupp f(m)$.
\end{lem}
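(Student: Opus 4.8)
The plan is to unwind the definition of invertible support and verify the containment element-wise. Fix $n \in \Isupp(m)$; by Definition \ref{InvSup} this means $n \in M_1^\ast$ and there exists a witnessing distribution $P \in D_R(M_1)$ with $\pi^{M_1}(P) = m$ and $P(n) \neq 0$. I must produce, for the element $f(n)$, both the membership $f(n) \in M_2^\ast$ and a witnessing distribution in $D_R(M_2)$ that is supported on $f(n)$ and pushes forward to $f(m)$.

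The natural candidate witness is $Q = D_R(f)(P) \in D_R(M_2)$. First I would record that $f(n) \in M_2^\ast$: since $f$ is in particular a monoid homomorphism, it carries units to units (if $n^{-1}$ is the inverse of $n$, then $f(n^{-1})$ inverts $f(n)$ by functoriality of $f$ on the monoid structure). Next I would check that $\pi^{M_2}(Q) = f(m)$; this is immediate from the hypothesis that $f$ is a morphism in $\catConvMon_R$, hence a morphism of $D_R$-algebras, giving $\pi^{M_2} \circ D_R(f) = f \circ \pi^{M_1}$, so that $\pi^{M_2}(Q) = \pi^{M_2}(D_R(f)(P)) = f(\pi^{M_1}(P)) = f(m)$.

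The crux is showing $Q(f(n)) \neq 0$. Using the formula for the pushforward from the definition of the distribution monad,
$$
Q(f(n)) = D_R(f)(P)(f(n)) = \sum_{m' \in M_1:\, f(m') = f(n)} P(m').
$$
Since $n$ lies in the fiber over $f(n)$, the summand $P(n) \neq 0$ appears. Here is where the hypothesis on $R$ enters: because $R$ is zero-sum-free, a sum of elements of $R$ can vanish only if every summand vanishes, so the presence of the nonzero term $P(n)$ forces $Q(f(n)) \neq 0$. Combining this with the previous two observations yields $f(n) \in \Isupp(f(m))$ by Definition \ref{InvSup}, which is the desired containment.

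The only genuinely load-bearing step is the non-vanishing of the fiber sum, and the main obstacle there is potential cancellation among the distinct preimages of $f(n)$; this is precisely what zero-sum-freeness rules out, so no further hypotheses (such as integrality or invertibility of denominators) are needed for this direction.
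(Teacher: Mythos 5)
Your proof is correct and follows essentially the same route as the paper's: push $P$ forward along $D_R(f)$, use the $D_R$-algebra morphism property to get $\pi^{M_2}(D_R(f)(P)) = f(m)$, and invoke zero-sum-freeness to see that the fiber sum $\sum_{f(m')=f(n)} P(m')$ cannot vanish since it contains the nonzero term $P(n)$. The only difference is that you explicitly verify $f(n)\in M_2^\ast$, a point the paper leaves implicit; this is a harmless (indeed welcome) addition, since $\Isupp(f(m))$ is by definition a subset of $M_2^\ast$.
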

\begin{proof}
	Given $n \in \Isupp(m)$ there exists $P \in D_R(M_1)$ such that
	$\pi^{M_1}(P)=m$ and $P(n)\neq 0$.  
	We have
	$$
	\pi^{M_2}(D_R(f)(P))=f(\pi^{M_1}(P))=f(m) 
	$$
	and
	$$
	D_R(f)(P)(f(n))= \sum_{f(n')=f(n)} P(n')=P(n)+\cdots \neq 0.
	$$
	Therefore $f(n) \in \Isupp(f(m))$. 
\end{proof}
%
%As a consequence of Lemma \ref{sup(fm)} we obtain the following  result.
Next result follows immediately from Lemma \ref{sup(fm)}.
\begin{corollary}\label{ForAPP1}
Let $R$ be a zero-sum-free semiring.
%, integral semiring, and $M$ be an $R$-convex monoid.	
For a morphism $f:M_1 \to M_2$  in $\catConvMon_R$ and $m\in M_1$ we have the following:
\begin{enumerate}
\item If
	$\Isupp (f(m)) \cap f(M^{\ast}_1)=\emptyset$, then $m$ is strongly
	non-invertible.
\item If $f(m)$ is strongly non-invertible, then $m$ is also strongly non-invertible.
\end{enumerate}

%	Given $f:M_1 \to M_2$ a map in $\catConvMon_R$ where $R$
%	is a zero-sum-free semiring, and given $m \in
%	M_1$ such that
%	$\Isupp f(m) \cap f(M^{\ast}_1)=\phi$, then $m$ is strongly
%	non-invertible.
\end{corollary}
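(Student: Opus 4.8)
Corollary \ref{ForAPP1} follows immediately from Lemma \ref{sup(fm)}, which gives the inclusion $f(\Isupp(m)) \subseteq \Isupp(f(m))$. The plan is simply to unwind what strong non-invertibility means (emptiness of the invertible support, Definition \ref{InvSup}) and apply this inclusion contrapositively in each of the two cases.

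For part (1), I would argue by contraposition. Suppose $m$ is \emph{not} strongly non-invertible, so that $\Isupp(m) \neq \emptyset$; pick $n \in \Isupp(m)$. By Lemma \ref{sup(fm)} we have $f(n) \in f(\Isupp(m)) \subseteq \Isupp(f(m))$. On the other hand, since $n \in M_1^\ast$, its image $f(n)$ lies in $f(M_1^\ast)$. Hence $f(n) \in \Isupp(f(m)) \cap f(M_1^\ast)$, so this intersection is nonempty, contradicting the hypothesis. Therefore $\Isupp(m) = \emptyset$, i.e.\ $m$ is strongly non-invertible.

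Part (2) is then a direct specialization of part (1). If $f(m)$ is strongly non-invertible, then $\Isupp(f(m)) = \emptyset$, and in particular $\Isupp(f(m)) \cap f(M_1^\ast) = \emptyset$. Applying part (1) immediately yields that $m$ is strongly non-invertible.

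There is essentially no obstacle here: the entire content is packaged in Lemma \ref{sup(fm)}, and the corollary is a two-line logical rearrangement. The only point requiring a small amount of care is keeping the direction of the implications straight—part (1) is genuinely a statement about the intersection $\Isupp(f(m)) \cap f(M_1^\ast)$ rather than about $\Isupp(f(m))$ alone, which is what makes it strictly more general than part (2). I would present the contrapositive argument for (1) and then note (2) as the special case where $f(M_1^\ast)$ is absorbed into an empty $\Isupp(f(m))$.
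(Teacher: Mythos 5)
Your proof is correct and follows exactly the route the paper intends: the paper's own ``proof'' is the single remark that the corollary follows immediately from Lemma \ref{sup(fm)}, and your contrapositive argument for part (1) (using that $\Isupp(m)\subseteq M_1^\ast$ by Definition \ref{InvSup}) together with deducing part (2) as the special case of an empty $\Isupp(f(m))$ is precisely the intended unwinding.
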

%

%\begin{corollary}
%	Given $f:M_1 \to M_2$ a map in $\catConvMon_R$ where $R$
%	is a zero-sum-free semiring, and given $m \in M$.
%	If $f(m)$ is strongly non-invertible, then $m$ also strongly non-invertible.
%\end{corollary}
%

	%

%
%B
\subsection{Invertible fraction} \label{sec:invertible-fraction}
We now give the definition of the invertible fraction for  real convex monoids.
\begin{defn}\label{InvFr}
{\rm
Let $M$ be a real convex monoid. 
%	Given $M$ a real convex monoid, 
The \emph{invertible
		fraction} of $m \in M$, denoted by $\IF(m)$, is the supremum of	
	$$
		 \{ \sum_{m' \in M^{\ast}} P(m'):\, P \in D_R(M)
	\text{ such that } \pi(P)=m\}.
	$$
	%We denote it by $\IF(m)$.
	The non-invertible fraction of $m$
	% will be 
	is defined to be 
	$\NIF(m)=1-\IF(m)$.
	%, and denoted by
	%$\NIF(m)$.
	}
\end{defn}

\begin{prop}\label{IFm0}
	Let $M$ be a real convex monoid. An element $m\in M$ is \emph{strongly non-invertible} if and only if $\IF(m)=0$.
\end{prop}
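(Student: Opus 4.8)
The plan is to prove both implications directly by unwinding Definition \ref{InvSup} and Definition \ref{InvFr}, using that for a real convex monoid the underlying semiring is $\RR_{\geq 0}$, where a coefficient $P(m')$ is nonzero precisely when it is strictly positive. This positivity is the bridge between the set-theoretic support condition and the numerical fraction.

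For the forward direction I would assume $m$ is strongly non-invertible, i.e.\ $\Isupp(m)=\emptyset$. By Definition \ref{InvSup} this says exactly that for every $P\in D_R(M)$ with $\pi(P)=m$ and every $m'\in M^\ast$ one has $P(m')=0$. Consequently $\sum_{m'\in M^\ast}P(m')=0$ for every admissible $P$, so the set whose supremum defines $\IF(m)$ consists only of the value $0$; its supremum is therefore $0$.

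For the converse I would argue by contrapositive, assuming $\Isupp(m)\neq\emptyset$. Then there exist $m'\in M^\ast$ and $P\in D_R(M)$ with $\pi(P)=m$ and $P(m')\neq 0$. Over $\RR_{\geq 0}$ this forces $P(m')>0$, whence $\sum_{m''\in M^\ast}P(m'')\geq P(m')>0$. Thus the supremum defining $\IF(m)$ is bounded below by a strictly positive number, giving $\IF(m)>0$ and hence $\IF(m)\neq 0$.

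There is no genuine obstacle; the statement is essentially a restatement of the two definitions once positivity over $\RR_{\geq 0}$ is invoked. The only point meriting a word of care is that the set over which the supremum is taken is nonempty, so that ``supremum of $\{0\}$'' legitimately equals $0$; this is guaranteed by the delta distribution $\delta^m$, which satisfies $\pi(\delta^m)=m$ since $\pi\circ\delta_M=\mathrm{id}_M$ by the $D_R$-algebra axioms, and thus always furnishes at least one admissible $P$.
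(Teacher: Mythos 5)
Your proposal is correct and follows essentially the same argument as the paper: both unwind Definitions \ref{InvSup} and \ref{InvFr} and use positivity over $\RR_{\geq 0}$ to pass between ``$P(m')=0$ for all units $m'$'' and ``$\sum_{m'\in M^\ast}P(m')=0$''. Your explicit remark that the admissible set is nonempty (via $\pi(\delta^m)=m$) is a small point the paper's proof leaves implicit, but it does not change the route.
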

\begin{proof}
	The element $m$ is strongly non-invertible if and only if $\Isupp(m)=\emptyset$. The set $\Isupp(m)$ is empty if and only if
	%, if and only if
every $P \in D(M)$ such that $\pi(P)=m$ satisfies 
	$P(m') = 0$ for all $m' \in M^{\ast}$.
	This is equivalent to every $P \in D(M)$ such that $\pi(P)=m$ satisfying 
	$\sum_{m'\in M^{\ast}}P(m') = 0$. 
	This   means that $\IF(m)=0$. 
\end{proof}

\begin{prop}\label{deg(fg)}
	For $m_1,m_2 \in M$ we have 
	$$
	\IF(m_1 \cdot m_2) \geq \IF(m_1)\cdot \IF(m_2).
	$$
\end{prop}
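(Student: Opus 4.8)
The plan is to build a competitor for the supremum defining $\IF(m_1\cdot m_2)$ out of competitors for $\IF(m_1)$ and $\IF(m_2)$, using the convolution product $\ast$ on $D(M)$ from Equation~(\ref{Convul}). Concretely, I would fix arbitrary $P_1,P_2\in D(M)$ with $\pi(P_1)=m_1$ and $\pi(P_2)=m_2$ and consider $P_1\ast P_2\in D(M)$. Since $M$ is a real convex monoid, the structure map $\pi$ is a homomorphism of monoids (Proposition~\ref{ConvCat2}), so $\pi(P_1\ast P_2)=\pi(P_1)\cdot\pi(P_2)=m_1\cdot m_2$. Hence $P_1\ast P_2$ is an admissible element in the set defining $\IF(m_1\cdot m_2)$, and it suffices to bound its invertible weight from below.

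The key step is that lower bound. Unwinding Equation~(\ref{eq:composition-DC}) for a one-object category gives $(P_1\ast P_2)(m)=\sum_{a\cdot b=m}P_1(a)P_2(b)$, so
$$
\sum_{m'\in M^\ast}(P_1\ast P_2)(m')=\sum_{a,b\,:\,a\cdot b\in M^\ast}P_1(a)P_2(b)\ \geq\ \sum_{a\in M^\ast}\sum_{b\in M^\ast}P_1(a)P_2(b)=\Big(\sum_{a\in M^\ast}P_1(a)\Big)\Big(\sum_{b\in M^\ast}P_2(b)\Big).
$$
Here I use two facts. First, $M^\ast=I(M)$ is closed under multiplication (it is a group), so every pair $(a,b)$ with $a,b\in M^\ast$ is indexed by some $a\cdot b\in M^\ast$; this gives the inclusion of index sets that licenses passing to the smaller double sum. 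Second, for a real convex monoid all coefficients $P_i(\cdot)$ are nonnegative reals, which is exactly what allows us to discard the omitted (nonnegative) terms and still keep an inequality. Consequently $\IF(m_1\cdot m_2)\geq\big(\sum_{a\in M^\ast}P_1(a)\big)\big(\sum_{b\in M^\ast}P_2(b)\big)$ for every admissible $P_1,P_2$.

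Finally I would pass to the supremum. Because $P_1$ and $P_2$ are subject to the independent constraints $\pi(P_1)=m_1$ and $\pi(P_2)=m_2$, and all quantities lie in $[0,1]$, the elementary identity $\sup\{st:\,s\in A,\,t\in B\}=(\sup A)(\sup B)$ for $A,B\subseteq[0,\infty)$ yields
$$
\IF(m_1\cdot m_2)\ \geq\ \sup_{P_1,P_2}\Big(\sum_{a\in M^\ast}P_1(a)\Big)\Big(\sum_{b\in M^\ast}P_2(b)\Big)=\IF(m_1)\cdot\IF(m_2).
$$
The homomorphism property of $\pi$ and the closure of $M^\ast$ under products are immediate from the results already established, so I expect the only genuinely delicate point to be this last interchange of supremum and product. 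It rests essentially on nonnegativity of the coefficients, and is therefore special to the real case $R=\RR_{\geq 0}$ in which $\IF$ is defined.
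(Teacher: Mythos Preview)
Your argument is correct and is essentially the same as the paper's: both form $P_1\ast P_2$, use that $\pi$ is a monoid homomorphism to see it is admissible for $m_1\cdot m_2$, and then bound the invertible weight from below using closure of $M^\ast$ and nonnegativity. The only difference is that you spell out the final passage to the supremum more carefully than the paper does.
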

\begin{proof}
	Let $P_1,P_2 \in D_R(M)$  such that $\pi(P_i)=m_i$ for $i=1,2$. Then 
	$\pi(P_1 \ast P_2)=m_1 \cdot m_2$ and
	$$
	\begin{aligned}
	\sum_{m' \in M^{\ast}} (P_1\ast P_2)(m') &=
	\sum_{m' \in M^{\ast}} \sum_{m'_1 \cdot
		m'_2=m'} P_1(m'_1)P(m'_2) \\
	&=\sum_{m'_1\cdot m'_2\in M^{\ast}} P_1(m'_1)P_2(m'_2)\\
	&\geq   \sum_{m'_1 , m'_2\in M^{\ast}} P_1(m'_1)P_2(m'_2)\\
	&=
	\sum_{m'_1\in M^{\ast}} P(m'_1)
	\sum_{m'_2\in M^{\ast}} P(m'_2).
	\end{aligned} 
	$$
	%which is equal to or greater than:
	%$$
	%\sum_{m'_1 , m'_2\in M^{\ast}} P_1(m'_1)P_2(m'_2)=
	%\sum_{m'_1\in M^{\ast}} P(m'_1)
	%\sum_{m'_2\in M^{\ast}} P(m'_2)
	%$$
	Therefore $\IF(m_1 \cdot m_2) \geq \IF(m_1)\cdot \IF(m_2)$.
\end{proof}
\begin{corollary}
	Given $n\in M^\ast$ and $m\in M$ we have $\IF(n\cdot
	m)=\IF(m)$.
\end{corollary}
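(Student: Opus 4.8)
The plan is to prove the corollary as a direct consequence of Proposition \ref{deg(fg)} together with the fact that invertible elements have invertible fraction equal to $1$. Since $n\in M^\ast$, the delta distribution $\delta^n\in D_R(M)$ satisfies $\pi(\delta^n)=n$ and puts all its weight on the single invertible element $n$, so $\sum_{m'\in M^\ast}\delta^n(m')=1$; this forces $\IF(n)=1$, and by the same reasoning $\IF(n^{-1})=1$. I would record this elementary observation first, perhaps as a one-line remark inside the proof.

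Next I would apply Proposition \ref{deg(fg)} twice. Applying it to the pair $(n,m)$ gives
\begin{equation*}
\IF(n\cdot m)\geq \IF(n)\cdot\IF(m)=\IF(m).
\end{equation*}
For the reverse inequality, the idea is to write $m=n^{-1}\cdot(n\cdot m)$ and apply Proposition \ref{deg(fg)} again to the pair $(n^{-1}, n\cdot m)$, yielding
\begin{equation*}
\IF(m)=\IF\bigl(n^{-1}\cdot(n\cdot m)\bigr)\geq \IF(n^{-1})\cdot\IF(n\cdot m)=\IF(n\cdot m).
\end{equation*}
Combining the two inequalities gives $\IF(n\cdot m)=\IF(m)$, which is exactly the claim.

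The main thing to be careful about is the trivial-looking step $\IF(n)=1$ for an invertible $n$: I want to make sure the supremum in Definition \ref{InvFr} is actually attained (or at least has value $1$) by exhibiting the explicit distribution $\delta^n$, rather than merely bounding it. Since $n$ itself is a unit, $\delta^n$ is a legitimate element of $D_R(M)$ with $\pi(\delta^n)=\pi^M(\delta^n)=n$, and the invertible-fraction sum evaluates to $1$, which is also the maximum possible value, so indeed $\IF(n)=1$; the analogous statement for $n^{-1}$ is identical. With those two facts in hand, the corollary is immediate and there is no real obstacle — the only subtlety is remembering to use the monoid identity $n^{-1}\cdot(n\cdot m)=m$ to get the reverse inequality, so that the single submultiplicativity estimate of Proposition \ref{deg(fg)} can be leveraged in both directions.
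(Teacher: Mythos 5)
Your proposal is correct and follows essentially the same route as the paper: both prove the two inequalities by applying Proposition \ref{deg(fg)} to the pair $(n,m)$ and then, via the identity $m=n^{-1}\cdot(n\cdot m)$, to the pair $(n^{-1},n\cdot m)$. The only difference is that you explicitly justify $\IF(n)=\IF(n^{-1})=1$ by exhibiting the delta distribution $\delta^n$, a step the paper uses implicitly; this is a harmless (and slightly more careful) elaboration, not a different argument.
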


\begin{proof}
	By Proposition \ref{deg(fg)} we have $\IF(n\cdot m)\geq \IF(n)\IF(m)=\IF(m)$.
	On the other hand, $m=n^{-1}\cdot(n\cdot m)$. Therefore 	$\IF(m) \geq \IF(n \cdot m)$.
\end{proof}

\begin{prop}\label{IFf}
	Given a morphism $f:M_1 \to M_2$  in $\catConvMon$ and  $m \in
	M$ we have
	$\IF(f(m))\geq \IF(m)$.
\end{prop}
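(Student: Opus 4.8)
The plan is to mirror the argument of Lemma \ref{sup(fm)}: push any distribution witnessing the invertible fraction of $m$ forward along $f$ using $D_R(f)$, and check that the pushed-forward distribution is an admissible witness for $\IF(f(m))$ whose unit-mass is at least as large. Since we are in $\catConvMon=\catConvMon_{\rr_{\geq 0}}$, all coefficients are nonnegative, which is what makes the relevant inequality go in the right direction.

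First I would record two facts about $f\colon M_1\to M_2$. Being a morphism in $\catConvMon$, it is a homomorphism of monoids that commutes with the algebra structure maps, so $\pi^{M_2}\circ D_R(f)=f\circ \pi^{M_1}$. Moreover, a monoid homomorphism sends invertible elements to invertible elements (if $n\cdot n^{-1}=e$ then $f(n)\cdot f(n^{-1})=f(e)$ is the identity of $M_2$), hence $f(M_1^\ast)\subseteq M_2^\ast$, equivalently $M_1^\ast\subseteq f^{-1}(M_2^\ast)$.

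Next, fix any $P\in D_R(M_1)$ with $\pi^{M_1}(P)=m$ and set $Q=D_R(f)(P)\in D_R(M_2)$. Then $\pi^{M_2}(Q)=f(\pi^{M_1}(P))=f(m)$, so $Q$ is admissible in the supremum defining $\IF(f(m))$. Using the definition of $D_R(f)$ and rearranging the sum over fibers, I would compute
$$
\sum_{m'\in M_2^\ast}Q(m')=\sum_{m'\in M_2^\ast}\,\sum_{f(n)=m'}P(n)=\sum_{n\in M_1\,:\,f(n)\in M_2^\ast}P(n)\;\geq\;\sum_{n\in M_1^\ast}P(n),
$$
where the inequality follows from the inclusion $M_1^\ast\subseteq f^{-1}(M_2^\ast)$ together with $P(n)\ge 0$ for all $n$.

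Finally, I would pass to suprema: every $P$ with $\pi^{M_1}(P)=m$ yields an admissible $Q$ for $f(m)$ achieving at least the value $\sum_{n\in M_1^\ast}P(n)$, so the supremum defining $\IF(f(m))$ dominates that defining $\IF(m)$, giving $\IF(f(m))\ge \IF(m)$. There is no genuine obstacle here; the only point that requires (mild) care is the fiber-rearrangement of the sum and the direction of the inequality, both of which rely on nonnegativity of the real coefficients and on the elementary fact that a homomorphism carries units to units.
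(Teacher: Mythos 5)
Your proposal is correct and follows essentially the same route as the paper's proof: push the witnessing distribution $P$ forward along $D(f)$, use $\pi^{M_2}\circ D(f)=f\circ\pi^{M_1}$ to see it is admissible for $f(m)$, and bound its unit-mass from below via the fiber sum. The paper's version is just more terse, leaving implicit the two points you spell out (that monoid homomorphisms carry units to units, and that nonnegativity gives the inequality after rearranging over fibers).
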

\begin{proof}
	For $P \in D(M_1)$ such that {$\pi^{M_1}(P)=m$} we have 
	$\pi^{M_2}(D(f)(P))=f(\pi^{M_1}(P))=f(m)$ and
	$$
	\sum_{n' \in M^{\ast}_2} D(f)(P)(n')=\sum_{n' \in M^{\ast}_2}\sum_{f(m)=n'}P(m)
	\geq \sum_{m'\in M_1^{\ast}} P(m').
	$$
\end{proof}
%
%\begin{corollary}
%Consider the Diagram \ref{DiagBull} in $\catConvMon$. For $(m_1,m_2) \in M_1 \times_N M_2$ we have
%	$min\{\IF(m_1) , \IF(m_2)\} \geq \IF((m_1,m_2))$.
%\end{corollary}
%
\begin{prop}\label{Lin}
	For $m \in M$ and $\pi(P)=m$ we have
	$$
	\IF(m)\geq \sum_{x \in M}P(x)\IF(x).
	$$
\end{prop}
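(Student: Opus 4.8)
The plan is to establish the inequality
$$
\IF(m)\geq \sum_{x\in M}P(x)\,\IF(x)
$$
by producing, from a near-optimal choice of representing distributions for each $x$ in the support of $P$, a single distribution $Q\in D_R(M)$ with $\pi(Q)=m$ whose invertible mass is at least the right-hand side minus an arbitrarily small error. Since $\IF$ is defined as a supremum (Definition \ref{InvFr}), the cleanest route is an $\epsilon$-argument: for each $x\in M$ fix some $P_x\in D_R(M)$ with $\pi(P_x)=x$ and $\sum_{m'\in M^\ast}P_x(m')\geq \IF(x)-\epsilon$, then glue these together weighted by $P$.

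First I would form the iterated distribution $\widehat P\in D_R(D_R(M))$ defined by $\widehat P(q)=\sum_{x:\,P_x=q}P(x)$, or more carefully take $\widehat P=D_R(x\mapsto P_x)(P)$ using the section $x\mapsto P_x$; then set $Q=\mu_M(\widehat P)$, so that explicitly $Q=\sum_{x\in M}P(x)\,P_x$ as a convex combination inside $D_R(M)$. The key computation is that $\pi(Q)=m$: using that $\pi$ is a $D_R$-algebra map, $\pi(\mu_M(\widehat P))=\pi(D_R(\pi)(\widehat P))$ by the algebra axiom (\ref{AAA}), and $D_R(\pi)(\widehat P)$ is the distribution sending each $x$ to $P(x)$ at the point $\pi(P_x)=x$, which is exactly $P$; hence $\pi(Q)=\pi(P)=m$. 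This identifies $Q$ as a legitimate representative of $m$.

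Next I would estimate the invertible mass of $Q$. Since $Q(m')=\sum_{x\in M}P(x)\,P_x(m')$ for each $m'$, summing over $m'\in M^\ast$ and interchanging the two finite sums gives
$$
\sum_{m'\in M^\ast}Q(m')=\sum_{x\in M}P(x)\sum_{m'\in M^\ast}P_x(m')
\geq \sum_{x\in M}P(x)\bigl(\IF(x)-\epsilon\bigr)
=\sum_{x\in M}P(x)\,\IF(x)-\epsilon,
$$
where the last equality uses $\sum_x P(x)=1$. Because $Q$ is an admissible representative, $\IF(m)\geq \sum_{m'\in M^\ast}Q(m')$, and letting $\epsilon\to 0$ yields the claim.

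The main obstacle I anticipate is purely bookkeeping rather than conceptual: $P$ has finite support, so only finitely many $P_x$ are actually needed and all the interchanges of summation are over finite index sets, which keeps everything well-defined and avoids convergence issues. One should be slightly careful that the assignment $x\mapsto P_x$ can be chosen so that $P_x$ is genuinely supported appropriately (e.g.\ for $x\in M^\ast$ one may simply take $P_x=\delta^x$, giving invertible mass exactly $1=\IF(x)$), but since we only invoke the defining supremum of $\IF(x)$ this choice is unconstrained and the $\epsilon$ slack absorbs any suboptimality. The only real point to verify with care is the identity $\pi(Q)=m$, which is where the $D_R$-algebra axiom does the essential work.
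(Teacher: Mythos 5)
Your proposal is correct and follows essentially the same route as the paper's proof: both choose $\epsilon$-near-optimal representatives $P_x$ for the points in the support of $P$, form $Q=\mu_M$ of the lifted distribution (your $\widehat P$ is the paper's $\tilde P$), invoke the $D_R$-algebra axiom in Diagram (\ref{AAA}) together with $D_R(\pi)(\widehat P)=P$ to conclude $\pi(Q)=m$, and then interchange the finite sums to bound the invertible mass of $Q$ from below. Your use of the pushforward $D_R(x\mapsto P_x)(P)$ is a slightly cleaner packaging of the paper's explicit case-defined $\tilde P$, but the argument is the same.
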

\begin{proof}
Let us write $m_1,m_2,\cdots,m_k$ for the distinct elements in 
%	We denote the elements in 
	$\{x \in M ~|~P(x) \neq 0\}$.
	% by
	%$m_1,m_2,\dots,m_k$.
	Given $\epsilon >0$, for every $1\leq i \leq k$ we choose
	$P_i \in D(M)$	such that $\pi(P_i)=m_i$ and
	$\IF(m_i)-\epsilon<\sum_{m' \in M^{\ast}}P_i(m')$.
	We have $\sum_{i=1}^{k}P_i(m_i)=1$. Thus we can define
	$Q=\sum_{i=1}^{k}P(m_i)P_i \in D(M)$.
	In fact, $Q=\mu_M(\tilde{P})$, where $\tilde{P} \in D(D(M))$ is
	defined as the follows:
	$$
	\tilde{P}(S)=
	\begin{cases}
		P(m_i)  & S=P_i \\
		0 & ~\text{otherwise.}
	\end{cases}
	$$
	On the other hand, $D(\pi)(\tilde{P})\in D(M)$ and for $x\in 	M$ we have
	$$
	D(\pi)(\tilde{P})(x)=\sum_{\pi(q)=x }\tilde{P}(q)=
	\sum_{i:\,\pi(P_i)=x}\tilde{P}(P_i)=
	\begin{cases}
		P(m_i)  & x=m_i \\
		0 & \text{otherwise.}
	\end{cases}
	$$
	We obtain that $D(\pi)(\tilde{P})=P$. Thus by the right-hand Diagram in (\ref{AAA}), we see that $\pi(Q)=\pi(P)$. Therefore $\pi(Q)=m$. Using this and by 
	our choice of $P_1, \cdots, P_k$, we obtain
	$$
	\begin{aligned}
		\sum_{m' \in M^{\ast}}Q(m') &= \sum_{m' \in
			M^{\ast}}
		\sum_{i=1}^{k}P(m_i)P_i(m')\\
		 & = 
		\sum_{i=1}^{k}\sum_{m' \in M^{\ast}}
		P(m_i)P_i(m')\\
		&=\sum_{i=1}^{k}P(m_i)(\sum_{m'
			\in M^{\ast}}
		P_i(m'))\\
		& > \sum_{i=1}^{k}P(m_i)(\IF(m_i)-\epsilon)\\
		&=
	\sum_{i=1}^{k}P(m_i)\IF(m_i)-\epsilon.
	\end{aligned}
	$$
	%
	%, this is bigger	
%than:
%	$$%
	%\sum_{i=1}^{k}P(m_i)(\IF(m_i)-\epsilon)=
	%\sum_{i=1}^{k}P(m_i)\IF(m_i)-\epsilon
	%$$
	Hence we proved that for every $\epsilon >0$ there exists $Q \in
	D_R(M)$
	such that $\pi(Q)=m$ and
	$$
	\sum_{m' \in M^{\ast}}Q(m')> 
	\sum_{m\in M}P(m)\IF(m)-\epsilon.
	$$
	This gives  the desired result.
\end{proof}
\begin{rem}\label{rem:IF>=PIF}
{\rm
{Theorem \ref{thm:ContextualSheaf} implies that the notion of contextuality for simplicial distributions and the corresponding notion for presheaves of distributions provided in \cite{abramsky2011sheaf} coincide. Later in Corollary \ref{NCFFF=IFFF} we will show that noncontextual fraction of a simplicial distribution $p:X\to D(Y)$ is equal to its invertible fraction (here we assume $Y$ is a simplicial group).
With these observations Proposition \ref{Lin} generalizes the first inequality of \cite[Theorem 2]{Abramsky_2017} satisfies by the contextual fraction of a presheaf of distributions.
}
%By Corollary \ref{NCFFF=IFFF} and Theorem \ref{thm:ContextualSheaf}
%\ak{[[Is that enough? or we need to explain that the map $\real$  is convex, I think it doesn't deserve.]]}
% we see that Proposition \ref{Lin} generalizes the first inequality of [Theorem 2]\cite{Abramsky_2017}.
%
}
\end{rem}

%\ak{
%The following definition will be used in Proposition \ref{IFFF2} to give an equivalent definition 
%of invertible fraction.
%}

%\begin{defn}

	%
%\end{defn}
%
%\begin{lemma}\label{PIPNIP}
	 
%\end{lemma}
%

{Next we provide an alternative characterization of invertible fraction, which will be useful when comparing this notion to noncontextual fraction.}

\begin{prop}\label{IFFF2}
Let $M$ be a real convex monoid.
%	Given a real convex monoid $M$, 
For $m\in M$, the invertible fraction 
	$\IF(m)$ is equal to the supremum of
	%$$
	%\{ \alpha \in [0,1] ~|~ P \in D(M^\ast) ,Q \in D(M)~\text{s.t}~ \w{\pi^M(\alpha P+ (1-\alpha)Q)=m}\}
	%$$
	\begin{equation}\label{SSS}
		\{ \alpha \in [0,1]:\, m_1 \text{ is weakly invertible},\;m_2 \in M \text{ such that } m=\alpha m_1+ (1-\alpha)m_2\, \}.
	\end{equation}
\end{prop}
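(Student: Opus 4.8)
The plan is to prove the identity by establishing the two inequalities between $\IF(m)$ and the supremum of the set in (\ref{SSS}), which I will call $S$. The bridge between the two descriptions is the observation that the structure map $\pi^M : D_R(M) \to M$ is affine with respect to the convex structures, i.e. $\pi^M(\alpha Q_1 + (1-\alpha) Q_2) = \alpha\, \pi^M(Q_1) + (1-\alpha)\, \pi^M(Q_2)$, where convex combinations in $D_R(M)$ are taken pointwise via $\mu_M$; this is just a restatement of the $D_R$-algebra axioms in Diagram (\ref{AAA}). I will also use that the identity $e_M$ is invertible, so a weakly invertible element always exists and $0\in S$.

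For the inequality $\IF(m)\le \sup S$, I would take any $P\in D_R(M)$ with $\pi^M(P)=m$ and set $\alpha=\sum_{m'\in M^*}P(m')$. When $0<\alpha<1$ I split $P$ into its restriction to $M^*$ and its restriction to $M\setminus M^*$, renormalizing each to obtain $P_1\in D_R(M^*)$ and $P_2\in D_R(M)$ with $P=\alpha P_1+(1-\alpha)P_2$ pointwise. Applying affineness of $\pi^M$ gives $m=\alpha m_1+(1-\alpha)m_2$ with $m_1=\tilde\pi^M(P_1)$ weakly invertible by Definition \ref{weakk}, so $\alpha\in S$. The boundary cases $\alpha=1$ (then $m$ itself is weakly invertible, take $m_1=m$) and $\alpha=0$ (take $m_1=e_M$, $m_2=m$) also place $\alpha$ in $S$. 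Hence every value $\sum_{m'\in M^*}P(m')$ lies in $S$, and passing to suprema yields $\IF(m)\le \sup S$.

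For the reverse inequality, given $\alpha\in S$ with witnesses $m_1$ weakly invertible and $m_2\in M$ satisfying $m=\alpha m_1+(1-\alpha)m_2$, I choose $P_1\in D_R(M^*)$ with $\tilde\pi^M(P_1)=m_1$ and set $P=\alpha P_1+(1-\alpha)\delta^{m_2}\in D_R(M)$. Affineness gives $\pi^M(P)=\alpha m_1+(1-\alpha)m_2=m$, while $\sum_{m'\in M^*}P(m')\ge \alpha\sum_{m'\in M^*}P_1(m')=\alpha$, since $P_1$ is supported on $M^*$ and the $\delta^{m_2}$ term contributes nonnegatively. Thus $\alpha\le \IF(m)$ for every $\alpha\in S$, giving $\sup S\le \IF(m)$, and combining the two bounds proves the claim.

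The only mild technical care is in the first inequality: one must check that restricting a distribution to $M^*$ and renormalizing produces a genuine element of $D_R(M^*)$, and that the pointwise splitting of $P$ is indeed the convex combination computed by $\mu_M$. But this is exactly the description of the free-algebra convex structure, so no real obstacle arises. I do not expect any deeper difficulty; the whole argument is a bookkeeping exercise organized around the affineness of $\pi^M$ and the definition of weak invertibility.
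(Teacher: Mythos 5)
Your proof is correct and takes essentially the same route as the paper's: one inequality comes from the distribution $\alpha P_1 + (1-\alpha)\delta^{m_2}$, and the other from splitting $P$ into its renormalized restrictions to $M^\ast$ and $M\setminus M^\ast$ (the paper's $I(P)$ and $NI(P)$), both steps glued together by the affineness of $\pi^M$. Your explicit handling of the boundary cases $\alpha\in\{0,1\}$, where the paper's renormalizations would divide by zero, is a minor tightening rather than a different approach.
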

\begin{proof}
We begin by an observation.
For $P \in D(M)$, we define  
	$$
	I(P)(m)=
	\begin{cases}
		\frac{P(m)}{\sum_{n \in M^\ast}P(n)}  & \text{if} ~ m \in M^\ast \\
		0 & \text{otherwise,}
	\end{cases}
	$$
	and 
	$$
	NI(P)(m)=
	\begin{cases}
		0 & \text{if} ~ m \in M^\ast \\
		\frac{P(m)}{\sum_{n \in M-M^\ast}P(n)}  & \text{otherwise.}
	\end{cases}
	$$	
Then 
%Given a real convex monoid $M$ and $P \in D(M))$ 
we have 
%	$$
\begin{equation}\label{eq:pinpin}
P = \sum_{n \in M^\ast}P(n) I(P) + \sum_{m \in M-M^\ast}P(m) NI(P).
\end{equation}	
%	$$

Now, let us denote the set in Equation (\ref{SSS}) by $A$. 
For $\alpha \in A$, 
	there exists $P_1 \in D(M^\ast)$ and $m_2 \in M$ such that
	$$
	m=\alpha \pi^M(P_1) + (1- \alpha) m_2=
	\alpha \pi^M(P_1) + (1- \alpha) \pi^M(\delta^{m_2}).
	$$
{By Proposition \ref{Conv=ConvR} the structure map $\pi^M$ is a morphism in $\catConv$.}
%	a $D_{\rr \geq 0}$-morphism. 
%By Proposition \ref{Conv=ConvR} it is an affine map. 
Therefore we obtain   
	$$
	m=\pi^M(\alpha P_1 + (1- \alpha) \delta^{m_2}).
	$$
	Observe that 
	$$
	\sum_{m' \in M^\ast}(\alpha P_1 + (1- \alpha) \delta^{m_2})(m')
	=\alpha\sum_{m' \in M^\ast}P_1(m') +
	(1-\alpha)\sum_{m' \in M^\ast}{\delta^{m_2}}(m') \geq
	\alpha\sum_{m' \in M^\ast}P_1(m').  
	$$
Since $P \in D(M^\ast)$ we have  $\sum_{m' \in M^\ast}P_1(m')=1$, and therefore
	$$
	\sum_{m' \in M^\ast}(\alpha P_1 + (1- \alpha) \delta^{m_2})(m')\geq
	\alpha.
	$$ 
	This yields that $\IF(m) \geq \sup A$.
	Now given $P \in D(M)$ such that $\pi(P)=m$,  Equation (\ref{eq:pinpin}) implies that
	$$
	P = \sum_{n \in M^\ast}P(n) I(P) +
	\sum_{m \in M-M^\ast}P(m) NI(P) .
	$$
	Applying $\pi^M$ we obtain
	$$
	m=\sum_{n \in M^\ast}P(n) \pi^M(I(P)) +
	\sum_{m \in M-M^\ast}P(m) \pi^M(NI(P)).
	$$
	Note that $\pi^M (I(P))$ is weakly invertible, thus 
	$\sup A \geq \IF(m)$.
\end{proof}
%

%\section{Convexity in simplicial scenarios}\label{Sec: ConvSim}
%
% Convexity in simplicial scenarios merged with first section

%	
%

%%%%%%Subsection contextulity for simplpicial distributions moved into first section

% kleisli category is moved into convex categories

\section{Contextuality and invertibility}\label{Sec: SimDis} 

{Recall that a simplicial distribution $p:X\to D_R(Y)$ is called noncontextual if it lies in the image of $\Theta_{X,Y}:D_R(\catsSet(X,Y))\to \catsSet(X,D_R(Y))$ (Definition \ref{def:contextual-morphism}).
When $Y$ is a simplicial group the set of simplicial distributions $\catsSet(X,D_R(Y))$ is a convex monoid (Corollary \ref{SCM}). In this monoid we can consider those elements that are weakly invertible (Definition \ref{weakk}). In this section we prove our main result, which is the equivalence of the two notions for simplicial distributions.
We also prove a similar equivalence between their strong versions.  
}

\subsection{Contextuality and weak invertibility}\label{sec:cont-weak-inv}
{
In this section we establish the equivalence of the notions of noncontextuality and weak invertibility. 
}

\begin{lem}\label{TetaMorph}
Let $X$ be 
%	Given 
a simplicial set 
%$X$ 
and $Y$ be a simplicial monoid.
%, 
Then the map
	$\Theta_{X,Y}:D_R(\St(X,Y)) \to \St(X,D_R(Y))$ is a homomorphism of monoids.
\end{lem}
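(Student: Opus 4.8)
The plan is to verify directly that $\Theta_{X,Y}$ respects both the multiplication and the unit, using the explicit description of $\Theta_{X,Y}$ from Proposition \ref{pro:alternative-Theta} together with the convolution identities of Lemma \ref{m1m2}. First I would pin down the three monoid structures involved. Since $Y$ is a simplicial monoid, each $Y_n$ is a monoid with identity $e_{Y_n}$ and the simplicial structure maps are monoid homomorphisms; hence $\St(X,Y)$ becomes a monoid under the pointwise product $(\varphi\cdot\psi)_n(x)=\varphi_n(x)\,\psi_n(x)$, with unit the map sending every $x\in X_n$ to $e_{Y_n}$. Because $D_R$ restricts to a monad on $\catMon$ (Corollary \ref{DRCat}), $D_R(Y)$ is again a simplicial monoid with $D_R(Y)_n=D_R(Y_n)$ carrying the convolution product, so $\St(X,D_R(Y))$ is a monoid under the corresponding pointwise product. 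Finally, viewing the monoid $\St(X,Y)$ as a one-object category, $D_R(\St(X,Y))$ is a monoid under the convolution $\ast$ of Equation (\ref{eq:composition-DC}).

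With these identifications in place, I would compute both sides of $\Theta_{X,Y}(P\ast Q)=\Theta_{X,Y}(P)\cdot\Theta_{X,Y}(Q)$ at an arbitrary $x\in X_n$. Expanding the convolution $(P\ast Q)(\varphi)=\sum_{\varphi_1\cdot\varphi_2=\varphi}P(\varphi_1)Q(\varphi_2)$ inside the formula of Proposition \ref{pro:alternative-Theta} and using that the product on $\St(X,Y)$ is pointwise gives
$$\Theta_{X,Y}(P\ast Q)_n(x)=\sum_{\varphi_1,\varphi_2}P(\varphi_1)Q(\varphi_2)\,\delta^{(\varphi_1)_n(x)\cdot(\varphi_2)_n(x)}.$$
For the right-hand side, $\Theta_{X,Y}(P)_n(x)\cdot\Theta_{X,Y}(Q)_n(x)$ is a convolution product in $D_R(Y_n)$ of $\sum_{\varphi_1}P(\varphi_1)\delta^{(\varphi_1)_n(x)}$ and $\sum_{\varphi_2}Q(\varphi_2)\delta^{(\varphi_2)_n(x)}$; applying Lemma \ref{m1m2} to the one-object category $Y_n$ (so that $\delta^a\ast\delta^b=\delta^{a\cdot b}$ and convolution expands bilinearly over the deltas) yields exactly the same expression. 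For the unit, $\Theta_{X,Y}(\delta^e)_n(x)=\delta^{e_n(x)}=\delta^{e_{Y_n}}$, which is precisely the identity of the convolution monoid $D_R(Y_n)$, so the unit of $D_R(\St(X,Y))$ is carried to the unit of $\St(X,D_R(Y))$.

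A cleaner alternative I would mention is to reduce to deltas: since $\Theta_{X,Y}$ is a morphism of $R$-convex sets (Proposition \ref{Hommm}) and both candidate products expand bilinearly over the generating delta distributions by Lemma \ref{m1m2}, it suffices to check the identity for $P=\delta^{\varphi_1}$ and $Q=\delta^{\varphi_2}$, where it collapses to the equality $\delta^{\varphi_1\cdot\varphi_2}=\delta^{\varphi_1}\cdot\delta^{\varphi_2}$ of deterministic distributions. I expect the only real friction to be bookkeeping: keeping the order of multiplication consistent with the $\ast$-convention of Equation (\ref{eq:composition-DC}), and making sure every identity invoked (bilinearity of convolution and $\delta^a\ast\delta^b=\delta^{a\cdot b}$ in $D_R(Y_n)$) is read off from Lemma \ref{m1m2} rather than from a convex-monoid structure on the target $\St(X,D_R(Y))$, which is not yet available at this stage.
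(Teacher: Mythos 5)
Your proposal is correct and follows essentially the same route as the paper's proof: a direct computation of $\Theta(P \ast Q)_n(x)$ from the explicit formula of Proposition \ref{pro:alternative-Theta}, expansion of the convolution on $D_R(\St(X,Y))$, and regrouping to identify the result with the pointwise convolution $\Theta(P)_n(x)\ast\Theta(Q)_n(x)$ --- the paper does this by evaluating at each $y\in Y_n$, you by expanding over deltas via Lemma \ref{m1m2}, which is the same manipulation. The only differences are cosmetic: you additionally record the (trivial) unit check, which the paper leaves implicit, and your reduction-to-deltas alternative is a legitimate repackaging of the same bilinearity argument.
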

\begin{proof}
	Given $p,q \in D_R(\St(X,Y))$,
	and  $x \in X_n$, $y \in Y_n$, we have {
	$$
	\begin{aligned}
	\Theta(p \ast q)_n(x)(y) &=
	\sum_{\varphi\in \catsSet(X,Y):\,\varphi_n(x)=y}(p \ast q)(\varphi) \\
	&= \sum_{ \varphi_n(x)=y}\sum_{\psi \cdot
		\psi' = \varphi }p(\psi) q(\psi')\\
	&= 	\sum_{\psi_n(x)\cdot \psi'_n(x)=y}p(\psi) q(\psi') \\
	&=\sum_{y_1 \cdot y_2=y}
		~\sum_{\psi_n(x)=y_1,\,\psi'_n(x)=y_2}p(\psi)q(\psi') \\
		&= \sum_{y_1 \cdot y_2=y}
		\left(\sum_{\psi_n(x)=y_1}p(\psi)\right)
		\left(\sum_{\psi'_n(x)=y_2}q(\psi')\right)\\
		&=
\sum_{y_1 \cdot y_2=y}\Theta(p)_n(x)(y_1) \cdot \Theta(q)_n(x)(y_2)\\
&=(\Theta(p)_n(x) \ast \Theta(q)_n(x))(y) \\
&= (\Theta(p) \cdot \Theta(q))_n(x)(y).
	\end{aligned}
	$$}
	%
%	This is the same as running over all $\psi_1 \in\St(X,Y) , \psi_2 \in \St(X,Y)$ such that $\psi_1(x)\cdot \psi_2(x)=m$:
%	\begin{equation}\label{Eq: ForHom}
%		\sum_{\psi_1(x)\cdot \psi_2(x)=m}P(\psi_1) Q(\psi_2)
%	\end{equation}
	%
%	On the other hand
	%
%	$$
%	\begin{aligned}
%(\Theta(P) \cdot \Theta(Q))(x)(m)=(\Theta(P)(x) \ast \Theta(Q)(x))(m)&=
%\sum_{m_1 \cdot m_2=m}\Theta(Q)(x)(m_1) \cdot \Theta(Q)(x)(m_2)\\ 
		%
%		&= \sum_{m_1 \cdot m_2=m}
%		\left(\sum_{\psi_1(x)=m_1}P(\psi_1)\right)
%		\left(\sum_{\psi_2(x)=m_2}Q(\psi_2)\right)\\
		%
%		&=\sum_{m_1 \cdot m_2=m}
%		~\sum_{\psi_1(x)=m_1~,~\psi_2(x)=m_2}P(\psi_1)Q(\psi_2)
%	\end{aligned}
%	$$
%	This is equal to (\ref{Eq: ForHom}). 
\end{proof}
\begin{cor}\label{SCM}
The functor $\St(-,-) : \St^{op} \times \St \to \catSet$ restricts to a functor
$$
\St(-,-): \St^{op} \times s\catConvMon_R \to \catConvMon_R
$$
In particular, for a simplicial set $X$ and a simplicial monoid $Y$ the set {$\catsSet(X,D_R(Y))$} of morphisms is an $R$-convex monoid. 	
\end{cor}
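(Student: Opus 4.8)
The plan is to deduce this from the convex-set version in Proposition \ref{Hommm} together with the multiplicativity of $\Theta$ established in Lemma \ref{TetaMorph}, recognizing $R$-convex monoids by means of Proposition \ref{ConvCat2}. First I would dispatch the ``in particular'' clause as a special case of the functorial statement. If $Y$ is a simplicial monoid then $D_R(Y)$ is an object of $s\catConvMon_R$: levelwise $(D_RY)_n=D_R(Y_n)$ is a monoid under the convolution product of Lemma \ref{lem:def-DC} applied to the one-object category $Y_n$, it is an $R$-convex set with structure map $\mu_{Y_n}$, and $\mu_{Y_n}$ is a monoid homomorphism by the computation in Lemma \ref{DeltaMuNat}; moreover the simplicial structure maps of $D_R(Y)$ are monoid homomorphisms by Lemma \ref{lem:def-DF}. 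Applying the functorial statement to this object then yields that $\St(X,D_R(Y))$ is an $R$-convex monoid.

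For the functorial statement, fix a simplicial set $X$ and an object $Y$ of $s\catConvMon_R$ and set $U=\St(X,Y)$. By Proposition \ref{Hommm} the set $U$ is already an $R$-convex set with structure map $\pi^U=(\pi^Y)_\ast\circ \Theta_{X,Y}$. I would equip $U$ with the levelwise monoid structure induced from $Y$, namely $(\varphi\cdot\psi)_n(x)=\varphi_n(x)\cdot\psi_n(x)$. By the monoid specialization of Proposition \ref{ConvCat2}, to conclude that $(U,\pi^U)$ is an $R$-convex monoid it suffices to show that $\pi^U$ is a homomorphism of monoids.

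The key step is to factor $\pi^U$ into two monoid homomorphisms. Since $Y$ is in particular a simplicial monoid, Lemma \ref{TetaMorph} shows that $\Theta_{X,Y}\colon D_R(\St(X,Y))\to \St(X,D_R(Y))$ is a monoid homomorphism; note that $\Theta_{X,Y}$ depends only on the underlying simplicial sets, so it is the same map appearing in $\pi^U$. Because $Y$ is an $R$-convex monoid, its structure map $\pi^Y\colon D_R(Y)\to Y$ is levelwise a monoid homomorphism (this is exactly Proposition \ref{ConvCat2} for $Y$), so post-composition $(\pi^Y)_\ast\colon \St(X,D_R(Y))\to \St(X,Y)$ respects the levelwise product and is a monoid homomorphism. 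Hence $\pi^U=(\pi^Y)_\ast\circ \Theta_{X,Y}$ is a monoid homomorphism, as required.

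Finally, for functoriality I would check that the induced maps land in $\catConvMon_R$. They are already morphisms of $R$-convex sets by Proposition \ref{Hommm}, so it remains to see that for $f\colon Y_1\to Y_2$ in $s\catConvMon_R$ and $g\colon X_1\to X_2$ the induced maps $f_\ast$ and $g^\ast$ are monoid homomorphisms; this is immediate, since $f$ is levelwise a monoid homomorphism and both induced maps are given by post- and pre-composition, which preserve the levelwise product. I expect the only genuine subtlety to be the compatibility of the convex and monoid structures on $U$, which is precisely the multiplicativity of $\Theta_{X,Y}$ furnished by Lemma \ref{TetaMorph}; the rest is a matter of assembling the already-established functors and algebra structures.
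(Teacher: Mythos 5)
Your proof is correct and follows essentially the same route as the paper's: both factor the structure map as $\pi^{\St(X,Y)}=(\pi^Y)_\ast\circ\Theta_{X,Y}$, invoke Lemma \ref{TetaMorph} for multiplicativity of $\Theta_{X,Y}$, cite Proposition \ref{Hommm} for the convex-set structure, and recognize convex monoids via Proposition \ref{ConvCat2}. Your explicit check that $D_R(Y)$ lies in $s\catConvMon_R$ when $Y$ is a simplicial monoid is a detail the paper leaves implicit, but it does not change the argument.
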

\begin{proof}
The structure map $\pi^{\St(X,Y)}:D_R(\catsSet(X,Y))\to \catsSet(X,Y)$ is a homomorphism of monoids since it is the composite of $\Theta_{X,Y}:D_R(\catsSet(X,Y)) \to \catsSet(X,D_RY)$, which is a homomorphism by Lemma \ref{TetaMorph}, with the homomorphism $(\pi^Y)_\ast: \catsSet(X,D_RY) \to \catsSet(X,Y)$.
%Given $X \in \St$ and $M \in s\catConvMon_R$. By Proposition \ref{TetaMorph} the map $\Theta_{X,M}$ is a 
%homomorphism of monoids, and by Proposition \ref{HomXG} 
%$(\pi^M)_\ast$ the map is a homomorphism of Monoids, so $\pi^{\St(X,M)}$ is a homomorphism of monoids. 
In addition, by Proposition \ref{Hommm}    
$\St(X,Y)$ is an $R$-convex set. 
%$ \in \catConv_R$. Therefore  $\St(X,Y) \in \catConvMon_R$.
It remains to show that the restriction is compatible with morphisms. This follows from Propsition \ref{Hommm} and the fact that the functor $\St(-,-) : \St^{op} \times \St \to \catSet$ restricts to a functor
$\St(-,-): \St^{op} \times s\catMon \to \catMon$.
%\ak{[[I added an explanation for the morphisms, you can remove if you see it not necessary.]]}
\end{proof}

%\begin{corollary}\label{STXYconvmon}
%	Given  a simplicial set $X$ and a simplicial monoid $Y$,
%	then $\St(X,D_R(Y))$ is a convex monoid.
%\end{corollary}
%

{
Next we describe the monoid structure of $\catsSet(X,D(N\ZZ_2))$ when $X$ is one of the spaces in Example \ref{ex:triangle} and \ref{Ex:Sqqq}. 
}
	
\begin{example}\label{Ex3 : CM}
{\rm
%\ak{[[Example number 1]]}    
First, let us describe the product for the triangle $X=\Delta[2]$.
% in Example \ref{ex:triangle}. 
%[[that already described in  
%the subsection of the sheaf
%approach]],
%By Corollary \ref{SCM} $\St(X,D(N\zz_2))$ is a real convex monoid. 
%Let us describe the product.
Given simplicial distributions $p,q$  represented by the boxes as in Example \ref{ex:edge-dist}
% the product of $p$ and $q$, each can be represented by the boxes, 
%, the product 
%$p \cdot q$ is defined as  follows:
%$$
%(p \cdot q)_n(x) = p_n(x) \ast q_n(x), \; \; \;\; x \in X_n.
%$$
%Given $p,q \in \St(X,N\zz_2)$,  
%Therefore $p\cdot q(\sigma_{x,y})$
%the product of $p(\sigma_{x,y})$ and $q(\sigma_{x,y})$ 
%\begin{table}[h!] 
%\begin{center} 
$$  
\begin{tabular}{|c|c|}  
\hline
  & $y$  \\ 
\hline
$x$ & {\begin{tabular}{cc} $p_1$ & $p_2$ \\ $p_3$ & $p_4$ \\ \end{tabular}}  \\
\hline
\end{tabular} 
\;\;\;\;\;\;
\begin{tabular}{|c|c|}  
\hline
  & $y$  \\ 
\hline
$x$ & {\begin{tabular}{cc}$q_1$ & $q_2$ \\ $q_3$ & $q_4$ \\
 \end{tabular}}  \\
\hline
\end{tabular} 
$$
the product $p\cdot q$ is represented by the box
\begin{equation}\label{eq:triangle-product}
\begin{tabular}{|c|c|}  
\hline
  & $y$  \\ 
\hline
$x$ & {\begin{tabular}{cc}
$p_1 \cdot q_1 + p_2 \cdot q_2 + p_3 \cdot q_3 + p_4 \cdot q_4$
&~~ $p_1 \cdot q_2 + p_2 \cdot q_1 + p_3 \cdot q_4 + p_4 \cdot q_3$
\\ 
$p_1 \cdot q_3 + p_3 \cdot q_1 + p_2 \cdot q_4 + p_4 \cdot q_2$
&~~ $p_1 \cdot q_4 + p_2 \cdot q_3 + p_3 \cdot q_2 + p_4 \cdot q_1$ \\
 \end{tabular}} \\
\hline
\end{tabular} 
\end{equation}
%\end{center}
%\label{tab:noncontextual}   
%\end{table}  
Similarly, we can describe the product for the square $X=\Delta[0]\ast X_\Sigma$.% in Example \ref{Ex:Sqqq}. 
The identity element in 
$\St(\Delta[0]\ast X_\Sigma ,DN\zz_2)$ is given by
%\begin{table}[h!] 
%\begin{center} 
$$
\begin{tabular}{|c|c|c|c|c|} 
\hline
  & $y_0$ & $y_1$ \\ 
\hline
$x_0$ & {\begin{tabular}{cc} $1$ & $0$ \\ $0$ & $0$ \\ \end{tabular}} & {\begin{tabular}{cc} $1$ & $0$ \\ $0$ & $0$ \\ \end{tabular}} \\ 
\hline
$x_1$ & {\begin{tabular}{cc} $1$ & $0$ \\ $0$ & $0$ \\ \end{tabular}} & {\begin{tabular}{cc} $1$ & $0$ \\ $0$ & $0$ \\ \end{tabular}} \\  
\hline
\end{tabular} 
$$
The product can be computed considering one box at a time, labeled by $(x_i,y_j)$, and using the formula in Equation (\ref{eq:triangle-product}).
%\end{center}
%\label{tab:noncontextual}   
%\end{table}
}
\end{example}

{Our main result in this section connects noncontextuality and weak invertibility. 
}

\begin{thm}\label{Theo1}
	Given a simplicial set $X$ and a simplicial group $Y$.
	If the distribution  $p \in \St(X,D_R(Y))$ is noncontextual then
	$p$ is weakly invertible (Definition \ref{weakk}).  
\end{thm}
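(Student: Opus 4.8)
The plan is to produce $p$ explicitly as $\tilde\pi^M(P)$ for a distribution $P \in D_R(M^\ast)$, where $M = \catsSet(X,D_R(Y))$ is the convex monoid supplied by Corollary \ref{SCM} and $M^\ast$ is its group of units. Noncontextuality of $p$ furnishes some $d \in D_R(\catsSet(X,Y))$ with $\Theta_{X,Y}(d) = p$. Combining this with the factorization $\Theta_{X,Y} = \pi^{\St(X,D_R(Y))} \circ D_R((\delta_Y)_\ast)$ from Proposition \ref{ThetaUniqqq} gives $p = \pi^M\bigl(D_R((\delta_Y)_\ast)(d)\bigr)$, so the whole problem reduces to showing that the distribution $D_R((\delta_Y)_\ast)(d) \in D_R(M)$ is supported on the units $M^\ast$.

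First I would check that $(\delta_Y)_\ast \colon \catsSet(X,Y) \to M$ is a homomorphism of monoids. This follows because $\delta_Y \colon Y \to D_R(Y)$ is a morphism of simplicial monoids: fiberwise one has $\delta^{a} \ast \delta^{b} = \delta^{a\cdot b}$ for the convolution product on $D_R(Y_n)$ (the single-object case of Lemma \ref{m1m2}), and $\delta^{e}$ is the unit; post-composition with such a map induces a monoid homomorphism on the hom-monoids. Equivalently, $(\delta_Y)_\ast = \Theta_{X,Y} \circ \delta_{\catsSet(X,Y)}$ is a composite of homomorphisms by Diagram (\ref{THETAA}) and Lemma \ref{TetaMorph}.

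The key step, and the only place where the hypothesis that $Y$ is a simplicial group (and not merely a simplicial monoid) is used, is to show that the image of $(\delta_Y)_\ast$ lies in $M^\ast$. Since $Y$ is a simplicial group, $\catsSet(X,Y)$ is itself a group under the pointwise product: the candidate inverse $\varphi^{-1}$, given by $(\varphi^{-1})_n(x) = (\varphi_n(x))^{-1}$, is a bona fide simplicial map because inversion on $Y$ is simplicial. Applying the homomorphism $(\delta_Y)_\ast$ then yields $\delta^{\varphi}\cdot\delta^{\varphi^{-1}} = \delta^{\varphi\cdot\varphi^{-1}} = \delta^{e}$, the identity of $M$, so every $\delta^{\varphi}$ is invertible. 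Hence $(\delta_Y)_\ast$ corestricts to a map $c \colon \catsSet(X,Y) \to M^\ast$ with $i_M \circ c = (\delta_Y)_\ast$.

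To conclude, I would set $P := D_R(c)(d) \in D_R(M^\ast)$. Functoriality of $D_R$ gives $D_R(i_M)(P) = D_R((\delta_Y)_\ast)(d)$, whence $\tilde\pi^M(P) = \pi^M\bigl(D_R(i_M)(P)\bigr) = \pi^M\bigl(D_R((\delta_Y)_\ast)(d)\bigr) = \Theta_{X,Y}(d) = p$, exhibiting $p$ as weakly invertible. Everything outside the third paragraph is a routine rearrangement of Proposition \ref{ThetaUniqqq}; the substantive content is precisely that the deterministic distributions attached to a simplicial group are units of $M$.
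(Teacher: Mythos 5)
Your proof is correct and takes essentially the same route as the paper: both use Proposition \ref{ThetaUniqqq} to factor $\Theta_{X,Y}$ as $\pi^{\St(X,D_R(Y))}\circ D_R((\delta_Y)_\ast)$ and then observe that, because $Y$ is a simplicial group, $(\delta_Y)_\ast$ is a monoid homomorphism whose image lies in the units, so the witnessing distribution lands in $D_R\bigl((\St(X,D_R(Y)))^\ast\bigr)$. The paper packages this via the adjunction-transpose formalism in Diagrams (\ref{Isommm}) and (\ref{Importtt}), while you verify the corestriction to units explicitly; this is a presentational rather than a mathematical difference.
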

\begin{proof}
	We have the following commutative diagram:
\begin{equation}\label{Isommm}
\begin{tikzcd}
(\St(X,D_R(Y)))^\ast   \arrow[r,hook] & \St(X,D_R(Y))  \\
		\St(X,Y)  
 \arrow[u,hook,"(\delta_{Y})_\ast"]  
\arrow[ru,hook,"(\delta_{Y})_\ast"']  
\end{tikzcd}
\end{equation}
By Proposition \ref{ThetaUniqqq} 
%(\ref{eq:Thetapi}) 
%we obtain that 
$\Theta_{X,Y}$ is the transpose of 
$(\delta_Y)_\ast$ with respect to the adjunction $D_R:\catMon \adjoint \catConvMon_R:U$.
% (see Remark \ref{AdjFormmm}).
Similarly, the map 
$\tilde{\pi}^{\St(X,D_R(Y))}$ is the transpose of the inclusion homomorphism 
$(\St(X,D_R(Y)))^\ast \hookrightarrow \St(X,D_R(Y))$ with respect to the same adjunction. 	
%By Proposition \ref{ThetaUniqqq}, Definition \ref{weakk}, and  
%the adjunction $D_R:\catMon \adjoint \catConvMon_R:U$,
Therefore we obtain the following commutative diagram:
\begin{equation}\label{Importtt}
\begin{tikzcd}
D_R((\St(X,D_R(Y)))^\ast)   
\arrow[rrr,"{\tilde\pi}^{\St(X,D_R(Y))}"] &&& \St(X,D_R(Y))  \\
D_R(\St(X,Y)) 
 \arrow[u,hook,"D_R((\delta_{Y})_\ast)"]  
\arrow[rrru,"\Theta_{X,Y}"']  
\end{tikzcd}
\end{equation}
This diagram gives the desired result.
\end{proof}
%
%
%In Theorem \ref{Theo2} below we will prove that the opposite direction of Theorem \ref{Theo1} holds  for a large class of semirings. 
The following example shows that the opposite direction of Theorem \ref{Theo1} does not hold in general. However, in Theorem \ref{Theo2} we will show that it holds for a large class of semirings.
%it does not hold in general.
%true when the semiring is $\rr$. 
%
\begin{example}\label{ex:D1-DeltaZ2}
{\rm
Let $X$ be the simplicial circle $S^1=\Delta[1]/\partial \Delta[1]$. It has one vertex $x$, and one nondegenerate $1$-simplex $\sigma$.
%, that means a "simplest circle".  
%such that
A distribution $p \in \St(X,D_{\rr}(\Delta_{\zz_2}))$ is %determined by  $p_a$ 
given by a tuple {$(p^{00},p^{01},p^{10},p^{11})$} 
%$(p_1,p_2,p_3,p_4)$ 
where $p^{00}+p^{10}=d_0(p_\sigma)=d_1(p_\sigma)=p^{00}+p^{01}$.
%\NSbox{p_1}{p_2}{p_3}{p_4}$
%\begin{table}[h!] 
%\begin{center} 
%$$
%\begin{tabular}{|c|c|}  
%\hline
%  & $x$  \\ 
%\hline
%$x$ & {\begin{tabular}{cc} $p_1$ & $p_2$ \\ $p_3$ & $p_4$ \\ \end{tabular}}  \\
%\hline
%\end{tabular} 
%$$
%\end{center}
%\label{tab:noncontextual}   
%\end{table}  
%where $p_1+p_3=d_0(P_a)=d_1(P_a)=p_1+p_2$. $P_a$ is the following 
%\begin{table}[h!] 
%\begin{center} 
$$
\begin{tabular}{|c|c|}  
\hline
  & $x$  \\ 
\hline
$x$ & {\begin{tabular}{cc} $p^{00}$ & $p$ \\ $p$ & $p^{11}$ \\ \end{tabular}}  \\
\hline
\end{tabular} 
$$
%\end{center}
%\label{tab:noncontextual}   
%\end{table}  
Deterministic distributions on $(S^1,\Delta_{\ZZ_2})$ are given by $(1,0,0,0)$ and $(0,0,0,1)$. Therefore the
distribution  $(1,2,2,-4)$ is contextual. But this distribution is weakly invertible, in fact even invertible, with inverse given by
$(\frac{11}{35},\frac{2}{7},\frac{2}{7},\frac{4}{35})$. 
}
\end{example}

\begin{lem}\label{GtoDRG}
Let $R$ be a 
%Given a 
zero-sum-free, integral
	semiring 
	%$R$ 
	and $G$ be a group.
	% $G$, 
Then $(D_R(G))^{\ast}$ is equal to the image of $\delta_G : G \to D_R(G)$. 
	%It turns out the compositions of adjoints functors \w{()^{\ast}\circ D_R} is the Identity %functor.
\end{lem}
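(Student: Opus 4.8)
The plan is to work with the monoid $D_R(G)$ whose product is the convolution $\ast$ of Equation~\eqref{eq:composition-DC}, specialized to the one-object category associated with the group $G$; its identity element is $\delta^e$, where $e$ is the unit of $G$. One inclusion is immediate from Lemma~\ref{m1m2}: since $\delta^g \ast \delta^{g^{-1}} = \delta^{g\cdot g^{-1}} = \delta^e = \delta^{g^{-1}} \ast \delta^g$, every $\delta^g$ is invertible, so the image of $\delta_G$ is contained in $(D_R(G))^{\ast}$. The content of the lemma is the reverse inclusion.

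For that, I would take a unit $p \in (D_R(G))^{\ast}$ together with a two-sided inverse $q$, so in particular $q \ast p = \delta^e$. Writing out the convolution at an arbitrary $f \in G$ and parametrizing the summation by $g_1$ via $g_2 = f g_1^{-1}$, this equation reads
$$
\sum_{g_1 \in G} q(f g_1^{-1})\, p(g_1) = \delta^e(f).
$$
The key step is to exploit the equations indexed by $f \neq e$, where the right-hand side is $0$. Here the two hypotheses on $R$ enter in turn: zero-sum-freeness (Definition~\ref{def:ZeroFreeSum}), extended by an easy induction to arbitrary finite sums, forces each summand $q(f g_1^{-1})\, p(g_1)$ to vanish, and then integrality (Definition~\ref{PropAB}) forces, for every $g_1$, that $q(f g_1^{-1}) = 0$ or $p(g_1) = 0$.

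I would then read this off at the level of supports. Setting $S_p = \{g : p(g) \neq 0\}$ and $S_q = \{g : q(g) \neq 0\}$, which are nonempty and finite, the vanishing for all $f \neq e$ says exactly that whenever $g_1 \in S_p$ and $g_2 \in S_q$ one must have $g_2 g_1 = e$, i.e.\ $g_2 = g_1^{-1}$. Fixing any $g_1 \in S_p$ therefore pins down $S_q = \{g_1^{-1}\}$; as this must hold for \emph{every} element of $S_p$, the set $S_p$ cannot contain two distinct elements, so $S_p = \{a\}$ is a singleton. The normalization $\sum_{x} p(x) = 1$ then leaves only $p(a) = 1$, whence $p = \delta^a$ lies in the image of $\delta_G$, as desired.

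The main obstacle is bookkeeping rather than conceptual difficulty: one must keep the direction of the convolution consistent with the composition convention of Equation~\eqref{eq:composition-DC}, and apply the two hypotheses in the correct order---first zero-sum-freeness to separate the sum into individual products, then integrality to collapse each product onto a vanishing factor. I would also emphasize that neither hypothesis is dispensable; over $R = \RR$, which fails zero-sum-freeness, one already sees invertible distributions that are not deltas, as in Example~\ref{ex:D1-DeltaZ2}.
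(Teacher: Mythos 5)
Your proof is correct and takes essentially the same approach as the paper: both expand the convolution identity $q \ast p = \delta^{e}$, use zero-sum-freeness to force every summand at $f \neq e$ to vanish and integrality to split each product, and then conclude from normalization that $p$ is a delta distribution. Your support-based bookkeeping (pinning down $S_q$ as $\{g_1^{-1}\}$ and then collapsing $S_p$ to a singleton) is only a mild repackaging of the paper's argument, which fixes a single $g$ with $q(g)\neq 0$ and deduces $p(hg^{-1})=0$ for all $h\neq e$.
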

\begin{proof}
{The image of $\delta_G$ is contained in $(D_R(G))^*$, since this map is a homomorphism of monoids.
For the converse consider 
$p=\sum_{g \in G}\alpha_g\delta^g$ 
with inverse $q=\sum_{g \in G}\beta_g\delta^g$, that is
$$
p\ast q(h) = \sum_{g_1g_2=h} p(g_1) q(g_2) = 
\left\lbrace
\begin{array}{cc}
1 & h=e_G\\
0 & h\neq e_G.
\end{array}
\right.
$$
The case $h=e_G$ implies that there exists $g\in G$ for which $q(g)\neq 0$. On the other hand, the second case gives us that $p(hg^{-1})=0$ for all $h\neq e_G$, since $R$ is zero-sum-free and integral. Therefore $p$ is the  delta distribution $\delta^{g^{-1}}$.
}
\end{proof}

\begin{lem}\label{HomInvvv}
Let $R$ be a zero-sum-free and integral
	semiring. 
Given a simplicial set $X$ and a simplicial group $Y$ the set $(\St(X,D_R(Y)))^\ast$ of {units} is the image of the following map:
	$$
	(\delta_Y)_{\ast} : \St(X,Y) \to \St(X,D_R(Y)).
	$$
\end{lem}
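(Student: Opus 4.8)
The plan is to exploit the fact, coming from Corollary~\ref{SCM}, that the monoid structure on $\St(X,D_R(Y))$ is computed simplex-wise: for $p,q\in \St(X,D_R(Y))$ and $x\in X_n$ one has $(p\cdot q)_n(x)=p_n(x)\ast q_n(x)$, where $\ast$ is the convolution product on the monoid $D_R(Y_n)$ induced by the group structure of $Y_n$, and the unit is the distribution $x\mapsto \delta^{e_{Y_n}}$. Consequently, for each fixed $x\in X_n$ the evaluation map $\St(X,D_R(Y))\to D_R(Y_n)$, $p\mapsto p_n(x)$, is a homomorphism of monoids, and hence carries units to units. This reduces the statement to understanding the units of each $D_R(Y_n)$, which is exactly what Lemma~\ref{GtoDRG} supplies.

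For the inclusion that the image of $(\delta_Y)_\ast$ lies in the units, I would take a simplicial map $\varphi\colon X\to Y$ and exhibit an inverse for $\delta^\varphi$. Since $Y$ is a simplicial group, the levelwise inversions assemble into a simplicial map $\iota\colon Y\to Y$, so $\varphi^{-1}\coloneqq \iota\circ\varphi$ is again simplicial. Using $\delta^a\ast \delta^b=\delta^{a\cdot b}$ in $D_R(Y_n)$ (the group analogue of Lemma~\ref{m1m2}), one computes $(\delta^\varphi\cdot \delta^{\varphi^{-1}})_n(x)=\delta^{\varphi_n(x)}\ast\delta^{(\varphi_n(x))^{-1}}=\delta^{e_{Y_n}}$, and symmetrically on the other side, so $\delta^\varphi$ is a unit with inverse $\delta^{\varphi^{-1}}$.

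For the reverse inclusion, suppose $p\in (\St(X,D_R(Y)))^\ast$. By the first paragraph each $p_n(x)$ is a unit of $D_R(Y_n)$, so Lemma~\ref{GtoDRG} (here is where the zero-sum-free and integral hypotheses on $R$ are used) forces $p_n(x)=\delta^{\varphi_n(x)}$ for a uniquely determined element $\varphi_n(x)\in Y_n$. This defines functions $\varphi_n\colon X_n\to Y_n$, and it remains to check that $\varphi=(\varphi_n)_n$ is a morphism of simplicial sets, whence $p=(\delta_Y)_\ast(\varphi)$ lies in the desired image. This naturality check is the step I expect to be the crux: naturality of the simplicial map $p$ gives, for every $\theta\colon[m]\to[n]$ in $\catDelta$ and every $x\in X_n$, the identity $D_R(\theta^\ast)(p_n(x))=p_m(\theta^\ast x)$; substituting the delta descriptions turns this into $\delta^{\theta^\ast(\varphi_n(x))}=\delta^{\varphi_m(\theta^\ast x)}$, and since $\delta$ is injective this yields $\theta^\ast(\varphi_n(x))=\varphi_m(\theta^\ast x)$, which is exactly the naturality of $\varphi$. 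Everything else is bookkeeping with the simplex-wise product, and the only genuine inputs are Lemma~\ref{GtoDRG} together with the injectivity of the delta map.
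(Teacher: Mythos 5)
Your proposal is correct and follows essentially the same route as the paper: the image of $(\delta_Y)_\ast$ lands in the units because $\St(X,Y)$ is a group mapping homomorphically into the monoid (you just make the inverse $\iota\circ\varphi$ explicit), and the converse reduces, via the simplex-wise convolution product, to Lemma \ref{GtoDRG} forcing each $p_n(x)$ to be a delta distribution, after which compatibility of deltas with the simplicial structure maps yields the simplicial map $\varphi$ with $p=(\delta_Y)_\ast(\varphi)$. This matches the paper's proof step for step, with your naturality check for arbitrary $\theta\colon[m]\to[n]$ being the same argument the paper phrases in terms of $d_i$ and $s_j$.
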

\begin{proof}
{Since $(\delta_Y)^*$ is a homomorphism of monoids we have} that $\Image((\delta_Y)_{\ast}) \subseteq (\St(X,D_R(Y))^\ast$. 
For the other direction, consider
$p,q \in \St(X,D_R(Y))$ such that 
	$p\cdot q =e_{\St(X,D_R(Y))}$. 
This means that 
we have $p(x) \ast q(x)=e_{D_R(Y_n)}$ for 
$x \in X_n$.
By Lemma \ref{GtoDRG}  
there exists $y(x) \in Y_n$ such that 
$p(x) =\delta^{y(x)}$. 
Then we define {$\varphi_n:X_n\to Y_n$ by} $\varphi_n(x)=y(x)$.
%$\varphi_n\in \St(X_n,Y_n)$ by
%	$\varphi_n(x)=y(x)$. 
Then compatibility of $p$ with the simplicial structure maps and 
	$$
	d_i(\delta^y)=\delta^{d_i(y)},\;\;
	s_j(\delta^y)=\delta^{s_j(y)}
	$$
{implies that $\varphi:X\to Y$ defined in degree $n$ by $\varphi_n$ is a simplicial set map such that $(\delta_Y)_\ast(\varphi)=p$. }
\end{proof}

\begin{thm}\label{Theo2}
Let $R$ be a 
zero-sum-free and integral
	semiring. 
Given a simplicial set $X$ and a simplicial group $Y$, a distribution  $p \in \St(X,D_R(Y))$ is noncontextual if and only if $p$	is weakly invertible.  
\end{thm}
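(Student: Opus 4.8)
The forward implication, that noncontextuality implies weak invertibility, is already established as Theorem \ref{Theo1}, which holds over an arbitrary semiring $R$. So the plan is to prove the converse: every weakly invertible $p \in \St(X,D_R(Y))$ is noncontextual. Here the two hypotheses on $R$ enter through Lemma \ref{HomInvvv}, which (via Lemma \ref{GtoDRG}, where zero-sum-freeness and integrality are used) identifies the group of units $M^\ast$ of $M = \St(X,D_R(Y))$ with the image of $(\delta_Y)_\ast : \St(X,Y) \to M$.

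The central observation I would make is that the corestriction $j : \St(X,Y) \to M^\ast$ of $(\delta_Y)_\ast$ is \emph{surjective}, which is exactly the content of Lemma \ref{HomInvvv}, and that the distribution functor $D_R : \catSet \to \catSet$ carries surjections to surjections. The latter is immediate: given a target distribution $q$ with finite support, choose a preimage of each point in the support and transport the weights. Consequently $D_R(j) : D_R(\St(X,Y)) \to D_R(M^\ast)$ is surjective.

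Now suppose $p$ is weakly invertible, so $p = \tilde\pi^M(P)$ for some $P \in D_R(M^\ast)$. Using surjectivity of $D_R(j)$, I would lift $P$ to some $d \in D_R(\St(X,Y))$ with $D_R(j)(d) = P$. Since $i_M \circ j = (\delta_Y)_\ast$ and $\tilde\pi^M = \pi^M \circ D_R(i_M)$, Equation (\ref{eq:Thetapi}) of Proposition \ref{ThetaUniqqq} yields
$$
\Theta_{X,Y}(d) = \pi^M\bigl(D_R((\delta_Y)_\ast)(d)\bigr) = \pi^M\bigl(D_R(i_M)(D_R(j)(d))\bigr) = \tilde\pi^M(P) = p,
$$
so $p$ lies in the image of $\Theta_{X,Y}$ and is therefore noncontextual. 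Concretely, this says that the commutative triangle (\ref{Importtt}) from the proof of Theorem \ref{Theo1} has both legs sharing the same image once $D_R((\delta_Y)_\ast)$ is known to be surjective onto $D_R(M^\ast)$.

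Most of the work has already been absorbed into Theorem \ref{Theo1}, Lemma \ref{HomInvvv}, and Proposition \ref{ThetaUniqqq}, so the remaining argument is short. The only place where genuine care is needed — and where the hypotheses on $R$ truly enter — is the identification $M^\ast = \Image((\delta_Y)_\ast)$: without zero-sum-freeness and integrality the units of $D_R(Y_n)$ need not reduce to delta distributions, and Example \ref{ex:D1-DeltaZ2} exhibits an invertible-but-contextual distribution over $R = \RR$. I therefore expect the dependence on Lemma \ref{HomInvvv}, rather than any computation, to be the single subtle point of the proof.
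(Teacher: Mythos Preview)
Your proposal is correct and follows essentially the same route as the paper: the paper's proof reads simply ``This follows from Diagram (\ref{Importtt}) and Lemma \ref{HomInvvv},'' and your argument is precisely the unpacking of that sentence, using Lemma \ref{HomInvvv} to see that the vertical map $D_R((\delta_Y)_\ast)$ in the triangle is onto $D_R(M^\ast)$, so that the images of $\Theta_{X,Y}$ and $\tilde\pi^M$ coincide. Your additional remark that $D_R$ preserves surjections is the only extra ingredient beyond what the paper leaves implicit, and it is indeed trivially true.
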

\begin{proof}
This follows from Diagram (\ref{Importtt}) and Lemma \ref{HomInvvv}.
\end{proof}

\begin{corollary}\label{NCFFF=IFFF} 
Let $X$ be a simplicial set and $Y$ be a simplicial group.  
For $p  \in \St(X,D(Y))$ we have 
	$$
	\NCF(p) = \IF(p).
	$$ 
\end{corollary}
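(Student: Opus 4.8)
The plan is to deduce this directly from the alternative description of the invertible fraction in Proposition \ref{IFFF2} together with the identification of weak invertibility and noncontextuality in Theorem \ref{Theo2}. Both $\NCF$ and $\IF$ are defined for the semiring $R=\RR_{\geq 0}$, which is zero-sum-free and integral, so the hypotheses of Theorem \ref{Theo2} are satisfied.

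First I would fix $M=\St(X,D(Y))$. Since $Y$ is a simplicial group, it is in particular a simplicial monoid, so by Corollary \ref{SCM} the set $M$ is an $\RR_{\geq 0}$-convex monoid, i.e.\ a real convex monoid, and $\IF(p)$ is defined. Applying Proposition \ref{IFFF2} to $m=p$ rewrites $\IF(p)$ as the supremum of those $\alpha\in[0,1]$ admitting a decomposition $p=\alpha p_1+(1-\alpha)p_2$ with $p_1\in M$ weakly invertible and $p_2\in M$ arbitrary.

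Next I would invoke Theorem \ref{Theo2}, which asserts that an element of $M=\St(X,D(Y))$ is weakly invertible if and only if it is noncontextual. Substituting ``noncontextual'' for ``weakly invertible'' in the supremum above shows that the set indexing $\IF(p)$ is exactly the set of $\alpha$ for which $p=\alpha q+(1-\alpha)s$ with $q$ noncontextual and $s$ an arbitrary simplicial probability distribution; this is precisely the set whose supremum defines $\NCF(p)$ in Definition \ref{CFFF}. Hence $\IF(p)=\NCF(p)$.

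The only point requiring care, and the step I expect to be the main (if minor) obstacle, is matching the two quantifications exactly: one must check that the ``$p_2\in M$'' appearing in Proposition \ref{IFFF2} ranges over the same objects as the ``$s$ runs over simplicial probability distributions'' in Definition \ref{CFFF}. This holds because the underlying set of the monoid $M$ is by definition the set $\St(X,D(Y))$ of all simplicial probability distributions on $(X,Y)$, so the two supremum problems are literally indexed by the same collection of convex decompositions of $p$.
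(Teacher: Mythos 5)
Your proposal is correct and follows exactly the paper's own proof, which cites precisely the same three ingredients: Definition \ref{CFFF}, Proposition \ref{IFFF2}, and Theorem \ref{Theo2}. Your additional care in checking that both suprema range over the same set of convex decompositions of $p$ is a reasonable elaboration of what the paper leaves implicit.
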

\begin{proof}
Follows from Definition \ref{CFFF}, Proposition \ref{IFFF2}, and Theorem \ref{Theo2}.  
\end{proof}

\subsection{Strong contextuality and strong invertibility} \label{sec:strong-cont-inv}
 
{In this section we make the connection between strong contextuality and strong non-invertibility when $R=\RR_{\geq 0}$.

Let $X$ be a simplicial set. A simplex $x\in X_n$ is called degenerate if $x$ belongs to {$\cup_{j=0}^{n-1}s_j(X_{n-1})$}; otherwise it is called nondegenerate.
}

\begin{lem}\label{Isupp=supppp}
Let $X$ be a simplicial set with finitely many
%Given a simplicial set $X$ \ak{that has a finite non-degenerate simplexes,} 
nondegenerate simplices and $Y$ be a simplicial group. For  $p \in \St(X,D(Y))$ we have
$$
\Isupp(p)=(\delta_Y)_\ast(\supp(p)).
$$
\end{lem}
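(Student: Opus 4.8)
The plan is to prove the two inclusions separately, working throughout with the identification of the units. By Lemma \ref{HomInvvv} we have $(\St(X,D(Y)))^\ast = (\delta_Y)_\ast(\St(X,Y))$, and $(\delta_Y)_\ast$ is injective (since $\delta_Y$ is), so every element of either side of the claimed equality has the form $\delta^\varphi := (\delta_Y)_\ast(\varphi)$ for a unique $\varphi \in \St(X,Y)$. Thus it suffices to show, for $\varphi \in \St(X,Y)$, that $\delta^\varphi \in \Isupp(p)$ if and only if $\varphi \in \supp(p)$.

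For the inclusion $\Isupp(p) \subseteq (\delta_Y)_\ast(\supp(p))$, suppose $\delta^\varphi \in \Isupp(p)$, so there is $P \in D(\St(X,D(Y)))$ with $\pi(P) = p$ and $P(\delta^\varphi) \neq 0$. Evaluating the formula (\ref{piForm}) for $\pi$ at an arbitrary simplex $x \in X_n$ and the outcome $\varphi_n(x) \in Y_n$ gives
\[
p_n(x)(\varphi_n(x)) = \sum_{p' \in \St(X,D(Y))} P(p')\, p'_n(x)(\varphi_n(x)),
\]
in which the summand indexed by $p' = \delta^\varphi$ equals $P(\delta^\varphi)\cdot \delta^{\varphi_n(x)}(\varphi_n(x)) = P(\delta^\varphi)$. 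Since $R = \RR_{\geq 0}$ every summand is nonnegative, so $p_n(x)(\varphi_n(x)) \geq P(\delta^\varphi) > 0$ for all $n$ and all $x \in X_n$; hence $\varphi \in \supp(p)$. This direction uses only nonnegativity.

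For the reverse inclusion, which is the heart of the argument, fix $\varphi \in \supp(p)$ and set $c_{x} := p_n(x)(\varphi_n(x))$ for $x \in X_n$, so $c_x > 0$ for every simplex. The key point, and the only place the finiteness hypothesis is used, is that $\{c_x\}$ is bounded below by a positive number. Indeed, for a degeneracy operator $\theta$ the simplicial identities for $p$ and $\varphi$ together with the injectivity of the degeneracy maps of $Y$ give $c_{\theta^\ast x} = c_x$; so by the Eilenberg--Zilber lemma every $c_x$ equals $c_{x'}$ for the unique nondegenerate simplex $x'$ underlying $x$. As $X$ has finitely many nondegenerate simplices, $\mu := \min_{x'\ \mathrm{nondegenerate}} c_{x'}$ is a minimum of finitely many positive numbers, hence $\mu > 0$ and $c_x \geq \mu$ for all $x$.

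After disposing of the trivial case $p = \delta^\varphi$ (equivalently $\mu = 1$), where one takes $P = \delta^{\delta^\varphi}$, we have $0 < \mu < 1$; set $\alpha := \mu$ and define $q$ degreewise by $q_n(x) = \tfrac{1}{1-\alpha}\big(p_n(x) - \alpha\,\delta^{\varphi_n(x)}\big)$. The choice $\alpha = \mu$ guarantees that each $q_n(x)$ is a genuine nonnegative, normalized distribution, and since the coefficients $\tfrac{1}{1-\alpha}$ and $-\tfrac{\alpha}{1-\alpha}$ sum to $1$, $q$ is an affine combination of the simplicial maps $p$ and $\delta^\varphi$; as the simplicial structure maps of $D(Y)$ are $R$-linear and so preserve affine combinations, $q$ is again a simplicial map, i.e. $q \in \St(X,D(Y))$. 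Then $P := \alpha\,\delta^{\delta^\varphi} + (1-\alpha)\,\delta^{q} \in D(\St(X,D(Y)))$ satisfies $\pi(P) = \alpha\delta^\varphi + (1-\alpha)q = p$ by (\ref{piForm}) and $P(\delta^\varphi) \geq \alpha > 0$, so $\delta^\varphi \in \Isupp(p)$. I expect the main obstacle to be precisely this uniform positive lower bound $\mu$ on the values $c_x$: without finiteness of the nondegenerate simplices the infimum of the $c_x$ could vanish and no single $\alpha > 0$ would work, so the reduction to nondegenerate simplices via Eilenberg--Zilber is exactly what makes the finiteness hypothesis essential; verifying that the rescaled $q$ remains simplicial is the only other point requiring care.
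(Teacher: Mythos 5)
Your proposal is correct and follows essentially the same route as the paper's proof: both directions rest on Lemma \ref{HomInvvv}, the forward inclusion on nonnegativity in Equation (\ref{piForm}), and the reverse on the decomposition $p=\alpha\,\delta^\varphi+(1-\alpha)q$ with $\alpha$ the minimum of the values $p_n(x)(\varphi_n(x))$ (positive by finiteness of the nondegenerate simplices), followed by the two-point distribution $Q$. Your two refinements---the Eilenberg--Zilber reduction justifying $\alpha>0$ and the linearity argument showing $q$ is simplicial---are cleaner than, but equivalent to, the explicit case-by-case computations in the paper.
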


\begin{proof} 
A distribution $q \in \Isupp(p)$  is invertible, and by 
Lemma \ref{HomInvvv} there exists $\varphi \in \St(X,Y)$ such that 
$q=\delta_Y \circ \varphi$. 
Also there exists 
$Q \in D(\St(X,D(Y))$ such that $\pi^{\St(X,D(Y)}(Q)=p$ and 
$Q(\delta_Y \circ \varphi)=Q(q) > 0$. 
Using Equation (\ref{piForm}), for $x \in X_n$ we obtain 
$$
\begin{aligned}
p_n(x)(\varphi_n(x)) &=
\sum_{p' \in \catsSet(X,D_R(Y)) }Q(p')\,p_n'(x)(\varphi_n(x)) \\
&\geq Q(\delta_Y \circ \varphi)(\delta_{Y_n}\circ \varphi_n(x))(\varphi_n(x))\\
%+\sum_{p' \neq \delta_Y\circ \varphi }Q(p')\,p_n'(x)(\varphi_n(x))
&=Q(\delta_Y \circ \varphi)\delta^{\varphi_n(x)}(\varphi_n(x)) \\
&=Q(\delta_Y \circ \varphi) > 0.
\end{aligned}
$$
Therefore $\varphi \in \supp(p)$ and 
$q \in (\delta_Y)_\ast(\supp(p))$.

For the converse inclusion, we will show that $\delta_Y \circ \psi \in \Isupp(p)$ for $\psi \in \supp(p)$.
%we will show that $\delta_Y \circ \psi \in \Isupp(p)$. 
We define 
$$
\alpha =\min\{p_n(x)(\psi_n(x)):\, x \in X_n ,\, n \geq 0\}.
$$
Observe that $\alpha >0$ since $X$ has finitely many simplices 
and for   $x \in X_n$ we have both 
$$p_{n+1}(s_i(x))(\psi_{n+1}(s_i(x)))\;\; \text{ and }\;\; p_{n-1}(d_j(x))(\psi_{n-1}(d_j(x)))$$ 
are greater than or equal to $p_n(x)$.
%
%$$
%p_{n+1}(s_i(x))(\psi_{n+1}(s_i(x))),\; 
%p_{n-1}(d_j(x))(\psi_{n-1}(d_j(x))) \geq p_n(x)(\psi_n(x)).
%$$
% 
If $\alpha=1$, which implies that $p=\delta_Y \circ \psi$, then 
$\delta_Y \circ \psi \in \Isupp(p)$. 
Then let us suppose that $\alpha <1$. For   $n\geq 0$, we define $q_n: X_n \to D_R(Y_n)$ by
$$
q_n(x)=\frac{p_n(x)-\alpha\delta^{\psi_n(x)}}{1-\alpha},\;\;\;x \in X_n. 
$$
{We need to verify that indeed $q_n(x)\in D_R(Y_n)$:}
Observe that for $y \in Y_n$, we have
$$
q_n(x)(y) =
\begin{cases}
\frac{p_n(x)(\psi_n(x)) -\alpha}{1-\alpha}  & \text{if} ~ y=\psi_n(x),\\
\frac{{p}_n(x)(y)}{1-\alpha} & \text{otherwise.}
\end{cases}
$$
By definition of $\alpha$, we have  $q_n(x)(y) \geq 0$
for all $x \in X_n$, $y \in Y_n$. 
In addition,  we have
$$
\begin{aligned}
\sum_{y\in Y_n}q_n(x)(y) &=\sum_{y\in Y_n}\frac{p_n(x)(y)-\alpha\delta^{\psi_n(x)}(y)}{1-\alpha} \\
&=\frac{1}{1-\alpha}\left(\sum_{y\in Y_n}p_n(x)(y)
-\alpha \sum_{y\in Y_n}\delta^{\psi_n(x)}(y) \right) \\
&=\frac{1}{1-\alpha}(1- \alpha )=1.
\end{aligned}
$$
Next we  prove that the collection of maps $\{q_n\}_{n \geq 0}$ form a simplicial set map $q : X \to D_R(Y)$.
Given $x\in X_n$ and $y \in Y_{n-1}$, we will show that 
$(D_R(d_j) \circ q_n(x))(y)= (q_{n-1} \circ d_j(x))(y)$.
We begin with the case that $ y \neq \psi_{n-1}(d_j(x))$: We have
$$
\begin{aligned}
q_{n-1}(d_j(x))(y)&=\frac{p_{n-1}(d_j(x))(y)}{1-\alpha}\\
&=\frac{1}{1-\alpha}D(d_j)(p_n(x))(y)\\
&=
\sum_{d_j(y')=y}\frac{p_n(x)(y')}{1- \alpha} \\
&= \sum_{d_j(y')=y}q_n(x)(y') \\
&= D_R(d_j) (q_n(x))(y).
\end{aligned}
$$
In the fourth line we used the observation that {$y'$ with $d_j(y')=y$ satisfies $y'\neq \psi_n(x)$},
otherwise we would have 
$y=d_j(y')=d_j(\psi_n(x))=\psi_{n-1}(d_j(x))$.
Next assume that 
$ y = \psi_{n-1}(d_j(x))=d_j(\psi_n(x))$. Then we have 
$$
\begin{aligned}
q_{n-1}(d_j(x))(y) &=
\frac{p_{n-1}(d_j(x))(y)-\alpha}{1-\alpha} \\
&=\frac{D_R(d_j)(p_{n}(x))(y)-\alpha}{1-\alpha} \\
&=\frac{\sum_{d_j(y')=y}p_n(x)(y')-\alpha}{1- \alpha}\\
&=\frac{p_n(x)(\psi_n(x))-\alpha}{1- \alpha}+
\sum_{d_j(y')=y,\,y' \neq \psi_n(x)}\frac{p_n(x)(y')}{1- \alpha}\\
&=q_n(x)(\psi_n(x)) + \sum_{d_j(y')=y,\,y' \neq \psi_n(x)}q_n(x)(y') \\
&=\sum_{d_j(y')=y}q_n(x)(y')\\
&=D_R(d_j) (q_n(x))(y).
\end{aligned}
$$
Now we   define $Q \in D(\St(X,D(Y)))$ by 
$$
Q(p')=
\begin{cases}
1-\alpha & \text{if}~p'=q \\
\alpha & \text{if} ~ p'=\delta_Y \circ \psi \\
0  & \text{otherwise.}
\end{cases}
$$
For $x \in X_n$, we have 
$$
\begin{aligned}
\pi^{\St(X,D(Y))}(Q)_n(x) &=(1-\alpha)q_n(x)+\alpha (\delta_{Y_n} \circ \psi_n)(x) \\ 
&=(1-\alpha)\frac{p_n(x)-\alpha\delta^{\psi_n(x)}}{1-\alpha}
+\alpha \delta^{\psi_n(x)} \\
&=p_n(x).
\end{aligned}
$$
We showed that $\pi^{\St(X,D(Y))}(Q)=p$, while 
$Q(\delta_{Y} \circ \psi)=\alpha \neq 0$, which implies that 
$\delta_{Y} \circ \psi \in \Isupp(p)$.
\end{proof}

\begin{corollary}\label{SC=SNI}
Let $X$ be a simplicial set with finitely many
nondegenerate simplices, $Y$ be a simplicial group and $p  \in \St(X,D(Y))$. 
\begin{enumerate}
\item $p$ is 
strongly contextual if and only if $p$ is strongly non-invertible.
\item $p$ is strongly contextual if 
and only if $\CF(p)=1$. 
\end{enumerate} 
%Given $X \in \St , Y \in s\catGrp $, and $p  \in \St(X,D(Y))$, then $p$ is 
%strongly contextual if and only if $p$ is strongly non-invertible.
\end{corollary}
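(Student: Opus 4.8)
The plan is to assemble both equivalences from results already established in this section, with Lemma \ref{Isupp=supppp} carrying the main weight. Throughout I take $R = \RR_{\geq 0}$, which is both zero-sum-free and integral, so that every earlier result in the section applies.

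For part (1), I would argue directly that $\supp(p) = \emptyset$ if and only if $\Isupp(p) = \emptyset$. By Lemma \ref{Isupp=supppp} we have $\Isupp(p) = (\delta_Y)_\ast(\supp(p))$. Since the image of a set under any function is empty precisely when the set itself is empty, the set $(\delta_Y)_\ast(\supp(p))$ is empty if and only if $\supp(p)$ is empty. Unwinding the definitions --- $p$ strongly contextual means $\supp(p) = \emptyset$ (Definition \ref{suppp}), and $p$ strongly non-invertible means $\Isupp(p) = \emptyset$ (Definition \ref{InvSup}) --- this yields the first equivalence immediately.

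For part (2), I would chain equivalences. First, $\CF(p) = 1$ if and only if $\NCF(p) = 0$, directly from $\CF(p) = 1 - \NCF(p)$ (Definition \ref{CFFF}). By Corollary \ref{NCFFF=IFFF} we have $\NCF(p) = \IF(p)$, so $\CF(p) = 1$ if and only if $\IF(p) = 0$. Proposition \ref{IFm0} then identifies the condition $\IF(p) = 0$ with $p$ being strongly non-invertible. Finally, applying part (1) converts strong non-invertibility back into strong contextuality, completing the second equivalence.

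The argument presents no genuine obstacle: the conceptual content has already been discharged in Lemma \ref{Isupp=supppp}, where the finiteness hypothesis on nondegenerate simplices is used to manufacture, from a global section $\psi \in \supp(p)$, an explicit convex decomposition of $p$ (via the auxiliary distribution $q$ and the mixing weight $\alpha$) witnessing $\delta_Y\circ\psi \in \Isupp(p)$. The only points requiring a moment's care are verifying that the cited hypotheses hold --- namely that $\RR_{\geq 0}$ is zero-sum-free and integral, which is immediate --- and observing that for part (1) it is enough that $(\delta_Y)_\ast$ be a function, so no injectivity need be invoked for the emptiness comparison.
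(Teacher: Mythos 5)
Your proof is correct and follows essentially the same route as the paper: part (1) is read off from Lemma \ref{Isupp=supppp} (emptiness of a set is equivalent to emptiness of its image, so no injectivity is needed), and part (2) chains part (1) with Proposition \ref{IFm0} and Corollary \ref{NCFFF=IFFF}. Your write-up just makes explicit the unwinding of definitions that the paper leaves implicit.
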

%
%\begin{proof}
%Directly from Propositions \ref{IFm0}, \ref{CF1=SC}, and corollary 
%\ref{NCFFF=IFFF}.
%\end{proof}
%\begin{prop}\label{CF1=SC}
%A simplicial distribution $p \in \St(X,D(Y))$ is strongly contextual if 
%and only if $\CF(p)=1$. 
%\end{prop}
%
\begin{proof}
The first part follows directly from Lemma \ref{Isupp=supppp}. The second part follows from the first part together with Proposition \ref{IFm0}, and Corollary
\ref{NCFFF=IFFF}.
\end{proof}
\begin{rem}\label{rem:strongly-contextual-CF1}
{\rm
%Proposition 6.3 in \cite{abramsky2011sheaf} is a special case of Corollary \ref{SC=SNI} part (2)
Theorem \ref{thm:ContextualSheaf} together with Remark \ref{rem:strong-fraction} implies that Corollary \ref{SC=SNI} part (2) generalizes Proposition 6.3 in \cite{abramsky2011sheaf}.
% is a special case of Corollary \ref{SC=SNI} part (2).  
}  
\end{rem}

\subsection{{Extremal simplicial distributions}}
 \label{sec:extremal}

\begin{defn}\label{def:vertex}
{\rm
Let $(X,\pi^{X})$  be an $R$-convex set.
An element $x\in X$ is 
%An element in $R$-convex set $(X,\pi^{X})$ 
called a \emph{vertex}, or an \emph{extreme point}, if $x$ has a unique preimage under $\pi^X$. 
}
\end{defn}
%(see \cite[Definition 11]{Jacobs_2010}). 
%
%Here we give some Propositions related to vertexes.
%
%\aak{
%\begin{defn}
%a simplicial distributions on the pair 
%$(X,Y)$ is called {\it deterministic} if it lies on the image of 
%$(\delta_Y)_\ast:\St(X,Y) \to \St(X,D_R(Y))$.
%\end{defn}
%} 
Recall the definition of deterministic simplicial distribution from Definition \ref{def:deterministic}.

\begin{prop}\label{Detvert}
Let $R$ be a zero-sum-free, integral semiring. 
For simplicial sets $X,Y$ every deterministic distribution in $\St(X,D_R(Y))$ is a vertex.
\end{prop}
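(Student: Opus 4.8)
The plan is to unwind Definition \ref{def:vertex} for the $R$-convex set $U=\St(X,D_R(Y))$, whose structure map $\pi=\pi^U$ is the one produced in Proposition \ref{Hommm} and described explicitly in Equation (\ref{piForm}). A deterministic distribution is $\delta^\varphi=(\delta_Y)_\ast(\varphi)$ for some simplicial set map $\varphi:X\to Y$, and in degree $n$ at $x\in X_n$ it is the delta distribution $(\delta^\varphi)_n(x)=\delta^{\varphi_n(x)}$. Since $\pi\circ\delta_U=\Id_U$ always holds, the distribution $\delta^{\delta^\varphi}\in D_R(U)$ is automatically a preimage of $\delta^\varphi$; the whole content of the statement is that it is the \emph{only} one, which is what I would prove.

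First I would fix an arbitrary $Q\in D_R(U)$ with $\pi(Q)=\delta^\varphi$ and expand this equation using Equation (\ref{piForm}): for every $n$ and every $x\in X_n$ one has
\[
\sum_{p\in U}Q(p)\,p_n(x)=(\delta^\varphi)_n(x)=\delta^{\varphi_n(x)}
\]
as an identity in $D_R(Y_n)$. Evaluating at a simplex $y\in Y_n$ with $y\neq\varphi_n(x)$ then yields $\sum_{p}Q(p)\,p_n(x)(y)=0$.

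The key step, and the only place the hypotheses on $R$ are used, is to convert this vanishing into the statement that every summand surviving in $Q$ is already deterministic. Because $R$ is zero-sum-free (Definition \ref{def:ZeroFreeSum}), a finite sum of elements of $R$ vanishes only when each summand vanishes, so $Q(p)\,p_n(x)(y)=0$ for every $p$ and every $y\neq\varphi_n(x)$; because $R$ is integral (Definition \ref{PropAB}), whenever $Q(p)\neq 0$ this forces $p_n(x)(y)=0$ for all $y\neq\varphi_n(x)$. As $p_n(x)$ is a distribution on $Y_n$, being supported on the single simplex $\varphi_n(x)$ forces $p_n(x)=\delta^{\varphi_n(x)}$.

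Finally, since this holds for all $n$ and all $x\in X_n$, every $p$ in the support of $Q$ satisfies $p=\delta^\varphi$, so $Q$ is concentrated at the single point $\delta^\varphi$, i.e.\ $Q=\delta^{\delta^\varphi}$. This gives uniqueness of the preimage and hence that $\delta^\varphi$ is a vertex. I expect the only (modest) obstacle to be the bookkeeping in promoting the binary zero-sum-free property to arbitrary finite sums and combining it cleanly with integrality; everything else is a direct computation from Equation (\ref{piForm}).
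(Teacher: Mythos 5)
Your proposal is correct and is essentially the paper's own proof: both expand $\pi(Q)=\delta^\varphi$ via Equation (\ref{piForm}), evaluate at $y\neq\varphi_n(x)$ to get a vanishing sum, use zero-sum-freeness to kill each summand, and integrality to split off the factor $Q(p)$. The only difference is the (logically equivalent, contrapositive) bookkeeping — you show every $p$ in the support of $Q$ equals $\delta^\varphi$, while the paper fixes $q\neq\delta_Y\circ\varphi$ and shows $Q(q)=0$ — so this counts as the same approach.
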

\begin{proof}
Let $\varphi \in \St(X,Y)$. Suppose that we have 
$Q \in D_R(\St(X,D_R(Y)))$ such that 
$\pi^{\St(X,D_R(Y))}(Q)=\delta_Y \circ \varphi$. This means that 
$$
\sum_{p\in \St(X,D_R(Y))}Q(p)p=\delta_Y \circ \varphi.
$$
Given $q \in \St(X,D_R(Y))$, not equal to $\delta_{Y}\circ \varphi$,  there exists $x \in X_n$ and $ y \in Y_n$ such that 
$y \neq \varphi_n(x)$ and $q_n(x)(y) \neq 0$. We have 
$$
\sum_{p\in \St(X,D_R(Y))}Q(p)p_n(x)(y)=\delta_{Y_n} \circ \varphi_n(x)(y)= \delta^{\varphi_n(x)}(y)=0.
$$
Since $R$ is a zero-sum-free semiring, we obtain that 
$Q(q)q_n(x)(y)=0$. Since $R$ is integral, we have 
$Q(q)=0$.
This implies  {$Q=\delta^{\delta_{Y}\circ \varphi}$}.
\end{proof}

\begin{prop}\label{f-1vert}
Let $R$ be a zero-sum-free semiring and 
%Given a zero-sum-free semiring $R$, and given 
$f : X \to Y$ be a morphism in $\catConv_R$.
If $y\in Y$ is a vertex,
then every vertex of the $R$-subconvex set $f^{-1}(y)$ is a vertex of $X$.
\end{prop}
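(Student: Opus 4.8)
The plan is to unwind the definition of a vertex (Definition \ref{def:vertex}) and exploit that $f$ is a morphism of $D_R$-algebras. First I would record the structure of the fiber: since $f$ is a morphism in $\catConv_R$, the subset $f^{-1}(y)$ is closed under the convex structure (for $P$ supported on $f^{-1}(y)$ one has $D_R(f)(P)=\delta^y$, hence $f(\pi^X(P))=\pi^Y(\delta^y)=y$), so $f^{-1}(y)$ is an $R$-subconvex set with structure map the restriction of $\pi^X$. Recall also that because $\pi^X\circ\delta_X=\Id_X$, saying $x$ is a vertex of an $R$-convex set means precisely that $\pi^X(P)=x$ forces $P=\delta^x$. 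So fix a vertex $x$ of $f^{-1}(y)$, take any $P\in D_R(X)$ with $\pi^X(P)=x$, and aim to prove $P=\delta^x$.

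The next step is to push $P$ forward along $f$. Using that $f$ is a $D_R$-algebra morphism, $\pi^{Y}\circ D_R(f)=f\circ\pi^X$, so
$$
\pi^{Y}(D_R(f)(P))=f(\pi^X(P))=f(x)=y,
$$
the last equality because $x\in f^{-1}(y)$. Since $y$ is a vertex of $Y$ by hypothesis, its only preimage under $\pi^Y$ is $\delta^y$, whence $D_R(f)(P)=\delta^y$.

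The crux is then to turn this into a support statement, and this is where zero-sum-freeness enters. Evaluating $D_R(f)(P)=\delta^y$ at any $y'\neq y$ gives
$$
\sum_{x'\in f^{-1}(y')}P(x')=\delta^y(y')=0.
$$
Since $R$ is zero-sum-free, a finite sum of elements of $R$ vanishing forces each summand to vanish, so $P(x')=0$ for every $x'\notin f^{-1}(y)$. Hence $P$ is supported on $f^{-1}(y)$, i.e.\ $P\in D_R(f^{-1}(y))$. As the structure map of the fiber is the restriction of $\pi^X$, we have $\pi^{f^{-1}(y)}(P)=\pi^X(P)=x$, and since $x$ is a vertex of $f^{-1}(y)$ this yields $P=\delta^x$. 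Viewing $\delta^x$ as a distribution on $X$, this shows $x$ is a vertex of $X$.

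I expect the only real content is the zero-sum-free reduction of $\sum_{x'\in f^{-1}(y')}P(x')=0$ to the vanishing of each $P(x')$; the remaining steps are formal consequences of the $D_R$-algebra axioms and the definition of vertex. Everything is finitary since distributions in $D_R(X)$ have finite support, so no convergence or infinite-sum subtleties arise.
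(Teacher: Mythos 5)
Your proof is correct and follows essentially the same route as the paper's: push $P$ forward along $f$, use the vertex property of $y$ to get $D_R(f)(P)=\delta^y$, invoke zero-sum-freeness to conclude $P$ is supported on $f^{-1}(y)$, and then apply the vertex property of $x$ in the fiber. The only difference is your explicit preliminary check that $f^{-1}(y)$ is an $R$-subconvex set with structure map the restriction of $\pi^X$, which the paper leaves implicit (and which is a worthwhile addition).
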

\begin{proof}
Let $x$ be a vertex of $f^{-1}(y)$ and $p \in D_R(X)$
such that $\pi^X(p)=x$. 
Then 
$$
\pi^Y(D_R(f)(p))=f(\pi^X(p))=f(x)=y.
$$
Since $y$ is a vertex of $Y$, we obtain that $D_R(f)(p)=\delta^y$. 
This means that if $y' \neq y $ then 
$D_R(f)(p)(y')=\sum_{f(x')=y'}p(x')=0$. 
Since $R$ is zero-sum-free we obtain that $p(x')=0$ for all 
$x' \notin f^{-1}(y)$. In other words, $p \in D_R(f^{-1}(y))$.
Finally since
 $y$ is a vertex of $f^{-1}(y)$, we obtain that $p=\delta^y$.
\end{proof}

{Main application of this observation to simplicial distributions is as follows.

\begin{cor}
Let $f:Z\to X$ be a map of simplicial sets. Consider the morphism $f^*:\catsSet(X,D_R(Y))\to \catsSet(Z,D_R(Y))$
in $\catConv_R$. If $p\in \catsSet(Z,D_R(Y))$ is a vertex  then every vertex of $(f^*)^{-1}(p)$ is a vertex of $\catsSet(X,D_R(Y))$.
\end{cor}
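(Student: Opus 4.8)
The plan is to recognize this corollary as a direct instance of Proposition \ref{f-1vert}, applied with $f^*$ in the role of the abstract convex-set map and with $p$ in the role of the distinguished vertex. First I would record that both $\catsSet(X,D_R(Y))$ and $\catsSet(Z,D_R(Y))$ are genuine $R$-convex sets: this is exactly Proposition \ref{Hommm}, since $D_R(Y)$ is an object of $s\catConv_R$ for any simplicial set $Y$, so the functor $\St(-,-)$ takes values in $\catConv_R$ whenever the second variable is $D_R(Y)$. Thus the ambient objects to which Proposition \ref{f-1vert} must be applied are indeed $R$-convex sets, and the fibre $(f^*)^{-1}(p)$ is an $R$-subconvex set of $\catsSet(X,D_R(Y))$.

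Next I would verify the one genuinely structural hypothesis, namely that the precomposition map $f^*$ induced by $f\colon Z\to X$ is a morphism \emph{in} $\catConv_R$ and not merely a map of underlying sets. This too is supplied by Proposition \ref{Hommm}: the functor $\St(-,-)\colon \St^{op}\times s\catConv_R \to \catConv_R$ is contravariant in its first argument, so the simplicial set map $f\colon Z\to X$ induces a morphism of $R$-convex sets $f^*\colon \catsSet(X,D_R(Y)) \to \catsSet(Z,D_R(Y))$. In particular $f^*$ commutes with the structure maps $\pi$, which is precisely the $\catConv_R$-compatibility required to invoke Proposition \ref{f-1vert}. With these two points in place, I would simply apply Proposition \ref{f-1vert} with $\catsSet(X,D_R(Y))$ playing the role of its source, $\catsSet(Z,D_R(Y))$ its target, $f^*$ its map, and $p$ its vertex; since $p$ is a vertex and $R$ is zero-sum-free, the conclusion that every vertex of $(f^*)^{-1}(p)$ is a vertex of $\catsSet(X,D_R(Y))$ follows immediately.

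I do not anticipate a real obstacle here: the entire content lies in matching hypotheses, and the corollary is essentially a specialization of Proposition \ref{f-1vert} to the convex sets of simplicial distributions. The only point demanding any care is confirming that $f^*$ preserves convex combinations --- equivalently, that precomposition is $\catConv_R$-linear --- which is not to be checked by hand but is delivered by the functoriality asserted in Proposition \ref{Hommm}. (I note that the corollary implicitly inherits the standing assumption that $R$ is zero-sum-free, as required by Proposition \ref{f-1vert}.)
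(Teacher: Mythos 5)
Your proposal is correct and matches the paper's own treatment: the paper states this corollary immediately after Proposition \ref{f-1vert} as its ``main application'' with no separate proof, precisely because it is the specialization you describe, with $f^*$ a morphism in $\catConv_R$ by Proposition \ref{Hommm} and $p$ playing the role of the distinguished vertex. Your explicit verification that the hom-sets are $R$-convex and that precomposition is $\catConv_R$-linear, together with flagging the inherited zero-sum-free hypothesis on $R$, fills in exactly the details the paper leaves implicit.
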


{
\begin{ex}\label{ex:triangle-same-edges}
{\rm
Let $\overline{\Delta[2]}$ denote the simplicial set obtained by gluing the {$d_1$ and $d_2$} faces of $\Delta[2]$. 
A simplicial distribution $p:\overline{\Delta[2]}\to D(N\ZZ_2)$ can be represented by a tuple $(p^{ab})_{a,b\in \ZZ_2}$, as in Example \ref{ex:triangle}, where {$p^{01}=p^{11}$}.
%As the subspace 
We will consider the subspace given by the simplicial circle $S^1=\Delta[1]/\partial\Delta[1]$, the {$d_0$}-face of $\overline{\Delta[2]}$.
A simplicial distribution $q:S^1\to D(N\ZZ_2)$ is given by a distribution $q_{\iota_1}\in D(\ZZ_2)$ {(see the notation in  Definition \ref{def:SimpDist})}.
 (Note that the set of simplicial distributions on $(S^1,N\ZZ_2)$ is in bijective correspondence with the set of simplicial distributions on the pair $(\Delta[1],N\ZZ_2)$; see Example \ref{ex:simplex}. {Also observe that $\overline{\Delta[2]}\cong \Delta[0]\ast S^1$  and  $\catsSet(\overline{\Delta[2]},D(N\ZZ_2))\cong \catsSet(S^1,D(\Delta_{\ZZ_2}))$}
%These distributions are in bijection with {simplicial distributions of the form $S^1\to \Delta_{\ZZ_2}$} %distributions of Example \ref{ex:D1-DeltaZ2} 
under the adjunction in (\ref{eq:adjunction-Dec}).)
%Similar to  $\Delta[1]$ (Example \ref{ex:simplex}) as identifying the vertices does not change simplicial distributions taking values in $D(N\ZZ_2)$ which has a single vertex.
The inclusion $f:S^1\to \overline{\Delta[2]}$ as the {$d_0$}-th face induces 
$$
f^*: \catsSet(\overline{\Delta[2]},D(N\ZZ_2))\to \catsSet(S^1,D(N\ZZ_2))\cong [0,1],\;\; (p^{ab}) \mapsto 1-2p^{01}.
$$
The preimage of the vertices $\set{0,1}$ of $[0,1]$ are given by {$(p^{00},0,1-p^{00},0)$ and $(0,1/2,0,1/2)$}; respectively. 
The former component has the deterministic vertices given by $(1,0,0,0)$ and {$(0,0,1,0)$}. Therefore this component coincides with the subset of noncontextual distributions.
The latter component consisting of a single point is a contextual vertex.  
}
\end{ex}
}  

\begin{ex}\label{ex:chsh-boundary}
{\rm
Let $X$ denote the square space in Example \ref{Ex:Sqqq} and $\partial X$ denote its boundary consisting of the nondegenerate $1$-simplices given by $x_i\oplus y_j$ where $i,j\in \ZZ_2$.
The inclusion $f:\partial X\to X$ induces
\begin{equation}\label{eq:f-star}
f^*: \catsSet(X,D(N\ZZ_2))\to \catsSet(\partial X,D(N\ZZ_2))\cong [0,1]^4
\end{equation}
defined by
$$
f^*(p_{x_iy_j}^{ab}) = (p_{x_iy_j}^{00}+p_{x_iy_j}^{11} )_{i,j\in \ZZ_2}.
$$
There are $16$ vertices of the hypercube $[0,1]^4$ given by a tuple $(\delta^{a_{ij}})_{i,j\in \ZZ_2}$ of delta distributions. The preimage is given by (1) a singleton if $\sum_{i,j} a_{ij}=1 \mod 2$, and otherwise (2) the interval $[0,1]$. The $8$ vertices  in case (1) are the PR boxes (hence contextual; see Example \ref{Ex: CHSH}). For example, the preimage of $(\delta^0, \delta^0, \delta^0, \delta^1)$ is given by 
$$
\begin{tabular}{|c|c|c|c|c|} 
\hline
  & $y_0$ & $y_1$ \\ 
\hline
$x_0$ & {\begin{tabular}{cc} $1/2$ & $0$ \\ $0$ & $1/2$ \\ \end{tabular}} & {\begin{tabular}{cc} $1/2$ & $0$ \\ $0$ & $1/2$\\ \end{tabular}} \\ 
\hline
$x_1$ & {\begin{tabular}{cc} $1/2$ & $0$ \\ $0$ & $1/2$ \\ \end{tabular}} & {\begin{tabular}{cc} $0$ & $1/2$ \\ $1/2$ & $0$ \\ \end{tabular}} \\  
\hline
\end{tabular} 
$$
On the other hand, the vertices of case (2) are a pair of deterministic vertices.
}
\end{ex}  
} 

\subsubsection{Homotopic simplicial distributions}

Simplicial homotopy can be used to capture strongly contextual simplicial distributions and vertices.
% generalizing the observation in  Example \ref{HomVertEx}. 
{We begin with a preliminary observation.}

\begin{lem}\label{SCC}
Let  $f : Z \to X$ be a simplicial set map, and  $p\in \St(X,D_R(Y))$.
\begin{enumerate}
\item $f^\ast(\supp(p)) \subseteq \supp(f^\ast(p))$.
\item If $\supp (f^\ast (p)) \cap f^\ast(\St(X,Y))=\emptyset$ then $p$ is strongly contextual.
\end{enumerate}
\end{lem}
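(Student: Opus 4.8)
The plan is to reduce both parts to the single fact that the induced map $f^\ast$ is precomposition with $f$, so that every condition is controlled pointwise by evaluating $p$ and $\varphi$ on the image simplices $f_n(z)$.

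First I would unwind the definition of the support for part (1). Fix $\varphi \in \supp(p)$; I must show that $f^\ast(\varphi) = \varphi \circ f$ lies in $\supp(f^\ast(p))$, where $f^\ast(p) = p \circ f$. Writing the two induced simplicial maps out in degree $n$ gives $(f^\ast(p))_n(z) = p_n(f_n(z))$ and $(f^\ast\varphi)_n(z) = \varphi_n(f_n(z))$ for $z \in Z_n$. Hence for every $z \in Z_n$ and every $n \geq 0$,
$$
(f^\ast(p))_n(z)\big((f^\ast\varphi)_n(z)\big) = p_n(f_n(z))\big(\varphi_n(f_n(z))\big).
$$
Since $\varphi \in \supp(p)$, the right-hand side is nonzero because the support condition for $p$ holds in particular at the simplex $x = f_n(z) \in X_n$. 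This is exactly the requirement for $f^\ast(\varphi)$ to belong to $\supp(f^\ast(p))$, which gives the claimed inclusion $f^\ast(\supp(p)) \subseteq \supp(f^\ast(p))$.

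For part (2) I would combine part (1) with the evident inclusion $f^\ast(\supp(p)) \subseteq f^\ast(\St(X,Y))$, which holds simply because $\supp(p) \subseteq \St(X,Y)$. Together these yield
$$
f^\ast(\supp(p)) \subseteq \supp(f^\ast(p)) \cap f^\ast(\St(X,Y)).
$$
Under the hypothesis that the right-hand side is empty, the image $f^\ast(\supp(p))$ is empty; since the image of a set under a function is empty only if the set itself is empty, this forces $\supp(p) = \emptyset$, i.e. $p$ is strongly contextual.

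There is no genuine obstacle here: the argument is a direct bookkeeping of the precomposition action together with the definition of support. The only point that requires care is to keep the three instances of $f^\ast$ distinct — acting on $\St(X,Y)$, on $\St(X,D_R Y)$, and on the supports — and to evaluate $p$ and $\varphi$ consistently on the image simplex $f_n(z)$ rather than on $z$ itself.
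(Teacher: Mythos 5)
Your proof is correct and follows essentially the same route as the paper: part (1) is the same pointwise computation $(f^\ast p)_n(z)\big((f^\ast\varphi)_n(z)\big) = p_n(f_n(z))\big(\varphi_n(f_n(z))\big) \neq 0$, and part (2) is the same observation that any $\varphi \in \supp(p)$ would force $f^\ast(\varphi)$ into the intersection $\supp(f^\ast(p)) \cap f^\ast(\St(X,Y))$, contradicting its emptiness. The only difference is cosmetic: you phrase part (2) as a set inclusion while the paper picks an element, but the content is identical.
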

\begin{proof}
Given $\varphi \in \supp(p)$ and $x \in Z_n$, we have 
$$
f^\ast(p)_n(x)(f^\ast(\varphi)_n(x))=(p_n \circ f_n)(x)(\varphi_n \circ f_n (x))=
p_n(f_n(x))(\varphi_n (f_n (x))) \neq 0.
$$
This proves part (1).
If $\varphi \in \supp(p)$ then by part (1) we have 
$f^\ast(\varphi) \in \supp(f^\ast(p))$. This means that  
$f^\ast(\varphi) \in \supp(f^\ast(p)) \cap f^\ast(\St(X,Y))$, proving part (2).
\end{proof}
 
{Part (2) of Lemma \ref{SCC} is a key observation also used in arguments involving simplicial cohomology \cite[Section 5.3]{okay2022simplicial} and \v Cech cohomology  \cite{abramsky2015contextuality}.}

\begin{ex}\label{ex:support-strong-contextual}
{\rm
Consider the inclusion $f:\partial X\to X$ in Example \ref{ex:chsh-boundary}. This map induces
$$
f^*:\catsSet(X,N\ZZ_2) \to \catsSet(\partial X,N\ZZ_2)\cong \ZZ_2^{4}
$$
which sends $\varphi:X\to N\ZZ_2$, determined by the tuple $(\varphi_{\sigma_{ij}})_{i,j\in \ZZ_2}$, to the tuple $(\varphi_{\sigma_{ij}}^{00}+\varphi_{\sigma_{ij}}^{11})_{i,j\in \ZZ_2}$. In particular, the image of $f^*$ consists of $(a_{ij})_{i,j\in \ZZ_2}$ such that 
\begin{equation}\label{eq:tuple-aij}
\sum_{i,j} a_{ij}=0\mod 2.
\end{equation}
 According to Lemma \ref{SCC} a distribution $p\in \catsSet(X,D(N\ZZ_2))$ is strongly contextual if $\supp(f^*(p))$ does not contain    a tuple $(a_{ij})_{i,j}$ satisfying Equation (\ref{eq:tuple-aij}). Note that PR boxes are such that their support satisfy this property, thus they are strongly contextual.
}
\end{ex}

{Let $f_0,f_1:X\to Y$ be two simplicial set maps. {A simplicial homotopy  from $f_0$ to $f_1$ 
is a simplicial set map $F:X\times \Delta[1]\to Y$ such that $f_0=F\circ i_1$ and $f_1=F\circ i_0$,  where $i_k:X \cong X\times \Delta[0] \xrightarrow{\idy\times d^k} X\times \Delta[1]$ for $k=0,1$; see \cite{goerss2009simplicial}.
We will write $f_0\sim f_1$ if there exists a simplicial homotopy from $f_0$ to $f_1$. If no such homotopy exists we will write $f_0\not\sim f_1$.
}

\begin{prop}\label{SCFacet}
Let $\varphi_1, \varphi_2 \in \St(X,Y)$ be such that $\varphi_1\not\sim \varphi_2$.
Then every homotopy $F \in \St(X\times \Delta[1],D_R(Y))$ from 
$\delta_Y \circ \varphi_1$ to  $\delta_Y \circ \varphi_2$ is 
strongly contextual. 
\end{prop}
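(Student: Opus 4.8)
The plan is to argue by contradiction: if $F$ were not strongly contextual, its support would contain a genuine simplicial homotopy from $\varphi_1$ to $\varphi_2$, contradicting the hypothesis $\varphi_1\not\sim\varphi_2$. The whole argument rests on the observation that the two boundary restrictions of $F$ are deterministic, and that deterministic distributions have rigid (singleton) supports.

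First I would record the support of a deterministic distribution. For any $\varphi\in\St(X,Y)$ we have $(\delta_Y\circ\varphi)_n(x)=\delta^{\varphi_n(x)}$, and $\delta^{\varphi_n(x)}(y)\neq 0$ holds precisely when $y=\varphi_n(x)$. Reading off Definition \ref{suppp}, this gives $\supp(\delta_Y\circ\varphi)=\{\varphi\}$. Note that this uses only that $1\neq 0$ in $R$, so no additional hypotheses on the semiring are needed.

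Next I would apply Lemma \ref{SCC}(1) to the two inclusions $i_k:X\to X\times\Delta[1]$ for $k=0,1$. By the definition of a simplicial homotopy, $F$ being a homotopy from $\delta_Y\circ\varphi_1$ to $\delta_Y\circ\varphi_2$ means $i_1^*(F)=\delta_Y\circ\varphi_1$ and $i_0^*(F)=\delta_Y\circ\varphi_2$, where $i_k^*(F)=F\circ i_k$ is simply the restriction of the simplicial distribution $F$ along the simplicial set map $i_k$. Combining Lemma \ref{SCC}(1) with the support computation above yields $i_1^*(\supp(F))\subseteq\supp(i_1^*(F))=\{\varphi_1\}$ and, likewise, $i_0^*(\supp(F))\subseteq\{\varphi_2\}$.

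Now suppose, for contradiction, that $\supp(F)\neq\emptyset$ and pick $\psi\in\supp(F)$. The two containments above force $\psi\circ i_1=\varphi_1$ and $\psi\circ i_0=\varphi_2$. But a simplicial set map $\psi\in\St(X\times\Delta[1],Y)$ satisfying exactly these two boundary identities is, by definition, a simplicial homotopy from $\varphi_1$ to $\varphi_2$; this contradicts $\varphi_1\not\sim\varphi_2$. Hence $\supp(F)=\emptyset$, that is, $F$ is strongly contextual. I do not expect a genuine obstacle here; the only points requiring care are the bookkeeping of the $i_0/i_1$ convention (which endpoint matches which face map) and confirming that precomposition of a simplicial distribution with the simplicial set map $i_k$ is precisely the restriction $i_k^*$ to which Lemma \ref{SCC} applies verbatim.
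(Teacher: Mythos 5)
Your proof is correct and follows essentially the same route as the paper: both arguments rest on the singleton support $\supp(\delta_Y\circ\varphi)=\{\varphi\}$ of deterministic distributions, restriction of $F$ to the two ends of the cylinder, and the observation that any element of $\supp(F)$ would itself be a simplicial homotopy from $\varphi_1$ to $\varphi_2$, contradicting $\varphi_1\not\sim\varphi_2$. The only difference is cosmetic packaging: the paper assembles the two endpoint inclusions into a single map $i=i_0\sqcup i_1:X\sqcup X\to X\times\Delta[1]$ and invokes Lemma \ref{SCC} part (2), whereas you apply Lemma \ref{SCC} part (1) to $i_0$ and $i_1$ separately and derive the contradiction directly---since part (2) is an immediate consequence of part (1), these amount to the same argument.
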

\begin{proof}
Let  $F \in \St(X\times \Delta[1],D_R(Y))$ be a homotopy from 
$\delta_Y \circ \varphi_1$ to $\delta_Y \circ \varphi_2$. 
Let $i=i_0 \sqcup i_1 : X \sqcup X \to X \times \Delta[1]$.
%\ak{
Then 
$$
\begin{aligned}
\supp (i^\ast (F)) \cap i^\ast(\St(X \times \Delta[1],Y))&=
\supp( (\delta_Y \circ \varphi_1) \sqcup 
(\delta_Y \circ \varphi_2))
\cap i^\ast(\St(X \times \Delta[1],Y)) \\
&=\supp(\delta_Y \circ (\varphi_1 \sqcup \varphi_2))
\cap i^\ast(\St(X \times \Delta[1],Y)) \\
&=\{\varphi_1 \sqcup \varphi_2\}
\cap i^\ast(\St(X \times \Delta[1],Y)) =\emptyset .
\end{aligned}
$$
%\comm{which references are we using here?}\ak{[[I think \cite{friedman2008elementary} will be good]]}
Here we used the fact that 
$\supp(\delta_Y \circ (\varphi_1 \sqcup \varphi_2))
=\{\varphi_1 \sqcup \varphi_2\}$.
In addition, the last equation follows from the fact that there exists no homotopy from
$\varphi_1$ to $\varphi_2$. 
By Lemma \ref{SCC} part (2) the distribution $F$ is strongly contextual.
\end{proof}

\begin{rem}
{\rm
In the case of $R=\RR_{\geq 0}$ and 
$Y\in s\catGrp$ there is an alternative proof of Proposition \ref{SCFacet} that relies on the equivalence of strong contextuality and strong non-invertibility. By Corollary \ref{SCM} the inclusion $i=i_0 \sqcup i_1 : X \sqcup X \to X \times \Delta[1]$ induces a map of real convex monoids:
$$i^\ast : \St(X \times \Delta[1],D(Y)) \to \St(X\sqcup X,D(Y)).$$ 
%in 
%$\catConvMon$. 
According to 
Proposition \ref{Detvert} the deterministic distribution $\delta_Y\circ (\varphi_1\sqcup\varphi_2)$ is a vertex.
%We have   $i^\ast(F)=
%\Theta(\delta_{\varphi_1}) \sqcup \Theta(\delta_{\varphi_2})
%\delta_Y \circ \varphi_1 \sqcup \delta_Y \circ \varphi_2 
%= \delta_Y \circ (\varphi_1 \sqcup \varphi_2)$, which is a vertex according to 
%Proposition \ref{Detvert}. 
In particular, its invertible support consists only of $\delta_Y \circ (\varphi_1 \sqcup \varphi_2)$.
By Proposition \ref{HomInvvv}, the horizontal maps of the following diagram are 
isomorphisms:
$$
\begin{tikzcd}
\St(X \times \Delta[1],Y) \arrow[r,"(\delta_Y)_\ast"] 
\arrow[d,"i^\ast"]  & (\St(X \times \Delta[1],DY))^\ast
\arrow[d,"i^\ast"] \\
\St(X\sqcup X ,Y)  \arrow[r,"(\delta_Y)_\ast"] &
 (\St(X \sqcup X ,DY))^\ast
\end{tikzcd}
$$
Since there is no homotopy from $\varphi_1$ to $\varphi_2$, which means that $\varphi_1 \sqcup \varphi_2 \notin  i^\ast (\St(X \times \Delta[1],Y))$, it turns out that  
$\delta_Y \circ (\varphi_1 \sqcup \varphi_2) \notin
 i^\ast( (\St(X \times \Delta[1],DY)^\ast) $.
% "(\delta_Y)_\ast"
We conclude that the intersection of
 $\Isupp (i^\ast (F))$  and $i^\ast( (\St(X \times \Delta[1],DY))^\ast)$ is empty.
 %= \emptyset$. 
 By Corollary \ref{ForAPP1} part (1) 
we obtain that $F$ is strongly non-invertible, and then by Corollary \ref{SC=SNI} part (1) $F$ is strongly contextual.  
}
\end{rem}

\begin{cor}\label{cor:homotopy}
Let $R$ be a 
%Given a 
zero-sum-free, integral semiring and
%iven 
$\varphi_1, \varphi_2 \in \catsSet(X,Y)$ be such that 
%there is no homotopy from 
$\varphi_1 \not\sim \varphi_2$. 
If  $F \in \catsSet(X\times \Delta[1],D_R(Y))$  is the unique homotopy from 
$\delta_Y \circ \varphi_1$ to  $\delta_Y \circ \varphi_2$,  then 
$F$ is a strongly contextual vertex in $\St(X\times \Delta[1],D_R(Y))$. 
\end{cor}
\begin{proof}
%\ak{
According to Proposition \ref{Detvert}, the deterministic distribution
 $(\delta_Y \circ \varphi_1) \sqcup 
(\delta_Y \circ \varphi_2)=
\delta_Y \circ (\varphi_1 \sqcup \varphi_2)$ is a vertex 
in $\St(X\sqcup X,D_R(Y))$. By Proposition \ref{f-1vert} 
the unique preimage of  $(\delta_Y \circ \varphi_1) \sqcup 
(\delta_Y \circ \varphi_2)$ under $i^\ast$ is a vertex in
$\St(X\times \Delta[1],D_R(Y))$. This 
vertex is strongly contextual by Proposition
 \ref{SCFacet}.
%}
\end{proof}

\begin{example}\label{HomVertEx}
{\rm
Let $X$ denote the simplicial set obtained by gluing two copies of $\Delta[1]$ along their boundaries:
%\ak{[[Here in the picture $y$ has to be in the opposite direction in order to match with the next picture.]]}
$$ 
\includegraphics[width=.16\linewidth]{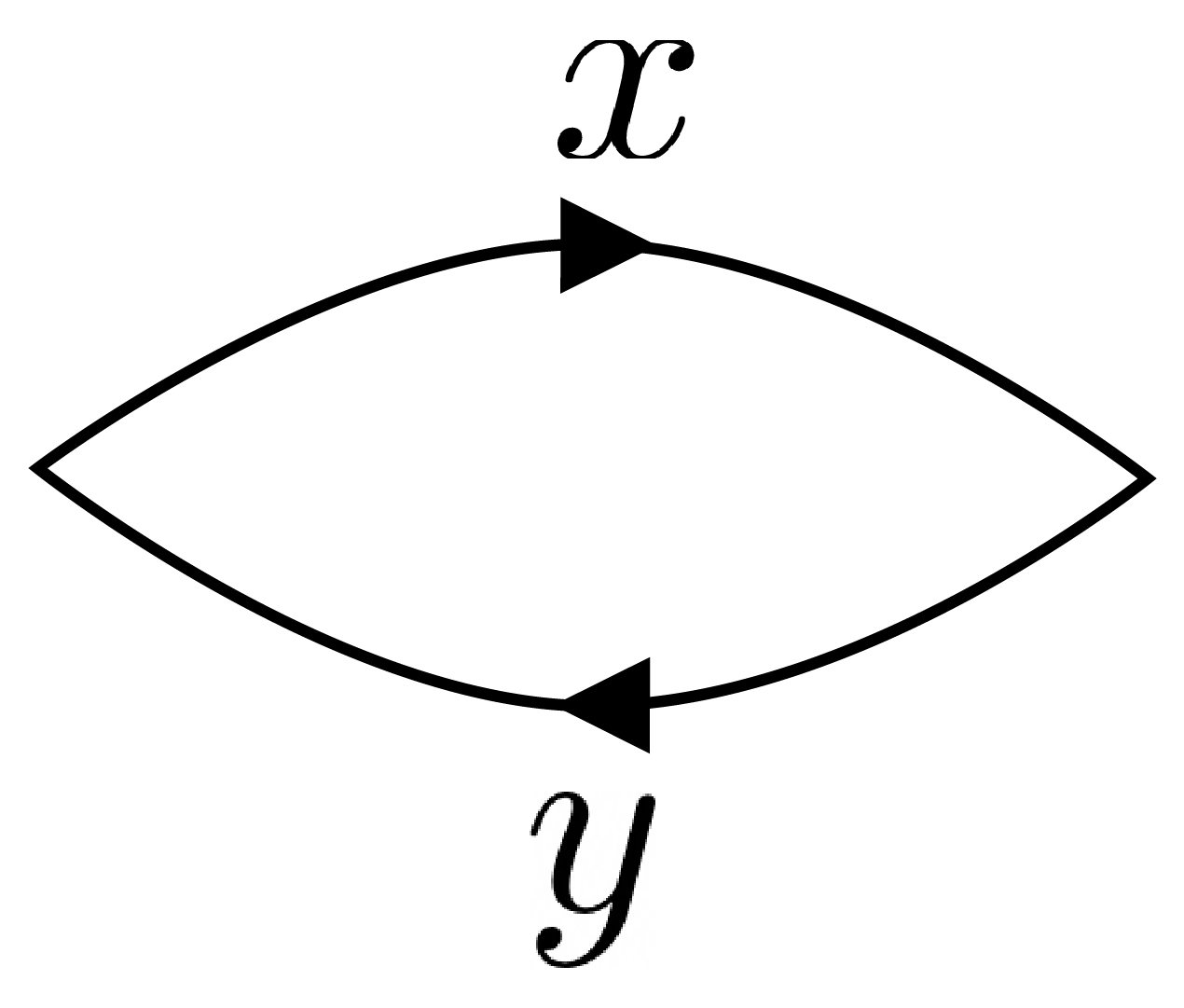}  
$$
Consider two simplicial set maps $\varphi,\psi\in \catsSet(X,N\ZZ_2)$. These maps are determined by the images of the $1$-simplices $x,y$. 
Let $\varphi$ be given by $(x,y) \mapsto (0,0)$, and $\psi$ by $(x,y) \mapsto (1,0)$. Note that $\varphi \not\sim \psi$. But the deterministic distributions $\delta_{N\ZZ_2}\circ \varphi$ and $\delta_{N\ZZ_2}\circ \psi$ are homotopic via the following {unique} homotopy:
% given by
$$ 
\includegraphics[width=.4\linewidth]{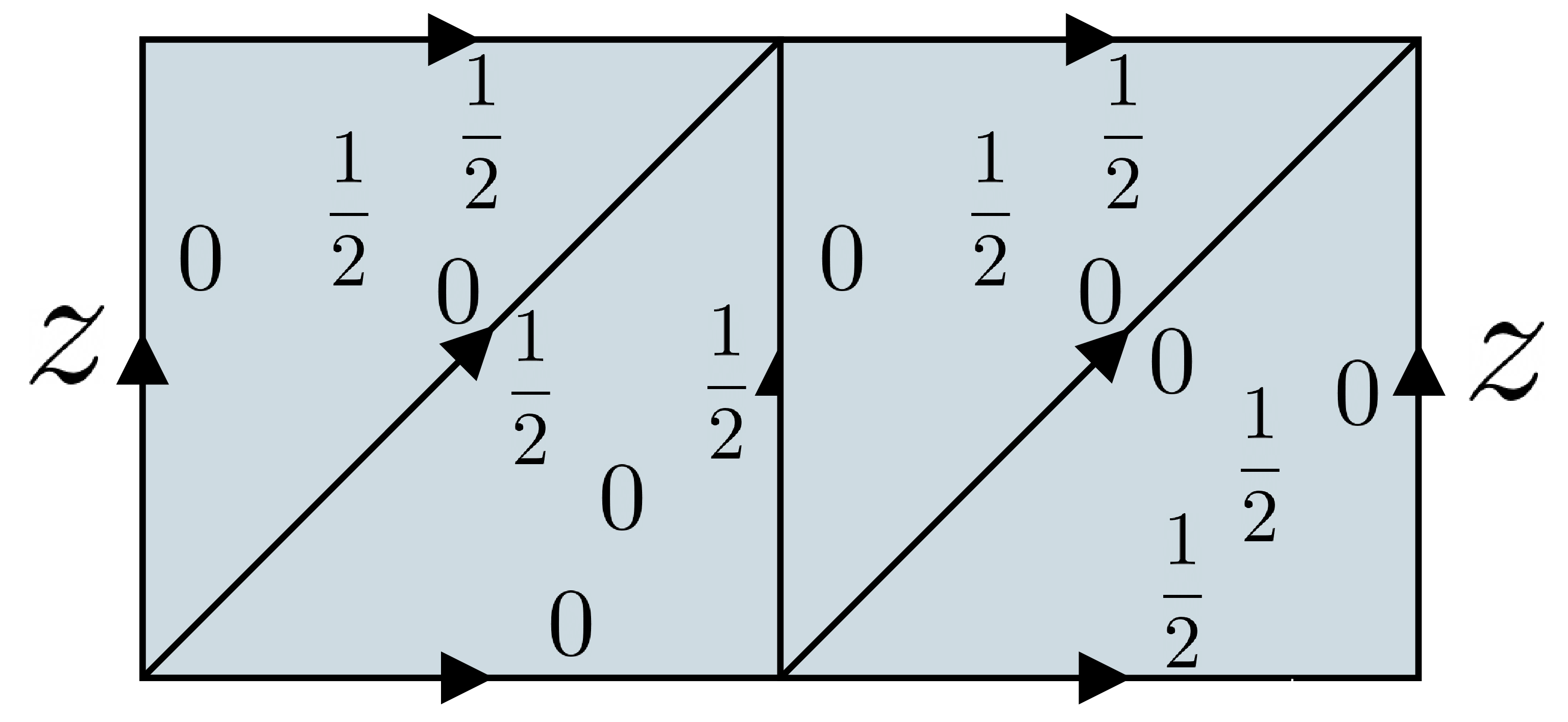}  
$$ 
%{
%Each PR box can be captured in this way as a homotopy. 
%These homotopies correspond to the $8$ distinct PR boxes. 
%}
Note that $(X\times \Delta[1],N\ZZ_2)$ is another way to describe the CHSH scenario of Example \ref{Ex: CHSH}. 
{The distribution above is a PR box. Each of the $8$ distinct PR boxes can be captured in this way as a homotopy.} 

In general, given the four triangles corresponding to the four boxes in the CHSH scenario, any way of assembling them into a simplicial set provides a description as a simplicial distribution. In addition to the realizations given in this example and Example \ref{Ex:Sqqq} there is another realization where $X$ is a punctured torus; see \cite{okay2022simplicial} and \cite{okay2022mermin}.
}
\end{example}

%\begin{ex}
%{\rm
%Let $X$ denote the space in Example \ref{HomVertEx} such that $X\times \Delta[1]$ realizes the CHSH scenario. 
%The inclusion $i_0\sqcup i_1:X\sqcup X\to X\times \Delta[1]$ induces 
%$$
%(i_0\sqcup i_1)^*:\catsSet(X\sqcup X,D(N\ZZ_2))\to \catsSet(X\times \Delta[1],D(N\ZZ_2))\cong [0,1]^4.
%$$
%This map can be identified with $f^*$ in Equation (\ref{eq:f-star}) of Example \ref{ex:chsh-boundary}. 
%The vertices $(\delta^{a_{ij}})_{i,j}$ of $[0,1]^4$ with $\sum_{i,j} a_{ij}\neq 0\mod 2$ correspond to pairs of  deterministic distributions that are not homotopic.
%These vertices have a unique preimage under $(i_0\sqcup i_1)^*$ (as in the case of $f^*$) which correspond to a homotopy between the deterministic distributions. 
% Preimage under  There is a unique homotopy in $\catsSet(X\times\Delta[1],D(N\ZZ_2))$ between  
%In the case of the CHSH scenario, realized as a homotopy (Example \ref{HomVertEx}), all the contextual vertices come from those distributions that give a homotopy between two deterministic distributions. 
%These homotopies correspond to the $8$ distinct PR boxes. 
%} 
%\end{ex}

%\bibliography{bib}
%\bibliographystyle{ieeetr}   
%\bibliographystyle{alpha} 

\newcommand{\etalchar}[1]{$^{#1}$}

\end{document}